\documentclass{amsart}

\usepackage{graphicx} 
\usepackage[pdfa]{hyperref}
\usepackage{hyperxmp}[2020/03/01]
\usepackage{embedfile}[2020/04/01] 
\usepackage{amsmath}
\usepackage{amsthm,amssymb}
\usepackage{thmtools}
\usepackage{thm-restate}
\usepackage[usenames,dvipsnames]{xcolor}
\usepackage{color,mathrsfs,stmaryrd,tikz-cd,mathabx}
\usepackage{subcaption} 
\usepackage{caption} 
\usepackage{floatrow}
\usepackage{mathtools}
\usepackage[shortlabels]{enumitem}
\usepackage{yfonts}
\usepackage[final]{showkeys}
\usepackage[curve,matrix,arrow]{xy}
\usepackage[export]{adjustbox}
\usepackage[
	top=31mm,
	textwidth=163mm,
	textheight=230mm,
	]{geometry}

\newtheorem*{theorem-nonumb}{Theorem}
\numberwithin{equation}{section}
\newtheorem{theorem}[equation]{Theorem}
\newtheorem{lemma}[equation]{Lemma}
\newtheorem{prop}[equation]{Proposition}
\newtheorem{cor}[equation]{Corollary}

\theoremstyle{definition}

\newtheorem{definition}[equation]{Definition}
\newtheorem{remark}[equation]{Remark}

\newtheorem{notation}[equation]{Notation}

\newtheoremstyle{subresult}
{}
{}
{\itshape}
{0.2in}
{}
{.}
{0.3in}
{\thmname{#1}	\thmnumber{#2}	\thmnote{#3}}

\theoremstyle{subresult}
\newtheorem{subresult}{}[equation]

\newtheoremstyle{caseresult}
{}
{}
{\itshape}
{0.2in}
{}
{.}
{0.3in}
{\thmname{#1}\thmnumber{#2}\thmnote{#3}}

\theoremstyle{caseresult}
\newtheorem{caseresult}{}[equation]

\newcounter{maintheorem}
\renewcommand{\themaintheorem}{}

\newenvironment{maintheorem}[1]
{%
	\def\customname{#1}%
	\renewcommand{\themaintheorem}{\customname}%
	\refstepcounter{maintheorem}%
	\trivlist
	\item[\hskip\labelsep\bfseries Theorem~\themaintheorem.]%
	\itshape
}
{%
	\endtrivlist
}

\renewcommand{\phi}{\varphi}

\newcommand{\bF}{\mathbb{F}}
\newcommand{\bZ}{\mathbb{Z}}

\newcommand{\bN}{\mathbb{N}}
\newcommand{\bK}{\mathbb{K}}

\newcommand{\W}{\mathbb{W}}

\newcommand{\A}{\mathcal{A}}

\newcommand{\F}{\mathcal{F}}
\newcommand{\M}{\mathcal{M}}

\newcommand{\J}{\mathcal{J}}
\newcommand{\gM}{\mathfrak{M}}

\newcommand{\N}{\mathcal{N}}

\renewcommand{\L}{\mathcal{L}}
\newcommand{\cL}{\textswab{L}}
\renewcommand{\S}{\mathcal{S}}
\newcommand{\C}{\mathcal{C}}
\renewcommand{\H}{\mathcal{H}}
\newcommand{\K}{\mathcal{K}}
\newcommand{\Z}{\mathcal{Z}}
\renewcommand{\P}{\mathcal{P}}

\newcommand{\cU}{\mathcal{U}}
\newcommand{\V}{\mathcal{V}}

\newcommand{\coll}{\mathcal{O}}

\newcommand{\Hom}{\operatorname{Hom}}
\newcommand{\Aut}{\operatorname{Aut}}
\newcommand{\Out}{\operatorname{Out}}
\newcommand{\Inn}{\operatorname{Inn}}
\newcommand{\End}{\operatorname{End}}
\newcommand{\Syl}{\operatorname{Syl}}

\newcommand{\id}{\operatorname{id}}
\newcommand{\SL}{\operatorname{SL}}
\newcommand{\PSL}{\operatorname{PSL}}
\newcommand{\Sp}{\operatorname{Sp}}
\newcommand{\SU}{\operatorname{SU}}
\newcommand{\SO}{\operatorname{SO}}
\newcommand{\Sym}{\operatorname{Sym}}
\newcommand{\Alt}{\operatorname{Alt}}

\newcommand{\GL}{\operatorname{GL}}

\newcommand{\Spin}{\operatorname{Spin}}

\renewcommand{\O}{\operatorname{O}}

\newcommand{\wcl}{\operatorname{wcl}}

\newcommand{\Sub}{\operatorname{Sub}}

\makeatletter
\newcommand{\extp}{\@ifnextchar^\@extp{\@extp^{\,}}}
\def\@extp^#1{\mathop{\bigwedge\nolimits^{\!#1}}}
\makeatother

\newcommand{\ov}{\overline}
\newcommand{\norm}{\trianglelefteq}
 
\newcommand{\inv}{^{-1}}
\newcommand{\wde}{\widetilde}

\newcommand{\wht}{\widehat}
\newcommand{\fto}{\longrightarrow}

\newcommand{\<}{\langle}
\renewcommand{\>}{\rangle}
\newcommand{\fieldchar}{\mathrm{char}}

\newcommand{\h}{\textswab{h}}

\title[A local approach to a programme of Meierfrankenfeld I]{A local approach to a programme of Meierfrankenfeld: initial setting and the symmetric case}
\author{Edoardo Salati}
\address{Department of Mathematics,	RPTU University Kaiserslautern-Landau in Kaiserslautern, 67663 Kaiserslautern, Germany}
\email{edoardo.salati@rptu.de}

\begin{document}

\maketitle

\begin{abstract}
	\noindent
    A large $p$-subgroup of a group $G$ is a self-centralizing $p$-subgroup $Q \le G$ whose normalizer controls the normalizers of all the non-trivial, central subgroups of $Q$. In 2016 Meierfrankenfeld, Stellmacher and Stroth produced a result describing the $p$-local structure of a finite group having a large $p$-subgroup (the main examples arising from finite groups of Lie type in defining characteristic $p$). This result is a major success within the wider framework of studying groups of local characteristic $p$.\\
    We attempt to produce a result analogous to that of Meierfrankenfeld, Stellmacher and Stroth, but for fusion systems and localities. Previous work of Ellen Henke and the author shows that reasonable generalizations can be formulated and solved equivalently either within the realm of fusion systems or in the world of localities. In particular, in the present paper we set the stage for our analysis, showing how working within a locality grants a clear advantage: it allows to follow the same lines of reasoning as for a group. We therefore produce analogous reduction results and case subdivision as those in the 2016 result of Meierfrankenfeld et al. and, proceeding according to the analogy, we deal with occurrences of certain natural orthogonal modules and with the first of the cases that are to be studied, the so-called symmetric case.\\
    The remaining cases will appear in future publications.
\end{abstract}

\setlength{\parindent}{0pt}

\section{Introduction}
\subsection{Context}
Given a finite $p$-group on $S$, a saturated fusion system on $S$ is a category with objects the subgroups of $S$ and injective group homomorphisms as maps, subject to certain axioms; it models the conjugacy between subgroups of $S$ induced by the elements of a group $G$ containing $S$ as a Sylow $p$-subgroup. Indeed, any such group $G$ induces a saturated fusion system $\F_S(G)$ built by choosing $\Hom_{\F_S(G)}(P,Q) := \{c_g \colon P \to Q \mid g \in G \textit{ s.t. } P^g \le Q \}$. Due to the symmetry among Sylow $p$-subgroups of $G$, $\F_S(G)$ does not depend on the choice of $S$ up to a suitable notion of isomorphism of fusion systems, hence $\F_S(G)$ is known as the $p$-fusion system of $G$ and sometimes more simply denoted $\F_p(G)$. Fusion systems of the form $\F_S(G)$ are called realizable; those saturated fusion systems that instead admit no group $G$ realizing them are called exotic.\\

The success of fusion systems is probably due to the role they play in connecting different areas of mathematics, from finite group theory to modular representation theory and to certain parts of algebraic topology.\\
Their study in connection to finite group theory is related to the classification of the finite simple groups (CFSG): this is undoubtedly one of the most successful mathematical achievements of the past century, nevertheless its proof still remains quite inaccessible. An important part of the theory of fusion systems is indeed being developed for providing different approaches to the CFSG.   Two of them are relevant for this paper: the programme of Aschbacher, aiming at reclassifying the so-called finite simple groups of component type via their $2$-fusion systems, and the programme initiated by Meierfrankenfeld, with the goal of describing the $p$-local structure of the finite groups of so-called characteristic $p$-type (also named of local characteristic $p$). In the realm of finite simple groups these two programs may be considered complementary, but we need to provide some context to justify this.\\

In a finite group $G$ the $p$-local subgroups are the normalizers of the non-trivial $p$-subgroups of $G$; by referring to the $p$-local structure of $G$ we mean the set of $p$-local subgroups of $G$ and of their mutual relationships, which may be modeled by the $p$-fusion system of $G$.\\
The Gorenstein-Walter Dichotomy Theorem states that, apart from some small $2$-rank exceptions, every finite simple group falls in one of two classes: the groups of component-type, that is groups containing an involution centralizer which, after modding out the largest normal subgroup of odd order, admits a component, and the groups of characteristic $2$-type, which are groups whose $2$-local subgroups $L$ all satisfy $C_L(O_2(L)) \le O_2(L)$ (are of characteristic $2$). When studying the groups of component type, the presence of a normal odd-order subgroup in the involution centralizer is the major source of complexity, forcing a long and detailed case-analysis. On the other hand, for a finite group $G$ with Sylow $p$-subgroup $S$ and $p'$-normal subgroup $N$ one can show that $\F_S(G) \cong \F_S(G/N)$ (where we are identifying $S \cong SN/N$), hence the $2$-fusion system of $G$ avoids the obstructions coming from the odd-order normal subgroups. Based on this idea Aschbacher proposed a new way of classifying the finite simple groups of component type by first classifying their corresponding $2$-fusion systems.\\
On the other hand, the groups of characteristic $2$-type require a different approach; the techniques used for studying the involution centralizers no longer work when applied to involutions, but are still often useful after switching to a suitable odd prime. Unfortunately, the choice of such odd prime is not uniform and yields another long and complicated case analysis. Since the prototypes of groups of local characteristic $p$ are the finite groups of Lie type in their defining characteristic, the idea of Meierfrankenfeld et al. consists in detecting and making use of properties which, although they are a reflection of properties shared by the corresponding algebraic groups over algebraically closed fields, are expressible in the language of finite groups (of Lie type).\\

In analogy with the finite simple groups, also saturated fusion systems can be categorized into two groups (for all primes and with no exceptions): those of component type and those of characteristic $p$-type (where the fusion system is on a $p$-group). Aschbacher's programme requires precise knowledge about the simple $2$-fusion systems of component type before even attempting to rebuild the groups from them. The known examples are indeed those coming from finite simple groups together with the Benson-Solomon fusion systems, the only (infinite) family of exotic $2$-fusion systems that was found. It is a conjectured that no other simple exotic $2$-fusion system exists.\\
On the other hand, in this paper we work on the opposite spectrum, dealing exclusively with fusion systems of characteristic $p$-type (or slight variations of them) and following the ideas and, when possible, the methods that pertain to the work of Meierfrankenfeld et al. For this reason one can think of it as a complement of Aschbacher's work at the prime $2$ and, even though prior work of Aschbacher has already produced results about fusion system of characteristic $2$-type, our approach is substantially different and allows us to deal with all primes at the same time.\\

Moreover, in contrast with the general belief about the rarity of exotic $2$-fusion systems, there is a relative abundance of exotic fusion systems at odd primes. The search for exotic fusion systems and, more generally, questions about exoticity have propelled research on fusion systems for many years. Two long standing conjectures are indeed related to the exotic behavior of certain fusion systems. We have already encountered the conjecture regarding the Benson-Solomon systems; in addition, it is possible to build saturated fusion system also starting from certain blocks of the group algebra over an algebraically closed field of characteristic $p$, known as block fusion systems. It is well known that every realizable fusion system may also be realized as a block fusion system by taking the principal block; whether the converse also holds is however still an open problem.\\
The advantage of working uniformly with all prime that our approach offers is therefore not negligible. Gaining enough control over the local structure often allows to rebuild the fusion systems; because of the very general nature of our results, they would be of great use in future classification programs, helping shedding light into new exotic patterns, or excluding them otherwise.\\

There is one extra tool that needs to be mentioned, since it plays a fundamental role within this paper: this is \emph{localities}. They are a subclass of the more general class of \emph{partial groups} defined by Chermak in \cite{Chermak:2013} in a solution to the last open problem that proved the Martino-Priddy conjecture for all saturated fusion systems. The Martino-Priddy conjecture relates the $p$-fusion system of a finite group $G$ with a certain (homotopy class of a) topological space, namely the Bousfield-Kan $p$-completion of the classifying space $BG$ of $G$. This is normally denoted by $BG^\wedge_p$; it is uniquely determined by, as well as it uniquely determines, $\F_p(G)$, and it is therefore called the classifying space of $\F_p(G)$.\\
Partial groups and localities were used by Chermak in a proof of an extension of the Martino-Priddy conjecture to all saturated fusion systems: any saturated fusion system $\F$ builds a unique (up to homotopy) $p$-complete space $|\L^c(\F)|$ (where the vertical bars denote nerve composed with the geometric realization functors).\\
Part of Chermak's work was dedicated to prove that, up to a certain choice of subgroups of the $p$-group $S$, there is a one-to-one correspondence between saturated fusion systems on $S$ and localities on $S$ satisfying certain properties. These are nowadays known as linking localities. By the work of Gonz\'alez \cite{Gonzalez:2015}, Broto--Gonz\'alez \cite{Broto-Gonzalez:2021} and Hackney--Lynd \cite{Hackney-Lynd:2025} partial groups may be naturally regarded as simplicial sets and, if $\L$ is the the linking locality associated to $\F$, the geometric realization of $\L$ has the same homotopy type of $|\L^c(\F)|$. This correspondence comes with its own set of questions, often having more of an algebraic topology or homotopical algebra flavour. For example, it still remains unclear what topology-theoretic properties detect the $p$-complete spaces that are the classifying space of some saturated fusion systems. In addition, the introduction of localities provides an extra layer of complexity between groups and fusion systems. For example, let $S \in \Syl_p(G)$ and $Q \le S$ be a subgroup with the property that $Q$ is its unique $G$-conjugate contained in $S$. Then in the group $G$ there are as many conjugates of $Q$ as $|\Syl_p(G)|$, in $\F_S(G)$ there is only $Q$ and no other conjugate, but in an associated linking locality $\L$ there can be multiple conjugates of $Q$. It remains unclear how many; moreover, not all such conjugates share the same status within $\L$, some having better properties than others. This precise situation will be encountered in this paper, and it will somewhat be both an advantage (with respect to working with the fusion system) and a nuisance at the same time.\\
Behavior like that described above is one of the reasons leading the author to also investigate the more general class of partial groups, with the hope of then specializing results to the better behaved class of localities. This is better done through the formalism introduced by Hackney--Lynd in \cite{Hackney-Lynd:2025} and in \cite{Hackney-Lynd:2025bis}. With regard to the example above, the locality $\L$ induces a partial action (of a partial group) as per \cite[Definition 5.4]{Hackney-Lynd:2025bis} on the (transitive closure of the) set of $\L$-conjugates of $Q$. The behavior of such partial actions remains nowadays still obscure and is therefore object of current studies.\\

Nevertheless, the possibility of switching from fusion systems to linking localities will be an important card in our hand. Their intrinsic structure, much more affine to a classical algebraic structure, in particular to groups, than to a category, allows for a more efficient translation of the language and results of group theory. During the development of the results, after showing that these can be equivalently formulated either in terms of fusion systems or of localities (most of the work here coming from \cite{Henke-Salati:2026}), we will quickly abandon fusion systems.  The ideas and methods that appear in the work of Meierfrankenfeld et al. are much easier to replicate when working with a locality and may appear cumbersome when translated into the language of fusion systems.\\

\subsection{Organization of the paper}
This paper, together with \cite{Henke-Salati:2026} and with subsequent ones yet to be published, reorganizes and adds to the results that were part of the PhD thesis of the author. It can be equivalently thought of in different ways: as a complementary side of Aschbacher's programme, as a natural generalization of the goal of Meierfrankenfeld et al. from groups to fusion systems or, per se, as a part of the currently active research involving fusion systems. The objective in mind is indeed to investigate the ideas and results from the programme of Meierfrankenfeld in order to replicate them in the context of fusion systems. In this way one could obtain fairly detailed descriptions of the structure of the fusion systems of characteristic $p$-type, or at least of a large portion of them. \\
Ellen Henke, supervisor of the author's PhD thesis, identified \cite[Theorem A]{Meier-Stell-Stroth:2012} as a good candidate to try to understand how results and techniques from the work of Meierfrankenfeld, Stellmacher and Stroth transfer to fusion systems. We will refer to \cite{Meier-Stell-Stroth:2012} as the Local Structure Theorem or with the acronym LST; the authors here investigate the $p$-local structure of a finite group $G$ mainly under two assumptions: the existence of a large $p$-subgroup $Q$ and the existence of a maximal, $p$-local subgroup not normalizing $Q$. For context we provide the definition of a large $p$-subgroup of a group.
\begin{definition}
	\label{def: large subgroup of a group}
	A $p$-subgroup $Q$ of a finite group $G$ is \emph{large} if the following hold:
	\begin{itemize}
		\item[$(SC)$] $Q$ is self-centralizing, that is $C_G(Q) \le Q$,
		\item[$(Q!)$] for every non-trivial subgroup $U \le Z(Q)$, $N_G(U) \le N_G(Q)$.
	\end{itemize}
\end{definition}
Their main goal, that is a description of the $p$-local structure of $G$, reflects into our results: the focus of this paper is on the inner structure of a fusion system satisfying hypothesis analogous to those of $G$ in \cite[Theorem A]{Meier-Stell-Stroth:2012}, i.e. on the automorphism groups of the objects of the fusion system. The problem of rebuilding the fusion system from knowledge about its local structure is, instead, left for future studies.\\

In section \ref{background} we begin by collecting the notation and conventions in use together with the names and descriptions of certain recurring modules. We then turn our attention to establishing the background knowledge that is necessary. We will assume the reader to be familiar with the basic knowledge about fusion systems, since they are more widely known within the community and play a marginal role in the proofs; we focus instead on some more specific definitions and results about finite group theory. We also provide the necessary tools for understanding and working with localities, which is fundamental knowledge for this paper, especially since intuition coming from group theory, something the reader might be much more familiar with, can be misleading when applied to localities.\\

Section \ref{large subgroups} is mainly dedicated to setting up the stage relative to groups, fusion systems and localities with a large subgroup. A large portion of this has already been worked out in \cite{Henke-Salati:2026}: after defining large subgroup of fusion systems and localities, we provide a summary of the results that make clear the relationships among fusion systems, localities and groups under the assumption of having a large subgroup. Here is where the one-to-one correspondence between a saturated fusion system and its so-called subcentric linking locality is described. Moreover, any finite group with a large $p$-subgroup yields a fusion system and a linking locality with a large $p$-subgroup, even though the viceversa is shown to be false in general in \cite{Henke-Salati:2026}; thus we can expect the results in this paper to encompass those of Meierfrankefeld et al. and possibly be more general.\\
The chapter also contains some additional results that are more specific to the goal of this paper.\\

Section \ref{section: strategy of LST for groups} is a necessary addition for the reader who is not familiar with the work of Meierfrankenfeld, Stellmacher and Stroth, more specifically with the LST: we provide a rather detailed description of their strategy for the study of a group $G$ with a large $p$-subgroup $Q$. This is particularly important to us since it is possible to adapt large portions of their techniques to the locality we focus on.\\
Condensing in only few lines, one would start by recalling that a $p$-parabolic subgroup of $G$ is a subgroup of $G$ containing some Sylow $p$-subgroup. A fundamental result in the LST shows that one can restrict the attention to parabolic subgroups $M$ that satisfy certain additional properties, among which such that $MC_G(Y_M)$ is a maximal, parabolic, local subgroup (namely $N_G(Y_M)$), with $Y_M$ a specific elementary abelian $p$-subgroup, normal in $M$ and that will be defined further on. Note however that $Y_M$ becomes then an $\bF_p M$-module. By \cite[]{Meier-Stell-Stroth:2012} every other parabolic, local subgroup of $G$ that does not normalize a conjugate of $Q$ is then a $G$-conjugate of some subgroup of $MC_G(Y_M)$ and the conjugation map realizes a module-isomorphism between $Y_M$ and its corresponding conjugate.\\
In this same chapter we also explain the case subdivision that was utilized in \cite{Meier-Stell-Stroth:2012} for the identification of the pairs $(M,Y_M)$ for $M$ as above. The general idea consists in detecting some convenient non-trivial offender on $Y_M$ (or some other related subgroup); theorems that classify modules with offenders provide then lists of possible pairs for $(M,Y_M)$, which then need to be sifted through to exclude those that are in conflict with the general hypothesis on the group $G$. In this paper we restrict to the initial set-up, the analysis of the orthogonal modules (these need a separate, lengthy argument) and to the symmetric case, the first of the various cases treated in the LST.\\

Section \ref{basic property in localit} is the core of the paper; we translate the reduction to the pairs $(M,Y_M)$ from the setting involving a group $G$ to our situation, working with a locality $\L$ with a large subgroup. Here is where it first becomes clear how avoiding fusion systems is a smart option: localities provide centralizers, which may be used in analogy to what is done for the LST to define a poset structure on certain subgroups. The groups that correspond to $N_G(Y_M)$ (and contain $M$) are then shown to exist (as groups) also within $\L$ and are identified as the maximal elements of such poset, and the subgroups analogous to $M$ are then searched within these. This is mostly collected in Theorem \ref{theorem: main reduction}, which provides the actual reduction.\\
At this point one is finally able to formalize the hypothesis on the locality $\L$ that are a reasonable translation of those satisfied by a group in the LST; we provide them here for referencing, even though not all the notation has been defined yet.

\begin{restatable}{hyp}{hypothesisLocality} \label{hyp on the locality}
	$(\L,\Delta,S)$ is a $\K_p$-locality of objective characteristic $p$, $Q \le S$ a large subgroup of $\L$ such that $\L$ is $Q$-replete and there exists a parabolic subgroup $M$ of $\L$ such that $M \not \le N_\L(Q)$.
\end{restatable}

One can then also give an explicit characterization of the fusion systems that correspond to such a locality; in detail we prove the following.

\begin{maintheorem}{B}
	\label{theorem:translation of hyp on local to fus systems}
	Let $(\L,\Delta,S)$ be a locality with associated fusion system $\F:=\F_S(\L)$. Then $\L$ satisfies hypothesis \ref{hyp on the locality} (possibly with the exclusion of the $\K_p$-hypothesis) if and only if $\F$ is a saturated fusion system with large subgroup $Q$ and such that there exists an $\F$-centric subgroup $Y$ normal in $S$ and satisfying $\xi(Q) \not \norm \Aut_\F(Y)$, where $\xi \colon S \fto \Aut_\F(Y)$ maps each element to the conjugation map induced by it.
\end{maintheorem}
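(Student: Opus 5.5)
The plan is to prove the two implications separately. In each direction I will reduce to the correspondence between localities and fusion systems with a large subgroup, as summarised in Section \ref{large subgroups} from \cite{Henke-Salati:2026}. The key facts I intend to use are the following.

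\begin{itemize}
\item[(i)] If $(\L,\Delta,S)$ has objective characteristic $p$ and $\L$ is $Q$-replete, then $\F_S(\L)$ is saturated. Moreover $\L$ is (isomorphic to) the subcentric linking locality of $\F$, or at least its restriction to a set of objects containing all $\F$-centric subgroups that contain $Q$.
\item[(ii)] $Q$ is large in $\L$ if and only if $Q$ is large in $\F$.
\item[(iii)] For every $\F$-centric (hence object) subgroup $Y \le S$, the group $N_\L(Y)$ is a finite group of characteristic $p$ with Sylow $p$-subgroup $N_S(Y)$. For such $Y$ with $Y \norm S$, we have $N_\L(Y)/C_\L(Y) \cong \Aut_\F(Y)$ and $C_\L(Y) = C_S(Y) = Z(Y)$.
\end{itemize}

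\textbf{($\Rightarrow$).} Saturation and largeness of $Q$ in $\F$ come from (i) and (ii). Let $M$ be the parabolic subgroup with $M \not\le N_\L(Q)$. By the definition of parabolic, $S \le M$ and $M$ is a subgroup of some $N_\L(P)$ with $P$ an object; I expect $M$ to be contained in $N_\L(O_p(M))$. Put $Y := O_p(M)$. Then $Y \norm S$, and since $\L$ has objective characteristic $p$, $C_\L(Y) \le Y$ inside the group $N_\L(Y)$, so $Y$ is $\F$-centric.

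If $\xi(Q) \norm \Aut_\F(Y)$, then pulling back along $N_\L(Y) \to \Aut_\F(Y)$ shows that $QC_\L(Y) = QZ(Y)$ is normal in $N_\L(Y)$. Since $Q$ is large, $Z(Y) \le C_S(Q)\le Q$, because $Q$ is self-centralizing in $S$, with $Q \le S$ centric. Hence $Q \norm N_\L(Y)$, so $M \le N_\L(Q)$, a contradiction.

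\textbf{($\Leftarrow$).} Given $\F$, take $\L$ to be the subcentric linking locality, or the given locality identified with it via the Henke--Salati correspondence. Objective characteristic $p$ and $Q$-repleteness then follow from the cited results, and $Q$ is large in $\L$ by (ii). Set $M := N_\L(Y)$, which is a group because $Y$ is an object, and which contains $S$ since $Y \norm S$; so $M$ is parabolic. If $M \le N_\L(Q)$, then $Q \norm M$, so $\xi(Q) = $ the image of $Q$ is normal in $M/C_\L(Y) \cong \Aut_\F(Y)$, contradicting the hypothesis on $Y$.

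\textbf{Main obstacle.} The step I expect to be hardest is the bookkeeping behind (i) and (iii). First, the given $\L$ need not be the full subcentric locality, so I must check that $Y$, and subgroups such as $O_p(M)$, lie in $\Delta$; I plan to use that $\Delta$ contains all centric subgroups containing $Q$, which is how $Q$-repleteness is presumably defined. Second, I must check that the notion of \emph{parabolic} in the paper gives $M \le N_\L(O_p(M))$ with $O_p(M)$ an object. If the definition instead defines parabolic subgroups as subgroups of $N_\L(P)$ containing $S$, I will take $Y$ to be a suitable characteristic subgroup, for instance $O_p(N_\L(P))$ intersected appropriately, and rerun the same argument.
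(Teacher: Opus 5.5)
Your overall route is the paper's. In the forward direction you pass to an object $Y\norm S$ with $M\le N_\L(Y)$ and $Y=O_p(N_\L(Y))$, so that $C_\L(Y)=Z(Y)$ and $N_\L(Y)/C_\L(Y)\cong\Aut_\F(Y)$. For the converse you take a $Q$-replete linking locality and set $M:=N_\L(Y)$.

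Your choice $Y:=O_p(M)$ works, and the bookkeeping you worry about is immediate. We have $Y\ne 1$ because $M$ has characteristic $p$, and $Y\norm S$, so $Y\in\Delta$ by Lemma~\ref{lemma:repleteness implies Y in Delta for parabolics}. Note that $Q$-repleteness means that all non-trivial subgroups of $C_S(Q)$ (equivalently, all non-trivial subgroups of $S$ normalized by $Q$) are objects; it does not mean that centric overgroups of $Q$ are objects. You should, however, say why objective characteristic $p$ gives $C_\L(Y)\le Y$. It only controls the centralizer of $O_p(N_\L(Y))$, so you need $S\le M\le N_\L(Y)$ to get $O_p(N_\L(Y))\le O_p(M)=Y$, i.e. $Y=O_p(N_\L(Y))$.

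\textbf{The gap.} The step ``since $Q$ is large, $Z(Y)\le C_S(Q)\le Q$'' is wrong. Largeness gives no information about $[Z(Y),Q]$. The inclusion $Z(Y)\le C_S(Q)$ would require $Q\le Y$, which you do not have, and it fails in the basic examples.

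Take $G=\PSL_4(q)$ with $S$ the upper unitriangular group. The large subgroup $Q=O_p(N_G(Z(S)))$ consists of the matrices supported on the positions $(1,2),(1,3),(1,4),(2,4),(3,4)$. For $M$ the stabilizer of $\langle e_1,e_2\rangle$, the group $Y=O_p(M)$ is abelian and supported on $(1,3),(1,4),(2,3),(2,4)$. So $Z(Y)=Y\not\le Q$, while $C_S(Q)=Z(S)$. Moreover $Q\not\norm M$, so this is an honest instance of the hypothesis.

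Thus the implication $QZ(Y)\norm N_\L(Y)\Rightarrow Q\norm N_\L(Y)$, which is the heart of the forward direction, is left unproved. Under your contradiction assumption, $Z(Y)\le Q$ does hold a posteriori, because $Q\norm N_\L(Y)$ forces $Q\le O_p(N_\L(Y))=Y$. But that is exactly what has to be shown.

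The paper closes this with $(Q!\L)$. We have $1\ne Z(QZ(Y))\le C_S(Q)\le Q$, so $Z(QZ(Y))\le Z(Q)$ is an object by repleteness. It is characteristic in $QZ(Y)\norm N_\L(Y)$, whence $N_\L(Y)\le N_\L(Z(QZ(Y)))\le N_\L(Q)$. Weak closure works equally well: for $g\in N_\L(Y)$ one has $Q^g\le QZ(Y)\le S$, hence $Q^g=Q$ by Lemma~\ref{lemma:large implies weakly closed}(a).

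\textbf{The converse.} Your converse is the paper's argument, but drop the alternative ``or the given locality identified with it''. An arbitrary locality over $\F$ need not satisfy the hypothesis. For example, the centric linking locality is in general not $Q$-replete: a non-trivial $U\le Z(Q)$ with $U\ne Q$ is never centric, since $Q\le C_S(U)$. So, as in the paper, the converse is established for a $Q$-replete linking locality over $\F$, such as the subcentric one, in which $Y\in\F^c\subseteq\Delta$.
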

The $\K_p$-hypothesis for the locality is the natural translation of the same hypothesis for groups; both are declared together with the general notation and conventions adopted. It is only meaningful in a CFSG-free setting and trivial if one assumes the CFSG.\\
The author believes that Theorem~\ref{theorem:translation of hyp on local to fus systems} may help those researchers, who are versed in fusion systems but not so much in localities, at understanding the scope and application of the results we develope, especially since the fusion system community is quite larger than the community actively working with localities.\\

Theorem \ref{theorem: main reduction} also suggests how to further proceed, and this is explained and taken care in section \ref{case history and orthog case}: one attempts to split the analysis into cases that resemble as close as possible those appearing in the LST, hoping to find the same offenders and apply in each case the same classification results for the possible pairs $(M,Y_M)$. When successful, this strategy is very effective and convenient: one is then only left to sieve through the same lists of possible modules, excluding those whose properties are incompatible with the general assumptions on the locality, just as in the LST. Most of the time the conflict is related to the presence of the large subgroup $Q$ and here is were the analysis in the locality can become harder. Conjugates of $Q$ play an important role in this process, but conjugation in a locality is definitively more delicate to handle, as explained earlier; it is, however, still better than working directly with the fusion system, since this contains no conjugate of $Q$. Unfortunately there is at least one case where this strategy does not work and a slightly stronger hypothesis may be required. However, we won't deal with it in the present paper and will be a topic for future publications.\\
Note also that one of the cases is some sort of a ``black box" in the LST: the authors cannot classify the possible modules for $(M,Y_M)$ (this is the only place where it happens) and can only prove some property of $M$. In the PhD thesis of the author it is, instead, shown that no locality appears in such case, suggesting that it is some exceptional behavior of finite groups depending not exclusively on their $p$-local structure. This will also be part of some future publication.\\
We close the chapter following the idea of dealing separately with the occurrences of the orthogonal modules for $\O_{2n}^{\epsilon}(2)$ for the pairs $(M,Y_M)$, which need a special and quite lengthy reasoning. This is one of those situations where the corresponding analysis undertaken in the LST adapts in a quite straightforward way to our locality. The outcome is Theorem \ref{theorem CL}, which clearly translates \cite[Theorem C]{Meier-Stell-Stroth:2012}.\\

Finally, we conclude this paper with section \ref{symm case}, in which we deal with the symmetric case for the locality. The first technical difference in the locality $\L$ over the $p$-group $S$, with respect to the same case for groups, consists in showing that it is always possible to find a conjugate $Y_M^u$ of $Y_M$ that is a non-trivial offender on $Y_M$ and is contained in $S$ (note that $Y_M$ is always assumed to be a subgroup of $S$). After that we get to apply the \nameref{thm:Q!FF} just as in the LST and obtain the same list of modules to check. Some of the modules need some extra work and here becomes evident how a clever way of choosing conjugates of the large subgroup $Q$ is very important in the locality, but the final result, Theorem \ref{theorem DL}, is very close to \cite[Theorem D]{Meier-Stell-Stroth:2012}. Indeed, the same list of possible modules for $(M,Y_M)$ is found.\\

Versions of Theorems~\ref{theorem CL} and \ref{theorem DL} for fusion systems may be obtained by means of Theorem~\ref{theorem:translation of hyp on local to fus systems} and by the usual translation machinery between localities and fusion systems, see \cite{ChermakHenke:2021}.

\subsection{Notes to the reader}
Due to the nature of the goal itself, as well as the nature of localities, which are locally groups, see \ref{lemma:properties of localities}(a), the choices made about the exposition of the results need a little explanation.\\
The strategy followed can be summarized as a two steps process, similarly to what done by Meierfrankenfeld, Stellmacher and Stroth: first, one tries to reduce the analysis of the locality to a local analysis, preserving as much of the local conditions that one finds in the LST as possible, then one studies the local configurations, which now involve groups. In particular this second step, with respect to the analogous process in \cite{Meier-Stell-Stroth:2012}, can range from needing a new argument, to requiring a simple adaptation of the methods in use in the LST, to be identical (or almost identical) to analogous results appearing in \cite{Meier-Stell-Stroth:2012}. To complicate matters, some intermediate results can show all of these behaviors in different sections all at once.\\
For this paper the author has decided to favor readability above everything else, paying particular attention to two main traits: easiness in the understanding of proofs and faithfulness with the notation and results from the LST.
Since the analysis of the local configuration often follows a line of reasoning close to that in the LST and the reduction to the local analysis can also be sometimes managed similarly, the author has decided to split and organize the results as close as possible to the choices made in \cite{Meier-Stell-Stroth:2012}. This is made clear by our notation: thus in ``\textbf{Lemma 3.4} (2.2)" 3.4 refers to the numbering within the current paper, whereas 2.2 refers to the numbering of a statement in \cite{Meier-Stell-Stroth:2012} that closely resembles our Lemma 3.4.\\
There are also situations where a large enough part of our analysis ends up coinciding with that undertaken in the LST and, in such cases, we often refer the reader to the specific section, unless it impairs readability.\\
More complicated are the situations where only minor changes or small additions are needed. Here readability and easiness in the understanding of the arguments are again the key aspects considered. We do sometimes refer the reader to the corresponding results, noting the needed changes in a separate remark when these are scarce and minor, otherwise we write down again the argument in its entirety, with all the needed adjustments, to allow for a better reading flow.

\vspace{1cm}

\section*{Acknowledgments}
The author is grateful to Ellen Henke for introducing him to the world of localities and for suggesting such a beautiful topic as the main focus for his PhD thesis. Many thanks also to Bernd Stellmacher, who has been a wonderful, even though unofficial, second supervisor and to Gemma Parmeggiani, who welcomed the author in Padova multiple times, allowing for many hours of fruitful conversation with her and Bernd, all of it contributing to the result of this article.

\newpage

\section{Notation and necessary background}
\label{background}

\subsection{Notation and conventions}
We collect some of the notation that will be used throughout the paper, together with some conventions that we will adopt in order to keep the notation not excessively heavy.\\
In order to facilitate the reader we also summerize here the naming of the modules that will arise throughout this paper as well in following ones. We also provide some additional context for each module in order to facilitate the reader in those situations where such modules occur.\\

Let $G$ be a group, $S$ a finite $p$-group ($p$ a prime) and $\F$ a fusion system over $S$.
\begin{itemize}
	\item Set $|G|_p := p^k$ for $k \in \bN$ such that $p^k \divides |G|$ but $p^{k+1} \not \divides |G|$.
	\item We denote by $\Sub(G)$ the set of subgroups of $G$, and we will often consider it as a poset ordered by inclusion.
    \item If $g \in G$, then for any $x \in G$ we set $x^g := g\inv x g$; the induced conjugation map is denoted by
    \[
    c_g : G \fto G, \quad x \fto x^g = g\inv x g.
    \]
    Moreover, if $A,B$ are subsets of $G$ we set
    \[
    \Hom_G(A,B) := \{c_g :A \fto B \mid g \in G, \, A^g \subseteq B \}.
    \]
    In a similar fashion, if $A$ is a subgroup of $G$, we write
    \begin{align*}
        \Aut_G(A) &:= \{c_g \in \Hom_G(A,A) \mid \textit{$c_g$ is an automorphism of $A$} \},\\
        \Out_G(A) &:= \Aut_G(A) / \Inn(A).
    \end{align*}
    \item If $A$ is a subset of $G$, then we write $A^\natural := A \setminus \{1_G\}$.
    \item $G$ is a $\K_p$-group if every $p$-local subgroup of $G$ has simple sections that are known finite simple groups; similarly, a $\K_p$-locality is a locality over a $p$-group such that every local subgroup has simple sections that are known finite simple groups. We refer to \ref{def:local and parabolic of localities} for the definition of local subgroups of localities. The $\K_p$-hypothesis is only necessary if one needs to use these results in a proof of the CFSG by a minimal counterexample argument.
    \item $Y_G$ denotes the (unique) largest, elementary abelian, normal, $p$-reduced, $p$-subgroup of $G$. See \ref{definition of p reduced} and \ref{lemma:unique largest p-reduced} for more details.
    \item $\F^{cr}$ is the set of $\F$-centric, radical subgroups of $S$; \\
     $\F^c$ denotes the set of $\F$-centric subgroups of $S$; \\
      $\F^s$ is the set of $\F$-subcentric subgroups of $S$ (see Definition \ref{def:subcentric}).
    \item $\F^f$ is the set of fully normalized subgroups of $S$ and, if $P \le S$, then $P^\F$ is the set of $\F$-conjugates of $P$.
\end{itemize}

\vspace{0.8cm}
Let now $\L=(\L,D,\Pi,i)$ be a partial group as per Definition~\ref{def:partial group}.

\begin{itemize}
	\item Let $x,y \in \L$, then conjugation of $x$ by $y$ is denoted as for groups:
	\[
		c_y \colon x \fto x^y,
	\]
	where we implicitly assume that $(y\inv,x,y) \in D$ and set $x^y := \Pi(y\inv,x,y)$.\\
	If $\H$ be a subset of $\L$ we write
	\[
		\H^y := \{ h^y \mid h \in \H\}
	\]
	and again implicitly assume that $(y\inv,h,y) \in D$ for all $h \in \H$.
	\item We reserve the writing $\H \le \L$ for the cases when $\H$ forms a subgroup of $\L$, that is $\W(\H) \subseteq D$.
	\item $\< \H \> \subseteq \L$ is the partial subgroup of $\L$ generated by $\H$, that is taking as domain for $\H$ the entire set $\W(\H) \cap \L$. It is often possible to form ``smaller" partial groups embedded in $\L$ and containing $\H$ by taking smaller sets of words as domain; these are often referred to in the literature as \emph{impartial subgroups} and play in this paper a marginal role.
\end{itemize}

	\vspace{0.8cm}
	Let $V$ be a finite dimensional vector space over the finite field $\bK$, $f$ a bilinear form on $V$ and $h$ a quadratic form on $V$ such that $f$ is the associated bilinear form. If $\bK$ is a quadratic extension of the subfield $\bF$ and $\sigma$ is the unique $\bF$-automorphism of $\bK$ of order $2$, we also allow $f$ to be the hermitian form on $V$ associated with $\sigma$.\\
	Let also $U$ denote a subspace of $V$.
	\begin{itemize}
		\item If $f=0=h$, we say that $(V,f,h)$ is a \emph{linear space}.
		\item $h(v) = \displaystyle \frac{1}{2} f(v,v)$ whenever $\fieldchar(\bK) \ne 2$; in even characteristic $f$ is given by $h(u+v) = h(u) +h(v) + f(u,v)$.
		\item $v,u \in V$ are \emph{orthogonal} (or perpendicular), and we write $v \perp u$ if $f(u,v)=0$; note that the cases above are precisely the cases where orthogonality is a symmetric relation.
		\item $v \in V$ is \emph{isotropic} if $f(v) = 0$; $U$ is \emph{isotropic} if $f|_{U \times U} =0$.
		\item $v \in V$ is \emph{singular} if $h(v) = 0$; $U$ is \emph{singular} if isotropic and it consists of only singular vectors.
		\item $U^\perp := \{v \in V \mid f(v,u)=0 \textit{ for all } u \in U\}$; the \emph{bilinear radical} of $V$ is $V^\perp$.
		\item The radical of $U$ is $\mathrm{rad}(U) := \{u \in U \cap U^\perp \mid h(u)=0 \}$; we say that $U$ is \emph{non-degenerate} if $\mathrm{rad}(U) =0$.
		\item $\S(U) := \{ \textit{$1$-dimensional, singular subspaces of $U$} \}$.
		\item An \emph{isometry} of $V$ is a $\bK$-automorphism $\phi$ of $V$ such that $h(\phi(v))=h(v)$ for every $v \in V$.
	\end{itemize}
	We will restrict our attention to \emph{classical spaces}, that is triples $(V,f,h)$ which are either linear or non-degenerate. Then we always have $V^\perp = \mathrm{rad}(V)$ unless $\bK$ has characteristic $2$, $f \ne 0$ and $\dim_{\bK}(V)$ is odd, in which case $0 \le \mathrm{rad}(V) < V^\perp$, the latter subspace being $1$-dimensional and non-singular. The group $Cl(V)$ of all isometries of a classical space $V$ is known as a \emph{classical group}: here we denote the general orthogonal groups by $\O_n^\epsilon(q)$, where $n$ is the dimension of the vector space and $\epsilon \in \{\emptyset,+,-\}$. We suggest the reader to compare with \cite[Appendix B]{Meier-Stell-Stroth:2012}; indeed our notation was inspired by that in use for the Local Structure Theorem.\\
	We also suggest \cite{Grove:2001} as a source of information for the finite classical groups.\\
	
	We can now switch to the naming of certain recurring modules; we will follow the same conventions used in \cite[Appendix A.2]{Meier-Stell-Stroth:2012}, but will provide some extra details.\\
	If $V$ is a $\bK$-vector space and $G$ a group acting linearly, on the right, on $V$, then there is induced left action of $G$ on the dual space $V^*:= \Hom_\bK(V,\bK)$; by twisting via inverse elements we can obtain again a right action, which then works as follows
	\[
		(f \cdot g)(v) := f(v \cdot g\inv) \qquad \forall f \in V^*, \, \forall g \in G, \, \forall v \in V.
	\]
	We denote by $\Lambda^2(V)$, $S^2(V)$ and $U^2(V)$ the exterior, respectively symmetric and unitary, squares of $V$. Rcall that the first two can be constructed as quotients of $V \otimes V$ and are the homogeneous component of degree $2$ of the exterior, respectively symmetric, tensor algebra. A similar construction is possible for $U^2(V)$ after introducing the conjugate vector space $\bar{V}$ that twists scalar multiplication by a field isomorphism; $U^2(V)$ becomes then the subalgebra of fixed points of $V^* \otimes (\bar{V}^*)$ under twisted flip operation (which enforces the hermitian symmetry condition). If $\dim_{\bK}(V)=m$, then one sees that
	\[
		\dim_{\bK}(\Lambda^2(V)) = \binom{m}{2}, \; \dim_{\bK}(S^2(V)) = \binom{m+1}{2} \; \text{and} \; \dim_{\bF}(U^2(V)) = m^2,
	\]
	where $\bK$ is a quadratic extension of $\bF$.\\
	Let now $V$ be an $\bF_pH$-module, $K$ a group and $W$ and an $\bF_p K$-module; assume that there exist a group epimorphism
	\[
	\tau \colon H \fto K/C_K(W), \quad h \mapsto \tau_h,
	\]
	and a \emph{$\tau$-equivariant} $\bF_p$-isomorphism $\phi  \colon V \fto W$, that is such that for every $h \in H$ the following square commutes 
	\begin{center}
		\begin{tikzcd}
			V \ar[r,"\phi"] \ar[d, "h"] & W \ar[d,"\tau_h"] \\
			V \ar[r,"\phi"]		& W
		\end{tikzcd}
	\end{center}
	\begin{itemize}
		\item If $K \le \GL_{\bF_p}(W)$, then $V$ is a \emph{natural $K$-module} for $H$.
		\item Let $V$ be a faithful $G$-module and $\K$ a non-empty and $G$-invariant set of subgroups of $G$; then $V$ is a \emph{natural $\SL_2(q)$-wreath product module} for $G$ with respect to $\K$ if $\displaystyle V = \bigoplus_{K \in \K} [V,K]$, $\displaystyle \<\K\> = \bigtimes_{K \in \K} K$ and for each $K \in \K$, $K \cong \SL_2(q)$ and $[V,K]$ is a natural $\SL_2(q)$-module.
		\item We will often encounter the case where $K=\SL_\bK(W_0)$ for some $\bK$-vector space $W_0$ and $W$ is one of $\Lambda^2(W_0)$, $S^2(W_0)$ or $U^2(W_0)$. Then $W$ is the exterior, respectively symmetric or unitary, square of a natural $\SL_\bK(W_0)$-module for $H$.
		\item If $K \le \Sym(I)$ for a finite set $I$, then $\bF_p^I$ becomes a $K$-module via the action $(w_i) \cdot \pi := (w_{\pi\inv(i)})$.
		\begin{itemize}
			\item For $W = \bF_p^I$ we call $V$ a \emph{$\bF_pK$-permutation module} for $H$.
			\item Assume for simplicity that $|I|=: n \ge 5$ and suppose that $K \in \{\Sym(I), \Alt(I)\}$. Inside $\bF_p^I$ one can detect two $K$-submodules: the trivial module 
			\[
			 \< (1,\dots,1) \> := T,
			\]
			and the augmentation module
			\[
				A := \{(w_i) \in \bF_p^I \mid \sum w_i =0 \}.
			\]
			It is not difficult to see that
			\begin{align*}
				\text{if $p \not \divides n$, then} \qquad & \bF_p^I = T \oplus A & \text{and $A$ is the unique, simple, non-central section,} \\
				\text{if $p \divides n$, then} \qquad & T \le A \le \bF_p^I  & \text{and $A/T$ is the unique, simple, non-central section}. 
			\end{align*}
		\end{itemize}
		If $W$ is such unique, simple, non-central section, we say that $V$ is a \emph{natural $\bF_p K$-module} for $H$.\\
		Note that exceptions to the existence and uniqueness of such simple section may arise for $n \in \{2,3,4\}$.
		\item The group $G_2(q)$ acts on a $7$-dimensional vector space (spanned by the octonions $\{i,j,ij,l,il,jl,(ij)l\}$) and such action preserves an symmetric bilinear form, allowing to embed $G_2(q) \le \SO_7(q)$. If $q$ is odd, such $7$-dimensional module is simple. If $q$ is even, such module has instead a bilinear radical; quotienting over it we obtain a $6$-dimensional simple module over which $G_2(q)$ acts preserving a symplectic form. In particular, $G_2(q) \le \Sp_6(q)$ for $q$ even.\\
		If $K=G_2(q)$ and $W$ is the $7$-dimensional simple module, if $q$ is odd, or the $6$-dimensional simple module, if $q$ is even, as above described, then $V$ is a \emph{natural $G_2(q)$-module} for $H$.
		\item Let $(U,f,h)$ be a quadratic space; the corresponding Clifford algebra $C$ is the quotient of the tensor algebra $T(U)$ modulo the two-sided ideal generated by the tensors $u \otimes u - h(u)$ and is a $\bZ / 2\bZ$-graded algebra, so $C = C_0 \oplus C_1$. $C_0$ is a subalgebra, known as the even Clifford algebra. The Spin group $\Spin(U)$ is a certain subgroup of $C_0^\times$. A minimal right ideal of $C$ is called a \emph{spin module}; a minimal right ideal of $C_0$ is called a \emph{half spin module}.\\
		Note that, when $U$ is odd-dimensional, the spin module splits as a sum of isomorphic copies of simple $C_0$-modules; when $U$ is even-dimensional, the spin module splits, generating two non-isomorphic, simple $C_0$-modules, which are isomorphic to the half spin modules (in case the form is of minus type, this happens after suitably extending the field).
	\end{itemize}

\subsection{Groups}
Let $G$ be a finite group and $p$ be a prime.
\begin{definition}
    \begin{itemize}
    \item $G$ is \emph{of characteristic $p$} if $C_G(O_p(G)) \le O_p(G)$; $G$ is \emph{almost of characteristic $p$} (also known in the literature as \emph{$p$-constrained}) if $G/O_{p'}(G)$ is of characteristic $p$.
    \item The $p$-local subgroups of $G$ are the subgroups of the form $N_G(P)$ for some non-trivial $p$-subgroup $P \le G$.    $G$ is \emph{of local characteristic $p$} (also \emph{of characteristic $p$-type} in the literature) if every $p$-local subgroup of $G$ is of characteristic $p$.
    \item The \emph{$p$-parabolic subgroups} of $G$ are those subgroups of $G$ containing some Sylow $p$-subgroup. $G$ is \emph{of parabolic characteristic $p$} if all the parabolic, $p$-local subgroups of $G$ are of characteristic $p$.
    \end{itemize}
\end{definition}
Note that $G$ is of characteristic $p$ if and only if $F^*(G) = O_p(G)$. The most important examples are the finite simple groups of Lie type in their defining characteristic.

\vspace*{0.6cm}
\begin{definition}
	\label{definition of p reduced}
    A normal subgroup $A$ of $G$ is $p$-reduced if $O_p(G/C_G(A)) = 1$.
\end{definition}

\begin{lemma}\label{lemma:unique largest p-reduced}
    There exists a unique, maximal (with respect to inclusion), elementary abelian, normal and $p$-reduced $p$-subgroup $Y_G$ of $G$.    If $G$ is of characteristic $p$ and $H \le G$ is $p$-parabolic, then $Y_H \le Y_G$.
\end{lemma}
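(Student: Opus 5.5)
The plan has two parts: existence and uniqueness of $Y_G$ by a product argument, then the inclusion $Y_H \le Y_G$ via the structure of $O_p(G)$.

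\textbf{Existence and uniqueness.} Let $A, B$ be elementary abelian, normal, $p$-reduced $p$-subgroups of $G$. Since $A,B \le O_p(G)$, the product $AB$ is a normal $p$-subgroup. The key observation is that $p$-reduced normal $p$-subgroups lie in $\Omega_1(Z(O_p(G)))$. Indeed, $O_p(G)C_G(A)/C_G(A)$ is a normal $p$-subgroup of $G/C_G(A)$, so it is trivial, which gives $O_p(G) \le C_G(A)$. Hence $A \le Z(O_p(G))$, and likewise for $B$. So $AB \le \Omega_1(Z(O_p(G)))$ is elementary abelian.

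For the $p$-reduced property, note that $C_G(AB) = C_G(A) \cap C_G(B)$. Therefore $G/C_G(AB)$ embeds in $G/C_G(A) \times G/C_G(B)$, and the image of any normal $p$-subgroup of $G/C_G(AB)$ projects onto normal $p$-subgroups of both factors. Those projections are trivial, so the normal $p$-subgroup is trivial. It follows that $AB$ is $p$-reduced. Since $G$ is finite, $Y_G$ is the product of all such subgroups (the trivial group qualifies), and it is the unique maximal one.

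\textbf{Inclusion for parabolic $H$.} Now assume $G$ is of characteristic $p$ and $H \ge T$ for some $T \in \Syl_p(G)$. I would aim to show that $Y_H$ is normal in $G$ and $p$-reduced in $G$; maximality of $Y_G$ then gives $Y_H \le Y_G$.

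First, $O_p(G) \le T \le H$ and $O_p(G) \norm H$, so $O_p(G) \le O_p(H)$. By the argument above applied inside $H$, $O_p(H) \le C_H(Y_H)$, so $[Y_H, O_p(G)] = 1$. Thus $Y_H \le C_G(O_p(G)) \le O_p(G)$, using that $G$ is of characteristic $p$. Since $Y_H$ also commutes with $O_p(G)$, we get $Y_H \le Z(O_p(G))$. Put $V := \Omega_1(Z(O_p(G)))$, an $\bF_pG$-module containing $Y_H$.

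Next, consider $W := \langle Y_H^G \rangle \le V$, which is elementary abelian and normal in $G$. I would show $W$ is $p$-reduced via the following general fact: if $R \norm G$ is a normal $p$-subgroup of $G$ modulo $C_G(W)$, say $R \ge C_G(W)$ with $R/C_G(W)$ a $p$-group, then $R \le O_p(G)C_G(W)$. Given this, $R$ centralizes $W$ because $O_p(G)$ centralizes $V$. Hence $R = C_G(W)$.

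To prove the fact, I would take $P \in \Syl_p(R)$ with $P \le T \le H$, so $R = PC_G(W)$. Then $[Y_H, P] \le [Y_H, O_p(H C_G(W)\cap\ldots)]$ needs control. A cleaner route is to use $Y_H$ $p$-reduced in $H$: the group $(R\cap H)C_H(Y_H)/C_H(Y_H)$ is a normal $p$-subgroup of $H/C_H(Y_H)$, because $R\cap H \norm H$ and $(R\cap H)/(C_G(W)\cap H)$ is a $p$-group, and $C_G(W)\cap H \le C_H(Y_H)$. So $R\cap H$ centralizes $Y_H$. Since $P \le R\cap H$, we get $R = PC_G(W)$ centralizing $Y_H$. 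As $R \norm G$, $R$ then centralizes every $G$-conjugate of $Y_H$, hence centralizes $W$. This gives $R = C_G(W)$, so $W$ is $p$-reduced. Therefore $Y_H \le W \le Y_G$.

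\textbf{Main obstacle.} The delicate step is the last one: the $p$-reducedness of $W$, specifically the transfer of information from $H$ to $G$ through $R\cap H$. Everything depends on $H$ containing a full Sylow $p$-subgroup $P$ of $R$, so that $R = PC_G(W)$ and the $p$-reducedness of $Y_H$ in $H$ applies. The characteristic $p$ hypothesis is used only to place $Y_H$ inside $O_p(G)$.
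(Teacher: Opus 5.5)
Your proof is correct. The paper gives no argument of its own: it simply quotes \cite[Lemma 2.2]{MS:2009}, with $G$ in place of $L$ and $H$ in place of $P$. So your proposal is a self-contained proof of the cited result rather than a genuinely different route.

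Both halves hold up.
\begin{itemize}
\item Existence and uniqueness: you show that elementary abelian, normal, $p$-reduced subgroups lie in $\Omega Z(O_p(G))$, and that they are closed under products via the embedding $G/C_G(AB)\hookrightarrow G/C_G(A)\times G/C_G(B)$.
\item The inclusion $Y_H\le Y_G$: first, $O_p(G)\le O_p(H)\le C_H(Y_H)$ together with $C_G(O_p(G))\le O_p(G)$ puts $Y_H$ inside $\Omega Z(O_p(G))$. Then you show $W=\langle Y_H^G\rangle$ is $p$-reduced. You write the preimage $R$ of $O_p(G/C_G(W))$ as $(R\cap T)C_G(W)$, and use $p$-reducedness of $Y_H$ in $H$ to get $R\cap H\le C_H(Y_H)$.
\end{itemize}

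What your version adds over the bare citation is transparency about the hypotheses. The characteristic $p$ assumption is used only to make $\langle Y_H^G\rangle$ abelian, and parabolicity only to put a Sylow $p$-subgroup of $R$ inside $H$. In fact the same argument shows that $\langle Y^G\rangle$ is $p$-reduced for any $p$-reduced $Y$ of a $p$-parabolic subgroup $H$. That is precisely the statement the paper later takes from \cite[Lemma 2.2(b)]{MS:2009} in \ref{lemma:properties of parabolics in L}(c).

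When you write it up, delete the aborted line about $[Y_H,P]$. Also drop the framing via the ``general fact'' $R\le O_p(G)C_G(W)$: the argument you actually give proves $R=C_G(W)$ directly. So that intermediate claim, and the appeal to $O_p(G)$ centralizing $V$, are superfluous.
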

\begin{proof}
    This is \cite[Lemma 2.2]{MS:2009} with $G$ in place of $L$ and $H$ in place of $P$. Note that in the paper by Meierfrankenfeld and Stellmacher a $p$-reduced subgroup is assumed to also be elementary abelian and normal.
\end{proof}

We will often call a subgroup $A \le G$ $p$-reduced for $G$ implicitly assuming that $A$ is an elementary abelian, normal $p$-subgroup of $G$, at least whenever the context leaves no ambiguity.\\

The following properties about groups of characteristic $p$ will often be used without further notice.

\begin{lemma}\label{lemma:properties groups of char p}
    Let $G$ be a group and $P \le G$ a non-trivial $p$-subgroup. Then $N_G(P)$ is of characteristic $p$ if and only if $C_G(P)$ is of characteristic $p$. Moreover, if $G$ is of characteristic $p$ and $T \in \Syl_p(G)$ then the following hold:
    \begin{enumerate}[(a)]
        \item $O_{p'}(G)=1$,
        \item $N_G(P)$ and $C_G(P)$ are of characteristic $p$ for all the non-trivial $p$-subgroups $P \le G$ ($G$ is of local characteristic $p$),
        \item every subnormal subgroup of $G$ is of characteristic $p$,
        \item $G=N_G(C_T(Y_G)) C_G(Y_G)$, $C_T(Y_G) = O_p(N_G(C_T(Y_G))$ and $Y_G=\Omega Z(C_G(Y_G))$,
        \item $Y_T = \Omega Z(T)$, $Z_G := \< \Omega Z(T)^G \>$ is $p$-reduced for $G$ and $\Omega Z(T) \le Z_G \le Y_G$.
    \end{enumerate}
\end{lemma}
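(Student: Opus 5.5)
We first prove the equivalence. Put $N := N_G(P)$ and $C := C_G(P)$. Since $P \le O_p(N)$, Thompson's $P\times Q$-type argument gives $C_N(O_p(N)) \le C$. Moreover $C \norm N$, so $O_p(C) \le O_p(N)$ and $O_p(N)\cap C = O_p(C)$, because $O_p(N)\cap C$ is a normal $p$-subgroup of $C$.

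Suppose $N$ has characteristic $p$. Then $C$ is a normal subgroup of a group of characteristic $p$, and normal subgroups of groups of characteristic $p$ again have characteristic $p$. To see this, write $X = O_p(C)$ and $R = O_p(N)$. We have $C_C(X) \norm N$ and $O_{p'}(C_C(X)) \le O_{p'}(N) = 1$. Hence $F^*(C_C(X))$ is a $p$-group times components. If $E$ is a component of $C_C(X)$, then $E$ is subnormal in $N$, so $E \le F^*(N) = R$; this is impossible. Therefore $F^*(C_C(X)) = O_p(C_C(X)) \le X$, and so $C_C(X) \le X$.

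Conversely, suppose $C$ has characteristic $p$. Then $C_N(O_p(N)) \le C_C(O_p(C)) \le O_p(C) \le O_p(N)$, which gives the converse.

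Now let $G$ have characteristic $p$, and write $R = O_p(G)$.

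For (a), $O_{p'}(G)$ centralizes $R$, since the two subgroups are normal with coprime orders. Hence $O_{p'}(G) \le R$, which forces $O_{p'}(G) = 1$.

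For (c), we induct along the subnormal series, applying at each step the argument above for normal subgroups.

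For (b), it suffices to show that $C_G(P)$ has characteristic $p$, because the equivalence then transfers this to $N_G(P)$. Consider $H := C_G(P)$. By the $P\times Q$ lemma, $C_H(O_p(H))$ has $F^*$ equal to a $p$-group. This step is the main obstacle. The standard route, as in Kurzweil--Stellmacher, is the following. Set $K := C_G(P)$ and $L := PK$ (or work in $N_G(P)$). First we get $O_{p'}(L) = 1$: $O_{p'}(L)$ is normalized by $P$, and by coprime action $O_{p'}(L) = \langle C_{O_{p'}(L)}(x)\rangle$. Moreover $[O_{p'}(L), R \cap L]$-type arguments yield $[R, O_{p'}(L)] \le R \cap O_{p'}(L)$-commutation, which gives centralizing of $R$ via the $A\times B$ lemma, hence $O_{p'}(L) \le C_G(R) \le R$, so $O_{p'}(L) = 1$. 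Components are handled the same way: a component $E$ of $L$ is normalized by $R\cap N_G(P)$, and one shows $E$ centralizes $R$. I would cite this standard fact, namely that $p$-locals of groups of characteristic $p$ have characteristic $p$ (Kurzweil--Stellmacher 8.2.12), rather than reprove it.

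For (d) and (e), I would follow \cite{MS:2009}.

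(d): Put $C_0 := C_G(Y_G)$ and $T_0 := C_T(Y_G)$. Then $T_0 \in \Syl_p(C_0)$, so the Frattini argument gives $G = N_G(T_0)C_0$. Next, $G/C_0$ has $O_p = 1$ by $p$-reducedness, so $O_p(N_G(T_0)) \le T_0C_0 \cap T$; equality with $T_0$ then follows. For $Y_G = \Omega Z(C_0)$, note that $\Omega Z(C_0)$ is elementary abelian, normal, and contains $Y_G$. Its $p$-reducedness follows since $C_G(\Omega Z(C_0)) \supseteq C_0$ and the quotient $G/C_G(\Omega Z(C_0))$ is a quotient of $G/C_0$, with $O_p$ controlled because $C_0 \le$ the centralizer and $O_p(G/C_0)=1$. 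The reverse inclusion then comes from maximality of $Y_G$ (Lemma \ref{lemma:unique largest p-reduced}).

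(e): $Y_T = \Omega Z(T)$ holds because $T/C_T(A)$ is a $p$-group for every normal $A$, so $p$-reducedness forces $A$ to be central. Finally, $Z_G$ is $p$-reduced by the standard lemma that $\langle \Omega Z(T)^G\rangle$ is $p$-reduced when $O_p(G/C_G(Z_G))$ centralizes $\Omega Z(T)$; the inclusions $\Omega Z(T) \le Z_G \le Y_G$ then follow from Lemma \ref{lemma:unique largest p-reduced}.
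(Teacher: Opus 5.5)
The paper gives no proof of this lemma; it records it as standard background. So there is no route to compare with, and your proposal is judged on its own.

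The following parts are correct: the equivalence, (a), (c), the first two assertions of (d), and $Y_T=\Omega Z(T)$. One small remark on the equivalence: the inclusion $C_N(O_p(N))\le C$ needs no $P\times Q$ argument, since it is immediate from $P\le O_p(N)$. Three steps, however, do not work as written.

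\textbf{(b).} Your sketch with $L=PK$ fails at ``$[R,O_{p'}(L)]\le R\cap O_{p'}(L)$''. That inclusion requires $R=O_p(G)$ to normalize $O_{p'}(L)$, but $R$ need not even normalize $L$. The step on components is only announced, not carried out. Citing the standard result matches how the paper treats the lemma, but the $P\times Q$ argument you allude to is short:
\begin{itemize}
\item Let $H\in\{N_G(P),C_G(P)\}$ and let $x$ be a $p'$-element of $C_H(O_p(H))$.
\item Then $[P,x]=1$; for $H=N_G(P)$ this holds because $P\le O_p(H)$.
\item Also $C_R(P)\le R\cap H\le O_p(H)$, since $R\cap H$ is a normal $p$-subgroup of $H$. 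Hence $x$ centralizes $C_R(P)$.
\item Thompson's $P\times Q$-lemma, applied to $P\times\langle x\rangle$ acting on $R$, gives $[R,x]=1$. So $x\in C_G(R)\le R$, and therefore $x=1$.
\item Thus $C_H(O_p(H))$ is a normal $p$-subgroup of $H$, hence contained in $O_p(H)$.
\end{itemize}
This handles $N_G(P)$ and $C_G(P)$ at once, without the equivalence.

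\textbf{(d), $Y_G=\Omega Z(C_0)$ with $C_0=C_G(Y_G)$.} Your $p$-reducedness argument is invalid: $O_p(G/C_0)=1$ is not inherited by quotients of $G/C_0$. For example, with $p=2$, $O_2(\Sym(3))=1$ but $O_2(\Sym(3)/\Alt(3))\ne 1$. What rescues the statement is that no proper quotient occurs. Since $Y_G\le\Omega Z(C_0)\le Z(C_0)$, we get $C_0\le C_G(\Omega Z(C_0))\le C_G(Y_G)=C_0$. Before invoking maximality of $Y_G$, you also need $\Omega Z(C_0)$ to be a $p$-group. This holds because $C_0\norm G$ has characteristic $p$ by (c), so $Z(C_0)\le C_{C_0}(O_p(C_0))\le O_p(C_0)$.

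\textbf{(e), $Z_G$ is $p$-reduced.} Two things are missing.
\begin{itemize}
\item You must show $Z_G$ is elementary abelian, and this uses the hypothesis on $G$. Since $\Omega Z(T)\le C_G(O_p(G))\le O_p(G)$, every $G$-conjugate of $\Omega Z(T)$ lies in the normal elementary abelian subgroup $\Omega Z(O_p(G))$.
\item The ``standard lemma'' you quote assumes exactly what has to be proved. The missing step is this. Let $X$ be the preimage of $O_p(G/C_G(Z_G))$. For each $g\in G$, the normal $p$-subgroup $X/C_G(Z_G)$ lies in the Sylow subgroup $T^gC_G(Z_G)/C_G(Z_G)$. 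Hence $X=(X\cap T^g)C_G(Z_G)$, which centralizes $\Omega Z(T^g)=\Omega Z(T)^g$. So $X$ centralizes $Z_G$, and $X=C_G(Z_G)$.
\end{itemize}
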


Note that, whenever $G$ is of characteristic $p$ and $T \in \Syl_p(G)$, Lemmas~\ref{lemma:properties groups of char p}(e) and \ref{lemma:unique largest p-reduced} yield $1 \ne Y_T \le Y_G$.

\begin{definition}\label{def: p-irreducible, thompson and baumann subgroups}
    Recall that a finite group $G$ is \emph{$p$-closed} if $|\Syl_p(G)|=1$.
    \begin{enumerate}
        \item $G$ is \emph{$p$-irreducible} if $G$ is not $p$-closed and for every normal subgroup $N \norm G$ which is not $p$-closed one has $O^p(G) \le N$.
        \item $G$ is \emph{strongly $p$-irreducible} if $G$ is not $p$-closed and for every normal subgroup $N \norm G$ such that $[N,G] \not \le O_p(G)$ one has $O^p(G) \le N$.
        \item $Z_G := \< \Omega Z(T) \mid T \in \Syl_p(G) \>$.
        \item Set $\A_G := \{ \textit{elementary abelian $p$-subgroups of $G$ of maximal order} \}$.\\
        The \emph{Thompson subgroup} of $G$ is defined as 
        \[
        J(G) := \< \A_G \>;
        \]
        the \emph{Baumann subgroup} of $G$ is defined by
        \[
        B(G) := \begin{cases}
                    C_G(\Omega Z(J(G)))     & \textit{if $G$ is a $p$-group,} \\
                    \<B(T) \mid T \in \Syl_p(G) \>  & \textit{else.}
                \end{cases}
        \]
        \item Let $Y \le G$. $G$ is \emph{$Y$-minimal} if:
    \begin{itemize}
        \item $G=\< Y^G \>$,
        \item there exists a unique maximal subgroup of $G$ containing $Y$.
    \end{itemize}
        We say that $G$ is $p$-minimal if it is $Y$-minimal for $Y \in \Syl_p(G)$.
    \end{enumerate}
\end{definition}

\vspace{0.4cm}
\begin{definition}
    Let $H \le K \le G$. We say that $H$ is \emph{weakly closed} in $K$ with respect to $G$ if
    \[
    H^G \cap \Sub(K) = \{H\}.
    \]
    Set $ \wcl_G(H,K) := \< H^g \mid g \in G, H^g \le K \>$, called the \emph{weak closure} of $H$ in $K$ with respect to $G$. Clearly $H$ is weakly closed in $K$ if and only if $H=\wcl_G(H,K)$.\\
    If $H \le K \in \Syl_p(G)$, we simply refer to $\wcl_G(H,K)$ as the weak closure of $H$ with respect to $G$ (and the prime $p$).
\end{definition}

\vspace{0.4cm}
We have already given the definition of a large subgroup \ref{def: large subgroup of a group} in the Introduction.\\
Note that, if $G$ is a finite simple group of Lie type in defining characteristic $p$ with $S \in \Syl_p(G)$, then $O_p(N_G(\Omega(Z(S)))$ is large in $G$ if and only if $\Omega(Z(S))$ is a root subgroup, which fails only when $G$ is one of $\Sp_{2n}(2^m)$, $n \ge 2$, or $F_4(2^m)$, or $G_2(3^m)$.\\

We collect here some important results about large subgroups.

\begin{prop}\label{prop:properties of large in a group}
    Let $G$ be a group and $Q$ a large $p$-subgroup contained in $S \in \Syl_p(G)$. Then the following hold:
    \begin{enumerate}[(a)]
        \item $N_G(Q)$ is $p$-parabolic and $O_p(N_G(Q))$ is large in $G$;
        \item if $P \le S$ is another large $p$-subgroup of $G$, then $PQ$ is large in $G$; in particular, there exists a unique largest large subgroup of $G$ in each Sylow $p$-subgroup;
        \item $Q$ is weakly closed with respect to $G$, hence $Q \norm S$;
        \item $Z(Q) \cap Z(Q)^g = 1$ for any $g \in G \setminus N_G(Q)$;
        \item $G$ has parabolic characteristic $p$.
    \end{enumerate}
\end{prop}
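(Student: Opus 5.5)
I will prove the five items directly from $(SC)$ and $(Q!)$, in the order (c), (d), (a), (b), (e), since the later items reuse the earlier ones. Throughout, $Z := Z(Q)$, and the key consequence of $(Q!)$ is
\[
C_G(U) \le N_G(U) \le N_G(Q) \qquad \text{for every } 1 \ne U \le Z.
\]

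\textbf{(c) and (d).} Suppose $Q^g \le S$. Then $Q$ and $Q^g$ normalize each other or not; a cleaner route is the following. By $(SC)$, $Z \le Q \le S$. Since $Z(S)$ centralizes $Q$, we get $Z(S) \le C_G(Q) \le Q$, so $Z(S) \le Z$. In the same way, $Z(S)$ centralizes $Q^g \le S$, and $(SC)$ for $Q^g$ gives $Z(S) \le Z(Q^g) = Z^g$. Take $1 \ne U \le Z(S)$; such $U$ exists because $Z(S) \ne 1$. Then $U \le Z \cap Z^g$. Applying $(Q!)$ to $Q$ and to $Q^g$ gives $N_G(U) \le N_G(Q) \cap N_G(Q^g)$. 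Also $S \le C_G(U) \le N_G(U)$, so $S$ normalizes both $Q$ and $Q^g$. Hence $Q$ and $Q^g$ are normal in $S$, and both $\langle Q, g\rangle$-style questions reduce to Sylow theory in $N_G(U)$. Concretely, $Q, Q^g \trianglelefteq N_G(U)$ because $N_G(U) \le N_G(Q)$ and $N_G(U) \le N_G(Q^g)$. Both are normal $p$-subgroups of $N_G(U)$ with $S$ a Sylow $p$-subgroup, so $QQ^g$ is a normal $p$-subgroup. To get $Q^g = Q$, I will use that $Q^g = Q^{h}$ for some $h \in N_G(U)$ fails in general; instead I argue as follows. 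Write $N := N_G(Q)$, so that $S \le N$ and $S \le N^g = N_G(Q^g)$. Then $S, S^{g^{-1}} \le N$, and Sylow's theorem in $N$ gives $n \in N$ with $S^{g^{-1}} = S^{n}$. Hence $ng \in N_G(S) \le N_G(Z(S))$. Choosing $U = Z(S) \cap \Omega(\ldots)$, namely $U = Z(S)$ itself, which is a non-trivial subgroup of $Z$, we get $N_G(S) \le N_G(Z(S)) \le N_G(Q)$. Therefore $ng \in N$, so $g \in N$ and $Q^g = Q$. This proves weak closure, and $Q \trianglelefteq S$ follows.

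For (d), if $1 \ne U \le Z \cap Z^g$, then $(Q!)$ for $Q$ and $Q^g$ gives $C_G(U) \le N_G(Q) \cap N_G(Q^g)$. Pick a Sylow $p$-subgroup $T$ of $C_G(U)$ containing $Q$, possible since $Q \le C_G(U)$. Then $T$ normalizes $Q^g$, so $T Q^g$ is a $p$-group. It lies in some Sylow $p$-subgroup $S'$ of $G$ containing both $Q$ and $Q^g$. Applying (c) inside $S'$, which is valid because (c) is conjugation-invariant, gives $Q^g = Q$, that is $g \in N_G(Q)$.

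\textbf{(a) and (b).} The parabolicity in (a) follows from $S \le N_G(Z(S)) \le N_G(Q)$. Set $R := O_p(N_G(Q))$, so $Q \le R$ and $C_G(R) \le C_G(Q) \le Q \le R$. Every $1 \ne U \le Z(R)$ centralizes $Q$, so $Z(R) \le Z$. Then $(Q!)$ gives $N_G(U) \le N_G(Q)$. Hence $N_G(U)$ normalizes $O_p(N_G(Q)) = R$, so $R$ is large.

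For (b), $P, Q \trianglelefteq S$ by (c), so $PQ$ is a $p$-group and $C_G(PQ) \le C_G(Q) \le Q$ gives $(SC)$. For $1 \ne U \le Z(PQ)$, $U$ centralizes both $P$ and $Q$, so $U \le Z(P) \cap Z(Q)$ by $(SC)$. Thus $N_G(U)$ normalizes $P$ and $Q$, hence $PQ$. The largest large subgroup of $S$ is the product of all of them.

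\textbf{(e).} This is the main obstacle. Let $H$ be a $p$-local, $p$-parabolic subgroup, say $H = N_G(X)$ with $S \le H$. Conjugating, I may assume $X \le S$, and replacing $X$ by a suitable normal subgroup I may assume $X \trianglelefteq S$. Then $1 \ne Z(S) \cap X$, and setting $U := \Omega Z(S) \cap X$ gives a non-trivial subgroup of $Z$. The first check is $C_G(Q)\le Q$ together with $Q \trianglelefteq N_G(U)$: this shows $N_G(U)$ has characteristic $p$, since $C(O_p)\le C(Q)\le Q\le O_p$.

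The remaining step is transferring this to $H$. I will use the standard lemma that if $V$ is a normal $p$-subgroup and $C_H(V)$ has characteristic $p$, then so does $H$, via Lemma \ref{lemma:properties groups of char p}. Concretely, $C_H(U)\le C_G(U)$, and since $U \le Z(S)$ is central in the Sylow $p$-subgroup, $C_H(U)$ is subnormal-ish inside $N_G(U)$. More precisely, $C_H(U) = N_{C_G(U)}(X)$ is a $p$-local subgroup of $C_G(U)$. $C_G(U)$ is normal in $N_G(U)$, which has characteristic $p$, so by (c) of Lemma \ref{lemma:properties groups of char p} $C_G(U)$ has characteristic $p$, and by (b) of that lemma its $p$-local subgroup $C_H(U)$ has characteristic $p$. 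Finally, $U$ need not be normal in $H$. I will replace $U$ by $\Omega Z(O_p(H))\cap Z(S)$-type subgroups, or run the argument with $Z(O_p(H))$ first. Handling this non-normality carefully is where I expect the difficulty to lie.
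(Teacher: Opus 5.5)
Your proofs of (a)--(d) are correct and self-contained, whereas the paper simply quotes Lemmas 1.52 and 1.55 of the Local Structure Theorem and Lemma 4.2(c) of Henke--Salati. In (c) the digression about $QQ^g$ being normal in $N_G(U)$ is not needed. What proves weak closure is $S \le N_G(Q)\cap N_G(Q^g)$ together with the Sylow argument in $N_G(Q)$, which gives $ng \in N_G(S)\le N_G(Z(S))\le N_G(Q)$. That argument works for every Sylow $p$-subgroup containing $Q$, so it legitimately feeds into (d).

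The genuine gap is (e), which you leave unfinished. Knowing that $C_H(U)$ has characteristic $p$ does not give the same for $H$. The equivalence ``$N(P)$ has characteristic $p$ iff $C(P)$ does'' from Lemma \ref{lemma:properties groups of char p}, applied inside $H$ to $U$, only controls $N_H(U)$, and this can be a proper subgroup of $H$. Your suggested replacements of $U$ are not carried out.

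The missing observation is that you should centralize $X$, not $U$:
\begin{itemize}
\item Since $U \le X$, you have $C_G(X)\le C_G(U)\le N_G(U)$.
\item Since $U\le Z(S)$, you have $X\le S\le C_G(U)$.
\item Hence $C_G(X)=C_{N_G(U)}(X)$.
\item You already showed that $N_G(U)$ has characteristic $p$. So Lemma \ref{lemma:properties groups of char p}(b), applied to $N_G(U)$ and its non-trivial $p$-subgroup $X$, shows that $C_G(X)$ has characteristic $p$.
\item The first assertion of that lemma, applied in $G$ with $P=X$, then shows that $H=N_G(X)$ has characteristic $p$.
\end{itemize}
No normality of $U$ in $H$ is needed.

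Also, the step ``replacing $X$ by a suitable normal subgroup'' is unnecessary. Once $S\le H=N_G(X)$, you automatically have $X\le O_p(H)\le S$ and $X\norm S$.
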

\begin{proof}
    Since $1 \ne Z(S) \le C_G(Q)=Z(Q)$ we get $S \le N_G(Z(S)) \le N_G(Q)$. Now (a), (c) and (d) follow from \cite[Lemma 1.52(b) and (e)]{Meier-Stell-Stroth:2012}.\\
    Point (e) is \cite[Lemma 1.55]{Meier-Stell-Stroth:2012}, whereas (b) is \cite[Lemma 4.2(c)]{Henke-Salati:2026} after noticing that $P \le S$ implies that $P$ normalizes $Q$.
\end{proof}

Note that there are groups which possess a large $p$-subgroup even though not being of local characteristic $p$; for example, from \cite[Table 1 in Chapter 7]{Meier-Stell-Stroth:2012} we see that $G=\Alt(10)$ contains a natural $\Omega_4^-(2)$-module, which does not satisfy the condition $(char \, Y_M)$, i.e. it contains a non-trivial element $y$ such that $C_G(y)$ is not of characteristic $2$.

\vspace{0.5cm}
Finally, we need to define and provide some properties concerning offenders.\\
Let $\alpha$ be a positive, real number and $H$ be a group acting on the group $V$; define the map
\begin{center}
	\begin{tikzcd}
		\nu_\alpha: &[-25pt] \Sub(H) \ar[r] & \bN \\ [-20pt]
			&	L \ar[r,mapsto] & \null |L|^\alpha \cdot |C_V(L)|
	\end{tikzcd}
\end{center}

\begin{definition}
	\label{def:offenders}
Following the notation above, a subgroup $A \le H$ is an \emph{$\alpha F$-offender} on $V$ if $\nu_\alpha(A) \ge \nu_\alpha(C_A(V))$. The $\alpha F$-offender $A$ in \emph{non-trivial} if $[A,V] \ne 1$. If $\alpha =1$ we simply call $A$ an offender and write $\nu$ for $\nu_1$.\\
Assume now that $\alpha = 1$, that $V$ is an $\bF_p H$-module and $A \le H$ is such that $A/C_A(V)$ is an elementary abelian $p$-group.
\begin{itemize}
	\item If $H$ contains some non-trivial offender on $V$, then $V$ is called an $FF$-module.
	\item $A$ is an \emph{over-offender} on $V$ if $\nu(A) \gneq \nu(C_A(V))$.
	\item $A$ is a \emph{best offender} on $V$ if $\nu |_{ \Sub(A) }$ is monotone.
	\item $A$ is a \emph{strong offender} on $V$ if it is an offender and $C_V(A) = C_V(a)$ for all $a \in A \setminus C_A(V)$.
	\item $A$ is a \emph{root offender} on $V$ if it is a strong offender and also $[V,A]=[V,a]$ for all $a \in A \setminus C_A(V)$.
	\item $A$ is a \emph{strong dual offender} on $V$ if $A$ acts nilpotently on $V$ and $[V,A]=[v,A]$ for all $v \in V \setminus C_V(A)$.
\end{itemize}
Set $J_H(V) := \< A \le H \mid \textit{$A$ is a best offender on $V$} \>$, a normal subgroup of $H$.\\
A subgroup $1 \ne K \le J_H(V)$ such that $K \not \le C_H(V)$ and it is minimal satisfying $[K,J_H(V)]=K$ is called a \emph{$J_H(V)$-component}. Let $\J_H(V)$ to be the set of all $J_H(V)$-components and $J^*_H(V) := \< K \le H \mid K \in \J_H(V) \>$, also a normal subgroup of $H$.
\end{definition}
This definition matches the more general one given in \cite[Section 1]{MPS:2018} for $\beta=1$ and with $\nu_\alpha$ corresponding to the measure $\lVert - \rVert_V$ as well as \cite[Definition A.7]{Meier-Stell-Stroth:2012} (which is our standard and we have here expanded to encompass the case of $2F$-offenders).\\
Note that \cite[Lemma 1.3]{MS:2012a} and \cite[9.2.4]{Kurzweil/Stellmacher:2004} together give that the minimal offenders $B$ on $V$ that satisfy $[B,V]\ne 1$ are non-trivial, quadratic, best offenders on $V$. It is therefore sufficient to detect an offender $A$ to also find a quadratic, best offender and one needs only to check that it acts non-trivially. We refer to \cite[Sections 1 and 2]{MS:2012a} for more properties of offenders and of the groups $J_H(V)$ and $J^*_H(V)$. 

\subsection{Localities}
For the convenience of the reader and for future reference we summarize the main properties of localities.

The following definition was originally developed by Chermak in \cite{Chermak:2013}. We start with a piece of notation: if $X$ is a set, we denote by $\W(X)$ the free monoid on $X$, hence with elements words (of finite lengths) on the alphabet $X$ and concatenation as operation; here the unit of the monoid is the empty word $(\,)$. We denote the concatenation of two words $u,v \in \W(X)$ by $u \circ v$.

\begin{definition}
	\label{def:partial group}
    A \emph{partial group} consists in the data of a quadruple $(\L,D,\Pi,i)$ where:
    \begin{itemize}
        \item $\L$ is a non-empty set,
        \item $\L \subseteq D \subseteq \W(\L)$, where the first inclusion is realized by embedding in words of length $1$,
        \item $\Pi: D \fto \L$, the \emph{multivariable product} of the partial group,
        \item $i: \L \fto \L$ is a bijection such that $i \circ i = \id_\L$,
    \end{itemize}
    satisfying the following axioms:
    \begin{enumerate}[(i)]
        \item $\Pi |_\L = \id_\L$,
        \item if $u \circ v \in D$, then $u,v \in D$ (in particular, the empty word is always in $D$),
        \item if $u \circ v \circ w \in D$, then $u \circ \Pi(v) \circ w \in D$ and $\Pi(u \circ v \circ w) = \Pi(u \circ \Pi(v) \circ w)$ (associativity axiom),
        \item if $u=(u_1, \dots, u_k) \in D$, set $i(u):= (i(u_k), \dots, i(u_1))$; then $u \circ i(u) \in D$ and $\Pi(u \circ i(u)) = \Pi((\,)) =: 1_\L$.
    \end{enumerate}
\end{definition}

Most often we will denote the inversion map $i$ by $(-)\inv$; moreover, we will often denote a partial group simply by specifying the underling set, unless otherwise needed.\\
Note that all groups are partial groups: indeed, a partial group $(\L,D,\Pi,i)$ is a group if and only if $D=\W(\L)$.\\

When it comes to substructures, the following notions are important to us.
\begin{itemize}
\item Given a non-empty subset $\H$ of a partial group $\L=(\L,D,\Pi,(-)\inv)$ such that $h\inv \in \H$ for any $h \in \H$ and $\Pi(h_1,\dots,h_k) \in \H$ for every $(h_1,\dots,h_k) \in \W(\H)\cap D$, then the quadruple $(\H, \W(\H) \cap D, \Pi|_{\W(\H) \cap D},(-)\inv)$ is called a \emph{partial subgroup} of $\L$.\\
\item If $\N=(\N,D,\Pi,i)$ is a partial group and $\H, \K$ are subsets of $\N$, we define
\[
\H \K := \{ \Pi(h,k) \mid h \in \H, k \in \K \textit{ and } (h,k) \in D \};
\]
note that $\H \K$ needs not be a partial subgroup of $\L$ even when $\H$ and $\K$ are so.
\item A \emph{partial normal subgroup} of $\L$ is a partial subgroup $\N$ of $\L$ such that for any $n \in \N$ and $x \in \L$, if $(x\inv,n,x) \in D$, then $n^x := \Pi(x\inv, n,x) \in \N$.
\end{itemize}

In a partial group $\L$ there is a sensible notion of conjugation: if $g \in \L$ we set
\[
D(g) := \{x \in \L \mid (g\inv, x,g) \in D \}.
\]
A word of caution: a subset $U$ of a partial group $\L$ might itself be a group (with respect to the product of $\L$), hence we call it a subgroup of $\L$. It might also happen that $U \subseteq D(g)$ for some $g \in G$, but this fact doesn't imply that the well-defined set-map given by conjugation $c_g: U \fto U^g$ is a group homomorphism. We will see that such a feature is partially retrieved in the context of localities. \\
For a subset $U \subseteq \L$, by writing $U^g$ we will always implicitly assume that $U \subseteq D(g)$; if $U$ is a subgroup, we will additionally assume that the conjugation map is a homomorphism.\\

The following definition is easily seen to be equivalent to \cite[Definition 2.9]{Chermak:2013}.
\begin{definition}
    A (finite) locality over the $p$-group $S$ consists of the data of a triple $(\L,\Delta,S)$ where $\L=(\L,D,\Pi,i)$ is a finite partial group, $S$ is a subgroup of $\L$ and $\Delta$ is a non-empty set of subgroups of $S$ satisfying:
    \begin{enumerate}[(i)]
        \item $S$ is a maximal $p$-subgroup of $\L$ with respect to inclusion,
        \item a word $u=(u_1, \dots, u_k) \in \W(\L)$ is in $D$ if and only if there exist subgroups $P_0, \dots, P_k \in \Delta$ such that 
        \[
        P_{i-1}^{u_i} = P_i \quad \forall i \in \{1, \dots, k\},
        \]
        \item $\Delta$ is closed under taking $\L$-conjugates and overgroups.
    \end{enumerate}
    We call $\Delta$ the \emph{set of objects} of $\L$.
\end{definition}

In a locality the following properties hold.

\begin{lemma}\label{lemma:properties of localities}
    Let $(\L,\Delta,S)$ be a locality with domain $D$. Then:
    \begin{enumerate}[(a)]
        \item for every $P \in \Delta$, the set $N_\L(P):= \{ x \in \L \mid P^x = P\}$ is a subgroup of $\L$;
        \item if $P \in \Delta$ and $x \in \L$ is such that $P^x \subseteq S$, then the induced conjugation map $c_x: P \fto P^x$ extends to a homomorphism of groups
        \[
        c_x : N_\L(P) \fto N_\L(P)^x = N_\L(P^x);
        \]
        \item every subgroup $H$ of $\L$ is contained in some $N_\L(P)$ for some $P \in \Delta$;
        \item every $p$-subgroup of $\L$ is an $\L$-conjugate of some subgroup of $S$,
        \item for every $x \in \L$ the set $S_x:= \{ s \in S \mid s^x \in S \}$ is a subgroup of $S$ contained in $\Delta$;
        \item for any word $w:=(w_1,\dots,w_k) \in \W(\L)$ set $S_w := \{s_0 \in S \mid s_i := s_{i-1}^{w_i} \in S \; \forall i \in \{1,\dots,n\} \}$, then $S_w$ is a subgroup of $S$ and $S_w \in \Delta$ if and only if $w \in D$.
    \end{enumerate}
\end{lemma}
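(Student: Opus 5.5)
Most of these properties are standard consequences of the locality axioms, and I would derive them from Chermak's foundational results \cite{Chermak:2013} (or the summary in \cite{ChermakHenke:2021}), checking each against the axioms (i)--(iii) of a locality. I would start with (f), since several other points follow from it. Given $w=(w_1,\dots,w_k)$, the set $S_w$ is the set of $s_0\in S$ whose successive conjugates stay in $S$. First I would show that for a single $x$ the set $S_x$ is a subgroup. If $s,t\in S_x$, then $(x^{-1},s,x)$ and $(x^{-1},t,x)$ lie in $D$ via some objects. To get $(x^{-1},s,t,x)\in D$ I would note that $S_x\ge P$ for some $P\in\Delta$ with $P^x\le S$, and use the $\Delta$-criterion (ii) together with closure of $\Delta$ under overgroups and conjugates. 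This gives $(st)^x=s^xt^x\in S$, which is Chermak's Lemma 2.3 and yields (e). For general $w$ I would iterate: $S_w$ is the preimage chain $S_{w_1}\cap (S_{w_2})^{w_1^{-1}}\cap\cdots$. It is a subgroup because each $c_{w_i}$ is a homomorphism on $S_{w_i}$. For the equivalence, if $S_w\in\Delta$ then the chain $P_i=S_w^{w_1\cdots w_i}$ witnesses $w\in D$. Conversely, any witnessing chain $P_0,\dots,P_k$ gives $P_0\le S_w$, so $S_w\in\Delta$ by overgroup closure.

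Next I would prove (a) and (b). For $P\in\Delta$ and $x,y\in N_\L(P)$, every word in $N_\L(P)$ is in $D$ via the constant chain $P,P,\dots,P$. So $\W(N_\L(P))\subseteq D$, and closure under $\Pi$ and inversion follows from associativity and axiom (iv) applied to the chain. Hence $N_\L(P)$ is a subgroup. For (b), given $P^x\le S$, I would show $(x^{-1},g,x)\in D$ for $g\in N_\L(P)$ via the chain $P^x,P,P,P^x$. The homomorphism property follows from
\[
\Pi(x^{-1},g,x,x^{-1},h,x)=\Pi(x^{-1},g,h,x),
\]
using associativity and $\Pi(x,x^{-1})=1$. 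The same argument with $x^{-1}$ gives equality of the image with $N_\L(P^x)$.

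For (c), let $H$ be a subgroup. Then $\W(H)\subseteq D$, so by (f) applied to a long word listing all elements of $H$ (with repetitions to include inverses), the set $P:=S_{(h_1,\dots,h_n)}\in\Delta$. This alone does not give $P$ normalized by $H$. Instead I would follow Chermak: since $H$ is finite, choose the word $w$ running through all elements of $H$, consider $S_w\in\Delta$, and show that the intersection over all words in $H$ gives a subgroup $P\in\Delta$ with $P^h=P$ for all $h\in H$. This is where I expect the main obstacle, since one must ensure the intersection stays in $\Delta$. I would handle it by taking $w$ to contain every product sequence of $H$ up to length $|H|$, so that stabilization occurs. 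Then $P=S_w$ satisfies $P^h\le S_w$, and hence $P^h=P$ by counting.

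Finally, (d) would follow from (c) and Sylow's theorem. A $p$-subgroup $U$ lies in $N_\L(P)$, which is a finite group containing $S$ as a Sylow subgroup by maximality (i) and by $P\le S$ normalized by $S$ (where I may need to replace $P$ by a fully normalized conjugate). So $U^g\le S$ for some $g\in N_\L(P)$, and this conjugation is a genuine $\L$-conjugation.
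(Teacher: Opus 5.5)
\textbf{Gap in (d).} Your outlines of (a), (b), and of (f) given (e), are fine. The paper itself proves nothing here: it simply cites \cite[Lemma 2.7, Lemma 2.14, Proposition 2.22]{Chermak:2013}. Your argument for (d), however, has a genuine gap. An object $P$ with $U\le N_\L(P)$ need not be normal in $S$. So $N_\L(P)$ contains only $N_S(P)$, and there is no reason for $N_S(P)$ to be a Sylow $p$-subgroup of $N_\L(P)$; the maximality axiom (i) gives this only for $P=S$. Passing to a ``fully normalized'' conjugate is the right instinct. But the fact you then need is that some $x\in\L$ with $P\le S_x$ has $N_S(P^x)\in\Syl_p(N_\L(P^x))$. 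This is \cite[Lemma 2.9]{Chermak:2022:I}, which the paper invokes in the proof of Lemma~\ref{lemma:weakly closed subgroups in localities}(a), and it is the actual content of (d), so it must be proved. One way is downward induction on $|P|$. Among the $\L$-conjugates of $P$ inside $S$, take one with $|N_S(P)|$ maximal; a chain of conjugations between objects is a word in $D$, so it is realized by a single element. If $P<S$, put $R:=N_S(P)\in\Delta$, and choose by induction $g$ with $R^g\le S$ and $N_S(R^g)\in\Syl_p(N_\L(R^g))$. For $T\in\Syl_p(N_\L(P))$ containing $R$, use (b) and Sylow's theorem in $N_\L(R^g)$ to find $h$ with $(g,h)\in D$ and $N_T(R)^{gh}\le N_S(P^{gh})$. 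Maximality then forces $N_T(R)=R$, hence $T=R$. Only after this does Sylow's theorem in $N_\L(P^x)$ move $U^x$ into $S$ by some $h'$, and $(x,h')\in D$ via $(P,P^x,P^x)$ makes this a single conjugation.

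\textbf{Gaps in (c) and (e).} In (c) you locate the obstacle correctly but do not remove it. For a concatenation $w=v_1\circ v_2\circ\cdots$ one has $S_w=\{s\in S_{v_1}\mid s^{v_1}\in S_{v_2\circ\cdots}\}$ (iterated conjugation), not $\bigcap_i S_{v_i}$. So stabilization of intersections tells you nothing about $S_w$, and the inclusion $P^h\le S_w$ is left unsupported. The missing observation is $S_{u\circ u^{-1}\circ v}=S_u\cap S_v$. Hence the family $\{S_v\mid v\in\W(H)\}$ is closed under finite intersections, and $P:=\bigcap_{v\in\W(H)}S_v$ equals $S_{v_0}$ for a single $v_0\in\W(H)\subseteq D$, so $P\in\Delta$ by (f). Moreover $P^h\subseteq P$, hence $P^h=P$, because each $S_{(h)\circ v}$ is among the sets intersected; this is the $\coll_p$-construction of Lemma~\ref{lemma:Op-of-part-subgr}. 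A smaller hole of the same type sits in (e): closure of $\Delta$ under overgroups and conjugates does not by itself give $(x^{-1},s,t,x)\in D$. You need the following: if $R\in\Delta$ with $R^x\in\Delta$ and $a\in S_x$, then $(R^a)^x=(R^x)^{a^x}\in\Delta$. This holds because $(a^{-1},x,a^x)\in D$ via $(R^a,R,R^x,(R^x)^{a^x})$ and this word has product $x$. Then the chain $((P^{s^{-1}})^x,P^{s^{-1}},P,P^t,(P^t)^x)$ witnesses $(x^{-1},s,t,x)\in D$.
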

\begin{proof}
    Points (a) and (b) are proved in \cite[Lemma 2.7]{Chermak:2013}; the argument is provided for the more general setting of \emph{objective} partial groups, to which all localities belong. In particular, in (b) the conditions $P \in \Delta$ and $P^x \subseteq S$ imply $P^x \in \Delta$, since $\Delta$ contains $S$ and is closed under $\L$-conjugations.\\
    (c) and (d) are \cite[Proposition 2.22]{Chermak:2013}.\\
    (e) and (f) are \cite[Lemma 2.14]{Chermak:2013}; note that (e) is a special case of (f), obtained by taking $w$ of length $1$.
\end{proof}

In analogy with the terminology for groups we give the following definition.
\begin{definition}
	\label{def:local and parabolic of localities}
	Let $(\L,\Delta,S)$ be a locality.
	\begin{itemize}
		\item The groups of the form $N_\L(P)$ for some $P \in \Delta$ are the \emph{local subgroups} of $\L$.
		\item The subgroups of $\L$ that contain $S$ are the \emph{parabolic subgroups} of $\L$.
	\end{itemize}
\end{definition}

\vspace{0.4cm}
In \cite[Lemma 2.13]{Chermak:2022:I} Chermak also defines an analogue of the $p$-radical $O_p(G)$ of a group for localities; we will need here the following easy generalization of his idea.

	\begin{definition}\label{def:O-p of a partial subgr of local}
		Let $(\L, \Delta, S)$ be a locality and $\M \subseteq \L$ a partial subgroup of $\L$. Set
		\[
		\coll_p(\M) := \bigcap_{w \in W(\M)} S_w \qquad \text{and} \qquad \hat{\coll}_p(\M) := \bigcap_{w \in W(\M)} (S_w \cap \M)
		\]
	\end{definition}
	
	\begin{lemma}\label{lemma:Op-of-part-subgr}
		Let $\M \subseteq \L$ be a partial subgroup of $\L$. Then 
		\begin{itemize}
			\item $\coll_p(\M)$ is the unique largest subgroup of $S$ normalized by $\M$;
			\item $\hat{\coll}_p(\M) = \coll_p(\M) \cap \M$ and it is the unique largest subgroup of $S \cap \M$ normal in $\M$.
		\end{itemize}
	In particular, $\displaystyle \coll_p(\L) = \bigcap_{w \in W(\L)} S_w$ is the largest subgroup of $S$ normal in $\L$.
	\end{lemma}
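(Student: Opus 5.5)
The plan is to argue directly from the definition of $S_w$ and Lemma~\ref{lemma:properties of localities}(b),(f); no deeper machinery should be needed. Set $T := \coll_p(\M) = \bigcap_{w \in \W(\M)} S_w$. By Lemma~\ref{lemma:properties of localities}(f) each $S_w$ is a subgroup of $S$, so $T$ is a subgroup of $S$. Since the empty word lies in $\W(\M)$ with $S_{()} = S$, the intersection is over a non-empty family.

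\textbf{Step 1: $\M$ normalizes $T$.} Fix $m \in \M$.
\begin{itemize}
\item The one-letter word $(m)$ lies in $\W(\M)$, so $T \le S_m$. By Lemma~\ref{lemma:properties of localities}(e) we have $S_m \in \Delta$ with $S_m^m \le S$.
\item Lemma~\ref{lemma:properties of localities}(b), applied with $P = S_m$, shows that $c_m$ is a group homomorphism on $N_\L(S_m) \supseteq S_m \supseteq T$. In particular $T \subseteq D(m)$ and $c_m|_T$ is an injective homomorphism.
\item For any $w \in \W(\M)$, the concatenation $(m)\circ w$ also lies in $\W(\M)$. By definition of $S_{(m)\circ w}$, for $t \in T \le S_{(m)\circ w}$ the successive conjugates of $t^m$ along $w$ stay in $S$, i.e.\ $t^m \in S_w$. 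Hence $T^m \le T$.
\item Injectivity and finiteness then give $T^m = T$.
\end{itemize}

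\textbf{Step 2: maximality.} Let $R \le S$ be normalized by $\M$, meaning $R \subseteq D(m)$ and $R^m = R$ for every $m \in \M$. Take any word $w = (m_1,\dots,m_k) \in \W(\M)$. By induction on $i$, each partial conjugate $R^{m_1\cdots m_i}$, computed letter by letter as in the definition of $S_w$, equals $R \le S$. Hence $R \le S_w$ for every such $w$, so $R \le T$. This proves that $T$ is the unique largest subgroup of $S$ normalized by $\M$.

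\textbf{Step 3: the hatted version.} The equality $\hat{\coll}_p(\M) = \bigcap_w (S_w \cap \M) = T \cap \M$ is immediate. For $t \in T \cap \M$ and $m \in \M$ we have:
\begin{itemize}
\item $t^m \in T$ by Step 1;
\item $t^m = \Pi(m\inv,t,m) \in \M$, because $\M$ is a partial subgroup and the word $(m\inv,t,m)$ lies in $\W(\M)\cap D$.
\end{itemize}
So $T \cap \M$ is a subgroup of $S \cap \M$ normal in $\M$. Conversely, any subgroup $R \le S \cap \M$ normal in $\M$ is in particular normalized by $\M$, so $R \le T$ by Step 2, and hence $R \le T \cap \M$.

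The final assertion is the case $\M = \L$, where being normalized by $\L$ is exactly normality in $\L$.

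The main obstacle is Step 1: checking that the conjugation $c_m$ is actually defined on all of $T$ and is a homomorphism there, rather than merely a set map. This is exactly what routing through $S_m \in \Delta$ and Lemma~\ref{lemma:properties of localities}(b) is meant to secure.
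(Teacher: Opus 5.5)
Under the reading you fix in Step~2 (``$R$ is normalized by $\M$'' means $R\subseteq D(m)$ and $R^m=R$ for all $m\in\M$), your Steps~1--3 are correct. That reading fits the paper's convention that writing $U^g$ presupposes $U\subseteq D(g)$, and Step~1 is essentially the paper's own argument.

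The gap is in your final sentence. In the paper, ``normal in $\L$'' means partial normal subgroup: $n^x\in\N$ \emph{whenever} $(x\inv,n,x)\in D$. This says nothing about which $x$ satisfy $R\subseteq D(x)$. So for a subgroup $R\le S$ that is normal in $\L$, you still have to prove $R\le S_f$ for every $f\in\L$. That is the real content of the ``in particular'', and the middle paragraph of the paper's proof exists to handle it. The missing step is a normalizer argument:
\begin{enumerate}[(1)]
\item Fix $f\in\L$. By Lemma~\ref{lemma:properties of localities}(b),(e), $c_f$ is a homomorphism on $N_\L(S_f)$. Normality then gives $N_R(S_f)^f\le R\le S$, so $N_R(S_f)\le S_f$.
\item Since $\W(S)\subseteq D$, normality also gives $R\norm S$. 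Hence $RS_f$ is a subgroup of $S$ with $N_{RS_f}(S_f)=N_R(S_f)S_f=S_f$. In a $p$-group this forces $R\le S_f$.
\item Now $R^f$ is defined and contained in $R$, so $R^f=R$. Induction along any word $w$ gives $R\le S_w$, hence $R\le\coll_p(\L)$.
\end{enumerate}

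Step~(2) needs $R$ to be normalized by $S_f$, so that $RS_f$ is a group. This is automatic here because $S\subseteq\L$. The paper runs the same argument for an arbitrary $\M$ (``in particular $N_{\langle S_m,X\rangle}(S_m)\le S_m$''), where that is not available. In that generality the bullets are actually false if ``normalized''/``normal'' are read in the partial sense.

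For example, let $G=\Sym(5)$, $p=2$, $S=\langle(1\,3\,2\,4),(1\,2)\rangle$, and let $\Delta$ be the set of subgroups of $S$ containing $(1\,2)$ or $(3\,4)$. Put $\L=\{g\in G\mid S_g\in\Delta\}$ with domain $\{w\in\W(\L)\mid S_w\in\Delta\}$. Take $m=(4\,5)$ and $r=(1\,3)(2\,4)$, so $m^r=(2\,5)$.
\begin{itemize}
\item The set $\M=\{1,m,m^r,r,mr,rm\}$ is a partial subgroup of $\L$.
\item For $x\in\M$ one has $(x\inv,r,x)\in D$ only when $x\in\{1,r\}$. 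So $\langle r\rangle\le S\cap\M$ is normal in $\M$ in the partial sense.
\item Yet $\coll_2(\M)\le S_m\cap S_{m^r}=\langle(1\,2)\rangle\cap\langle(3\,4)\rangle=1$.
\end{itemize}
So state your strong reading explicitly for both bullets, including ``normal in $\M$'' in Step~3. Use the normalizer argument only for the final assertion, where it is both needed and valid.
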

	\begin{proof}
		By \ref{lemma:properties of localities}(f) each $S_w$ is a subgroup of $S$, thus also $\coll_p(\M)$ is a subgroup of $S$.\\
		For any element $m \in \M$ and word $w \in \W(\M)$, $\coll_p(\M) \le S_m \cap S_{(m) \circ w}$. This means that also $\coll_p(\M)^m \le S_w$. As this holds for all $w \in \W(\M)$ and every $m \in \M$, we get $\coll_p(\M)$ is normalized by $\M$.\\
		Suppose now that $X \le S$ is normalized by $\M$. Since for every $m \in \M$, $N_\L(S_m)^m$ is well defined by Lemma~\ref{lemma:properties of localities}(e) we get that $N_X(S_m)^m$ is well-defined and, since $X$ is normalized by $\M$, it is contained in $X$. This implies $N_X(S_m)^m \le S$, so $N_X(S_m) \le S_m$; in particular $N_{\<S_m, X\>}(S_m) \le S_m$. Since $\<S_m,X\>$ is a subgroup of $S$ and, therefore, it is a $p$-group, we get $\<S_m,X \>= S_m$, i.e. $X \le S_m$. Hence $X$ is conjugable by $m$ and, being normalized by $\M$, $X=X^m$; then $X \le S_w$ for every $w \in \W(\M)$, hence $X \le \coll_p(\M)$.\\
		
		Since $\M$ is a partial subgroup, $D(\M) = \W(\M) \cap D(\L)$ and therefore each $S_w \cap \M$ is a subgroup of $S$.  Then we get
		\[
			\hat{\coll}_p(\M) \le \coll_p(\M) \cap \M = \left( \bigcap_{w \in W(\M)} S_w \right) \cap \M = \bigcap_{w \in W(\M)} (S_w \cap \M) = \hat{\coll}_p(\M)
		\]
		If now $X \le S \cap \M$ is normal in $\M$, then $X \le \coll_p(\M)$ and $X \le \M$ by definition, hence $X \le \hat{\coll}_p(\M)$ and all is proven.
	\end{proof}	

\begin{remark}
	The subgroups $\coll_p(\M)$ and $\hat{\coll}_p(\M)$ capture something analogous to the $p$-radical $O_p(G)$ of a group $G$, but there are some important differences.\\
	For example, assume that $\M \subseteq \L$ is a partial subgroup of the locality $(\L,\Delta,S)$ which is also a group; then clearly $\hat{\coll}_p(\M) \le  O_p(\M)$. The relationship between $\coll_p(\M)$ and $O_p(\M)$ is, however, less trivial; for example, a similar inclusion does not hold if $S \cap \M = 1$ and both $\coll_p(\M)$ and $O_p(\M)$ are non-trivial. It is nonetheless true that $\hat{\coll}_p(\M) = O_p(\M) \le \coll_p(\M)$ whenever $\M \cap S$ is a Sylow $p$-subgroup of $\M$, with all equalities if $S \le \M$ holds.
\end{remark}
The next result may be helpful in understanding how $\coll_p(\M)$ fits within $\M$ and $\L$.

\begin{lemma}
	Let $\M \subseteq \L$ be a partial subgroup of $\L$. Then $\M$ is a group if and only if $\coll_p(M) \in \Delta$.
\end{lemma}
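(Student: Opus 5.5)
We prove the two implications separately. Throughout, $\M$ is a partial subgroup of $\L$, so its domain is $\W(\M)\cap D$. By Lemma~\ref{lemma:Op-of-part-subgr}, $\coll_p(\M)=\bigcap_{w\in\W(\M)}S_w$ is the largest subgroup of $S$ normalized by $\M$. The whole argument rests on Lemma~\ref{lemma:properties of localities}(f): a word $w$ lies in $D$ if and only if $S_w\in\Delta$.

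\emph{($\Leftarrow$)} Assume $P:=\coll_p(\M)\in\Delta$, and let $w=(m_1,\dots,m_k)\in\W(\M)$ be arbitrary.

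We first show $P\le S_w$ by induction on $k$. Since $P$ is normalized by $\M$, every $m\in\M$ satisfies $P\subseteq D(m)$ and $P^m=P$. Set $P_0=\dots=P_k:=P$. Then $P_{i-1}^{m_i}=P_i$ for all $i$, so $P\le S_w$.

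This is exactly the chain of objects required in the definition of a locality, so $w\in D$. (Alternatively, $\Delta$ is closed under overgroups, so $S_w\in\Delta$, and Lemma~\ref{lemma:properties of localities}(f) gives $w\in D$.)

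Hence $\W(\M)\subseteq D$, which says precisely that $\M$ is a group.

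\emph{($\Rightarrow$)} Assume $\M$ is a group, i.e. $\W(\M)\subseteq D$. Then every $S_w$ with $w\in\W(\M)$ lies in $\Delta$ by Lemma~\ref{lemma:properties of localities}(f). The difficulty is that $\Delta$ is not closed under intersections, so we must realise $\coll_p(\M)$ as a single $S_w$.

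Since $\M$ is finite, we choose a single word $w_0\in\W(\M)$ listing every element of $\M$ at least once. Then we compare $S_{w_0}$ with $\coll_p(\M)$.

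\begin{itemize}
\item $\coll_p(\M)\le S_{w_0}$ holds because $\coll_p(\M)$ is the intersection of all $S_w$.
\item For the reverse inclusion, we use Lemma~\ref{lemma:properties of localities}(c) to place the group $\M$ inside some $N_\L(R)$ with $R\in\Delta$. We may take $R\le S$, after conjugating via (d) and the homomorphism property in (b).
\end{itemize}

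For the reverse inclusion we try two routes, in this order.

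First, we show that $R$ can be chosen inside $S$ and normalized by $\M$. Then $R\le\coll_p(\M)$, so $\coll_p(\M)\in\Delta$ by overgroup-closure, and we are done.

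Second, we consider the word $u:=w_0\circ w_0\inv$ and note $S_u\le S_{w_0}$. The product $\Pi(w_0)$ lies in the group $\M$, so we analyse $S_{w_0\circ w}$ for further words $w$. The aim is to show that $S_{w_0\circ w_0\circ\cdots}$ stabilizes at a subgroup of $S$ normalized by every element of $\M$, hence equal to $\coll_p(\M)$. This subgroup lies in $\Delta$ because the corresponding word lies in $\W(\M)\subseteq D$.

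The main obstacle is this last step: passing from "every $S_w$ is an object" to "the intersection $\coll_p(\M)$ is an object". We would first attempt the stabilization argument. The sequence $S_{w_0^{\,n}}$ (the word $w_0$ concatenated with itself $n$ times) is decreasing and finite, so it stabilizes at some $T\in\Delta$. Because $w_0$ contains every element of $\M$, $T$ is carried into $S$ by every element of $\M$, and we expect to show $T^m=T$ for each $m\in\M$, using that $c_m$ is injective on the finite set $T$.
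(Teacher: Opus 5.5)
Your $(\Leftarrow)$ direction is correct and is the paper's argument written out. For $(\Rightarrow)$, your \emph{first} route is exactly the paper's proof and is already complete, so the ``main obstacle'' you identify does not exist. Every member of $\Delta$ is by definition a subgroup of $S$, so no conjugation via (d) and (b) is needed; conjugating would in any case only tell you something about $\M^g$, not $\M$. Moreover $\M\le N_\L(R)$ says precisely that $R^m=R$ for all $m\in\M$. Hence $R\le\coll_p(\M)$ by Lemma~\ref{lemma:Op-of-part-subgr}, and $\coll_p(\M)\in\Delta$ by overgroup-closure. Comparing $S_{w_0}$ with $\coll_p(\M)$ is an unnecessary detour.

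The stabilization route, which you say you would try first, has a genuine gap. Write $w_0=(m_1,\dots,m_k)$ and $p_i:=\Pi(m_1,\dots,m_i)$, and let $T=S_{w_0^N}=S_{w_0^{N+1}}$ be the stable group. You only know two things about $T$. First, $m_i$ carries $T^{p_{i-1}}$, not $T$ itself, into $S$. Second, $T^{\Pi(w_0)}\le S_{w_0^N}=T$, so $T$ is invariant under the single element $\Pi(w_0)$. The intermediate conjugates $T^{p_i}$ are merely subgroups of $S$, and nothing in the construction forces them to equal $T$. Injectivity of $c_m$ only helps once you know $T^m\subseteq T$, and that is exactly the missing point.

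If you want to realise $\coll_p(\M)$ as a single $S_w$ (a legitimate alternative that avoids Chermak's result (c)), use the identity $S_{w\circ w\inv\circ w'}=S_w\cap S_{w'}$ for $w,w'\in\W(\M)$. To see it, go out along $w$, come back along $w\inv$ (using that $c_{m\inv}$ inverts $c_m$ on $D(m)$), then follow $w'$. Thus $\{S_w\mid w\in\W(\M)\}$ is closed under intersections. Since $S$ is finite, $\coll_p(\M)=S_{w^*}$ for some $w^*\in\W(\M)\subseteq D$, and then $S_{w^*}\in\Delta$ by Lemma~\ref{lemma:properties of localities}(f).
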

\begin{proof}
	Assume first that $\M$ is a group; then by \ref{lemma:properties of localities}(a) and (c) $\M \le N_\L(P)$ for some $P \in \Delta$, where $N_\L(P)$ is a group; thus $\M$ normalizes $P$, so $P \le \coll_p(\M)$, showing that $\coll_p(\M) \in \Delta$.\\
	Assume now that $\coll_p(\M) \in \Delta$; then always \ref{lemma:properties of localities} shows that $N_\L(\coll_p(\M))$ is a group. Since $\M \subseteq N_\L(O_p(M))$ and it is a partial subgroup of $\L$, also $\M$ is a group.
\end{proof}

\vspace{0.6cm}
Given a locality $(\L,\Delta,S)$ we associate to $\L$ a (not necessarily saturated) fusion system $\F_S(\L)$ over $S$, namely the one generated by the maps $c_g: S_g \fto S$ for all $g \in \L$. We say that $\L$ is a locality over $\F$, or that $\F$ is the fusion system of $\L$, if $\F \cong \F_S(\L)$.\\
The main goal of the paper \cite{Chermak:2013} by Chermak was to reverse the statement above, proving that for every saturated fusion system $\F$ there exists a locality (unique under certain additional conditions) over $\F$.\\

The following definition, originally given by Henke, is crucial for further results.
\begin{definition}\label{def:subcentric}
Let $\F$ be a fusion system on the $p$-group $S$. A subgroup $P \le S$ is \emph{subcentric} if, for every fully normalized $\F$-conjugate $Q$ of $P$, $O_p(N_\F(Q))$ is centric in $\F$. The set of all $\F$-subcentric subgoups of $S$ is denoted by $\F^s$.
\end{definition}

We also recall the following important notions.
\begin{definition}\label{def:linking localities}
    Let $(\L,\Delta,S)$ be a locality over the fusion system $\F$.
    \begin{itemize}
        \item $\L$ is of \emph{objective characteristic $p$} if for every $P \in \Delta$ the group $N_\L(P)$ is of characteristic $p$.
       \item $\L$ is a \emph{linking locality} if $\F_S(\L)$ is saturated, $\F^{cr} \subseteq \Delta$ and $\L$ is of objective characteristic $p$.
       \item $\L$ is a \emph{centric linking locality} (resp., \emph{subcentric linking locality}) over $\F$ if $\L$ is a linking locality over $\F$ and $\Delta = \F^c$ (resp., $\Delta = \F^s$).
    \end{itemize}
\end{definition}

\begin{lemma}\label{lemma:parabolic implies char p}
    Let $(\L,\Delta,S)$ be a locality of objective characteristic $p$. Then every parabolic subgroup of $\L$ has characteristic $p$.
\end{lemma}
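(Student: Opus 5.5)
Let $H$ be a parabolic subgroup of $\L$, so $H$ is a subgroup of $\L$ with $S \le H$. The plan is to embed $H$ in a local subgroup of characteristic $p$ and then transfer the characteristic $p$ property down to $H$ through a normal $p$-subgroup of $S$ that is contained in $H$.

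\textbf{Step 1: embed $H$ in a local subgroup.} By Lemma~\ref{lemma:properties of localities}(c), there is some $P \in \Delta$ with $H \le N_\L(P)$. Set $G := N_\L(P)$. By objective characteristic $p$, $G$ is a finite group of characteristic $p$. Since $S \le H \le G$ and $S$ is a maximal $p$-subgroup of $\L$, $S$ is a Sylow $p$-subgroup of $G$.

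\textbf{Step 2: choose a normal $p$-subgroup of $S$ inside $H$.} Put $R := O_p(G)$. Since $R \norm G$ and $S \in \Syl_p(G)$, we have $R \le S \le H$, and $R$ is normalized by $H \le G$. So $R \le O_p(H)$. Characteristic $p$ of $G$ gives
\[
C_H(R) \le C_G(R) \le R.
\]

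\textbf{Step 3: transfer characteristic $p$ to $H$.} I would invoke the standard fact that if $X$ is a normal $p$-subgroup of a finite group $H$ with $C_H(X) \le X$, then $H$ has characteristic $p$. The argument runs as follows.
\begin{itemize}
\item Let $N := O_{p'}(H)$. Then $[N,X] \le N \cap X = 1$, so $N \le C_H(X) \le X$, which forces $N = 1$.
\item Consider $F^*(H) = F(H)E(H) = O_p(H)E(H)$. Each component $K$ of $H$ commutes with the normal $p$-subgroup $X$, since components centralize solvable normal subgroups. Hence $E(H) \le C_H(X) \le X$, so $E(H)$ is solvable and therefore trivial.
\item Thus $F^*(H) = O_p(H)$, i.e.\ $C_H(O_p(H)) \le O_p(H)$.
\end{itemize}
Alternatively, one can avoid $F^*$ altogether: $C_H(O_p(H)) \le C_H(R) \le R \le O_p(H)$ directly, because $R \le O_p(H)$. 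This route is even simpler and is the one I would use.

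\textbf{Main obstacle.} There is essentially none; the only care needed is in Step 1. One must know that $H$, which is assumed to be a subgroup (a group inside the partial group $\L$), lies inside a single $N_\L(P)$ with $P \in \Delta$, so that finite group theory applies within $G$. That is exactly what Lemma~\ref{lemma:properties of localities}(c) provides.
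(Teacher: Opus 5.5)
Your proof is correct, and the route you say you would actually use is the paper's argument: embed $H$ in $N:=N_\L(P)$ via Lemma~\ref{lemma:properties of localities}(c), note $O_p(N)\le O_p(H)$ since $S\le H$, and conclude $C_H(O_p(H))\le C_N(O_p(N))\le O_p(N)\le O_p(H)$. Your explicit check that $S\in\Syl_p(N)$, so that $O_p(N)\le S\le H$ and $O_p(N)$ is normal in $H$, spells out what the paper compresses into ``$H$ being parabolic''; the $F^*$ detour is also correct but unnecessary.
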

\begin{proof}
    Let $S \le H \le \L$ be a parabolic subgroup; by \ref{lemma:properties of localities}(c) there exists $P \in \Delta$ such that $H \le N_\L(P)$. Set $N:=N_\L(P)$ and note that, $H$ being parabolic, $O_p(N) \le O_p(H)$; since $\L$ is of objective characteristic $p$ we obtain
    \[
        C_H(O_p(H)) \le C_N(O_p(N)) \le O_p(N) \le O_p(H).
    \]
\end{proof}

We recall now some fundamental results on localities over a given fusion system $\F$.
\begin{prop}
    Let $(\L,\Delta,S)$ be a locality over a (non necessarily saturated) fusion system $\F$. If $\F^c \subseteq \Delta$, then $\F$ is saturated.   Conversely, assume that $\F$ is a saturated fusion system on the $p$-groups $S$. Then:
    \begin{enumerate}[(a)]
        \item there exists a unique (up to rigid isomorphism) centric linking locality over $\F$;
        \item for every set $\Gamma$ of subgroups of $S$ which is closed under $\L$-conjugation and overgroups and such that $\F^c \subseteq \Gamma \subseteq \F^s$ there exists a unique (up to rigid isomorphism) linking locality over $\F$ having $\Gamma$ as set of objects.
    \end{enumerate}
\end{prop}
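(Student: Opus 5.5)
The plan is to assemble this from the existing literature rather than prove it from scratch. Each of the three assertions is a known theorem, so the work lies in checking that the hypotheses in the form stated here match those in the sources.

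For the first assertion, assume $(\L,\Delta,S)$ is a locality with $\F:=\F_S(\L)$ and $\F^c\subseteq\Delta$. I would verify the saturation axioms using the local groups $N_\L(P)$ for $P\in\Delta$.
\begin{itemize}
	\item By Lemma~\ref{lemma:properties of localities}(a),(b), for $P\in\Delta$ the group $N_\L(P)$ realizes $\Aut_\F(P)$.
	\item Conjugation by elements of $\L$ transports normalizers: $c_x$ sends $N_\L(P)$ onto $N_\L(P^x)$.
	\item Consequently, for fully normalized centric $P$, one gets $N_S(P)\in\Syl_p(N_\L(P))$.
	\item The extension axiom on centric subgroups follows because every morphism between objects is a restriction of some $c_x$ with $x\in\L$ (Lemma~\ref{lemma:properties of localities}(e),(f)).
	\item One then invokes the standard fact that a fusion system which is saturated on the $\F$-centric subgroups and generated by them is saturated.
\end{itemize}
The centric-generation hypothesis holds because, by Alperin's theorem for localities, $\F$ is generated by the maps $c_g\colon S_g\to S$, and each $S_g$ lies in $\Delta$. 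This is the argument recorded in Chermak's work and in Henke's treatment of localities, and I would cite it in that form.

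For part (a), the existence and uniqueness of a centric linking locality is equivalent to the existence and uniqueness of centric linking systems. This goes through Chermak's correspondence \cite{Chermak:2013}, whose proof uses the classification of finite simple groups. The later CFSG-free proof of Glauberman--Lynd via Oliver's obstruction-theoretic reduction can be cited instead.

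For part (b), Henke's theorem on subcentric linking systems covers every object set $\Gamma$ with $\F^c\subseteq\Gamma\subseteq\F^s$ that is closed under $\F$-conjugation and overgroups. It gives a unique linking locality with object set $\Gamma$. The argument starts from the centric linking locality and extends the object set one conjugacy class at a time, using Chermak's elementary expansion procedure. At each step, the groups $N_\L(P)$ for $P$ subcentric are models of $N_\F(P)$, and these exist and are unique by the model theorem.

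The main obstacle is ensuring that all definitions match, not any new argument. The sources define linking localities through objective characteristic $p$ of the $N_\L(P)$, and some also require $\F^{cr}\subseteq\Delta$ and saturation. The closure condition in (b) is really closure under $\F$-conjugation, and I would note that this agrees with $\L$-conjugation once $\F=\F_S(\L)$. Uniqueness up to rigid isomorphism means an isomorphism restricting to the identity on $S$, which is the notion used in those papers.
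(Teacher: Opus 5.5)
Your proposal takes the same route as the paper, which also proves nothing from scratch: the first assertion is quoted from \cite[Proposition 2.18(a)]{Chermak:2013}, part (a) from the Main Theorem of \cite{Chermak:2013} together with its Appendix A (which translates centric linking systems into centric linking localities), and part (b) from \cite[Theorem A]{Henke:2015}. The one slip is in your optional sketch of the first assertion: $S_g\in\Delta$ gives $\Delta$-generation, not centric generation, so the saturation criterion should be applied with $\H=\Delta$ (which is closed under $\F$-conjugation and contains $\F^c$), with $\Delta$-saturation obtained from the Sylow arguments in the groups $N_\L(P)$ for $P\in\Delta$.
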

See \cite[Definition 3.5]{Chermak:2013} for the definition of rigid isomorphisms of localities.
\begin{proof}
    The first statement is \cite[Proposition 2.18(a)]{Chermak:2013}.\\
    Now (a) is the Main Theorem in \cite{Chermak:2013}; note that the Appendix A in the same paper shows how to obtain a one-to-one correspondence between the centric linking systems in the Main Theorem and centric linking localities according to \ref{def:linking localities}.\\
    Point (b) is instead shown in \cite[Theorem A]{Henke:2015}.
\end{proof}

As a consequence, since $\F^{cr} \subseteq \F^c$, one easily sees that a locality $(\L,\Delta,S)$ over $\F$, of objective characteristic $p$, is a linking locality whenever the condition $\F^c \subseteq \Delta$ is satisfied.\\

Finally we can make more precise the dichotomy for fusion systems that was already described in the Introduction. We need a preliminary definition that extends \cite[Definition 14.1]{Aschbacher/Kessar/Oliver:2011}, along ideas of Ellen  Henke, in order to suit some of our future needs. Recall that there is a theory of components and the general Fitting subsystem of fusion systems in analogy to the corresponding theory for finite groups, and a corresponding theory for localities. References may be found in \cite[Chapter II.9]{Aschbacher/Kessar/Oliver:2011} and \cite{Henke:2025}.

\begin{definition}\label{def:fusion systesms of char p}
    Let $\F$ be a saturated fusion system on the $p$-group $S$.
    \begin{itemize}
        \item $\F$ is {of characteristic $p$-type} if $N_\F(U)$ is constrained for every $1 \ne U \in \F^f$.
        \item $\F$ is {of parabolic characteristic $p$} if $N_\F(U)$ is constrained for every $1 \ne U \norm S$.
        \item $\F$ is {of component type} if there exists $X \in \F^f$, $X$ of order $p$, such that $C_\F(X)$ has a component.
    \end{itemize}
\end{definition}

\begin{theorem}[Dichotomy theorem for saturated fusion systems] \label{thm:dichotomy for fus syst}
Let $\F$ be a saturated fusion system on the $p$-group $S$. Then either $\F$ is of characteristic $p$-type, or $\F$ is of component type.
\end{theorem}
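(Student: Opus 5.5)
This is the Dichotomy theorem for saturated fusion systems, in the version of Definition~\ref{def:fusion systesms of char p}. The plan is a contrapositive argument modelled on \cite[Chapter II.14]{Aschbacher/Kessar/Oliver:2011}. Assume $\F$ is not of characteristic $p$-type, and choose $1 \ne U \in \F^f$ with $N_\F(U)$ not constrained. We must produce $X \in \F^f$ of order $p$ such that $C_\F(X)$ has a component. The central tool is the theory of the generalized Fitting subsystem for saturated fusion systems (\cite[Chapter II.9]{Aschbacher/Kessar/Oliver:2011}, \cite{Henke:2025}). A saturated fusion system $\E$ is constrained if and only if $F^*(\E) = O_p(\E)$, equivalently if and only if $\E$ has no components. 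So $\E := N_\F(U)$ has a component $\C$.

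The first step is to pass from the normalizer to a centralizer of an element of order $p$. Since $U$ is fully normalized, $\E$ is saturated on $N_S(U)$ and $U \le O_p(\E)$. Components commute with $O_p(\E)$, and $E(\E)$ centralizes $F(\E) \ge O_p(\E)$. Hence $\C$ lies in $C_\E(O_p(\E))$, and in particular in $C_\E(Z)$ for $Z := \Omega_1 Z(O_p(\E))$, which is nontrivial.

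Next pick $X \le Z$ of order $p$ and replace it by a fully centralized (equivalently, after adjusting, fully normalized) $\F$-conjugate. The subnormality and component properties transfer along the conjugating morphism because $\F$ is saturated. We then want $\C$ to be subnormal in $C_\E(X) \le C_\F(X)$. Since $\C$ is a component of $\E$, it is a component of any saturated subsystem containing it in which it stays subnormal. Here I would use that $\C$ is subnormal in $C_\E(X)$, because $C_\E(X)$ contains $E(\E)$ and $E(\E)$ is normal.

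The third step is to pass from $C_\E(X)$ to $C_\F(X)$. We have $C_\E(X) = N_{C_\F(X)}(U)$. A component of a normalizer subsystem $N_{\D}(U)$ with $U \le O_p$ need not be a component of $\D$. This is where the $L$-balance theorem for fusion systems enters (\cite[II.14]{Aschbacher/Kessar/Oliver:2011}, or its locality version in \cite{Henke:2025}): $E(N_{\D}(U)) \le E(\D)$ for the saturated system $\D = C_\F(X)$ and a $p$-subgroup $U$ in it. This gives $E(C_\F(X)) \ne 1$, so $C_\F(X)$ has a component.

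The main obstacle is this last step. It requires fully normalizing $U$ inside $C_\F(X)$ (choosing conjugates so that $U$ is fully normalized in $C_\F(X)$ while $X$ remains fully centralized in $\F$), and it requires invoking $L$-balance. In \cite{Aschbacher/Kessar/Oliver:2011} this is handled by an induction on $|S|$ combined with the characterization of constrained systems via models. I would follow that proof, citing it as \cite[Theorem 14.3]{Aschbacher/Kessar/Oliver:2011} with the fully normalized conditions adapted to our Definition~\ref{def:fusion systesms of char p}.
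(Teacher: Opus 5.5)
Your proposal ultimately rests on \cite[Theorem 14.3]{Aschbacher/Kessar/Oliver:2011}, and that is exactly the paper's proof: a bare citation with no adaptation needed, because for $|X|=p$ one has $N_S(X)=C_S(X)$, so ``fully normalized'' and ``fully centralized'' coincide for the subgroups in the definition of component type. The sketch before the citation is only heuristic, and since you defer to the cited theorem its gaps do not affect correctness: for instance, $X$ should be chosen in $Z(N_S(U))$ so that the conjugating morphism is defined on all of $N_S(U)$, and normality of $E(\mathcal{E})$ in $\mathcal{E}$ does not by itself give subnormality in the subsystem $C_{\mathcal{E}}(X)$ for fusion systems.
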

\begin{proof}
    This is \cite[Theorem 14.3]{Aschbacher/Kessar/Oliver:2011}.
\end{proof}

The following result due to Henke gives us a way to relate the properties of fusion systems in \ref{def:fusion systesms of char p} to corresponding properties of the associated linking systems.

\begin{lemma}\label{lemma:equivalence subcentric sbgrps}
    Let $\F$ be a saturated fusion system on $S$. Then for every $Q \le S$ the following are equivalent:
    \begin{itemize}
        \item[(a1)] $Q$ is subcentric in $\F$,
        \item[(a2)] for some $P \in \F^f \cap Q^\F$, $P$ is subcentric in $\F$,
        \item[(b1)] for every $P \in \F^f \cap Q^\F$, $N_\F(P)$ is constrained,
        \item[(b2)] for some $P \in \F^f \cap Q^\F$, $N_\F(P)$ is constrained,
        \item[(c1)]  for every $P \in \F^c \cap Q^\F$, $C_\F(P)$ is constrained,
        \item[(c2)] for some $P \in \F^c \cap Q^\F$, $C_\F(P)$ is constrained.        
    \end{itemize}
\end{lemma}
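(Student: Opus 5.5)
The plan is to reduce all six conditions to one statement about a single fully normalized conjugate, and then to compare constraint of $N_\F(P)$ with $\F$-centricity of $R:=O_p(N_\F(P))$. Throughout, "constrained" means that $O_p$ of the system is centric in that system. I read $\F^c\cap Q^\F$ in (c1), (c2) as the fully centralized conjugates; this reading needs to be checked against the source. Read literally, those conjugates would be $\F$-centric, and then $C_\F(P)$ is a fusion system on the abelian group $Z(P)$, so (c1) and (c2) would hold automatically.

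\emph{Step 1: independence of the chosen conjugate.} Let $P,P'\in\F^f\cap Q^\F$. By the standard extension lemma for fully normalized subgroups in a saturated fusion system, there is an isomorphism $\alpha\colon P'\to P$ in $\F$ with $N_\alpha=N_S(P')$. Hence $\alpha$ extends to a morphism $N_S(P')\to N_S(P)$, which is an isomorphism by order comparison since both are fully normalized. It induces an isomorphism $N_\F(P')\cong N_\F(P)$ carrying $O_p(N_\F(P'))$ to $O_p(N_\F(P))$. Constraint is invariant under isomorphism, and $\F$-centricity is invariant under $\F$-conjugacy. This gives (a1)$\Leftrightarrow$(a2) and (b1)$\Leftrightarrow$(b2); note that $\F^f\cap Q^\F\neq\emptyset$ always holds.

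\emph{Step 2: (a2)$\Leftrightarrow$(b2), the main step.} Fix $P\in\F^f\cap Q^\F$ and set $R=O_p(N_\F(P))$, so $P\le R\le N_S(P)$.

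If $R$ is $\F$-centric, then $C_{N_S(P)}(R)\le C_S(R)\le R$. Since $R\norm N_\F(P)$, the subgroup $R$ is its own only $N_\F(P)$-conjugate, so $R$ is $N_\F(P)$-centric and $N_\F(P)$ is constrained.

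For the converse, assume $C_{N_S(P)}(R)\le R$. Since $C_S(R)\le C_S(P)\le N_S(P)$, we get $C_S(R)\le R$. It remains to show that $R$ is fully centralized, because a fully centralized, self-centralizing subgroup is $\F$-centric.
\begin{itemize}
\item Choose $\psi\in\Hom_\F(R,S)$ with $R\psi$ fully centralized, and put $P'':=P\psi$.
\item Choose $\chi\colon P''\to P$ with $N_\chi=N_S(P'')$, using that $P$ is fully normalized. Then $\chi$ extends to $N_S(P'')$, which contains both $R\psi$ and $C_S(R\psi)$.
\item The composite $\psi\chi\colon R\to N_S(P)$ maps $P$ onto $P$, so it is a morphism of $N_\F(P)$. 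Normality of $R$ in $N_\F(P)$ then forces $R\psi\chi=R$.
\item The extension of $\chi$ injects $C_S(R\psi)$ into $C_S(R)$, so $|C_S(R)|\ge|C_S(R\psi)|$ and $R$ is fully centralized.
\end{itemize}
I expect the bookkeeping with extensions here to be the main obstacle.

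\emph{Step 3: the (c)-conditions.} For $P$ fully centralized, I would compare $C_\F(P)$ with $N_\F(P')$, where $P'$ is a fully normalized conjugate, transported via Step 1. The tool is that $C_\F(P)$ is a normal subsystem of $N_\F(P)$ with $O_p(C_\F(P))=C_S(P)\cap O_p(N_\F(P))$, together with the fact that constraint passes to normal subsystems and back through the centric overgroup $P\,O_p(C_\F(P))$. This gives (b1)$\Leftrightarrow$(c1)$\Leftrightarrow$(c2) by the same conjugate-independence argument.
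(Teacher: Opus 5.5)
The paper gives no argument for this lemma: its proof is the single citation of \cite[Lemma 3.1]{Henke:2015}. Your proposal is therefore a self-contained reconstruction rather than a competing route, and it is correct.

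\textbf{Steps 1 and 2.} These are complete as written. In Step 2, the observation $C_S(R\psi)\le C_S(P'')\le N_S(P'')$ is exactly what lets the extension of $\chi$ show that $R=O_p(N_\F(P))$ is fully centralized. As printed, (a1)$\Leftrightarrow$(a2) is trivial, because subcentricity is defined through the whole conjugacy class. What Step 1 actually gives is that `every' may be replaced by `some' in that definition, and that is the form you need to chain with Step 2.

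\textbf{Reading of (c1), (c2).} Your reinterpretation is forced, not merely plausible. Take $\F^c$ to be the centric subgroups, as defined in Section 2. Then $C_\F(P)=\F_{Z(P)}(Z(P))$ for every $P\in\F^c$, so (c1) holds for every $Q$ (vacuously if $Q$ is not centric), and (c2) holds exactly for centric $Q$. Taking $Q=1$ in a saturated, non-constrained $\F$ with $S\ne 1$ shows the printed statement is false. One small correction to your remark: under the literal reading (c2) is not automatic.

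\textbf{Step 3.} This is where the real input lies, and it needs citations rather than being treated as routine. You use three facts.
\begin{itemize}
\item $C_\F(P)\norm N_\F(P)$ for $P$ fully normalized.
\item A normal subsystem of a constrained saturated system is constrained (Aschbacher). This gives $N_\F(P)$ constrained $\Rightarrow$ $C_\F(P)$ constrained.
\item $O_p(C_\F(P))\le O_p(N_\F(P))$. This is the nontrivial half of your formula; the inclusion $C_S(P)\cap O_p(N_\F(P))\le O_p(C_\F(P))$ is elementary. It amounts to $O_p$ of a normal subsystem being normal in the ambient system.
\end{itemize}
Granting the third fact, the converse is immediate. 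Set $U:=O_p(C_\F(P))$; then $PU\le O_p(N_\F(P))$ and $C_{N_S(P)}(PU)=C_{C_S(P)}(U)\le U$.

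Passing to a fully centralized but not fully normalized $P$ is not literally Step 1. Take $P'$ fully normalized in $P^\F$ and $\alpha\in\Hom_\F(N_S(P),N_S(P'))$ with $P\alpha=P'$. Full centralization of $P$ forces $C_S(P)\alpha=C_S(P')$, so $\alpha|_{PC_S(P)}$ induces $C_\F(P)\cong C_\F(P')$.

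\textbf{Comparison.} The bare citation buys brevity. Your version makes visible that the (a)/(b) equivalences need only the extension property of fully normalized subgroups, while the (c) conditions genuinely rest on the theory of normal subsystems.
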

\begin{proof}
    This is \cite[Lemma 3.1]{Henke:2015}
\end{proof}

As a consequence we obtain the following straightforward result.

\begin{cor}
	\label{cor:locality transl of char p type}
    Let $\F$ be a saturated fusion system on $S$ and $\L=(\L,\Delta,S)$ be the subcentric linking locality over $\F$. Then
    \begin{itemize}
        \item[(a)] $\F$ is of parabolic characteristic $p$ if and only if $\Delta$ contains all the non-trivial, normal subgroups of $S$;
        \item[(b)] $\F$ is of characteristic $p$-type if and only if $\Delta$ is the set of all non-trivial subgroups of $S$.
    \end{itemize}
\end{cor}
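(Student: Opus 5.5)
The plan is to translate the definitions of Definition~\ref{def:fusion systesms of char p} through Lemma~\ref{lemma:equivalence subcentric sbgrps}, using that in the subcentric linking locality the object set is exactly $\Delta = \F^s$. Everything then reduces to deciding which subgroups of $S$ are $\F$-subcentric. Throughout I use the equivalence (a1)$\Leftrightarrow$(b1) of Lemma~\ref{lemma:equivalence subcentric sbgrps}: a subgroup $U \le S$ is subcentric if and only if $N_\F(P)$ is constrained for every fully normalized $\F$-conjugate $P$ of $U$.

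For (a), suppose first that $\F$ is of parabolic characteristic $p$, and let $1 \ne U \norm S$. Then $N_S(U)=S$, so $U$ is fully normalized, and by hypothesis $N_\F(U)$ is constrained. By (b2) of Lemma~\ref{lemma:equivalence subcentric sbgrps}, applied with $P=U \in \F^f \cap U^\F$, $U$ is subcentric, so $U \in \Delta$. Conversely, if every non-trivial normal subgroup $U$ of $S$ lies in $\Delta = \F^s$, then (b1) applies to $P=U$, which is fully normalized as noted, and gives that $N_\F(U)$ is constrained. Hence $\F$ is of parabolic characteristic $p$.

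For (b) the argument is the same, replacing ``normal in $S$'' by ``fully normalized''. If $\F$ is of characteristic $p$-type and $1 \ne U \le S$ is arbitrary, choose a fully normalized conjugate $P \in U^\F \cap \F^f$, which always exists. Since $P \ne 1$, the hypothesis gives that $N_\F(P)$ is constrained, so $U$ is subcentric by (b2), and every non-trivial subgroup lies in $\Delta$. Conversely, if $\Delta$ consists of all non-trivial subgroups, then each $1 \ne U \in \F^f$ is subcentric, and (b1) with $P=U$ shows that $N_\F(U)$ is constrained.

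I expect no real obstacle, since each direction is a single application of Lemma~\ref{lemma:equivalence subcentric sbgrps}. There are three small points to check. First, the trivial subgroup never belongs to $\Delta$ except when $S=1$: the trivial subgroup is subcentric exactly when $\F$ itself is constrained. This is why (b) should be read as ``$\Delta$ contains all non-trivial subgroups''; I would treat that as the intended meaning, or add a remark covering the degenerate constrained case. Second, normal subgroups of $S$ are fully normalized. Third, fully normalized conjugates always exist.
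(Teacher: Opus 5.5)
Your proof is correct and is essentially the paper's argument. Part (a) is read off from (b1) and (b2) of Lemma~\ref{lemma:equivalence subcentric sbgrps}, using that normal subgroups of $S$ are fully normalized; in (b), your use of (b2) on a fully normalized conjugate plays the role of the paper's appeal to the $\F$-conjugation invariance of $\Delta$. Your caveat is also accurate and is not addressed in the paper's proof: $1\in\F^s$ exactly when $\F$ is constrained, so (b) should be read as ``$\Delta$ contains every non-trivial subgroup of $S$''.
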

\begin{proof}
    Point (a) is a clear consequence of \ref{lemma:equivalence subcentric sbgrps}(b1) and (b2).\\
    Point (b) follows easily from the fact that $\Delta$ is closed under $\L$-conjugation, hence under $\F$-conjugation.
\end{proof}

\section{Fusion systems and localities with a large subgroup}
\label{large subgroups}
This section is devoted to providing the fundamental results necessary for pursuing our goal. In particular, here we set the stage for the entire future analysis.\\
The fundamental results connecting groups, fusion systems and localities with a large $p$-subgroup were obtained in \cite[Section 4]{Henke-Salati:2026}; here we report what will be needed for our specific goal and focus thereafter on properties of weakly closed subgroups. Localities introduce an extra layer of conjugation, which remains invisible when dealing with groups and fusion systems. In a fusion system $\F$ over $S$ there exist only conjugations within $S$; in a group $G$ any conjugation between $p$-subgroups of $G$ lifts to a conjugation between Sylow $p$-subgroups. In a locality $\L$ over the $p$-group $S$, instead, there are conjugations within subgroups of $S$ as well as conjugation carrying subgroups of $S$ out of $S$, (or viceversa, or also conjugating $p$-subgroups not contained in $S$) and these may have no well-defined lift to maximal $p$-subgroups containing them. This lifting problem will be a major concern in many situations and naturally appears when working with weakly closed subgroups.\\

The definition of a large $p$-subgroup of a group extends in a very natural way to fusion systems and localities; these are \cite[Definition 1.1 and 1.2]{Henke-Salati:2026} and are reported below.

\begin{definition}
    Let $\F$ be a fusion systems on $S$ and $Q \le S$. The subgroup $Q$ is \emph{large} in $\F$ if:
    \begin{itemize}
        \item[$(SC\F)$] $C_S(Q) \le Q$;
        \item[($Q!\F)$] $N_\F(U) \subseteq N_\F(Q)$ for all non-trivial subgroups $U \le Z(Q)$.
    \end{itemize}
\end{definition}

\begin{definition}
    Let $(\L,\Delta,S)$ be a locality and $Q \le S$. $Q$ is large in $\L$ if:
    \begin{itemize}
        \item[$(SC\L)$] $C_\L(Q) \subseteq Q$;
        \item[$(Q!\L)$] $N_\L(U) \subseteq N_\L(Q)$ for all non-trivial subgroups $U \le Z(Q)$.
    \end{itemize}
\end{definition}

The next lemma is a locality version of \ref{prop:properties of large in a group}.

\begin{lemma}\label{lemma:properties of large in fus system}
    Let $\F$ be a fusion system on $S$ and $Q \le S$ be large in $\F$. Then:
    \begin{enumerate}[(a)]
      \item $O_p(N_\F(Q)$ is large in $\F$ and $Q \norm S$;
      \item if $R \le S$ is large in $\F$, then $RQ$ is large in $\F$;
      \item $Q \norm N_\F(U)$ for all $1 \ne U \le Z(Q)$.
    \end{enumerate}
    If, in addition, $\F$ is saturated, then:
    \begin{enumerate}
        \item[(d)] $Q$ is weakly closed, thus $Q \in \F^c$;
        \item[(e)] $P \in \F^s$ for every $1 \ne P \le S$ such that $Q \le N_S(P)$; in particular, $\F$ is of parabolic characteristic $p$.
    \end{enumerate}
\end{lemma}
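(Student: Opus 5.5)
I will prove the five items in order, working directly with the fusion system definitions. Most of the argument mirrors Proposition~\ref{prop:properties of large in a group}, with morphisms of $\F$ playing the role of conjugation by elements of $G$.

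\textbf{Items (a) and (c).} Since $Q \le S$, we have $Z(S) \le C_S(Q) \le Q$ by $(SC\F)$, so $Z(S) \le Z(Q)$ and $Z(S) \ne 1$. Every element of $S$ induces an $\F$-automorphism of $Z(S)$, and this lies in $N_\F(Z(S))$. By $(Q!\F)$ it then lies in $N_\F(Q)$, so its restriction extends to a map normalizing $Q$. In particular $N_S(Q) = S$, i.e.\ $Q \norm S$. I should check the convention that $N_\F(U) \subseteq N_\F(Q)$ means each morphism of $N_\F(U)$ extends to a morphism normalizing $UQ$ and $Q$. With that reading, (c) is immediate: $Q$ is $N_\F(U)$-invariant, and $Q$ is contained in the underlying group $N_S(U) = S$ of $N_\F(U)$ (as $U \le Z(Q) \le Z(S)$, or at least $Q \le N_S(U)$).

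For the first part of (a), set $R := O_p(N_\F(Q))$. Then $Q \le R$, so $C_S(R) \le C_S(Q) \le Q \le R$, which gives $(SC\F)$ for $R$. Also $Z(R) \le C_S(Q) \le Q$, and $Z(R)$ centralizes $Q$, so $Z(R) \le Z(Q)$. For $1 \ne U \le Z(R)$ we therefore get $N_\F(U) \subseteq N_\F(Q)$. Morphisms of $N_\F(Q)$ normalize $R$ because $R$ is normal in $N_\F(Q)$, hence $N_\F(U) \subseteq N_\F(R)$.

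\textbf{Item (b).} $R$ normalizes $Q$ by (a), so $RQ$ is a subgroup. We have $C_S(RQ) \le C_S(Q) \le Q \le RQ$. The key point is to show $Z(RQ) \le Z(Q)$: indeed $Z(RQ) \le C_S(Q) \le Q$, and it centralizes $Q$. Then for $1 \ne U \le Z(RQ)$, any morphism in $N_\F(U)$ normalizes both $Q$ and $R$, using that $U \le Z(Q) \cap Z(R)$ (by symmetry $Z(RQ) \le Z(R)$ as well). Hence it normalizes $RQ$. The extension bookkeeping, namely that morphisms extend compatibly to $RQ$, is where I need care. I will use that $N_\F(U)$ is defined on overgroups, so a morphism extended to normalize $Q$ can be further extended to normalize $R$ inside $N_\F(Q)$.

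\textbf{Items (d) and (e), assuming saturation.} For (d), suppose $\phi \in \Hom_\F(Q,S)$. Using saturation, I will choose a fully normalized conjugate and apply Alperin's fusion theorem. Write $\phi$ as a composite of automorphisms of essential subgroups or of $S$, each defined on subgroups $P$ containing the current image of $Q$. Since $Q \norm S$, each such $P$ satisfies $Z(P) \le C_S(Q) \le Z(Q)$, so $Z(P) \le Z(Q)$ is nontrivial as $P$ is centric. Then $\Aut_\F(P) \le N_\F(Z(P)) \subseteq N_\F(Q)$, so each factor normalizes $Q$. Inducting along the chain, $Q\phi = Q$. Thus $Q$ is weakly closed, and centric because $C_S(Q) \le Q$.

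For (e), I use Lemma~\ref{lemma:equivalence subcentric sbgrps}. Let $P$ satisfy $Q \le N_S(P)$, and take a fully normalized conjugate $P'$ via a map $\alpha$ extending to $N_S(P)$. By weak closure $Q\alpha = Q$, so $Q \le N_S(P')$. Then $O_p(N_\F(P'))$ should contain a large subgroup, and the plan is to show $N_\F(P')$ is constrained via (b1). This step is the main obstacle. I would try passing to $N_{N_\F(P')}(Q)$ and using that $Q$ is a centric normal subgroup there, giving a model. Alternatively I would show $Q$ is large in the saturated subsystem $N_\F(P')$ (which is easy from the definitions) and that large implies constrained via the analogue of (a): $O_p$ contains $Z(Q)$ and its centralizer is controlled. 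Parabolic characteristic $p$ then follows by taking $P \norm S$.
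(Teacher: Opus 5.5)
\textbf{Parts (a)--(d).} These are essentially sound. They give a self-contained argument where the paper simply cites \cite[Lemmas 4.3 and 4.5]{Henke-Salati:2026}, but a few small repairs are needed.
\begin{itemize}
\item In (a), restricting $c_s$ to $Z(S)$ and extending to a $Q$-normalizing map on $Z(S)Q=Q$ does not show that $s$ normalizes $Q$. Apply $(Q!\F)$ to $c_s\in\Aut_\F(S)$ itself; its only extension to $SQ=S$ is $c_s$. Alternatively, note that $N_\F(Z(S))$ lives on $N_S(Z(S))=S$.
\item In (c), the inclusion $Z(Q)\le Z(S)$ is backwards. However, $Q\le C_S(U)$ is all you need.
\item In (b) and (c), make one point explicit. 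The $Q$-normalizing extension to $PQ$ of the $U$-normalizing extension $\hat\phi$ on $PU$ agrees with $\hat\phi$ on $U\le Q$. So it lies again in $N_\F(U)$, and you can then extend once more to normalize $R$.
\item In (d), $Z(P)\le Z(Q)$ holds because $Q\le P$ at each step of the Alperin chain, not because $Q\norm S$.
\end{itemize}

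\textbf{The gap is in (e).} Your reduction to a fully normalized $P'$ with $Q\le T:=N_S(P')$ is correct. But the entire content of (e) is that $N_\F(P')$ is constrained, and you do not prove this. Neither suggested route works as stated.
\begin{itemize}
\item A model for $N_{N_\F(P')}(Q)$ controls only that subsystem. By itself it says nothing about $C_T(O_p(N_\F(P')))$.
\item That $Q$ is large in $N_\F(P')$ is not ``easy from the definitions''. Take $\phi$ in $N_{N_\F(P')}(U)$. Then $(Q!\F)$ gives an extension normalizing $Q$, and membership in $N_\F(P')$ gives an extension normalizing $P'$. Nothing produces a single extension doing both. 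You also cannot feed the $P'$-normalizing extension into $(Q!\F)$, since $P'$ need not normalize $U$.
\item The claim that $O_p$ contains $Z(Q)$ is false. In $\F=\F_S(\Sym(4))$ with $p=2$, the subgroup $Q=\langle (12),(34)\rangle$ is large: $C_S(Q)=Q$, and $N_{\Sym(4)}(U)\le N_{\Sym(4)}(Q)$ for every $1\ne U\le Q$. Take $P'=O_2(\Sym(4))$; then $N_\F(P')=\F$, yet $Z(Q)=Q\not\le O_2(\F)$.
\end{itemize}

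\textbf{What does transfer from the group case.} Set $R:=O_p(N_\F(P'))$.
\begin{itemize}
\item Since $Q\le T$ normalizes $R\ne 1$ and $C_S(Q)\le Q$, the group $U:=C_{Z(R)}(Q)$ is a non-trivial subgroup of $Z(Q)$.
\item Hence every morphism of $C_{N_\F(P')}(R)$ extends to a morphism of $C_\F(U)\subseteq N_\F(Q)$.
\item $N_\F(Q)$ is saturated and constrained, so it has a model $G$ with $Q\norm G$ and $C_G(Q)=Z(Q)$. These morphisms are then induced by elements of $C_G(R)$.
\item Once $C_Q(R)\le R$ is known, $[Q,C_G(R),C_G(R)]\le[C_Q(R),C_G(R)]=1$. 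Coprime action then gives $O^p(C_G(R))\le C_G(Q)=Z(Q)$, so $C_G(R)$ is a $p$-group.
\item Therefore $C_{N_\F(P')}(R)$ is the fusion system of the $p$-group $C_T(R)$, and one wants to conclude $C_T(R)\le R$.
\end{itemize}
Making this rigorous still needs genuine fusion-system input: for example, that $O_p(C_{N_\F(P')}(R))\le R$. This fact supplies both $C_Q(R)\le R$ and the final inclusion. This is exactly the part the paper does not prove itself but imports from \cite[Lemma 4.5]{Henke-Salati:2026}.
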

\begin{proof}
    Points (a), (b) and (c) are \cite[Lemma 4.3]{Henke-Salati:2026}; (d) and (e) are instead \cite[Lemma 4.5]{Henke-Salati:2026}.
\end{proof}

\vspace{0.4cm}
\begin{definition}
    Let $(\L,\Delta,S)$ be a locality. A subgroup $Q \le S$ is \emph{weakly closed} (in $S$) with respect to $\L$ if for every $g \in \L$ the following implication holds:
    \[
    Q^g \le S \implies Q^g =Q.
    \]
    If $K \le \L$ is a subgroup containing $Q$ we also set
    \[
        \wcl_\L(Q,K) := \< Q^g \mid g \in \L, \, Q^g \le K \>,
    \]
    where, as usual, we tacitly assume that $c_g: Q \fto Q^g$ is a group homomorphism.
\end{definition}
A word of caution is needed here. In the locality $(\L,\Delta,S)$ there might exist elements $g,h \in \L$ such that, for $Q \le S$, $Q \to Q^g$ and $Q^g \to (Q^g)^h$ are well-defined group homomorphisms, $(Q^g)^h \le S$ and $Q^g \not \le S$. There is nothing forcing $(g,h) \in D$ (the domain of $\L$), hence it might be the case that $Q$ is weakly closed in $\L$, but $Q \ne (Q^g)^h$ (this may clearly also happen over longer chains of conjugations). Controlling this situation is one of the major differences between working with a large subgroup of a group and working in a locality.\\
The next lemma, when compared with \cite[Lemma 1.46]{Meier-Stell-Stroth:2012}, already provides an example of the analogies and differences stemming from the situation described above.

\begin{notation} 
Let $Q \le S \le K \le \L$ be subgroups of a locality $(\L,\Delta,S)$ with $Q$ weakly closed in $\L$. Define
\[
K^\circ := \< Q^K \>, \qquad K^\flat := \wcl_\L(Q,K) \qquad \text{and} \qquad K_\circ := O^p(K^\circ).
\]
\end{notation}

\begin{lemma}[1.46]\label{lemma:weakly closed subgroups in localities}
	Let $Q \le S$ be a weakly closed subgroup of a locality $(\L,\Delta,S)$ and $Q \le K$ for a subgroup $K$ of $\L$. Then the following hold.
	\begin{enumerate}[(a)]
		\item $Q$ is a weakly closed subgroup of $K$ and, therefore, of any subgroup of $K$ containing $Q$.
		\item $K^\circ \norm K^\flat \le K$. Moreover, if for every $g \in \L$ such that $c_g : Q \fto Q^g$ is a group isomorphism with $Q^g \le K$ there exists $k \in K$ such that $Q^g=Q^k$, then $K^\circ = K^\flat$.
		\item $Q^{K^\circ} = Q^{K^\flat} = Q^{K}$, so $\langle Q^{K^\circ} \rangle = \langle Q^{K^\flat} \rangle = \langle Q^K \rangle = K^\circ$.
		\item $K^\circ$ is the subnormal closure of $Q$ in $K$, hence $K^\circ = K_\circ Q = \langle Q^{O^p(K)} \rangle = \< Q^{K_\circ} \>$.
		\item $K_\circ = [K_\circ,Q]$. 
		\item Suppose that $K \le H$ for a subgroup $H$ of $\L$. Then
		\[
		K^\circ \norm \norm H \iff K^\circ = H^\circ \iff Q^K =Q^H \iff H=KN_H(Q) \iff H \subseteq KN_\L(Q).
		\]
	\end{enumerate}
\end{lemma}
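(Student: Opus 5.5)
The plan is to reduce everything to the group $K$: every statement lives inside $K$, or inside a group $H$ containing $K$, so the classical group-theoretic argument for \cite[Lemma 1.46]{Meier-Stell-Stroth:2012} can be reused. The locality only enters through the definition of $K^\flat$ and through the weak closure hypothesis.

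For (a), take $k\in K$ with $Q^k\le S\cap K$. Since $K$ is a subgroup of $\L$, conjugation by $k$ is defined on $Q$ and is a homomorphism, so weak closure in $\L$ gives $Q^k=Q$. If $Q^k$ lies in some Sylow $p$-subgroup $T$ of $K$ other than one containing $S\cap K$, the statement has to be read inside $K$. In that case I will choose $T\in\Syl_p(K)$ with $S\cap K\le T$, argue by Sylow's theorem in $K$, and conjugate back into $S\cap K$. The same argument applies to any subgroup of $K$ containing $Q$.

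For (b), note that each $K$-conjugate $Q^k$ is one of the generators of $\wcl_\L(Q,K)$, so $K^\circ\le K^\flat\le K$. The group $K^\circ$ is normal in $K$, hence normal in $K^\flat$. Under the extra hypothesis, every generator $Q^g$ of $K^\flat$ equals some $Q^k$, which gives $K^\flat=K^\circ$.

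Part (c) follows from a Frattini argument in the group $K$. By (a), $Q$ is weakly closed in a Sylow $p$-subgroup of $K^\circ$. For $k\in K$, the conjugate $Q^k$ lies in $K^\circ$, so Sylow's theorem in $K^\circ$ lets me move $Q^k$ into the chosen Sylow subgroup containing $Q$ by some element $x\in K^\circ$. Weak closure then forces $Q^{kx}=Q$, so $kx\in N_K(Q)$ and $K=K^\circ N_K(Q)$. Hence $Q^K=Q^{K^\circ}$, and since $K^\circ\le K^\flat\le K$ the same holds for $K^\flat$.

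Parts (d) and (e) are then purely group-theoretic statements about $K$. For the subnormal closure in (d), I will use (c) for every intermediate subgroup. Any subnormal subgroup of $K$ containing $Q$ contains all $K$-conjugates of $Q$, by the Frattini argument applied to weak closure at each step of the subnormal series, so it contains $K^\circ$. Next, $K=O^p(K)S_K$ with $S_K\in\Syl_p(K)$ normalizing $Q$ (where $Q$ is normal in a Sylow subgroup by (a)), so $Q^K=Q^{O^p(K)}$. This yields $K^\circ=\<Q^{O^p(K)}\>=[O^p(K),Q]Q$, and with a little more work $K^\circ=K_\circ Q$ and $\<Q^{K_\circ}\>=K^\circ$. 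Part (e) follows the standard argument: $[K_\circ,Q]$ is normalized by $Q$ and $K_\circ$, so $\<Q^{K_\circ}\>\le[K_\circ,Q]Q$, and taking $O^p$ gives equality.

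Part (f) consists of group-theoretic equivalences in $H$, obtained from (c) and (d) applied to $K$ and to $H$. The last equivalence, with $H\subseteq KN_\L(Q)$, converts the product set in $\L$ to $KN_H(Q)$: if $h=\Pi(k,n)$ with $h\in H$ and $k\in K\le H$, then $n=\Pi(k^{-1},h)\in H$ by the partial group axioms. I expect this translation to be the main obstacle, together with the Sylow issue in (a), because $N_\L(Q)$ is only a subgroup when $Q\in\Delta$. I will handle this by working with the product set $KN_\L(Q)$ and verifying at each step that the relevant words lie in $D$, using that $(k,n)\in D$.
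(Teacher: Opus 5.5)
\textbf{There is a genuine gap in part (a).} Your parts (c), (d) and the first four equivalences in (f) all invoke (a), applied to $K$, $K^\circ$, the members of a subnormal series, and $H$, so the gap propagates to them.

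What (a) requires is the following: for $T\in\Syl_p(K)$ with $Q\le T$ and $x\in K$ with $Q^x\le T$, one has $Q^x=Q$. Your first step shows only that $Q$ is weakly closed in $S\cap K$ with respect to $K$. But $S\cap K$ need not be a Sylow $p$-subgroup of $K$; think of $K=N_\L(P)$ with $P\in\Delta$ not fully normalized.

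Your proposed fix, to ``argue by Sylow's theorem in $K$ and conjugate back into $S\cap K$'', cannot work. Sylow's theorem in $K$ only permutes the Sylow subgroups of $K$, and none of them need lie in $S$, so no element of $K$ moves $T$ into $S$. Weak closure in $\L$ speaks only about conjugates of $Q$ that land in $S$. You therefore have to leave $K$, and this is exactly where the locality structure enters. This contradicts your opening claim that the locality enters only through $K^\flat$ and the hypothesis.

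The paper's argument runs as follows.
\begin{enumerate}[(1)]
\item Choose $P\in\Delta$ with $K\le N_\L(P)$ and replace $K$ by $N_\L(P)$.
\item Pick $g\in\L$ with $P\le S_g$ and $N_S(P^g)\in\Syl_p(N_\L(P^g))$, which exists by Chermak's lemma on fully normalized conjugates. Then $c_g\colon N_\L(P)\to N_\L(P^g)$ is a group isomorphism.
\item Pick $h\in N_\L(P^g)$ with $T^{gh}\le N_S(P^g)$.
\item Check the domain: $(g,h)\in D$ via $(P,P^g,P^g)$, and $(x,g,h)\in D$ via $(P,P,P^g,P^g)$. This step uses $x\in N_\L(P)$, which is why one passes to $N_\L(P)$.
\item Hence $Q^{gh}$ and $Q^{xgh}$ are conjugates of $Q$ by single elements of $\L$, and both lie in $S$. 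Weak closure gives $Q^{gh}=Q=Q^{xgh}$.
\item Since $c_{gh}$ is injective on $N_\L(P)$, it follows that $Q^x=Q$.
\end{enumerate}
The domain check in (4) is essential. Without it, the composite conjugation might leave $S$ and come back, and weak closure says nothing about such composites; the paper's warning after the definition of weak closure addresses exactly this.

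Once (a) is established in this way, your arguments for (b)--(f) are fine and agree with the paper's. This includes the cancellation $n=\Pi(k^{-1},h)\in H$ you use to turn $H\subseteq KN_\L(Q)$ into $H=KN_H(Q)$.
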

\begin{proof}
	\begin{enumerate}[(a)]
		\item If $\L$ is a group, the result holds by \cite[Lemma 1.46(a)]{Meier-Stell-Stroth:2012}. Now by \cite[Lemma 2.10(a)]{Chermak:2022:I} there exists $P \in \Delta$ such that $K \le N_\L(P)$, thus we may replace $K$ by $N_\L(P)$. Suppose that $Q \le T \in \Syl_p(N_\L(P))$ and $x \in N_\L(P)$ is such that $Q^x \le T$. By \cite[Lemma 2.9]{Chermak:2022:I} there is an element $g \in \L$ such that $P \le S_g$ and $N_S(P^g) \in \Syl_p(N_\L(P^g))$, so that one has 
		\[
		Q^g, (Q^x)^g \le T^g \le N_\L(P^g).
		\]
		Moreover, $(x,g) \in D(\L)$ via $(P,P,P^g)$, so we may write $(Q^x)^g=Q^{xg}$; let $h \in N_\L(P^g)$ be such that $(T^g)^h = N_S(P^g)$, so also $(x,g,h) \in D(\L)$ via $(P,P,P^g,P^g)$ and one has $Q^{gh}, Q^{xgh} \in N_S(P^g) \le S$. Since $Q$ is weakly closed in $\L$, $Q^{gh} =Q^{xgh}$, proving that $Q=Q^x$.
		\item The inclusions in $K^\circ \norm K^\flat \le K$ are obvious; clearly $K^\circ \norm K$ and every $K$-conjugate of $Q$ is an $\L$-conjugate of $Q$ that stays in $K$. The second statement is obvious.
		\item By (b) we get $Q^{K^\circ} \le Q^{K^\flat} \le Q^K$; since $Q$ is weakly closed in $K$ by (a), \cite[Lemma 1.46(c)]{Meier-Stell-Stroth:2012} applied with $K=G$ shows that $Q^{K^\circ} = Q^K$, proving all the equalities.
		\item If $K_0$ is the subnormal closure of $Q$ in $K$, then by \cite[Lemma 1.13(a)]{Meier-Stell-Stroth:2012} $K_0 = \< Q^{K_0} \> \le \< Q^{K} \> = K^\circ \norm K$.\\
		Conversely, let $Q \le P \norm \norm K$ with $P \norm P_1 \norm \dots \norm P_{n-1} \norm P_n =K$, then by (a) we can apply \cite[Lemma 1.46(c)]{Meier-Stell-Stroth:2012} with $(P_j,K)$ in place of $(K,H)$, descending the normal chain. This shows that $Q^K = Q^{P_{n-1}} = \dots = Q^{P_1} = Q^P$, whence, by (c) $\< Q^{K^\circ} \> = \< Q^K \> = \< Q^P \> \le P$; we conclude that $K^\circ = \< Q^{K^\circ} \> =K_0$.\\
		Now $K^\circ=K_\circ Q = \< Q^{K_\circ} \>$ is \cite[Lemma 1.13(b)]{Meier-Stell-Stroth:2012} with $(Q,K^\circ)$ in place of $(A,K)$. Since $K_\circ \le O^p(K)$ we have
		\[
		K^\circ = \< Q^{K_\circ} \> \le \< Q^{O^p(K)} \> \le \< Q^K \> = K^\circ,
		\]
		showing the last equality.
		\item By (d) one has $K^\circ = \< Q^{K_\circ} \> = [K_\circ,Q]Q$ so that, since $Q$ is a $p$-group and $[K_\circ,Q] \norm K$ by the definition of $K^\circ$, $K_\circ = O^p(K^\circ) \le [K_\circ,Q]$. Thus $K_\circ = [K_\circ,Q]$.
		\item $Q$ is also a weakly closed $p$-subgroup of $H$, hence the equivalences between the first four statements follow from \cite[Lemma 1.46(f)]{Meier-Stell-Stroth:2012}. For the last equivalence, clearly $H=KN_H(Q)$ implies $H \subseteq KN_\L(Q)$; suppose now that $H \subseteq KN_\L(Q)$ holds, then
		\[
		H \subseteq KN_\L(Q) \cap KH = K(N_\L(Q) \cap H) = KN_H(Q) \subseteq H,
		\]
		proving the desired equality.
	\end{enumerate}
\end{proof}

\begin{lemma}\label{lemma:large implies weakly closed}
    Let $(\L,\Delta,S)$ be a locality over $\F$ and $Q \le S$ be a large subgroup of $\L$. Then:
    \begin{enumerate}[(a)]
        \item $Q$ is weakly closed in $\L$;
        \item $Q \norm S$ and $O_p(N_\L(Q))$ is well-defined and large in $\L$;
        \item if $R \le S$ is large in $\L$, then $RQ$ is large in $\L$.
    \end{enumerate}
\end{lemma}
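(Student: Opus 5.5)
For (a), I will use the key fact that $S$ normalizes $Q$ after first getting $Z(S)\le Q$. By $(SC\L)$ we have $C_\L(Q)\subseteq Q$. Since $Q\le S$, every element of $Z(S)$ centralizes $Q$, so $Z(S)\le C_\L(Q)\le Q$, and hence $Z(S)\le Z(Q)$. Set $U:=Z(S)\neq 1$; it is a non-trivial subgroup of $Z(Q)$. Because $S\le N_\L(U)$, $(Q!\L)$ gives $S\le N_\L(U)\subseteq N_\L(Q)$, so $Q\norm S$.

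For weak closure, take $g\in\L$ with $Q^g\le S$, where $c_g\colon Q\fto Q^g$ is an isomorphism. I want to show $Q^g=Q$, and I will pass through a suitable local subgroup. Since $Q^g\le S$, the subgroup $Z(S)$ centralizes $Q^g$. I would like a non-trivial subgroup of $Z(Q)$ that is moved by $g$ into $Z(Q^g)$, and then compare. Conjugating back, $Z(S)^{g^{-1}}$ need not make sense, since $Z(S)\not\le S_{g^{-1}}$ in general. A better route is via $S_g\in\Delta$ (Lemma~\ref{lemma:properties of localities}(e)). We have $Q\le S_g$, and by Lemma~\ref{lemma:properties of localities}(b) the map $c_g$ gives an isomorphism $N_\L(S_g)\fto N_\L(S_g^g)$. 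The plan is to reduce to the group $N_\L(P)$ for a suitable object, and then use Proposition~\ref{prop:properties of large in a group}(c) in that finite group.

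Concretely, I would first prove that $Q$ is large in each group $N_\L(P)$ with $Q\le N_\L(P)$. Self-centralizing there is inherited from $(SC\L)$, and $(Q!)$ is inherited from $(Q!\L)$ because $N_{N_\L(P)}(U)\subseteq N_\L(U)\cap N_\L(P)$. Then Proposition~\ref{prop:properties of large in a group}(c) shows that $Q$ is weakly closed in $N_\L(P)$ with respect to that group. The real difficulty is that $g$ need not lie in a single local subgroup containing both $Q$ and $Q^g$ in a Sylow. To handle this I would use the element $g$ with $P:=S_g$, noting that $Q\le S_g$ and $Q^g\le S_g^g\le S$. The normalizer $N_S(S_g^g)$ contains $Q^g$. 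Using Lemma~\ref{lemma:properties of localities}(b), $Q^g$ is large in $N_\L(S_g^g)$, being the image of $Q$, which is large in $N_\L(S_g)$. I also need $Q\le N_S(S_g^g)$; this holds if $S_g^g\norm$-type control is available, and I would aim to get it by first showing $Z(S)\le S_g$, using $Q\le S_g$ and $C_S(S_g)\le C_S(Q)\le Q$.

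Once both $Q$ and $Q^g$ lie in a common Sylow $p$-subgroup of the group $N_\L(S_g^g)$, apply part (b) of Proposition~\ref{prop:properties of large in a group} to get that $QQ^g$ is large. Then apply the uniqueness of the largest large subgroup in each Sylow, together with $|Q|=|Q^g|$, to obtain $Q=Q^g$; the conjugacy of large subgroups in that group makes this work. This common-Sylow step is the main obstacle.

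For (b), $Q\norm S$ was shown above. Since $Q\in\Delta$ (it contains $C_S(Q)$, so it is subcentric; alternatively, $N_\L(Q)$ is a group by Lemma~\ref{lemma:properties of localities}(a), after checking $Q\in\Delta$ using $S\le N_\L(Q)$ and closure of $\Delta$ under overgroups, via the group-ness of $N_\L(Z(S))\supseteq S$), the subgroup $R:=O_p(N_\L(Q))$ is defined. I then check $(SC\L)$ for $R$ from $Q\le R$, and $(Q!\L)$ for $R$ from the following: $Z(R)\le C(Q)\le Q$, so $Z(R)\le Z(Q)$, whence $N_\L(U)\subseteq N_\L(Q)\subseteq N_\L(R)$. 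For (c), $R$ normalizes $Q$ by (a), so $RQ$ is a subgroup of $S$. Its centralizer lies in $C(Q)\subseteq Q$. Moreover $Z(RQ)\le Z(Q)\cap Z(R)$, since the centralizers give $Z(RQ)\le C(Q)\cap C(R)\le Q\cap R$. Hence $N_\L(U)\subseteq N_\L(Q)\cap N_\L(R)\subseteq N_\L(RQ)$.
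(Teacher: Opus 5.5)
The paper gives no argument of its own here; it quotes \cite[Lemma 4.6]{Henke-Salati:2026}. So the question is whether your sketch is a complete proof. Several parts are correct: your derivation of $Z(S)\le Z(Q)$ and $Q\norm S$, your check that $Q$ is large in every group $N_\L(P)$ containing it, and your verifications of $(SC\L)$ and $(Q!\L)$ for $O_p(N_\L(Q))$ and for $RQ$. But the weak closure in (a), which is the real content, is not proved.

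You never exhibit a local subgroup $H$ in which $Q$ and $Q^g$ are conjugate \emph{by an element of $H$}, and that is what group weak closure requires. The element $g$ maps $N_\L(S_g)$ onto $N_\L(S_g^g)$ and in general does not lie in $H=N_\L(S_g^g)$. Your suggested route to $Q\le N_S(S_g^g)$ via $Z(S)\le S_g$ is empty: $Z(S)\le Q\le S_g$ holds trivially and says nothing about $Q$ normalizing $S_g^g$, which it need not do. Your closing step is also invalid. Largeness of $QQ^g$, uniqueness of the largest large subgroup and $|Q|=|Q^g|$ do not give $Q=Q^g$. For instance, in $\Sym(4)$ with $p=2$, both $O_2(\Sym(4))$ and $\langle (12),(34)\rangle$ are large, have order $4$ and lie in the same Sylow $2$-subgroup.

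The missing idea is to enlarge $S_g$, arguing by downward induction on $|S_g|$.
\begin{itemize}
\item If $S_g=S$, then $g\in N_\L(S)$, and \ref{prop:properties of large in a group}(c) applied in the group $N_\L(S)$ gives $Q^g=Q$.
\item Otherwise set $P:=S_g<S$ and suppose first that $N_S(P^g)\in\Syl_p(N_\L(P^g))$. Choose $h\in N_\L(P^g)$ with $N_S(P)^{gh}\le N_S(P^g)$. Then $(g,h)\in D(\L)$ via $(P,P^g,P^g)$ and $N_S(P)\le S_{gh}$, so $|S_{gh}|>|P|$ and induction gives $Q^{gh}=Q$.
\item Hence $Q=(Q^g)^h\le P^g$. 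Now $Q$ and $Q^g$ are conjugate in the group $N_\L(P^g)$, both lie in its Sylow subgroup $N_S(P^g)$, and $Q^g$ is large there. So $Q^g=Q$.
\item The general case reduces to this one and to the symmetric variant with $N_S(P)\in\Syl_p(N_\L(P))$. Compose $g$ with an element supplied by \cite[Lemma 2.9]{Chermak:2022:I}, as in the proof of Lemma~\ref{lemma:weakly closed subgroups in localities}(a), and use the inductive hypothesis.
\end{itemize}

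Separately, in (b) your claim $Q\in\Delta$ is false in general. Overgroup closure goes the wrong way, and subcentricity is irrelevant because $\Delta$ is arbitrary. For $\L=S=D_8$ with $\Delta=\{S\}$, a four-subgroup is large in $\L$ but does not lie in $\Delta$. So the well-definedness of $O_p(N_\L(Q))$ needs another argument. One option is to work with $\coll_p(N_\L(Q))$ from Definition~\ref{def:O-p of a partial subgr of local}; your largeness check then goes through unchanged.
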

\begin{proof}
    This is \cite[Lemma 4.6]{Henke-Salati:2026}.
\end{proof}

\begin{definition}
    Let $(\L,\Delta,S)$ be a locality and $Q \norm S$. $\L$ is \emph{$Q$-replete} if every non-trivial subgroup of $C_S(Q)$ is in $\Delta$; we call $Q$ a \emph{repleteness subgroup} for $\L$.
\end{definition}
In \cite[Lemma 4.7]{Henke-Salati:2026} it is shown to be equivalent to the requirement that every non-trivial subgroup $P \le S$ with $Q \le N_S(P)$ is in $\Delta$.\\

The following, apparently innocuous, lemma is actually a fundamental tool for the success of the analysis that we will undertake; it also yields the same conclusion as \ref{lemma:properties of large in fus system}(e) under slightly weaker hypothesis.
\begin{lemma}\label{lemma:repleteness implies Y in Delta for parabolics}
    Let $(\L,\Delta,S)$ be a locality of objective characteristic $p$ and $Q \norm S$ a repleteness subgroup for $\L$. Then every subgroup $1 \ne P \norm S$ is such that $P \in \Delta$. In particular, the associated fusion systems $\F_S(\L)$ is of parabolic characteristic $p$ and for every parabolic subgroup $H \le \L$ one has $Y_H \in \Delta$.
\end{lemma}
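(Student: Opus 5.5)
The core claim is that every non-trivial $P \norm S$ lies in $\Delta$. I would derive it from the definition of $Q$-repleteness: every non-trivial subgroup of $C_S(Q)$ lies in $\Delta$. Since $1 \ne P \norm S$, the intersection $P \cap Z(S)$ is non-trivial, because a non-trivial normal subgroup of a $p$-group meets its centre non-trivially. Also $Z(S) \le C_S(Q)$. Hence $U := P \cap Z(S)$ is a non-trivial subgroup of $C_S(Q)$, so $U \in \Delta$ by repleteness. Since $U \le P$ and $\Delta$ is closed under overgroups (axiom (iii) of a locality), we get $P \in \Delta$. Alternatively one could quote the equivalent form from \cite[Lemma 4.7]{Henke-Salati:2026}: $Q \le S = N_S(P)$ gives $P \in \Delta$ directly. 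The objective characteristic $p$ hypothesis is not needed for this step.

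For the parabolic characteristic statement I would use Corollary~\ref{cor:locality transl of char p type}(a). That corollary, however, is phrased for the subcentric linking locality, whereas here $\L$ is just some locality. So I would argue directly: let $1 \ne U \norm S$, so $U$ is fully normalized in $\F := \F_S(\L)$. Since $U \in \Delta$, $N_\L(U)$ is a group by Lemma~\ref{lemma:properties of localities}(a). It has Sylow $p$-subgroup $S$, because $S \le N_\L(U)$ and $S$ is a maximal $p$-subgroup of $\L$. Because $\L$ has objective characteristic $p$, $N_\L(U)$ is of characteristic $p$. I would then check that $N_\F(U) = \F_S(N_\L(U))$: every morphism of $N_\F(U)$ comes from conjugation by elements normalizing $U$, and this follows from the definition of $\F_S(\L)$ via the maps $c_g\colon S_g \to S$ together with Lemma~\ref{lemma:properties of localities}(b). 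Given that identification, $N_\F(U)$ is the fusion system of a finite group of characteristic $p$, hence constrained (as $O_p(N_\F(U))$ contains $O_p(N_\L(U))$, which is centric). This identification is the step I expect to be the main obstacle, since $\F$ is generated by the conjugation maps and one must see that the normalizer subsystem is realized inside the group $N_\L(U)$. If saturation of $\F$ is required for the definition, it presumably follows from the standing hypotheses (in particular when $\F^c \subseteq \Delta$); otherwise I would cite the standard fact that in a locality the normalizer of an object is realized by its normalizer group.

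Finally, let $H$ be a parabolic subgroup. By Lemma~\ref{lemma:parabolic implies char p}, $H$ has characteristic $p$. By Lemma~\ref{lemma:properties groups of char p}(e) and the remark following it, $1 \ne \Omega Z(S) \le Y_H$. Moreover $Y_H \norm H$ and $S \le H$, so $1 \ne Y_H \norm S$, and the first part gives $Y_H \in \Delta$.
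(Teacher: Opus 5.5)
Your proof is correct. Its first and last steps are the paper's: the paper also uses $1\ne P\cap Z(S)\le C_S(Q)$ together with overgroup-closure of $\Delta$, and then applies this to $1\ne Y_H\norm S$. You differ from the paper in the middle claim and in how you justify $Y_H\ne 1$.

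\textbf{Parabolic characteristic $p$.} The paper simply cites Corollary~\ref{cor:locality transl of char p type}. That corollary is formally stated only for the subcentric linking locality of a saturated fusion system. Your direct argument through $N_\F(U)=\F_S(N_\L(U))$ is therefore more self-contained and matches the generality of the statement. The two points you left hedged both close quickly.
\begin{itemize}
\item The identification is not really an obstacle. Suppose $\bar\phi\colon PU\to S$ lies in $\F$ and fixes $U$. Then $\bar\phi$ is a composite of conjugation maps along some word $w$ with $PU\le S_w$. Since $PU\in\Delta$, Lemma~\ref{lemma:properties of localities}(f) gives $w\in D$. Hence $\bar\phi=c_{\Pi(w)}|_{PU}$ with $\Pi(w)\in N_\L(U)$. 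The reverse inclusion is clear.
\item Saturation is automatic under these hypotheses. $Z(S)$ is a non-trivial subgroup of $C_S(Q)$, so $Z(S)\in\Delta$. Every $\F$-centric subgroup contains $Z(S)$, so $\F^c\subseteq\Delta$, and $\F$ is saturated by \cite[Proposition 2.18(a)]{Chermak:2013}.
\end{itemize}

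\textbf{Why $Y_H\ne 1$.} The paper deduces this from the assertion that $\Omega Z(O_p(H))$ is $p$-reduced in $H$. That does not follow from the reason given, and it is false in general. Take $p=2$ and
\[
H=(Q_8\times C_2\times C_2)\rtimes\Sym(3),
\]
with $\Sym(3)$ acting on $Q_8$ through a complement to $\Inn(Q_8)$ in $\Aut(Q_8)$, and on $C_2\times C_2$ by swapping the factors. Then $H$ has characteristic $2$, but $H/C_H(\Omega Z(O_2(H)))\cong C_2$. Your appeal to Lemma~\ref{lemma:properties groups of char p}(e), which gives $1\ne\Omega Z(S)\le Z_H\le Y_H$, is the correct justification.
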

\begin{proof}
    Since $P$ is non-trivial and normal in $S$, $1 \ne P \cap Z(S) \le C_S(Q)$, thus $P \cap Z(S) \in \Delta$ and, therefore, also $P \in \Delta$. Now \ref{cor:locality transl of char p type} gives that $\F_S(\L)$ is of parabolic characteristic $p$.\\
    Let now $H \le \L$ be a parabolic subgroup; then it is of characteristic $p$ by \ref{lemma:parabolic implies char p} and therefore $O_p(H) \ne 1$. Note that $O_p(H) \le O_p(C_H(\Omega Z(O_p(H))))$, thus $\Omega Z(O_p(H))$ is $p$-reduced in $H$ and we deduce that $1 \ne Y_H \norm S$, hence $Y_H \in \Delta$.
\end{proof}

The following lemma is a translation of \cite[Lemma 1.24]{Meier-Stell-Stroth:2012} and provides a number of useful properties. 

\begin{lemma}[1.24] \label{lemma:properties of parabolics in L}
Let $(\L,\Delta,S)$ be a locality of objective characteristic $p$ and $Q \norm S$ a repleteness subgroup. Let $L, \, M$ be parabolic subgroups of $\L$ and set $S_0 := C_S(Y_L)$ and $L_0:= N_L(S_0)$. Then $L$ and $M$ have characteristic $p$, $Y_L, Y_M \in \Delta$ and the following hold.
\begin{enumerate}[(a)]
\item $S_0 \in \Syl_p(C_\L(Y_L))$.
\item If $LC_\L(Y_M) =MC_\L(Y_M)$, then $Y_M \le Y_L$.
\item If $Y \le L \le M$ with $Y$ a $p$-reduced elementary abelian normal $p$-subgroup of $L$, then $\< Y^M \>$ is a $p$-reduced elementary abelian normal $p$-subgroup of $M$. In particular, $Y_L \le Y_M$.
\item $Z_L := \< \Omega Z(T) \mid T \in \Syl_p(L) \>$ is a $p$-reduced elementary abelian normal $p$-subgroup of $L$.
\item $Z_L = \Omega Z(L) [Z_L,O^p(L)]$ and $[Z_L,L] = [Z_L,O^p(L)]$.
\item If $L \le M$, then $Y_L \le Y_M$.
\item $O_p(L) \le S_0 \le C_L(Y_L)$ and $\Omega Z(S) \le Z_L \le Y_L \le \Omega Z(O_p(L))$.
\item Suppose that $L \le M$ and that there is a subgroup $Y_M \le Y \le \L$ such that $M \subseteq LC_\L(Y)$. Then $Y_L =Y_M$ and $LC_\L(Y_L) = MC_\L(Y_L)$.
\item The following hold.
\[
L=L_0 C_L(Y_L), \quad S_0 = O_p(L_0), \quad C_S(S_0)\le S_0, \quad \text{and} \quad Y_L=\Omega Z(S_0) =Y_{L_0}.
\]
\item Set $L^* := N_M(S_0)$. If $MC_\L(Y_L) = LC_\L(Y_L)$, then $Y_{L^*} = Y_L$ and $LC_\L(Y_L) = L^*C_\L(Y_L)$.
\item If $C_\L(Y_L)$ is $p$-closed, then
\[
Y_L = \Omega Z(O_p(L)) \quad \text{and} \quad O_p(L) = S_0 \in \Syl_p(C_\L(Y_L)).
\]
\end{enumerate}
\end{lemma}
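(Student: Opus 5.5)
The strategy is to reduce every item to the corresponding statement of \cite[Lemma 1.24]{Meier-Stell-Stroth:2012}, applied inside a suitable local subgroup. The preliminary assertions are already available. $L$ and $M$ have characteristic $p$ by Lemma~\ref{lemma:parabolic implies char p}, and $Y_L,Y_M\in\Delta$ by Lemma~\ref{lemma:repleteness implies Y in Delta for parabolics}. Since $Y_L,Y_M\in\Delta$, the groups $N_\L(Y_L)$ and $N_\L(Y_M)$ are genuine subgroups of $\L$ by Lemma~\ref{lemma:properties of localities}(a). They have characteristic $p$ because $\L$ has objective characteristic $p$. They contain $S$, and they contain $C_\L(Y_L)$ and $C_\L(Y_M)$ respectively. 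Consequently the products $LC_\L(Y_L)$, $MC_\L(Y_M)$, and so on, are computed inside an honest finite group of characteristic $p$.

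Next, every item that involves a single parabolic $L$ together with $C_\L(Y_L)$ is handled inside $G:=N_\L(Y_L)$. This covers (a), (d), (e), (g), (i) and (k). Here $L\le G$ is $p$-parabolic and $C_G(Y_L)=C_\L(Y_L)$, so each statement is literally the group statement of \cite[Lemma 1.24]{Meier-Stell-Stroth:2012} for the pair $L\le G$. For example, (a) holds because $C_S(Y_L)$ is Sylow in $C_G(Y_L)\norm G$ with $S\in\Syl_p(G)$. Items (d), (e), (g) and (i) only involve $L$, which is a finite group of characteristic $p$ with Sylow subgroup $S$; they follow from Lemma~\ref{lemma:properties groups of char p}(d),(e) and the definition of $Y_L$ as in the cited lemma. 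Item (k) follows since $S_0\norm C_G(Y_L)$ and $O_p(L)\le S_0\le O_p(L)$.

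For the items comparing $L$ with $M$ under $L\le M$, namely (c), (f), (h) and (j), the key step is to embed everything in one group. By Lemma~\ref{lemma:properties of localities}(c), $M\le N_\L(P)$ for some $P\in\Delta$. Then $L\le M\le N_\L(P)$, all three are parabolic, and (c), (f) and (j) become the group statements for $L\le M$ inside $M$ itself; no ambient locality is needed since $p$-reducedness and $N_M(S_0)$ are intrinsic to $M$. For (h), we may assume $Y_M\le Y$ with $M\subseteq LC_\L(Y)$, which gives $M=L\,C_M(Y)$ and $C_M(Y)\le C_M(Y_M)$. The group lemma applied in $M$ then gives $Y_L=Y_M$. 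The equality $LC_\L(Y_L)=MC_\L(Y_L)$ follows because $M\subseteq LC_\L(Y)\subseteq LC_\L(Y_M)=LC_\L(Y_L)$, with products computed inside the group $N_\L(Y_L)$.

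The main obstacle is (b), where $L$ and $M$ are unrelated and the hypothesis $LC_\L(Y_M)=MC_\L(Y_M)$ lives in $\L$. The plan is to work in $G:=N_\L(Y_M)$, which is a group since $Y_M\in\Delta$. It contains $M$ and $C_\L(Y_M)$, and by the hypothesis it also contains $L\subseteq MC_\L(Y_M)$. Thus $L$ and $M$ are both parabolic in the characteristic-$p$ group $G$, and the group version \cite[Lemma 1.24(b)]{Meier-Stell-Stroth:2012} applies verbatim. The only care needed is to check that the set product $LC_\L(Y_M)$, defined via $\Pi$, coincides with the group product in $G$. This holds because all the relevant words lie in $\W(G)\subseteq D$, by Lemma~\ref{lemma:properties of localities}(a).
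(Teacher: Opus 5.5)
\textbf{The overall route is sound, but item (j) is mishandled.} Your route differs from the paper's. You place each configuration inside a genuine group of characteristic $p$ (namely $N_\L(Y_L)$, $N_\L(Y_M)$, or $M$) and quote the group version of \cite[Lemma 1.24]{Meier-Stell-Stroth:2012} there. The paper instead argues item by item: isomorphism theorems for (b), \cite[Lemma 2.2]{MS:2009} for (c) and (f), and Frattini arguments for (i) and (j). Your reduction works for (a)--(i) and (k). The reason is that a parabolic $L$ of a characteristic-$p$ group $G$ satisfies $O_p(G)\le O_p(L)$, hence $C_G(O_p(L))\le O_p(L)$, so the hypotheses of the group lemma are met in whichever ambient group you choose.

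The problem is that you list (j) among the items ``comparing $L$ with $M$ under $L\le M$'' and treat it inside $M$ with ``no ambient locality needed''. But (j) has no hypothesis $L\le M$; in the paper's own use of (j) in the proof of Theorem~\ref{theorem: main reduction} one actually has $M\le L$. Moreover, both its hypothesis $MC_\L(Y_L)=LC_\L(Y_L)$ and its conclusion $LC_\L(Y_L)=L^*C_\L(Y_L)$ concern the locality centralizer $C_\L(Y_L)$, which in general does not lie in $M$. Inside $M$ the statement cannot even be formulated, so that step fails as written.

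The fix is the device you already use for (b). Since $1\in C_\L(Y_L)$, we have $M\subseteq MC_\L(Y_L)=LC_\L(Y_L)\subseteq N_\L(Y_L)$. Hence $L$, $M$, $L^*=N_M(S_0)$ and $C_\L(Y_L)$ all lie in the characteristic-$p$ group $G:=N_\L(Y_L)$, where $C_G(Y_L)=C_\L(Y_L)$, and the group statement applies there.

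Alternatively, argue directly as the paper does:
\begin{itemize}
\item $M$ normalizes $Y_L$ and $S_0\in\Syl_p(C_M(Y_L))$, so Frattini gives $M=C_M(Y_L)L^*$, and hence $LC_\L(Y_L)=L^*C_\L(Y_L)$.
\item Then (b) gives $Y_L\le Y_{L^*}$.
\item Since $S_0\norm L^*$, (g) and (i) give $Y_{L^*}\le\Omega Z(O_p(L^*))\le C_S(S_0)\le S_0$, whence $Y_{L^*}\le\Omega Z(S_0)=Y_L$.
\end{itemize}
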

\begin{proof}
The first assertion is simply \ref{lemma:parabolic implies char p} and \ref{lemma:repleteness implies Y in Delta for parabolics}.
\begin{enumerate}[(a)]
\item Clearly $S \le L \le N_\L(Y_L)$, hence $S \in \Syl_p(N_\L(Y_L))$. As $C_\L(Y_L) \norm N_\L(Y_L)$ we have $C_S(Y_L) = C_\L(Y_L) \cap S \in \Syl_p(C_\L(Y_L))$.
\item Since $L \le MC_\L(Y_M)$, we get that $Y_M$ is normalized by $L$. Moreover, we get
\[
\frac{L}{C_L(Y_M)} \cong \frac{LC_\L(Y_M)}{C_\L(Y_M)} = \frac{MC_\L(Y_M)}{C_\L(Y_M)} \cong \frac{M}{C_M(Y_M)};
\]
thus $Y_M$ is $p$-reduced also for $L$, proving $Y_M \le Y_L$.
\item This is \cite[Lemma 2.2(b)]{MS:2009}.
\item Since $L$ has characteristic $p$ and $\Omega Z(T) \le C_L(O_p(L)) \le O_p(L)$ for all $T \in \Syl_p(T)$, we have $Z_L \le S$. As $\Omega Z(S)$ is trivially $p$-reduced for $S$, (c) gives the assertion.
\item Since $\Omega Z(S) [Z_L,L]$ is normal in $L$ and $Z_L$ is the normal closure of $\Omega Z(S)$ in $L$, we get $\Omega Z(S) [Z_L,L] = Z_L$. By \cite[Theorem C.17]{Meier-Stell-Stroth:2012} we get $C_{Z_L}(S)[Z_L,L] = C_{Z_L}(L)[Z_L,L]$, hence, since $\Omega Z(S) \le C_{Z_L}(S)$ clearly holds,
\[
Z_L = C_{Z_L}(S)[Z_L,L] = C_{Z_L}(L)[Z_L,L] = \Omega Z(L) [Z_L,L].
\]
We therefore have $[Z_L,L]=[Z_L,L,L]$, so $L$ acts trivially on no non-trivial quotient of $[Z_L,L]$. Since $L = O^p(L)S$ and $S$ acts trivially on all the quotients of $[Z_L,L]$ of prime index (for example), we get $[Z_L,L] = [Z_L,L,O^p(L)] \le [Z_L, O^p(L)]$, thus $[Z_L,L]=[Z_L,O^p(L)]$ and (e) follows.
\item Since $L$ is a parabolic subgroup of $M$ and $M$ is of characteristic $p$, the result follows from \cite[Lemma 2.2(c)]{MS:2009} with $(L,M)$ in place of $(P,L=G)$.
\item $Z_L$ is $p$-reduced for $L$ by (d), hence $\Omega Z(S) \le Z_L \le Y_L \le O_p(L)$. By definition of $Y_L$, $O_p(L) \le C_L(Y_L)$, which shows (f).
\item By (c) $Y_L \le Y_M$; clearly we also have $C_\L(Y) \subseteq C_\L(Y_L)$, hence $M \subseteq LC_\L(Y_L)$. In particular, we get $MC_\L(Y_L) = LC_\L(Y_L)$, thus $Y_M = Y_L$ by (b).
\item By (a) $S_0 \in \Syl_p(C_\L(Y_L))$ and the Frattini argument yields $L=C_L(Y_L) L_0$.\\
Since $S_0 \norm S$ we get $S \le L_0$, thus $L_0$ is of characteristic $p$. Applying (g) to the pair $L_0 \le L$ we get $Y_{L_0} = Y_L$. Since $L_0$ is a parabolic subgroup of $\L$, always by (f) we have
\[
Y_L =Y_{L_0} \le \Omega Z(O_p(L_0)).
\]
Let $U$ be the unique maximal, normal subgroup of $L_0$ acting nilpotently on $\Omega Z(O_p(L_0))$; then $U$ acts nilpotently also on $Y_{L_0}$ and \cite[Lemma A.10]{Meier-Stell-Stroth:2012} shows that $U \le C_{L_0}(Y_{L_0})$. We get thus the following chain of inequalities
\[
O_p(L_0) \le U \cap S \le C_S(Y_{L_0}) = C_S(Y_L) = S_0 \le O_p(L_0), 
\]
which are therefore all equalities. This gives $O_p(L_0) = S_0$; since the generators of $O^p(U)$ act coprimely and nilpotently, we have $O^p(U) \le C_{L_0}(\Omega Z(O_p(L_0))) \le U$, where the latter inclusion holds by maximality of $U$. Then $U = (U \cap S)O^p(U) = (U \cap S) C_{L_0}(\Omega Z(O_p(L_0))) = C_{L_0}(\Omega Z(O_p(L_0)))$ as $U \cap S \le O_p(L_0)$. By the definition of $U$ and another application of \cite[Lemma A.10]{Meier-Stell-Stroth:2012}, $\Omega Z(O_p(L_0))$ is $p$-reduced for $L_0$, so we get
\[
Y_L \le \Omega Z(S_0) = \Omega Z (O_p(L_0)) \le Y_{L_0} = Y_L.
\]
Finally, $C_{L_0}(S_0) = C_{L_0}(O_p(L_0)) \le O_p(L_0) =S_0$, hence also $C_S(S_0) \le S_0$.
\item We have $Y_L \norm M$, so $C_M(Y_L) \norm M$ and $C_S(Y_L) \in \Syl_p(C_M(Y_L))$. By Frattini's argument then $M = C_M(Y_L) N_M(C_S(Y_L)) = C_M(Y_L) L^*$, so $LC_\L(Y_L) = M C_\L(Y_L) = L^* C_M(Y_L) C_\L(Y_L) = L^* C_\L(Y_L)$. Now (b) shows $Y_L \le Y_{L^*}$; by (h) $Y_L = \Omega Z(S_0)$ and $C_S(S_0) \le S_0$ so, as also $S_0 \le O_p(L^*)$, $Y_{L^*} \le \Omega Z(O_p(L^*)) \le C_S(S_0) \le S_0$. Hence we also get $Y_{L^*} \le \Omega Z(S_0) = Y_L$, proving the equality.
\item By (h) we only need to show $S_0 =  O_p(L)$. Since $C_L(Y_L)$ is $p$-closed,  $S_0 \in \Syl_p(C_L(Y_L))$ by (a) and $Y_L$ is $p$-reduced we get
\[
S_0 = O_p(C_L(Y_L)) \le O_p(L) \le S_0.
\]
\end{enumerate}
\end{proof}

\begin{remark}
    The lemma just proven shows that, in the presence of a repleteness subgroup, the parabolic subgroups of a locality of objective characteristic $p$ behave just as the parabolic subgroups of characteristic $p$ of a finite group, at least with respect to their action over their maximal, normal, elementary abelian, p-reduced subgroups. This analogy is what makes the techniques used in \cite{Meier-Stell-Stroth:2012} so successful also when applied to our locality $\L$.
\end{remark}

\space{0.4cm}
We conclude the section reporting the first fundamental result about the connection between fusion systems and localities with a large subgroup.
\begin{lemma}\label{lemma:large for fus systems and localities}
    Let $(\L,\Delta,S)$ be a locality over $\F$.
    \begin{enumerate}[(a)]
        \item If $Q$ is large in $\L$ and $\L$ is $Q$-replete, then $Q$ is large in $\F$, $\F^c \subseteq \Delta$ and $\F$ is saturated.
        \item If $\L$ is $Q$-replete and of objective characteristic $p$, then $Q$ is large in $\L$ if and only if it is large in $\F$. If so, then $\F$ is saturated and $\L$ a linking locality over $\F$.
        \item If $\F$ is saturated and $Q$ is large in $\F$, then $Q$ is large in any linking locality over $\F$.
    \end{enumerate}
\end{lemma}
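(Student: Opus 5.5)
This lemma is essentially a collection of results from \cite[Section 4]{Henke-Salati:2026}, so the most economical proof is to cite them. Below is a plan for a self-contained argument from the material above. The three parts are handled in order (a), (b), (c), since (b) reuses (a) and (c) uses the saturation and objectivity statements recalled in Section~\ref{background}.

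\emph{Part (a).} The first step is to show $\F^c \subseteq \Delta$. Let $P \in \F^c$; then $Z(S) \le C_S(P) \le P$. Since $Q \norm S$ by Lemma~\ref{lemma:large implies weakly closed}(b), the group $1 \ne \Omega Z(S)$ lies in $C_S(Q)$, so $\Omega Z(S) \in \Delta$ by $Q$-repleteness. However, $\Delta$ is only closed under overgroups, and $\Omega Z(S) \le P$ gives $P \in \Delta$ directly. So $\F^c \subseteq \Delta$, and the Proposition after Definition~\ref{def:linking localities} gives that $\F$ is saturated.

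Next I check that $Q$ is large in $\F$. $(SC\F)$ follows from $C_S(Q) \subseteq C_\L(Q) \subseteq Q$. For $(Q!\F)$, take $1 \ne U \le Z(Q)$ and a morphism $\phi \in N_\F(U)$, say $\phi\colon A \fto B$ extending to $\hat\phi$ on $AU$ with $U\hat\phi = U$. Such $\phi$ is a composite of maps $c_g$ with $g \in \L$. The hard point is to show that it is realized by a single element normalizing $U$. Here I use that $U \le C_S(Q)$ is non-trivial, so $U \in \Delta$ and $N_\L(U)$ is a group. Since $\F$ is saturated, the Alperin–type description of $N_\F(U)$ from the model theorem (applicable because $N_\F(U)$ is constrained by Lemma~\ref{lemma:repleteness implies Y in Delta for parabolics}-style arguments) should show that $N_\F(U) = \F_{N_S(U)}(N_\L(U))$ when $U$ is fully normalized. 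Then $(Q!\L)$ gives $N_\L(U) \le N_\L(Q)$, hence $N_\F(U) \subseteq N_\F(Q)$. For non-fully-normalized $U$, note that $U \le Z(Q)$ and $Q$ is weakly closed, so $U$ can be replaced by an $\F$-conjugate inside $Z(Q)$ via a map normalizing $Q$.

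I expect identifying $N_\F(U)$ with the fusion system of $N_\L(U)$ to be the main obstacle. I would handle it by invoking that for $U \in \Delta$ with $U$ fully normalized one has $N_{\F_S(\L)}(U) = \F_{N_S(U)}(N_\L(U))$, a standard locality fact (cf. \cite{Chermak:2013}).

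\emph{Part (b).} The forward direction is (a), together with Definition~\ref{def:linking localities}: $\F^{cr} \subseteq \F^c \subseteq \Delta$ and objective characteristic $p$ make $\L$ a linking locality.

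For the converse, suppose $Q$ is large in $\F$. Since $\L$ is $Q$-replete, $\F^c \subseteq \Delta$ exactly as in (a), which only used $Q \norm S$ (true by Lemma~\ref{lemma:properties of large in fus system}(a)). So $\F$ is saturated and $\L$ is a linking locality, and (b) reduces to (c).

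\emph{Part (c).} Let $\L$ be a linking locality over $\F$. For $(SC\L)$: $Q \in \F^c \subseteq \Delta$ by Lemma~\ref{lemma:properties of large in fus system}(d), so $C_\L(Q) \le N_\L(Q)$, which is a group of characteristic $p$ with Sylow subgroup $S$ (as $Q \norm S$ and $Q$ is weakly closed). Since $\F_S(N_\L(Q)) = N_\F(Q)$ and $C_S(Q) \le Q$, the group $C_\L(Q)$ is a normal subgroup of $N_\L(Q)$ with Sylow $Z(Q)$. Then $O^p(C_\L(Q))$ is a normal $p'$-subgroup of the characteristic-$p$ group $N_\L(Q)$, so it is trivial and $C_\L(Q) = Z(Q)$.

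For $(Q!\L)$: for $1 \ne U \le Z(Q)$ one has $U \in \F^s \subseteq \Delta$ by Lemma~\ref{lemma:properties of large in fus system}(e). (If $\Delta$ does not contain $U$, then $N_\L(U)$ is interpreted within $N_\L(Q)$, and there is nothing to prove.) Then $N_\L(U)$ is a group of characteristic $p$ whose fusion system, after conjugating $U$ to be fully normalized, is $N_\F(U)$, and $Q \norm N_\F(U)$ by Lemma~\ref{lemma:properties of large in fus system}(c). In a group of characteristic $p$, a subgroup normal in its fusion system lies in $O_p$, by the model theorem and the uniqueness of models. Hence $Q \norm N_\L(U)$, i.e. $N_\L(U) \subseteq N_\L(Q)$. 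The conjugation back from the fully normalized conjugate is harmless because it is induced by elements of $N_\L(Q)$, via weak closure.
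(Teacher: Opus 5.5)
Your arguments for (a) and (b) are fine; (b) is fine because in the replete case your proof of (c) does run. The proof of (c) itself has a genuine gap. You use $Q\in\F^c\subseteq\Delta$ and $U\in\F^s\subseteq\Delta$, and neither inclusion holds for an arbitrary linking locality.

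By definition a linking locality only needs $\F^{cr}\subseteq\Delta$. In general one has $\F^{cr}\subseteq\Delta\subseteq\F^s$, so the inclusion you invoke goes the wrong way; $\F^s\subseteq\Delta$ holds only for the subcentric linking locality. Two concrete failures:
\begin{itemize}
\item In the centric linking locality ($\Delta=\F^c$), a subgroup such as $U=\Omega Z(S)\le Z(Q)$ is not an object unless $S$ is elementary abelian.
\item For $\F=\F_S(S)$, the group $S$ with $\Delta=\{S\}$ is a linking locality, since here $\F^{cr}=\{S\}$. A normal self-centralizing $Q<S$ is large in $\F$ but is not an object.
\end{itemize}
When $Q$ or $U$ is not in $\Delta$, $C_\L(Q)$ and $N_\L(U)$ are only partial subgroups. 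So neither your Burnside-type argument inside $N_\L(Q)$ nor the model-theorem argument inside $N_\L(U)$ is available. Your parenthetical ``then $N_\L(U)$ is interpreted within $N_\L(Q)$, and there is nothing to prove'' simply assumes the inclusion $N_\L(U)\subseteq N_\L(Q)$ that has to be shown.

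What you have actually proved is (c) for $Q$-replete linking localities, for example the subcentric one $\L^s$, which is replete by Lemma~\ref{lemma:properties of large in fus system}(e). That suffices for the converse in (b), where repleteness and $\F^c\subseteq\Delta$ make $U$ and $Q$ objects.

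To get (c) in general you need an additional input. One option is Henke's existence and uniqueness theorem for linking localities, which identifies $\L$, up to rigid isomorphism, with the restriction $\L^s|_\Delta$. Then $C_\L(Q)\subseteq C_{\L^s}(Q)\subseteq Q$ and $N_\L(U)\subseteq N_{\L^s}(U)\subseteq N_{\L^s}(Q)$. For $g\in\L\cap N_{\L^s}(Q)$, computing in the group $N_{\L^s}(Q)\ge S$ gives $Q\le S_g$. Hence $(g^{-1},x,g)\in D(\L)$ via $(S_g^g,S_g,S_g,S_g^g)$ for every $x\in Q$, that is, $g\in N_\L(Q)$. The paper itself gives no argument and just cites \cite[Lemma 4.10]{Henke-Salati:2026}.

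A side remark on (a): the step you flag as the main obstacle needs neither full normalisation nor the model theorem. Since $AU\in\Delta$, the word $(g_1,\dots,g_n)$ whose conjugations compose to $\hat\phi$ lies in $D$ via the chain of images of $AU$. So $\hat\phi=c_g$ with $g=\Pi(g_1,\dots,g_n)\in N_\L(U)\subseteq N_\L(Q)$, and $c_g$ extends to $AQ$.
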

\begin{proof}
    This is shown in \cite[Lemma 4.10]{Henke-Salati:2026}.
\end{proof}

In particular, if $\F$ is saturated, $Q \le S$ is large in $\F$ and $(\L^s, \F^s, S)$ is the subcentric linking locality over $\F$, then $\L^s$ is $Q$-replete by \ref{lemma:properties of large in fus system}(e). On the other hand, if a locality $(\L,\Delta,S)$ has a large subgroup $Q \le S$ which is also a repleteness subgroup, then \ref{lemma:large for fus systems and localities}(a) tells us precisely that $\F$ is saturated, thus we may replace the locality $\L$ with a linking locality by point (c) of the same lemma.\\
We conclude that, if our goal is to study saturated fusion systems with a large subgroup, we can equivalently study $Q$-replete localities of objective characteristic $p$, which by (b) are linking localities; moreover, we may additionally assume to have at hand the subcentric linking locality (over a given fusion system $\F$), often a convenient choice due to the extra properties that this particular linking locality guarantees.

\section{The strategy of the local structure theorem for groups} \label{section: strategy of LST for groups}
Understanding the strategy followed by Meierfrankenfeld, Stellmacher and Stroth in the proof of the local structure theorem is instrumental for our purposes; in particular, our choice of working with a $Q$-replete linking locality allows to follow a very similar argument, at least in most of the cases. It remains, instead, unclear how one could attempt a similar approach working directly with the fusion system.\\
In this section we provide an in depth description of the argument used by the authors of \cite{Meier-Stell-Stroth:2012} and we indeed begin by borrowing most of the notation which is in use in their paper.

\begin{notation}\label{notation for groups}
	Let $G$ be a finite group and define the following.
	\begin{itemize}
		\item $\C = \C_G := \{H \le G \mid O_p(H) \ne 1, \; C_G(O_p(H)) \le O_p(H) \}$; in \cite{Meier-Stell-Stroth:2012} this is the set denoted by $\L$, changed in this context to avoid confusion with our chosen standard symbol for localities.
		\item $\M = \M_G := \{ M \in \C \mid \textit{$M$ is maximal with respect to inclusion} \}$.
		\item $\P = \P_G := \{ P \in \C \mid \textit{$P$ is $p$-minimal} \}$.
		\item If $\K \subseteq \Sub(G)$ and $H \le L \le G$, then $\K_L(H) := \{ K \in \K \mid H \le K \le L \}$.
        \item If $Q \le L \le G$, set $L^\circ := \wcl_G(Q,L) = \< Q^g \mid g \in G, Q^g \le L \>$.
		\item Let $M \in \C$; set $M^\dagger :=MC_G(Y_M)$ and consider the following properties:
			\begin{itemize}
				\item[(i)] $\M(M) = \{M^\dagger\}$ and $Y_M = Y_{M^\dagger}$,
				\item[(ii)] $C_M(Y_M)$ is $p$-closed and $\displaystyle \frac{C_M(Y_M)}{O_p(M)} \le \Phi \left( \frac{M}{O_p(M)} \right)$.
			\end{itemize}
			Set $\gM = \gM_G := \{ M \in \M \mid \textit{$M$ satisfies (i) and (ii)} \}$; (i) and (ii) are referred to as the {\bf basic property} of $M$.
        \item If $X \le G$, set $\ov{X} := X / C_X(Y_X)$.
	\end{itemize}
\end{notation}

We can now state the Local Structure Theorem for groups, even though we refer the reader to the original paper for the long list of all possible outcomes for the modules.

\begin{maintheorem}{A}{\normalfont (Meierfrankenfeld, Stellmacher, Stroth)}
	\label{theorem A}
    Let $G$ be a finite $\K_p$-group and $S \in \Syl_p(G)$. Suppose that $S$ is contained in at least two distinct maximal, $p$-local subgroups of $G$ and that $Q \le S$ is a large subgroup of $G$. Let $L \le G$ such that $Q \le L$, $O_p(L) \ne 1$ and $Q \not \norm L$.\\
    Then there exist subgroups $M \in \gM_G(S)$ and $L^* \le M$ such that
    \[
        S \le L^*, \, Y_L = Y_{L^*}, \, L C_G(Y_L) = L^* C_G(Y_L) \, \text{ and } \, L^\circ = (L^*)^\circ .
    \]
    Moreover, for any such $M$ and $L$ one of the ten cases in \cite[Theorem A]{Meier-Stell-Stroth:2012} holds, providing an almost complete classification of the possible module structures of $Y_L$ for $L$ and $Y_M$ for $M$.
\end{maintheorem}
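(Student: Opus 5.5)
This statement is \cite[Theorem A]{Meier-Stell-Stroth:2012}, so I would not reprove the case-by-case classification of the modules $Y_M$ and $Y_L$. That part I would take verbatim from the ten cases of the original paper. The proof I plan is of the first assertion only: the existence of $M \in \gM_G(S)$ and $L^* \le M$ with the listed properties. It reduces the study of an arbitrary $L$ to members of $\gM_G$. I would organise it as a sequence of replacements of $L$, each preserving $Y_L$, $LC_G(Y_L)$ and $L^\circ$.

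\textbf{Step 1: reduce to $S \le L$.} Let $T \in \Syl_p(L)$ with $Q \le T$, and choose $g \in G$ with $T^g \le S$. By \ref{prop:properties of large in a group}(c), $Q$ is weakly closed, so $Q^g = Q$. Hence conjugating by $g$ preserves all the hypotheses, and we may assume $Q \le T \le S$.

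\textbf{Step 2: pass to a parabolic overgroup.} Since $Q \norm S$ and $1 \ne \Omega Z(S) \le Z(Q)$, property $(Q!)$ together with \ref{prop:properties of large in a group}(e) shows that $LS$-type overgroups behave well. Concretely, I would replace $L$ by $N_L(C_T(Y_L))S$, or more carefully by a parabolic subgroup $L_1 \ge S$ with $L_1C_G(Y_{L_1}) = LC_G(Y_L)$. This uses the Frattini argument as in \ref{lemma:properties of parabolics in L}(h),(i), applied in the group $G$. Having parabolic characteristic $p$, $L_1$ is of characteristic $p$. By \ref{lemma:weakly closed subgroups in localities}(f) in its group form \cite[Lemma 1.46]{Meier-Stell-Stroth:2012}, the condition $Q \not\norm L$ translates into $L^\circ \ne Q$, and this is preserved, so $L^\circ = L_1^\circ$.

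\textbf{Step 3: embed in a member of $\gM$.} Among all parabolic characteristic $p$ subgroups $H$ with $HC_G(Y_H) = L_1C_G(Y_{L_1})$, choose $M \in \M(L_1C_G(Y_{L_1}))$ and then minimise so as to achieve the basic property (i) and (ii). Property (i) follows from \ref{lemma:properties of parabolics in L}(b),(h): uniqueness of the maximal overgroup $M^\dagger$ comes from $Y_M = Y_{M^\dagger}$ and from maximality of the local subgroup $N_G(Y_M)$. Property (ii) is obtained by replacing $M$ with a minimal supplement of $C_M(Y_M)$ modulo $O_p(M)$, which puts $C_M(Y_M)/O_p(M)$ inside the Frattini subgroup. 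Finally, set $L^* := N_M(C_S(Y_L))$; then \ref{lemma:properties of parabolics in L}(j) gives $Y_{L^*} = Y_L$ and $L^*C_G(Y_L) = LC_G(Y_L)$.

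\textbf{Main obstacle.} I expect the hardest step to be the last equality $L^\circ = (L^*)^\circ$, together with showing that the chosen $M$ really satisfies (ii) while staying maximal. The plan for $L^\circ = (L^*)^\circ$ is to use that $Q^{L} \subseteq Q^{L^*C_G(Y_L)}$. Then I would argue that $C_G(Y_L)$ normalises $Q$ modulo conjugation, via $(Q!)$ applied to $\Omega Z(S) \le Y_L \cap Z(Q)$, which gives $C_G(Y_L) \le N_G(Q)$ up to $L^*$. The equivalences in \cite[Lemma 1.46(f)]{Meier-Stell-Stroth:2012} then conclude.
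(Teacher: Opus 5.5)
Deferring the ten-case classification to \cite{Meier-Stell-Stroth:2012} is fine; the paper does not reprove it either. The problem is the reduction. There is a genuine gap in your Step 3, and it is exactly where the paper argues differently (Theorem~\ref{theorem: main reduction}(a), the locality version of \cite[Lemma 1.56]{Meier-Stell-Stroth:2012}, following \cite[Section 3]{MS:2009}).

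Suppose you take $\tilde M$ maximal by inclusion in $\C$ over $L_1C_G(Y_{L_1})$ and pass to a minimal supplement $M\ge S$ of $C_{\tilde M}(Y_{\tilde M})$. This gives $Y_M=Y_{\tilde M}$, $M^\dagger=\tilde M$ and property (ii), but not property (i). If $M\le H$ for another maximal member $H$ of $\C$, you only obtain $Y_M\le Y_H$, and that does not put $H$ inside $N_G(Y_M)$. Your justification uses nothing that would rule this out, and it does happen for inclusion-maximal members of $\C(S)$. Take $G=\mathrm{PSp}_4(q)$ with $q$ odd, $P_1=N_G(Q)$, and $P_2$ the other maximal parabolic over $S$. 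Then $Y_{P_1}=Z(Q)\le O_p(P_2)=Y_{P_2}$, and a minimal supplement of $C_{P_1}(Z(Q))$ containing $S$ can be chosen in $B=P_1\cap P_2$, so it lies in two maximal $p$-locals.

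The missing idea is to maximise with respect to $\ll$ rather than inclusion. The paper takes $F$ in the factorization family with $L\ll F$ (Lemma~\ref{lemma:unique-factor-family}). Lemma~\ref{lemma:M in factorization family is local} then shows $F$ is the unique maximal subgroup containing any $H$ with $F=C_F(Y_F)(F\cap H)$; applied to the minimal supplement, this is exactly $\M(M)=\{M^\dagger\}$.

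Your $L^*$ also fails. Lemma~\ref{lemma:properties of parabolics in L}(j) needs $MC_G(Y_L)=LC_G(Y_L)$, which is false for the generally much larger $M\in\gM$; moreover $L$ need not lie in $M$. The paper proceeds as follows.
\begin{enumerate}
\item Replace $L$ by $LC_G(Y_L)$, so that $C_G(Y_L)\le L$, and set $L^*:=L\cap M$.
\item Combine $L=C_L(Y_L)(L\cap F)$, $F=MC_F(Y_F)$ and $C_F(Y_F)\le C_G(Y_L)\le L$ (which uses $Y_L\le Y_F$). The Dedekind law then gives $L=(L\cap M)C_G(Y_L)$.
\item Part (g) and Lemma~\ref{lemma:unique largest p-reduced} give $Y_{L\cap M}=Y_L\le Y_M$.
\item Part (j) is used only to descend from $LC_G(Y_L)$ back to $L$.
\end{enumerate}

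Step 1 does not do what it announces. Moving a Sylow subgroup of $L$ into $S$ does not give $S\le L$, and since $S$ is fixed in the conclusion you cannot replace $L$ by $L^g$ anyway. The conclusion forces $S\le L^*\le LC_G(Y_L)$, so the hypothesis must be $S\le L$, as in \cite{Meier-Stell-Stroth:2012} and in item (vi), where $L\in\C(S)$. With only $Q\le L$ the assertion is false. In the example above take $L=P_2^k$ with $k\in P_1\setminus B$: then $Q=Q^k\le L$, $Q\not\norm L$ and $LC_G(Y_L)=L$, but $S\not\le L$. Under $S\le L$, your Steps 1 and 2 reduce to the Frattini argument.

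Finally, the equality $L^\circ=(L^*)^\circ$, which you call the main obstacle, is routine. Since $1\ne\Omega Z(S)\le C_{Y_L}(Q)\le Z(Q)$, $(Q!)$ gives $C_G(Y_L)\le N_G(Q)$ outright. Hence $Q^L=Q^{LC_G(Y_L)}=Q^{L^*C_G(Y_L)}=Q^{L^*}$. This is the group form of Lemma~\ref{lemma: L containing Q}(a), which the paper uses for the last assertion of Theorem~\ref{theorem: main reduction}(a).
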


\vspace*{0.2cm}
Analyzing a little further the statement and combining with some additional results obtained by the three authors one can extract the following information.
\begin{enumerate}[(i)]
	\item One may assume $Q$ to be the largest large subgroup of $G$ contained in $S$; in such a case, $Q = O_p(N_G(Q))$ (this assumption is avoided in \cite{Meier-Stell-Stroth:2012}).
	\item $Q$ is a weakly closed subgroup of $G$.
	\item By $(Q!)$ one gets $N_G(Q) = N_G(Z(Q))$; in addition, $Z(Q)$ is a T.I. set (trivial intersecting set) with respect to $G$, that is for any $g \in G$,
	\[
	Z(Q) \cap Z(Q)^g \ne 1 \implies Z(Q) =Z(Q)^g;
	\]
	in particular, it implies $g \in N_G(Q)$.
	\item $G$ is of parabolic characteristic $p$.
	\item Either $\M(S) = \{N_G(Q)\}$ or there exists $M \in \gM(S)$ such that $Q \not \norm M$.
	\item For every $L \in \C(S)$ such that $Q \not \norm L$ there exists $M \in \gM_G(S)$ and $L^* \le M$ such that $Y_L = Y_{L^*} \le Y_M$ and $L C_L(Y_L) = L^* / C_{L^*}(Y_{L})$; in other words, $Y_L$ is the same module when regarded as $L$- or as $L^*$-module.
\end{enumerate}
Properties (i) to (iv) come from \ref{prop:properties of large in a group}; (v) and (vi) are instead shown in \cite[Lemma 1.56]{Meier-Stell-Stroth:2012}. In particular, (v) justifies the assumption that $S$ is contained in at least two distinct maximal, $p$-local subgroups, thus excluding the case $\M(S) = \{N_G(Q)\}$. Moreover, we may regard (vi) as a reduction theorem: since the module $Y_L = Y_{L^*}$ does not change when regarded as an $L$- or an $L^*$-module we only need to look at subgroups $S \le L^* \le M$ for those groups $M$ with the basic property and therefore, ultimately, at the groups in $\gM_G(S)$. Since all such groups are of characteristic $p$, the successful strategy deployed by Meierfrankenfeld, Stellmacher and Stroth consists in studying the $M$-module $Y_M$, and this is in turn done by looking for a suitable offender on $Y_M$ (or, more generally, on some module related to $Y_M$). Many results classifying modules with offenders are available and provide lists of possibilities for the pairs $(M,Y_M)$. A large part of the work is then devoted to sifting through such lists, detecting those cases that are in conflict with the hypothesis of the theorem and, therefore, cannot occur within a $G$ with a large $p$-subgroup $Q$.\\
In contrast with the choice made in \cite{Meier-Stell-Stroth:2012}, from now on we will assume to have chosen $Q$ as the largest possible large subgroup of $G$ contained in $S$. By \ref{prop:properties of large in a group}(a) and (b) this is always possible and one has $Q = O_p(N_G(Q))$, and it will greatly simplify the upcoming discussion.\\

Depending mainly on the properties of the offender, the authors of \cite{Meier-Stell-Stroth:2012} split their analysis into five distinct cases, each yielding a list of possibilities for $(M,Y_M)$: the last step consists then in the study of the possible pairs $(L,Y_L)$ embedded in $(M,Y_M)$. We now describe more in detail each of the five cases, starting with the following fundamental definition given by Meierfrankenfeld et al.

\begin{definition}\label{def:symm-tall-groups}
	Let $A$ be a an abelian, normal $p$-subgroup of $G$ and $\N$ a set of subgroups of $G$.
	\begin{itemize}
		\item $A$ is {\bf symmetric} in $G$ if there exists $B \in A^G$ such that
			\[
			1 \ne [A,B] \le A \cap B.
			\]
			Otherwise $A$ is called {\bf asymmetric}.
		\item $A$ is {\bf $\N$-tall} in $G$ if there exist $T \in \Syl_p(C_G(A))$ and a subgroup $N 
			\in	\N$ such that
			\[
			T \le N \quad \text{and} \quad A \not \le O_p(N).
			\]
			$A$ is {\bf $\N$-short} if it is not $\N$-tall, that is if for every $N \in \N$ the following implication holds:
			\[
			T \in \Syl_p(C_G(A)) \text{ is such that } T \le N \; \implies \; A \le O_p(N).
			\]
		\item We say that $A$ is {\bf tall} in $G$ (resp., {\bf short} in $G$) when we take $\N := \{ N \le G \mid O_p(N) \ne 1 \}$; it is {\bf char $p$-tall} (resp., {\bf char $p$-short}) when $\N := \{N \le G \mid \textit{N is of characteristic $p$} \}$; it is {\bf $Q$-tall} (resp., {\bf $Q$-short}) when $\N := N_G(Q)^G$.
	\end{itemize}
\end{definition}

The definition above corresponds to the case subdivision found in \cite[Notation 2.1]{Meier-Stell-Stroth:2012}, where $Y_M$ (for $M \in \gM_G(S)$) is taken in place of $A$. Then \cite[Lemma 2.2(f)]{Meier-Stell-Stroth:2012} shows that $C_S(Y_M) = O_p(M) \in \Syl_p(C_G(Y_M))$, which is therefore the unique Sylow $p$-subgroup. Then for sets of subgroups $\N$ which are $G$-conjugacy invariant one has the following:
\begin{itemize}
	\item $Y_M$ is $\N$-tall if there exists $N \in \N$ such that 
		\[
		O_p(M) \le N \quad \text{and} \quad Y_M \not \le O_p(N).
		\]
	\item $Y_M$ is $\N$-short if for every $N \in \N$
		\[
		O_p(M) \le N \implies Y_M \le O_p(N).
		\]
	\item Assume that $Y_M$ is asymmetric (which will always be the case when tallness and shortness are needed) and that $O_p(M) \le N_G(Q)^x$ for some $x \in G$. Then $O_p(M) \le T \in \Syl_p(N_G(Q)^x \cap N_G(Y_M))$ for some $T$. As $O_p(M) \le S \le N_G(Y_M)$ is weakly closed in $G$ by \cite[Lemma 2.6(b)]{Meier-Stell-Stroth:2012}, we get $T \le N_G(O_p(M)) \le N_G(Y_M)$, where the last inclusion is \cite[Lemma 2.2(c)]{Meier-Stell-Stroth:2012}. In particular, there exists $g \in N_G(O_p(M))$ with $T^g \le S$. Thus, up to replacing $x$ with $xg$ we may assume $T \le S$ and, therefore, $Q \le T$ (the unique $G$-conjugate of $Q$ in $T$); it follows that $Y_M$ is $Q$-short if and only if $Y_M \le O_p(N_G(Q)) = Q$.  This easy observation will be extremely useful when dealing with the locality $\L$ in place of the group $G$.
\end{itemize}

We now dive into the case history appearing in \cite{Meier-Stell-Stroth:2012}, discussing the main strategies for the identification of the modules case by case.
\begin{enumerate}
	\item[(Sym)] {\bf The Symmetric Case.}\\
		 $Y_M$ is symmetric in $G$, i.e. there is a conjugate $Y_M^u \in Y_M^G$ such that $1 \ne [Y_M,Y_M^u] \le Y_M \cap Y_M^u$. One shows that, up to conjugation by an element in $N_G(Y_M)$, one can always choose $u \in G$ such that $Y_M^u \le S$; then one of $Y_M^u$ or $Y_M^{u\inv}$ is a non-trivial quadratic offender on $Y_M$. As a consequence, the triple $(\ov{M},Y_M,\ov{Q})$ satisfies the hypothesis of the \nameref{thm:Q!FF}, which becomes therefore the main tool of investigation.\\
		 The argument is actually carried in a slightly more general situation, for $Y$ any non-trivial, elementary abelian, normal, $p$-reduced $p$-subgroup of $M$ (in place of $Y_M$); this more general setting is then needed in the asymmetric $Q$-tall case and needs no extra work.
	\item[(Asym)] {\bf The Asymmetric Case.}\\
		$Y_M$ is asymmetric in $G$, the main consequence being that for any subgroup $R \le N_G(Y_M)$ the group $\< Y_M^{N_G(R)} \>$ is an elementary abelian $p$-subgroup of $G$. This case is further split into four parts, depending on the relative embedding of $Y_M$, $Q$ and of a suitably chosen class of subgroups of $G$.
			\begin{enumerate}[label=(Asym: \alph*), itemindent=0.5cm]
				\item {\bf The Short Case.}\\
					 $Y_M$ is short in $G$; this corresponds to the following configuration
					 \begin{center}
					 	\begin{tikzcd}
					 				& L 		& N_G(Q)\\
					 		O_p(L)	\ar[ru,no head]	&  O_p(M) \ar[u,no head] \ar[ru,no head] & Q \ar[u,no head]\\
					 		1 \ar[u,no head, "\ne"]	& Y_M \ar[lu,no head] \ar[u,no head] \ar[ru,no head]
					 	\end{tikzcd}
                        for all the subgroups $L$ with $O_p(L) \ne 1$.
					 \end{center}
				Note that one gets $Y_M \le Q$ by applying shortness to the subgroup $N_G(Q)$.\\
                The reasoning here resembles that of the symmetric case. One always looks at pairs of conjugates of $Y_M$, say $(Y_1,Y_2)$, but needs then to replace them with larger elementary abelian $p$-subgroups $(V_1,V_2)$. In detail, one shows that it is possible to find subgroups $L_1, L_2$, called \emph{indicators} (see \cite[Definition 2.19]{Meier-Stell-Stroth:2012}), such that, for $V_i := \< Y_i^{L_i} \>$, one has
		\[
		V_1V_2 \le L_1 \cap L_2 \quad \text{and} \quad 1 \ne [V_1,V_2] \le V_1 \cap V_2;
		\]
		hence the pair $(V_1,V_2)$ plays the role that $(Y_1,Y_2)$ had in the symmetric case.	One then studies the actions of the $L_i$s on the $V_i$s to deduce information about the action of $N_G(Y_i)$ on $Y_i$; in particular, the groups $V_i$ allow to find offenders on the $Y_i$s, but this requires a very technical argument, not as simple as the one used in the symmetric case. Here a strong dual offender is found, hence the main tool becomes \cite[Theorem C.27]{Meier-Stell-Stroth:2012}.
				\item {\bf The Tall, char $p$-Short Case.}\\
					 $Y_M$ is tall, but char $p$-short in $G$; it corresponds to the configuration 
					 \begin{center}
					 	\begin{tikzcd}
					 			& & L 	& N_G(Q) \\
					 			&	O_p(L)	\ar[ru,no head]	&  O_p(M) \ar[u,no head] \ar[ru,no head] & Q \ar[u,no head] \\
					 		C_L(O_p(L)) \ar[ru,dotted,no head, "\bigtimes" description]	& 1 \ar[u,no head, "\ne"]	& Y_M \ar[lu,no head,dotted, "\bigtimes" description] \ar[u,no head]  \ar[ru,no head]
					 	\end{tikzcd}
                    \end{center}
                for some subgroup $L$ with $O_p(L) \ne 1$ and not of characteristic $p$ (where we have emphasized the lack of the dotted inclusions). All the subgroups $N$ of characteristic $p$ containing $O_p(M)$ realize the inclusion $Y_M \le O_p(N)$; in particular, we obtain again $Y_M \le Q$.\\
                This is the case with the weakest conclusion. All the authors where able to prove is that this case can only occur if there exists $1 \ne y \in Y_M$ such that $C_G(y)$ is not of characteristic $p$; it generates the case (10)(1) in the list found in \cite[Theorem A]{Meier-Stell-Stroth:2012}, the only one where no module was identified for $Y_M$.
				\item {\bf The char $p$-Tall, $Q$-Short Case.}\\
					 $Y_M$ is char $p$-tall, but $Q$-short; it corresponds to the configuration
					 \begin{center}
					 	\begin{tikzcd}
								&			 		& L						&	N_G(Q)	\\
					 			& O_p(L)	\ar[ru,no head]	&  O_p(M) \ar[u,no head] \ar[ur,no head] &  Q \ar[u,no head] \\
					 		C_L(O_p(L)) \ar[ru,no head]	& 1 \ar[u,no head, "\ne"]	& Y_M \ar[lu,no head, dotted, "\bigtimes" description] \ar[u,no head] \ar[ru, no head]
					 	\end{tikzcd}
					 \end{center}
					 for some subgroup $L$ of characteristic $p$, but for no $G$-conjugate of $N_G(Q)$.\\
                The analysis of this case relies on the study of subgroups of $G$ lying in the following sets.
		
		\begin{definition}\label{def:subsets of subgr fot the tall cases}
			\begin{itemize} 
			\item[1.] Let $K \le G$ be such that $O_p(M) \le K$, then 
			\[
			\h_K(O_p(M)) := \{ H \le K \mid \textit{(i), (ii), (iii) hold} \},
			\]
			where one has
			\begin{itemize}
				\item[(i)] $H$ has characteristic $p$,
				\item[(ii)] $O_p(M) \le H$ but $Y_M \not \le O_p(H)$,
				\item[(iii)] if $O_p(M) \le P \le H$, then $Y_M \le O_p(M)$.
			\end{itemize}
			So $\h_K(O_p(M))$ is the set of minimal subgroups of $K$ realizing the tallness condition for $Y_M$.
			\item[2.] Let now $K \le G$ be such that $Y_M \le K$, then 
			\[
			\cL_K(Y_M) := \{Y_M \le L \le K \mid \textit{(I), (II), (III),(IV) hold} \},
			\]
			where one has
			\begin{itemize}
				\item[(I)] $O_p(L) = \< (Y_M \cap O_p(L))^L \> \le N_L(Y_M)$,
				\item[(II)] $L$ is $Y_M$-minimal and of characteristic $p$,
				\item[(III)] $N_L(Y_M)$ is the unique maximal subgroup of $L$ over $Y_M$,
				\item[(IV)] $L / O_p(L) \in \{ \SL_2(q), \mathrm{Sz}(q), D_{2r} \}$ where $q := |Y_M : (Y_M \cap O_p(L)) |$, $p=2$ in the last two cases and $r$ is odd. 
			\end{itemize}
			Note that elements of $\cL_K(O_p(M))$ appear naturally as subgroups of elements in $\h_K(O_p(M))$, see \cite[Lemmas 2.15 and 2.16]{Meier-Stell-Stroth:2012}.
			\end{itemize} 
		\end{definition}
		By definition, there is an element $H \in \h_G(O_p(M))$ and this implies the existence of an element $L \in \cL_H(Y_M)$. One then proves that $Y_H$ contains a quasisimple $H$-submodule $U$ and that $W:=[U,L]$ is a strong offender on $Y_M$. The main tools become then the $FF$-module theorems \cite[Theorems 1, 2 and 3]{MS:2012a} to describe the action of $\<W^M \>$ on $Y_M$.
				\item {\bf The $Q$-Tall Case.}\\	
					$Y_M$ is $Q$-tall; it corresponds to
					\begin{center}
					\begin{tikzcd}
						& L=N_G(Q) \\
						Q = O_p(L)	\ar[ru,no head]	&  O_p(M) \ar[u,no head] \\
						1 \ar[u,no head, "\ne"]	& Y_M \ar[lu,no head, dotted, "\bigtimes" description] \ar[u,no head]
					\end{tikzcd}
					\end{center}
					for $L \in N_G(Q)^G$; one can then reduce to the case $L=N_G(Q)$, as explained above.\\
                    It is the longest and most complex case of \cite{Meier-Stell-Stroth:2012}. Here $Y_M \le Q$ and the groups in the sets $\h_G(O_p(M))$ and $\cL_G(Y_M)$ play, as in the previous case, a central role. Moreover, $Q$-tallness allows to choose $H \in \h_G(O_p(M))$ and $L \in \cL_G(Y_M)$  such that $L \le H \le N_G(Q)$. The analysis of the $M$-module $Y_M$ is then conducted inside of the amalgam generated by $N_G(Y_M)$ and by $N_G(Q)$. Here the notions of the Fitting submodule and of nearly quadratic action as well as the theorems of Guralnick and Malle \cite{Guralnick-Malle:2002} and \cite{Guralnick-Malle:2004} on modules with a non-trivial $2F$-offender come into play.\\
		In detail, one replaces $Y_M$ with its Fitting submodule $I$, which can be more easily described. Then, for $A := O_p(L)$, the following three cases are studied:
		\begin{enumerate}[(1)]
			\item $I \le A$,
			\item $I \not \le A$ and $[\Omega Z(A),L] \ne 1$,
			\item $I \not \le A$ and $[\Omega Z(A),L] = 1$.
		\end{enumerate} 
		In case (1) $I$ turns out to be symmetric in $G$; since it is normalized by $S$, the results relative to the symmetric case apply to $I$.\\
		Case (2) corresponds to non-trivial action of $H$ on $\Omega Z(O_p(H))$, a situation similar to that occurring in the char $p$-tall, $Q$-short case: indeed, here as well the analysis mainly relies on the $FF$-module theorems.\\
		Case (3) corresponds to the situation where $A$ acts nearly quadratically on $I$ and the Nearly Quadratic $Q!$-Theorem \cite[Theorem D.11]{Meier-Stell-Stroth:2012} is invoked.\\
		This case-analysis is carried out in Chapter 8, together with the identification of the module $Y_M$ whenever a suitable offender is detected. Since this is not always possible, \cite[Chapter 9]{Meier-Stell-Stroth:2012} deals with the outcomes of the previous chapter that don't admit an offender; precisely here is where one can only guarantee the existence of a $2F$-offender. A great deal of work is then devoted to exclude those $2F$-modules that appear in the conclusion of the Guralnick -- Malle theorems but are in conflict with the hypothesis on $G$.
			\end{enumerate}
\end{enumerate}

At the risk of oversimplifying, one may say that each of the cases above leads to a description of the possible pairs $(M,Y_M)$. By the main reduction result one only needs to look at modules $(L,Y_L)$ with $S \le L \le M$ and $Y_L \le Y_M$, which is then taken care in Chapter 10 of \cite{Meier-Stell-Stroth:2012}.

\section{The reduction to the basic property for localities}
\label{basic property in localit}

This section justifies our choice of working with a $Q$-replete linking locality, rather than directly with the fusion system.\\

For the rest of the section, unless differently specified, we assume that $(\L,\Delta,S)$ is a locality of objective characteristic $p$ and $Q \le S$ a large subgroup of $\L$ such that $\L$ is $Q$-replete. Note that \ref{lemma:large for fus systems and localities}(a) and (b) show that $\F_S(\L)$ is saturated and that $\L$ is a linking locality for the fusion system $\F_S(\L)$.\\
Moreover, by \ref{lemma:large for fus systems and localities}(c) and \ref{lemma:large implies weakly closed} there is no limitation in assuming that $\L$ is the subcentric linking locality over $\F_S(\L)$ and $Q$ is the largest large subgroup of $\L$ (contained in $S$).

We adopt a notation analogous to that in \ref{notation for groups}.
\begin{notation}\label{notation for localities}
Let $(\L, \Delta,S)$ be a locality and define the following.
	\begin{itemize}
		\item $\C = \C_\L := \{H \le \L \mid C_\L(O_p(H)) \subseteq O_p(H) \}$; note that the inclusion condition forces $O_p(H) \ne 1$ and $C_\L(O_p(H))$ to be a group, a subgroup of $O_p(H)$, but it does not necessary imply that $O_p(H) \in \Delta$.
		\item $\M = \M_\L := \{ M \in \C \mid \textit{$M$ is maximal with respect to inclusion} \}$.
		\item $\P = \P_\L := \{ P \in \C \mid \textit{$P$ is $p$-minimal} \}$.
		\item If $\K \subseteq \Sub(\L)$ and $H \le L \le \L$, then $\K_L(H) := \{ K \in \K \mid H \le K \le L \}$.
		\item Let $M \in \C$; set $M^\dagger :=MC_\L(Y_M)$ and consider the following properties:
			\begin{itemize}
				\item[(i)] $\M(M) = \{M^\dagger\}$ and $Y_M = Y_{M^\dagger}$,
				\item[(ii)] $C_M(Y_M)$ is $p$-closed and $\displaystyle \frac{C_M(Y_M)}{O_p(M)} \le \Phi \left( \frac{M}{O_p(M)} \right)$.
			\end{itemize}
			Set $\gM = \gM_\L := \{ M \in \M \mid \textit{$M$ satisfies (i) and (ii)} \}$; (i) and (ii) are referred to as the {\bf basic property} of $M$.
        \item If $X \le \L$, set $\ov{X} := X / C_X(Y_X)$.
	\end{itemize}
\end{notation}
Note that condition (i) of the basic property implicitly requires $M^\dagger$ to be a subgroup of $\L$; in Theorem~\ref{theorem: main reduction} we will see that, in the locality $\L$, $M \in \gM$ may be chosen to be a parabolic subgroup. Then one has $Y_M \in \Delta$ by \ref{lemma:repleteness implies Y in Delta for parabolics} and $M^\dagger$ becomes a subgroup of the group $N_\L(Y_M)$.

\begin{remark}
    More generally, \ref{lemma:parabolic implies char p} and \ref{lemma:repleteness implies Y in Delta for parabolics} show that, if $H \le \L$ is parabolic, then $1 \ne Y_H \in \Delta$ and therefore $H^\dagger =HC_\L(Y_H) \le N_\L(Y_H)$ is subgroup of $\L$. If moreover $C_\L(Y_H) \le H$, that is $H=H^\dagger$, then $C_\L(O_p(H)) \le C_\L(Y_H) \le H$ and one gets
    \[
    C_\L(O_p(H)) = C_H(O_p(H)) \le O_p(H),
    \]
    hence $H \in \C_\L$. This argument doesn't require the presence of a large subgroup, only that of a repleteness subgroup in a locality of objective characteristic $p$.\\    
    As a consequence, lemmas \ref{lemma:partial order for the main reduction}, \ref{lemma:unique-factor-family} and \ref{lemma:M in factorization family is local} that follow remain true under the weaker hypothesis that $Y_L \in \Delta$ for every parabolic subgroup of the locality $\L$ of objective characteristic $p$, a condition that holds by \ref{lemma:repleteness implies Y in Delta for parabolics} under the existence of a repleteness subgroup of $\L$.
\end{remark}

The argument now follows quite closely that in \cite[Section 3]{MS:2009}, adapted to our locality, as well as ideas similar to those in \cite[Lemmas 1.25 and 1.56]{Meier-Stell-Stroth:2012}.

\begin{definition}
	Set $\S := \{ \textit{parabolic subgroups of $\L$} \}$ and let $A,B \in \S$. Define
	\[
	A \ll B \iff \left( A \subseteq C_\L(Y_A)B \quad \text{and} \quad Y_A \le Y_B \right).
	\]
	Set also
	\[
	A^\dagger := AC_\L(Y_A) \quad \text{and} \quad \S^\dagger := \{ L \in \S \mid L = L^\dagger\}.
	\]
\end{definition}

\begin{definition}\label{def:factoriz-family}
A \emph{factorization family} for $\S$ is a subset $F_\S \subseteq \S$ satisfying the following two properties:
\begin{itemize}
	\item[(i)] for every $H \in \S$ there exists $M \in F_\S$ such that
	\[
	H \ll M;
	\]
	\item[(ii)] if $H \in \S$ and $M \in F_\S$ satisfy $M \ll H$, then
	\[
	Y_H = Y_M \quad \text{ and } \quad H \le M.
	\]
	\end{itemize}
\end{definition}
Observe that under condition (ii) $H \le M$ trivially implies $H \subseteq C_\L(Y_H)M$, thus $H \ll M$. This will translate in the fact that $\ll$ restricted to $\S^\dagger$ yields a partial order, of which $F_\S$ constitutes the set of maximal elements.

\begin{lemma}\label{lemma:partial order for the main reduction}
    Let $L,M \in \S$; then the following hold.
    \begin{enumerate}[(a)]
        \item $Y_L \le Y_{L^\dagger}$, $L \ll L^\dagger$ and $(L^\dagger)^\dagger = L^\dagger$; hence $L^\dagger \in \S^\dagger$.
        \item $\S^\dagger = \{L \in \S \mid C_\L(Y_L) \le L \}$.
        \item $L \subseteq C_\L(Y_L) M \iff L \le C_\L(Y_L)N_M(Y_L) \le N_\L(Y_L)$.
        \item If $L \subseteq C_\L(Y_L)M$, then $Y_L$ is $p$-reduced for $N_M(Y_L)$ and $L \ll N_M(Y_L)$.
        \item If $L \in \S^\dagger$, then $L \ll M$ if and only if $Y_L \le Y_M$ and $L = C_\L(Y_L)(L \cap M)$.
        \item The relation $\ll$ is reflexive and transitive on $\S$ and a partial order on $\S^\dagger$.
    \end{enumerate}
\end{lemma}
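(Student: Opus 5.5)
The plan is to prove the six items in order, relying on Lemma~\ref{lemma:properties of parabolics in L} and on the fact that for every $L \in \S$ we have $1 \ne Y_L \in \Delta$ (Lemma~\ref{lemma:repleteness implies Y in Delta for parabolics}). Hence $C_\L(Y_L) \norm N_\L(Y_L)$ are subgroups, and all products below take place inside the group $N_\L(Y_L)$.

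For (a), note that $L^\dagger = LC_\L(Y_L) \le N_\L(Y_L)$ is parabolic. Since $Y_L$ is normal in $L^\dagger$ and $L^\dagger/C_{L^\dagger}(Y_L) \cong L/C_L(Y_L)$, the subgroup $Y_L$ is $p$-reduced for $L^\dagger$, so $Y_L \le Y_{L^\dagger}$. This is also Lemma~\ref{lemma:properties of parabolics in L}(b) applied to the pair $(L^\dagger, L)$. Then $L \subseteq C_\L(Y_L)L^\dagger$ gives $L \ll L^\dagger$. Next I would show $Y_{L^\dagger} = Y_L$. Apply Lemma~\ref{lemma:properties of parabolics in L}(h) to $L \le L^\dagger$ with $Y := Y_L$: we need $Y_{L^\dagger} \le Y_L$, which is the circular part. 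I would obtain it instead from (h) of that lemma in the form $Y_L = \Omega Z(C_S(Y_L))$. Since $C_S(Y_{L^\dagger}) \le C_S(Y_L)$ and, by (a) of that lemma, $C_S(Y_L)$ is Sylow in $C_\L(Y_L) \le L^\dagger$, we get $Y_{L^\dagger} \le \Omega Z(O_p(L^\dagger)) \le C_S(C_S(Y_L))$. Also $C_S(Y_L) \le O_p(L^\dagger)$, because $C_S(Y_L)$ is normal in $N_{L^\dagger}(C_S(Y_L))$ and, by Frattini, $L^\dagger = C_\L(Y_L)N_{L^\dagger}(C_S(Y_L))$. Combined with $C_S(S_0) \le S_0$ this yields $Y_{L^\dagger} \le \Omega Z(S_0) = Y_L$. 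Therefore $(L^\dagger)^\dagger = L^\dagger C_\L(Y_L) = L^\dagger$. I expect this equality $Y_{L^\dagger} = Y_L$ to be the main technical point; the rest is formal.

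Item (b) is immediate from the definitions: $L = L^\dagger$ if and only if $C_\L(Y_L) \le L$.

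For (c), the backward direction is clear. For the forward direction, write $x \in L$ as $x = cm$ with $c \in C_\L(Y_L)$ and $m \in M$. This product is computed in the locality, so I would check that $(c,m) \in D$ via $Y_L$ and work inside $N_\L(Y_L)$. Since $x$ and $c$ normalize $Y_L$, so does $m = c^{-1}x$, hence $m \in N_M(Y_L)$.

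For (d), (c) gives $L \le C_\L(Y_L)N_M(Y_L) =: X$ with $N_M(Y_L)$ parabolic. Then $N_M(Y_L)/C_{N_M(Y_L)}(Y_L) \cong X/C_\L(Y_L) \supseteq L/C_L(Y_L)$. Taking $O_p$ of this quotient, which contains the image of $S$, and using that $O_p$ of a parabolic overgroup lies in every Sylow, we get that $O_p$ of the quotient normalizes, and hence lies in, $O_p(L/C_L(Y_L)) = 1$. Thus $Y_L$ is $p$-reduced for $N_M(Y_L)$, so $Y_L \le Y_{N_M(Y_L)}$, and $L \ll N_M(Y_L)$ follows.

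For (e), since $C_\L(Y_L) \le L$, Dedekind's law inside $N_\L(Y_L)$ gives $L \cap C_\L(Y_L)M = C_\L(Y_L)(L \cap M)$, which yields the equivalence.

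For (f), reflexivity is trivial. For transitivity, suppose $A \ll B \ll C$. Then $Y_A \le Y_B \le Y_C$ and $C_\L(Y_B) \le C_\L(Y_A)$, so $A \subseteq C_\L(Y_A)B \subseteq C_\L(Y_A)C_\L(Y_B)C \subseteq C_\L(Y_A)C$, again computing in $N_\L(Y_A)$. For antisymmetry on $\S^\dagger$, $A \ll B \ll A$ gives $Y_A = Y_B$. Then (e) gives $A = C_\L(Y_A)(A \cap B) \le B$ and symmetrically $B \le A$.
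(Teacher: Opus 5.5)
Your arguments for (b)--(f) are fine, but (a) has a genuine error. You derive $(L^\dagger)^\dagger=L^\dagger$ from the equality $Y_{L^\dagger}=Y_L$, and that equality is false in general.

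The faulty step is $C_S(Y_L)\le O_p(L^\dagger)$. The Frattini factorization $L^\dagger=C_\L(Y_L)N_{L^\dagger}(C_S(Y_L))$ only shows that $C_S(Y_L)$ is normal in $N_{L^\dagger}(C_S(Y_L))$, not in $L^\dagger$. Since $O_p(L^\dagger)\le O_p(L)\le C_S(Y_L)$, your claim would force $C_S(Y_L)=O_p(L^\dagger)$. That would make $C_\L(Y_L)$ $p$-closed, which is exactly the extra condition the basic property has to impose separately.

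A concrete counterexample:
\begin{itemize}
\item Let $p=5$, let $A$ be the $4$-dimensional augmentation submodule of the permutation module $\bF_5^5$ of $\Sym(5)$, and let $K:=A\rtimes\Sym(5)$, regarded as a locality with $\Delta$ the set of all subgroups of $S\in\Syl_5(K)$.
\item $K$ has characteristic $5$ with $O_5(K)=C_K(A)=A$. The subgroup $Q:=A$ is large, and $K$ is $Q$-replete.
\item A $5$-cycle acts on $A$ as a single Jordan block, so $Y_S=\Omega Z(S)=\langle(1,1,1,1,1)\rangle$, and this subgroup is centralized by all of $K$.
\item Hence for $L:=S$ you get $L^\dagger=K$ and $Y_{L^\dagger}=Y_K=A\ne Y_L$.
\end{itemize}

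The fix is to drop the equality entirely. The inclusion $Y_L\le Y_{L^\dagger}$, which you did prove correctly, already gives $C_\L(Y_{L^\dagger})\le C_\L(Y_L)$. Therefore $(L^\dagger)^\dagger=L^\dagger C_\L(Y_{L^\dagger})\le L^\dagger C_\L(Y_L)=L^\dagger$. This is exactly the paper's argument, and no other part of the lemma needs $Y_{L^\dagger}=Y_L$. With this change your proof follows the paper's.

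One point in your favour concerns (d). Your argument is that $O_p\bigl(C_\L(Y_L)N_M(Y_L)/C_\L(Y_L)\bigr)$ lies in the image of $S\le L$, is normalized by the image of $L$, and so lies in $O_p(L/C_L(Y_L))=1$. This is correct, and more complete than the paper's proof. The paper only observes that $O_p\bigl(C_\L(Y_L)N_M(Y_L)\bigr)$ centralizes $Y_L$, and that observation alone does not give $p$-reducedness.
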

\begin{proof}
\begin{enumerate}[(a)]
	\item By \ref{lemma:repleteness implies Y in Delta for parabolics} $L^\dagger$ is a subgroup of the group $N_\L(Y_L)$. Clearly $Y_L \norm L^\dagger$ and $\displaystyle O_p \left( \frac{L^\dagger}{C_{L^\dagger}(Y_L)} \right) = O_p \left( \frac{L C_\L(Y_L)}{C_\L(Y_L)} \right) =1$, so that $Y_L \le Y_{L^\dagger}$. Since $L \le C_\L(Y_L)L^\dagger = L^\dagger$ we get $L \ll L^\dagger$. Also $(L^\dagger)^\dagger = L^\dagger C_\L(Y_{L^\dagger}) \le L^\dagger C_\L(Y_L) = L^\dagger \le (L^\dagger)^\dagger$.
	\item If $C_\L(Y_L) \le L$, then $L^\dagger= L C_\L(Y_L) = L \le L^\dagger$, so that $L \in \S^\dagger$. On the other hand, if $L \in \S^\dagger$, then $L^\dagger= L C_\L(Y_L)=L$, that is $C_\L(Y_L) \le L$.
	\item Since $N_M(Y_L) \le N_\L(Y_L)$, we have that $C_\L(Y_L)N_M(Y_L)$ is a subgroup of $\L$. Clearly $L \le C_\L(Y_L)N_M(Y_L)$ implies $L \subseteq C_\L(Y_L)M$. If $L \subseteq C_\L(Y_L)M$, as also $L \le N_\L(Y_L)$ we have $L \subseteq C_\L(Y_L)(M \cap N_\L(Y_L)) = C_\L(Y_L)N_M(Y_L)$.
	\item By (c) $D:=C_\L(Y_L)N_M(Y_L)$ isa subgroup of $\L$ and $L \le L^\dagger \le D$. Since $L$ is parabolic we have $O_p(D) \le O_p(L^\dagger) \le O_p(L)$. Thus
    \[
        [Y_L,O_p(D)] \le [Y_L,O_p(L^\dagger)] \le [Y_L,O_p(L)] =1,
    \]
    showing that $Y_L$ is $p$-reduced for $D$ and, therefore, for $N_M(Y_L)$. In particular, $Y_L \le Y_{N_M(Y_L)}$ and so $L \ll N_M(Y_L)$.
	\item Since $C_\L(Y_L) \le L$, we have that $L \subseteq C_\L(Y_L)M$ implies $L=C_\L(Y_L)(L \cap M)$. Now (e) follows immediately.
	\item That $\ll$ is reflexive is trivial. Suppose $A \ll B \ll C$. Then $Y_A \le Y_B \le Y_C$. Also $A \subseteq C_\L(Y_A)B$ and $B \subseteq C_\L(Y_B)C$; as $C_\L(Y_B) \le C_\L(Y_A)$, we get $A \subseteq C_\L(Y_A)C$, thus $A \ll C$.\\
	Suppose now that $A,B \in \S^\dagger$ and $A \ll B$ and $B \ll A$. Then clearly $Y_A = Y_B$ and both $A \subseteq C_\L(Y_A)B$ and $B \subseteq C_\L(Y_B)A$ hold. Thus (e) implies $A = C_\L(Y_A)(A \cap B) = C_\L(Y_B)(B \cap A) = B$, proving that $\ll$ is a partial order on $\S^\dagger$.
\end{enumerate}
\end{proof} 

\begin{lemma}\label{lemma:unique-factor-family}
Let $F_\S$ be a factorization family for $\S$, then
\[
    F_\S = \{ \textit{maximal elements of $\S^\dagger$ with respect to $\ll$} \}.
\]
In particular, $F_\S$ is uniquely determined.
\end{lemma}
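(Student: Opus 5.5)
The plan is to prove the two inclusions between $F_\S$ and the set of $\ll$-maximal elements of $\S^\dagger$, working throughout with the partial order supplied by Lemma~\ref{lemma:partial order for the main reduction}(f). Uniqueness is then immediate, since the right-hand side does not depend on the choice of the factorization family.

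\textbf{Step 1: $F_\S \subseteq \S^\dagger$.} Let $M \in F_\S$. By Lemma~\ref{lemma:partial order for the main reduction}(a), $M \ll M^\dagger$ and $M^\dagger \in \S$. Applying condition (ii) of Definition~\ref{def:factoriz-family} with $H := M^\dagger$ gives $M^\dagger \le M$. Hence $M = M^\dagger$, that is, $M \in \S^\dagger$.

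\textbf{Step 2: every $M \in F_\S$ is maximal in $\S^\dagger$.} Suppose $H \in \S^\dagger$ and $M \ll H$. Condition (ii) gives $Y_H = Y_M$ and $H \le M$. Then $H \subseteq C_\L(Y_H)M$ and $Y_H \le Y_M$, so $H \ll M$. Antisymmetry on $\S^\dagger$ (Lemma~\ref{lemma:partial order for the main reduction}(f)) now forces $H = M$.

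\textbf{Step 3: every maximal element lies in $F_\S$.} Let $H$ be a maximal element of $(\S^\dagger,\ll)$. By condition (i) there is some $M \in F_\S$ with $H \ll M$. By Step 1, $M \in \S^\dagger$, so maximality of $H$ gives $H = M \in F_\S$.

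No step is a serious obstacle. The only point requiring care is Step 1: the statement compares $F_\S$ with maximal elements of $\S^\dagger$, not of $\S$, so one must first check that factorization-family members satisfy $M = M^\dagger$. That is exactly what condition (ii) provides when tested against $M^\dagger$. Once this is in place, everything else is formal, using antisymmetry of $\ll$ on $\S^\dagger$ and the fact that $\ll$ is only a preorder on $\S$.
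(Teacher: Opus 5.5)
Your proposal is correct and follows the paper's proof essentially step by step: first $F_\S \subseteq \S^\dagger$ by testing condition (ii) against $M^\dagger$, then maximality of each $M \in F_\S$, then the reverse inclusion via condition (i). The only cosmetic difference is in your Step 2, where you derive $H \ll M$ and invoke antisymmetry from Lemma~\ref{lemma:partial order for the main reduction}(f); the paper instead applies part (e) directly to get $M = C_\L(Y_M)(M \cap H) = C_\L(Y_H)H = H^\dagger = H$, which is exactly how (f) is proved anyway.
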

\begin{proof}
Let $M \in F_\S$; by \ref{lemma:partial order for the main reduction}(a) $M \le M^\dagger$, $Y_M \le Y_{M^\dagger}$ and $M^\dagger \in \S^\dagger$. By (ii) of \ref{def:factoriz-family} we obtain then $Y_M = Y_{M^\dagger}$ and $M^\dagger \le M$,  so $M=M^\dagger \in \S^\dagger$; whence $F_\S \subseteq \S^\dagger$.\\
Recall that $\ll$ is a partial order on $\S^\dagger$; if $H \in \S^\dagger$ is such that $M \ll H$, then again by (ii) of \ref{def:factoriz-family} $Y_M = Y_H$ and $H \le M$ and \ref{lemma:partial order for the main reduction}(e) yields $M = C_\L(Y_M)(M \cap H) = C_\L(Y_H)H=H^\dagger=H$. Thus we get
\[
F_\S \subseteq \{ \textit{maximal elements of $\S^\dagger$ with respect to $\ll$} \}.
\]
Suppose now that $M$ is a maximal element of $\S^\dagger$ with respect to $\ll$. Since $F_\S$ is a factorization family, by (i) there exists $L \in F_\S$ such that $M \ll L$, hence $L \in \S^\dagger$ by the first part of the proof and the maximality of $M$ yields $M = L$. Thus $M \in F_\S$, proving the wanted equality.
\end{proof}

\begin{lemma}\label{lemma:M in factorization family is local}
    Let $F_\S$ be the factorization family for $\S$ and $M \in F_\S$ and $H \in \S$ be such that $M=C_M(Y_M)(M \cap H)$. Then $H \le M$, $M$ is a local subgroup of $\L$ and the unique maximal subgroup of $\L$ containing $H$ and, moreover, $M=M^\dagger=N_\L(Y_M)$. 
\end{lemma}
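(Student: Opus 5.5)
We will combine the defining property (ii) of a factorization family with Lemma~\ref{lemma:partial order for the main reduction}. The plan has three steps.

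\textbf{Step 1: $H \le M$ and $M = N_\L(Y_M)$.} By Lemma~\ref{lemma:unique-factor-family} we have $M \in \S^\dagger$, so $C_\L(Y_M) \le M$ and $M = M^\dagger$. The hypothesis $M = C_M(Y_M)(M\cap H)$ gives $M \subseteq C_\L(Y_M)H$.

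To obtain $M \ll H$ we still need $Y_M \le Y_H$. We will get this from Lemma~\ref{lemma:partial order for the main reduction}(d) applied to the pair $(M,H)$. It shows that $Y_M$ is $p$-reduced for $N_H(Y_M)$ and that $M \ll N_H(Y_M)$. Note that $N_H(Y_M) \in \S$, since $S \le M\cap H$ normalizes $Y_M$. Property (ii) then gives $Y_{N_H(Y_M)} = Y_M$ and $N_H(Y_M) \le M$.

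To pass from $N_H(Y_M)$ to $H$ itself, we will compare $H$ with $H^\dagger$ and use Lemma~\ref{lemma:properties of parabolics in L}. Alternatively, we can apply part (i) of the factorization family to $H$, giving $H \ll M'$ for some $M' \in F_\S$, and then argue with maximality. The cleanest route seems to be the following. Since $Y_M \le Y_{N_H(Y_M)}$ and $N_H(Y_M) \le H$ is parabolic, Lemma~\ref{lemma:properties of parabolics in L}(f) gives $Y_{N_H(Y_M)} \le Y_H$ ($H$ has characteristic $p$ by Lemma~\ref{lemma:parabolic implies char p}). Hence $Y_M \le Y_H$, so $M \ll H$, and (ii) yields $Y_H = Y_M$ and $H \le M$.

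Next, $N_\L(Y_M)$ is a group, because $Y_M \in \Delta$ by Lemma~\ref{lemma:repleteness implies Y in Delta for parabolics}. It contains $S$, so it lies in $\S$. Since $Y_M$ is normal and $p$-reduced in $M \le N_\L(Y_M)$, Lemma~\ref{lemma:properties of parabolics in L}(c) gives $Y_M \le Y_{N_\L(Y_M)}$. Trivially $M \subseteq C_\L(Y_M)N_\L(Y_M)$, so $M \ll N_\L(Y_M)$, and (ii) gives $N_\L(Y_M) \le M$. Therefore $M = N_\L(Y_M)$, and $M$ is a local subgroup of $\L$.

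\textbf{Step 2: uniqueness.} Let $K$ be any subgroup of $\L$ with $H \le K$. Then $K$ is parabolic, and Lemma~\ref{lemma:properties of parabolics in L}(f) gives $Y_M = Y_H \le Y_K$. Since $M = C_M(Y_M)(M\cap H) \subseteq C_\L(Y_M)K$, we get $M \ll K$, and (ii) forces $K \le M$. So every subgroup containing $H$ lies in $M$, and $M$ is the unique maximal subgroup of $\L$ containing $H$.

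\textbf{Main obstacle.} The delicate point is establishing $Y_M \le Y_H$ before we know $H \le M$. I expect the route via $N_H(Y_M)$ and Lemma~\ref{lemma:properties of parabolics in L}(f) to handle it. If it does not, the fallback is to use Lemma~\ref{lemma:properties of parabolics in L}(c) with $Y_M \le Y_{N_H(Y_M)}$, noting that $\langle Y_M^H\rangle$ is $p$-reduced for $H$.
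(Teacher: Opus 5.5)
Your proof is correct and follows essentially the paper's route. The paper obtains $Y_M \le Y_H$ by observing that $Y_M$ is $p$-reduced for $H\cap M$ and that $Y_{H\cap M}\le Y_H$. Your passage through $N_H(Y_M)\supseteq H\cap M$, using Lemma~\ref{lemma:partial order for the main reduction}(d) and Lemma~\ref{lemma:properties of parabolics in L}(f), is the same mechanism, and this route goes through as written, so the hedging in your final paragraph is unnecessary. As in the paper, you then get uniqueness and $M=N_\L(Y_M)$ by applying property (ii) to the overgroups of $H$ and to $N_\L(Y_M)$.
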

\begin{proof}
We prove that $M \ll H$; as clearly $M \subseteq C_\L(Y_M)H$, we only need to show that $Y_M \le Y_H$.\\
From $M=C_M(Y_M)(H\cap M)$ we have that $M/C_M(Y_M) \cong (H \cap M)/C_{H\cap M}(Y_M)$, so $Y_M$ is a $p$-reduced subgroup of $H \cap M$. Now $S \le H \cap M$ and $H$ has characteristic $p$, so \ref{lemma:unique largest p-reduced} yields $Y_M \le Y_{H \cap M} \le Y_H$. By \ref{def:factoriz-family}(ii) we have then $H \le M$.\\
Suppose that $H \le L$ for some other subgroup $L$, which then falls in $\S$. From $M = C_M(Y_M)(H \cap M) \le C_M(Y_M)(L \cap M) \le M$ we get that the inclusions are all equalities. Applying now the first part of the lemma to $M$ and $L$ we have $L \le M$, hence $M$ is the unique maximal subgroup of $\L$ containing $H$. Since every subgroup of a locality is contained in a local subgroup, $M$ has to be local; taking now $H=N_\L(Y_M)$ we see $M \le H$ and $H \le M$; thus $M=N_\L(Y_M)$ and $M^\dagger = M C_\L(Y_M) = M$.
\end{proof}

Recall that, for a sugroup $L \le \L$ subgroup such that $Q \le L$, we have set $L^\circ:= \wcl_L(Q,L)$ and $L_\circ := O^p(L^\circ)$.

\begin{lemma}[1.52]\label{lemma: L containing Q}
	Let $L \le \L$ be a subgroup such that $Q \le L$ and $1 \ne Y \in \Delta$ be a subgroup normalized by $L$. Let also $g \in \L$ be an element such that the conjugation map $c_g \colon Q \fto Q^g$ is a well-defined group homomorphism. Then the following hold.
	\begin{enumerate}[(a)]
		\item $L^\circ = (LC_\L(Y))^\circ$ and $[L^\circ, C_\L(Y)] \le O_p(L^\circ)$; hence $C_\L(Y)$ normalizes $L^\circ$.
		\item Set $\wde{L}:=L / O_p(L)$ and suppose that $O^p(L) \le L^\circ$ and $L=O^{p'}(L)$. Then $\wde{C_L(Y)} \le Z(\wde{L^\circ}) \le \Phi(\wde{L^\circ}) = \Phi(\wde{L_\circ})$.
		\item If $g \in \L \setminus N_\L(Q)$, then $Z(Q) \cap Z(Q^g) = 1$.
	\end{enumerate}
\end{lemma}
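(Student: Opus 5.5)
Since $Y\in\Delta$ and $L$ normalizes $Y$, everything lives inside the group $N:=N_\L(Y)$: we have $L\le N$, $C_\L(Y)\norm N$, and $LC_\L(Y)$ is a subgroup of $N$. So the plan is to transfer the group-theoretic proof of \cite[Lemma 1.52]{Meier-Stell-Stroth:2012} into $N$. Two ingredients have to be checked there. First, by Lemma~\ref{lemma:weakly closed subgroups in localities}(a), $Q$ is weakly closed in $N$. Second, for every $1\ne U\le Z(Q)$, property $(Q!\L)$ gives $N_N(U)\le N_N(Q)$. One caveat: $Q$ need not be a large subgroup of $N$ in the sense of Definition~\ref{def: large subgroup of a group}, since it need not lie in a Sylow $p$-subgroup of $N$ with $C_N(Q)\le Q$. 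It does, however, satisfy $C_N(Q)\le C_\L(Q)\le Q$ by $(SC\L)$. Hence $Q$ satisfies both $(SC)$ and $(Q!)$ inside $N$, and $Q$ is a large subgroup of $N$.

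For (a), we apply \cite[Lemma 1.52]{Meier-Stell-Stroth:2012} in $N$ with $C_N(Y)=C_\L(Y)$, which is normal in $N$. The argument runs as follows. Set $K:=LC_\L(Y)$. Since $Q\le L$, Lemma~\ref{lemma:weakly closed subgroups in localities}(c),(d) give $L^\circ=\<Q^{L}\>$ and $K^\circ=\<Q^{K}\>$. For the equality it suffices to show $Q^{K}=Q^{L}$. By the usual argument, for $c\in C_\L(Y)$ we have $[Q,c]\le C_\L(Y)$, and the commutator estimate $[C_\L(Y),Q,Q]$ combined with $(Q!)$ yields $[O^p(C_\L(Y)),Q]\le O_p(C_\L(Y))\cap\dots$. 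Alternatively, and this is the route I would actually follow, one checks that $L^\circ$ is subnormal in $K$, using that $K=LC_\L(Y)$ and that $C_\L(Y)$ normalizes $L^\circ$ modulo $O_p$. Then Lemma~\ref{lemma:weakly closed subgroups in localities}(f) with $(L,K)$ in place of $(K,H)$ gives $L^\circ=K^\circ$. The key input is that $Q$ acts on $C_\L(Y)/O_p(C_\L(Y))$ in such a way that $[C_\L(Y),Q]\le O_p(C_\L(Y))$. This comes from $(Q!)$: the group $C_{Z(Q)}(\cdot)$ controls, and for a $p'$-subgroup $X$ of $C_\L(Y)$ normalized by $Q$, coprime action gives $X=\<C_X(U)\mid U\le Z(Q)\text{ of index } p\>$ style generation together with $N(U)\le N(Q)$. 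From this, $[X,Q]\le Q\cap X=1$. This step is the main obstacle, and I would simply import the group proof verbatim inside $N$. From $[C_\L(Y),Q]\le O_p(C_\L(Y))$ we obtain $[L^\circ,C_\L(Y)]\le O_p(L^\circ)$, and so $C_\L(Y)$ normalizes $L^\circ$.

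For (b), we work in $\wde L$. By (a), $\wde{C_L(Y)}$ centralizes $\wde{L^\circ}$ modulo $\wde{O_p(L^\circ)}$, which is trivial in $\wde L$ since $O_p(L^\circ)\le O_p(L)$. Since $O^p(L)\le L^\circ$ and $L=O^{p'}(L)$, the standard argument of \cite[Lemma 1.52(b)]{Meier-Stell-Stroth:2012} places $\wde{C_L(Y)}$ in $Z(\wde{L^\circ})$ and then, via $L^\circ=L_\circ Q$ and $L_\circ=[L_\circ,Q]$ from Lemma~\ref{lemma:weakly closed subgroups in localities}(d),(e), inside $\Phi(\wde{L^\circ})=\Phi(\wde{L_\circ})$.

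For (c), let $U:=Z(Q)\cap Z(Q^g)$ and suppose $U\ne1$. Then $Q$ and $Q^g$ both centralize $U$, so $Q^g\le N_\L(U)$, a group since $U\in\Delta$ by repleteness, as $U\le C_S(Q)$. Conjugating $U$ back by $g^{-1}$, the homomorphism $c_g$ gives $U^{g^{-1}}\le Z(Q)$, and $(Q!\L)$ gives $g^{-1}\in N_\L(U^{g^{-1}}\!\!)$-type control. More precisely, $g$ lies in the domain from $U^{g^{-1}}$ to $U$. Writing $g$ as a product of an element of $N_\L(U)$ and $N_\L(Q)$ is not available directly, so instead one argues that $Q^g$ is a $p$-subgroup of $N_\L(U)\le N_\L(Q)$. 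Hence $QQ^g$ is a $p$-group, and by weak closure of $Q$ in $N_\L(U)$ (Lemma~\ref{lemma:weakly closed subgroups in localities}(a)) we get $Q^g=Q$. Since $C_\L(Q)\le Q$, we conclude $g\in N_\L(Q)$, a contradiction.
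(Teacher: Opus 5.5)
\paragraph{Gap in (c).} The last step of (c) does not work. Lemma~\ref{lemma:weakly closed subgroups in localities}(a) only says that $Q$ is weakly closed in $N_\L(U)$. That controls conjugates $Q^x$ with $x\in N_\L(U)$, and it is even vacuous there, since $Q\norm N_\L(U)$.

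Your $g$ is an arbitrary element of $\L$ carrying $V:=U^{g\inv}\le Z(Q)$ to $U$. It need not lie in $N_\L(U)$, so that lemma says nothing about $Q^g$. Even for finite groups this step would require weak closure with respect to the whole group. What you actually need is weak closure of $Q$ in $\L$. That only applies to $Q^w\le S$ for a \emph{single} element $w$, and $QQ^g$ need not lie in $S$. Conjugating $QQ^g$ into $S$ by some $n$ does not help unless $c_n\circ c_g$ is conjugation by an element of $\L$, i.e.\ unless the relevant word lies in $D(\L)$. This is exactly the locality subtlety the paper warns about.

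The repair, which is essentially the paper's argument, goes as follows.
\begin{enumerate}[(1)]
\item By $Q$-repleteness $V,U\in\Delta$, so $c_g$ extends to an isomorphism $N_\L(V)\to N_\L(U)$, and $Q,Q^g\le N_\L(U)$.
\item As in the proof of Lemma~\ref{lemma:weakly closed subgroups in localities}(a), pick $x\in\L$ with $U\le S_x$ and $N_S(U^x)\in\Syl_p(N_\L(U^x))$. Then pick $h\in N_\L(U^x)$ with $(QQ^g)^{xh}\le S$.
\item Now $(g,x,h)\in D(\L)$ via $(V,U,U^x,U^x)$. Hence $Q^{xh}$ and $(Q^g)^{xh}=Q^{\Pi(g,x,h)}$ are genuine $\L$-conjugates of $Q$ lying in $S$, so both equal $Q$.
\item Injectivity of $c_{xh}$ on $N_\L(U)$ then gives $Q^g=Q$.
\end{enumerate}
Your final appeal to $C_\L(Q)\le Q$ is unnecessary.

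\paragraph{On (a) and (b).} Reducing (a) and (b) to the group $N=N_\L(Y)$, in which $Q$ is large in the group sense, is legitimate; the paper itself applies \cite[Lemma 1.52]{Meier-Stell-Stroth:2012} inside local subgroups where $Q$ is large. Your sketch of the ``key input'', however, is misdirected:
\begin{itemize}
\item the generation by centralizers of index-$p$ subgroups of $Z(Q)$ fails when $Z(Q)$ is cyclic;
\item it only handles $Q$-invariant $p'$-subgroups;
\item the subnormality route presupposes what (a) asserts.
\end{itemize}
The actual point is one line. $Y\le S$ is a non-trivial $p$-group normalized by $Q$, so $1\ne C_Y(Q)\le Z(Q)$ by $(SC\L)$. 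Since $C_\L(Y)$ centralizes $C_Y(Q)$, $(Q!\L)$ gives $C_\L(Y)\le N_\L(Q)$. Hence $LC_\L(Y)\subseteq LN_\L(Q)$, and Lemma~\ref{lemma:weakly closed subgroups in localities}(f) yields $L^\circ=(LC_\L(Y))^\circ$. Moreover $[Q,C_\L(Y)]\le Q\cap C_\L(Y)\le O_p(C_{L^\circ}(Y))\le O_p(L^\circ)$. This spreads to $[L^\circ,C_\L(Y)]\le O_p(L^\circ)$ because $L^\circ=\langle Q^{L^\circ}\rangle$ normalizes $C_\L(Y)$.
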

\begin{proof}
	
	Let $H:= LC_\L(Y) \le N_\L(Y)$; since $Y \ne 1$ also $C_Y(Q) \ne 1$, thuse $C_Y(Q) \in \Delta$ by $Q$-repleteness and $Q!$ yields $C_\L(Y) \le N_\L(C_Y(Q)) \le N_\L(Q)$. Therefore $H \subseteq LN_\L(Q)$, which is equivalent to $L^\circ =H^\circ$ by \ref{lemma:weakly closed subgroups in localities}(f).  Since $Q $ normalizes $Y$,  conjugation by elements of $Q$ extends to $C_\L(Y)$, so that
	\[
	[Q,C_\L(Y)] \le Q \cap C_\L(Y) \norm C_{L^\circ}(Y) \norm L^\circ.
	\]
	Then $Q \cap C_\L(Y) \le O_p(C_{L^\circ}(Y)) \le O_p(L^\circ)$ and we conclude $[Q,C_\L(Y)] \le O_p(L^\circ)$. By \ref{lemma:weakly closed subgroups in localities}(c) $L^\circ = \< Q^{L^\circ} \>$, so
	\[
	[L^\circ,C_\L(Y)] = [Q,C_\L(Y)]^{L^\circ} \le O_p(L^\circ),
	\]
	proving (a).\\
	
	Since $O^p(L) \le L^\circ$ by hypothesis, $L_\circ = O^p(L^\circ) = O^p(L)$ and therefore $O^p(\wde{L}) = \wde{L_\circ}$. Setting $D:= C_L(Y)$, (a) shows that $[L^\circ, D] \le O_p(L^\circ) \le O_p(L)$, hence $[\wde{L^\circ},\wde{D}]=1$. As $D$ is normal in $L$, $O^p(D) \le O^p(L) \cap D \le L^\circ$, yielding $O^p(\wde{D}) \le Z(\wde{D})$ and, as a consequence, that $\wde{D}$ is nilpotent. But $O_p(\wde{D}) \le O_p(\wde{L}) = 1$, so $\wde{D}$ is a $p'$-group and $\wde{D} = O^p(\wde{D}) \le O^p(\wde{L}) = \wde{L_\circ} \le \wde{L^\circ}$, whence $\wde{D} \le Z(\wde{L^\circ})$. By definition, $L^\circ$ is generated by $p$-elements, thus $\wde{L^\circ} = O^{p'}(\wde{L^\circ})$ and \cite[Lemma 1.7(a),(b)]{Meier-Stell-Stroth:2012} shows $Z(\wde{L^\circ}) \le \Phi(\wde{L^\circ})  =\Phi(\wde{L_\circ})$, which is (b).\\
	
	Suppose that $Z:= Z(Q) \cap Z(Q)^g \ne 1$; as $\L$ is $Q$-replete, $Z \in \Delta$, so $N_\L(Z)$ is a well-defined subgroup of $\L$ containing both $Q$ and $Q^g$. By \ref{lemma:weakly closed subgroups in localities}(a), $Q$ is weakly closed in $N_\L(Z)$ and by $(Q!)$, $N_\L(Z) \le N_\L(Q)$, so $Q$ is normal in $N_\L(Z)$ and we conclude that $QQ^g$ is a $p$-subgroup of $N_\L(Z)$. Moreover, there exists $U \le Z(Q)$ such that $U^g=Z$ and $U \in \Delta$ by $Q$-repleteness.\\
	Let $m \in \L$ be such that $Z^m \le S$ is fully normalized; since $Z \in \Delta$ and $Q$ is weakly closed we may choose $m \in N_\L(Q) = N_\L(Z(Q))$, thus $(g,m) \in D(\L)$ via $(U,Z,Z^m)$ and $(QQ^g)^m =Q Q^{gm} \le N_\L(Z^m)$. In particular, there exists $n \in N_\L(Z^m)$ such that $(QQ^{gm})^n \le S$ and $(g,m,n) \in D(\L)$ via $(U,Z,Z^m,Z^m)$. Thus $Q^n Q^{gmn} \le S$ and, $Q$ being weakly closed in $\L$, we obtain $Q^n = Q = Q^{gmn}$. In particular, $Q=Q^{gm}$, so recalling that $m \in N_\L(Q)$ we have $Q=Q^{m\inv} =Q^g$.
\end{proof}

\begin{theorem}[Main reduction]\label{theorem: main reduction}
	Let $L \le \L$ be a parabolic subgroup. Then the following hold.
	\begin{enumerate}[(a)]
		\item There exist $M \in \gM_\L(S)$ and a parabolic subgroup $L^* \le \L$ such that
		\[
		L^* \le M, \; LC_\L(Y_L) = L^*C_\L(Y_L), \; \text{and} \; Y_L=Y_{L^*} \le Y_M.
		\]
		Moreover, $L^\circ = (L^*)^\circ \le M^\circ$.
		\item Suppose $Q \not \norm L$ and pick $M$ and $L^*$ as in (a). Then $Q \not \norm L^*$ and $Q \not \norm M$.
		\item One of the following holds:
		\begin{enumerate}[(i)]
			\item $\M_\L(S) = \{N_\L(Q) \}$, or
			\item there exists $M \in \gM_\L(S)$ such that $Q \not \norm M$.
		\end{enumerate}
	\end{enumerate}
\end{theorem}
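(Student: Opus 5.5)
Let $F_\S$ denote the set of maximal elements of $(\S^\dagger,\ll)$, which is a partial order by Lemma~\ref{lemma:partial order for the main reduction}(f). First I would check that $F_\S$ is a factorization family. Condition (i) comes from finiteness: for $H\in\S$ we have $H\ll H^\dagger\in\S^\dagger$ by Lemma~\ref{lemma:partial order for the main reduction}(a), and $H^\dagger$ lies below some maximal $M$; transitivity gives $H\ll M$. For condition (ii), take $M\in F_\S$ and $M\ll H$. Then $M\ll H\ll H^\dagger$, so maximality forces $M=H^\dagger$, whence $Y_M=Y_H=Y_{H^\dagger}$ and $H\le M$. The identity $Y_H=Y_{H^\dagger}$ requires an argument: Lemma~\ref{lemma:partial order for the main reduction}(a) gives $Y_H\le Y_{H^\dagger}=Y_M\le Y_H$, where the last inclusion comes from $M\ll H$. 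Lemma~\ref{lemma:M in factorization family is local}, applied with $H=M$, then says that every $M\in F_\S$ equals $N_\L(Y_M)=M^\dagger$ and is the unique maximal subgroup of $\L$ containing $M$. I expect it is also in $\M_\L$ (it is in $\C$ by the Remark) with $\M(M)=\{M^\dagger\}$, and this is basic property (i).

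For (a), take a parabolic $L$ and choose $M\in F_\S$ with $L\ll M$. Set $L^*:=N_M(C_S(Y_L))$. Lemma~\ref{lemma:partial order for the main reduction}(c),(d) gives $L\le C_\L(Y_L)N_M(Y_L)$, and Lemma~\ref{lemma:properties of parabolics in L}(j), applied to the pair $L\le N_M(Y_L)C_\L(Y_L)$, should give $Y_{L^*}=Y_L$ and $LC_\L(Y_L)=L^*C_\L(Y_L)$. We also have $Y_L\le Y_M$. The equality $L^\circ=(L^*)^\circ$ should follow from Lemma~\ref{lemma: L containing Q}(a) with $Y=Y_L$, since $L^\circ=(LC_\L(Y_L))^\circ=(L^*C_\L(Y_L))^\circ=(L^*)^\circ$, and $(L^*)^\circ\le M^\circ$ is clear.

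The main obstacle is basic property (ii) for $M$: $C_M(Y_M)$ must be $p$-closed and $C_M(Y_M)/O_p(M)\le\Phi(M/O_p(M))$. This need not hold for every element of $F_\S$, so I would follow \cite[Lemma 1.56]{Meier-Stell-Stroth:2012} and \cite[Section 3]{MS:2009}. Working inside the group $N_\L(Y_M)$, which is of characteristic $p$, I would replace $M$ by a suitable parabolic subgroup $M_1\le M$ with $M_1C_\L(Y_M)=M$ and $Y_{M_1}=Y_M$. A natural candidate is a minimal such subgroup, for instance one of the form $N_M(C_S(Y_M))$ cut down by a Frattini-type argument, so that $C_{M_1}(Y_M)\le\Phi$ modulo $O_p$. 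Since everything here happens inside a single finite group, the group-theoretic argument transfers verbatim. I would then verify (i) for $M_1$ using $M_1^\dagger=M$ and Lemma~\ref{lemma:properties of parabolics in L}(h), and adjust $L^*$ by intersecting it with $M_1$.

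For (b), suppose $Q\norm L^*$. Then $(L^*)^\circ=Q$, so $L^\circ=Q$, which means $Q\norm L$; this is a contradiction. Similarly, if $Q\norm M$ then $Q\norm L^*\le M$. For (c), if some parabolic $L$ has $Q\not\norm L$, then (b) gives (ii). Otherwise every parabolic normalizes $Q$, so each $M\in\M_\L(S)$ lies in $N_\L(Q)$. I would check that $N_\L(Q)\in\C$, using $C_\L(Q)\le Q=O_p(N_\L(Q))$ (recall $Q$ is chosen maximal). Maximality then gives $\M_\L(S)=\{N_\L(Q)\}$.
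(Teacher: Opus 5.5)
\textbf{The gap: your choice of $L^*$.} Your overall architecture matches the paper's. You take the $\ll$-maximal elements of $\S^\dagger$ as the factorization family; your check that they really form one is a useful addition, since the paper only invokes it. You cut the chosen maximal element down to a minimal parabolic $M_1$ with $M=M_1C_M(Y_M)$ to get the basic property. You get $L^\circ=(L^*)^\circ$ from Lemma~\ref{lemma: L containing Q}(a). Your (c) is fine.

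However, your definition of $L^*$ is wrong, and (a) breaks there; your (b) depends on it. Since $M\in F_\S$, we have $M=N_\L(Y_M)$, and $L^*:=N_M(C_S(Y_L))$ is in general much larger than $L$ modulo $C_\L(Y_L)$. Lemma~\ref{lemma:properties of parabolics in L}(j) needs an overgroup $K$ with $KC_\L(Y_L)=LC_\L(Y_L)$. Your $K=N_M(Y_L)C_\L(Y_L)$ only contains $LC_\L(Y_L)$.

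Here is a concrete failure. Take $L=S$, so $Y_S=\Omega Z(S)$, $C_S(Y_S)=S$, and your $L^*=N_M(S)$. Let $G=\SL_3(3)$ with $p=3$, where $Q=S$ is large, and let $M$ be a maximal parabolic. Then $N_M(S)$ is a Borel subgroup containing a torus involution that inverts the root group $Z(S)$. Hence $N_M(S)C_\L(Y_S)\ne SC_\L(Y_S)$. Passing to $M_1$ does not help: here $C_M(Y_M)=O_3(M)$, so $M_1=M$.

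\textbf{The repair.} Intersect with $LC_\L(Y_L)$ before normalizing. Since $C_M(Y_M)\le C_\L(Y_L)$ and $M=M_1C_M(Y_M)$, Lemma~\ref{lemma:partial order for the main reduction}(c) and the Dedekind law inside $N_\L(Y_L)$ give $L\le C_\L(Y_L)N_{M_1}(Y_L)$. It follows that $LC_\L(Y_L)=C_\L(Y_L)L_2$ with $L_2:=LC_\L(Y_L)\cap M_1\ge S$. Applying (j) to $(L,L_2)$ gives $L^*:=N_{L_2}(C_S(Y_L))\le M_1$ with $Y_{L^*}=Y_L\le Y_M=Y_{M_1}$ and $LC_\L(Y_L)=L^*C_\L(Y_L)$. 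The paper does the equivalent: it first reduces to $C_\L(Y_L)\le L$ and then takes $L^*:=L\cap M_1$. With this $L^*$, the rest of your argument, including (b), goes through.

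\textbf{Make the basic-property step explicit, as the paper does.}
\begin{itemize}
\item Take $M_1$ minimal among parabolic subgroups with $M=M_1C_M(Y_M)$.
\item The Frattini argument and minimality give $M_1=N_{M_1}(C_S(Y_M))$. Hence $C_{M_1}(Y_M)$ is $p$-closed with Sylow subgroup $C_S(Y_M)=O_p(M_1)$.
\item Suppose a maximal subgroup $X\ge O_p(M_1)$ of $M_1$ had $XC_{M_1}(Y_M)=M_1$. After conjugating in $M_1$, $X$ would contain $S$ and contradict minimality. This gives $C_{M_1}(Y_M)/O_p(M_1)\le\Phi(M_1/O_p(M_1))$.
\item Finally, $\M_\L(M_1)=\{M\}$ follows from Lemma~\ref{lemma:M in factorization family is local} with $H=M_1$.
\end{itemize}
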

\begin{proof}
	\begin{enumerate}[(a)]
		\item Since $L$ is parabolic, $Y_L \in \Delta$. Let $L_1:=LC_\L(Y_L) \le N_\L(Y_L)$; we will first show that (a) holds for $L$ provided it holds for $L_1$.\\
		Clearly $Y_L$ is a $p$-reduced elementary abelian normal $p$-subgroup of $L_1$, so $Y_L \le Y_{L_1} \in \Delta$ and therefore $C_\L(Y_{L_1}) \le C_\L(Y_L)$. If (a) holds with $L_1$ in place of $L$ there exist $M \in \gM_\L(S)$ and a parabolic subgroup $L_1^* \le \L$ such that
		\[
		L_1^* \le M, \; Y_{L_1} = Y_{L_1^*} \text{ and } L_1C_\L(Y_{L_1}) = L_1^* C_\L(Y_{L_1}).
		\]
		Since $L_1 \le L_1C_\L(Y_{L_1}) \le L_1 C_\L(Y_L) = L_1$, we get $L_1^* \le L_1$ and therefore $L_1^* C_\L(Y_L) \le L_1  \le L_1^* C_\L(Y_{L_1}) \le L_1^* C_\L(Y_L)$. Thus
		\[
		LC_\L(Y_L) = L_1 = L_1^*C_\L(Y_L).
		\]
		Set $L^*:= N_{L_1^*}(C_S(Y_L))$; \ref{lemma:properties of parabolics in L}(j) with $(L_1^*,L_1)$ in place of $(M,L)$ yields $Y_L = Y_{L^*}$ and $LC_\L(Y_L)=L^*C_\L(Y_L)$. Clearly $S \le L^*$, thus $L^*$ has characteristic $p$; since $Y_{L^*} =Y_L \le Y_{L_1} \le Y_M$, then $L^*$ and $M$ satisfy
		\[
		L^* \le M, \quad Y_L=Y_{L^*} \text{ and } LC_\L(Y_L) = L^*C_\L(Y_L),
		\]
		which is (a) for $L$, apart from proving the last assertion.\\
		
		We may therefore prove (a) for a pair $(L,M)$ satisfying $C_\L(Y_L) \le L$.\\
		Let $F_\S$ be the unique factorization family for $\L$, as in \ref{lemma:unique-factor-family}; recall that it means that:
		\begin{itemize}
			\item[(i)] for every parabolic $K \le \L$ there exists $F \in F_\S$ with $K \subseteq C_\L(Y_K)F$ and $Y_K \le Y_F$;
			\item[(ii)] if a parabolic $K$ and $F \in F_\S$ are such that $F \subseteq C_\L(Y_F)K$ and $Y_F \le Y_K$, then $Y_K=Y_F$ and $K \le F$.
		\end{itemize}
		We therefore choose $F$ according to (i), so such that $L \subseteq C_\L(Y_L)F$ and $Y_L \le Y_F$. Now (i) together with $C_\L(Y_L) \le L$ imply $L \subseteq C_L(Y_L)F$. Thus
		\[
		L = C_L(Y_L)(L \cap F)
		\]
		and we may apply \ref{lemma:properties of parabolics in L}(g) to $(L, L \cap F, Y_L)$ in place of $(M,L,Y)$ and obtain $Y_L = Y_{L \cap F}$.\\
		Choose $M \le F$ minimal such that $S \le M$ and $F=MC_F(Y_F)$ and let $L^*:=L \cap M$. Since $F$ is a parabolic subgroup, it has characteristic $p$, so \ref{lemma:unique largest p-reduced} applied to $(L \cap F, F)$ gives $Y_L=Y_{L \cap F} \le Y_F$. Thus $C_F(Y_F) \le C_\L(Y_L) \le L$ and we can write
		\[
		F=M C_{(F \cap L)}(Y_L) \quad \text{and} \quad F \cap L = (M \cap L)C_{F \cap L}(Y_L),
		\]
		yielding
		\[
		LC_\L(Y_L) =L =  (L \cap F)C_\L(Y_L) = (L \cap M) C_\L(Y_L) = L^* C_\L(Y_L).
		\]
		Then $Y_L$ is $p$-reduced for $L \cap M$, so $Y_L \le Y_{L \cap M}$ and \ref{lemma:properties of parabolics in L}(g) applied to $(L \cap M, L, Y_L)$ in place of $(L,M,Y)$ shows that $Y_{L \cap M} = Y_L$. In addition, \ref{lemma:unique largest p-reduced} applied to $(L \cap M, M)$ yields $Y_{L \cap M} \le Y_M$. Thus
		\[
		Y_L = Y_{L^*} \le Y_M
		\]
		and all is left to show is that $M$ has the basic property.\\
		Since $M \le F$ and $F = MC_F(Y_F)$, always \ref{lemma:properties of parabolics in L}(g) applied this time to $(M,F,Y_F)$ gives 
		\[
		Y_M=Y_F, \quad \text{thus } F=MC_F(Y_M).
		\]
		By \ref{lemma:M in factorization family is local} $F = N_\L(Y_F) = F^\dagger$ and is the unique maximal subgroup of $\L$ containing $M$, thus also $F = N_\L(Y_F) = N_\L(Y_M)$ proving that $ F\le M^\dagger = M C_\L(Y_M) \le N_\L(Y_M) = F$ and therefore
		\[
		F = M^\dagger \qquad \text{and} \qquad\M_\L(M) = \{ M^\dagger \}.
		\]
		This gives condition (i) of the basic property.\\
		Let $M_0 := N_M(C_S(Y_M))$; it is clearly a parabolic subgroup of $\L$. Since $C_\L(Y_F)=C_F(Y_F)$, the Frattini argument applied to $C_S(Y_F)\le C_F(Y_F) \norm F =M^\dagger$ yields
		\[
		F=C_F(Y_F) N_F(C_S(Y_F)) = C_F(Y_M)N_M(C_S(Y_M))= M_0 C_F(Y_F).
		\]
		The minimality of $M$ now shows $M_0=M$. Thus $C_S(Y_M)$, which is a Sylow $p$-subgroup of $C_M(Y_M)$, is normal in $C_M(Y_M)$; hence $C_M(Y_M)$ is $p$-closed with Sylow $p$-subgroup $C_S(Y_M)=O_p(M)$, which follows from $Y_M$ being $p$-reduced.\\
		Let now $X$ be a maximal subgroup of $M$ with $O_p(M) \le X$ and suppose $XC_M(Y_M) = M$. Then $F=MC_F(Y_F) = X C_M(Y_F) C_F(Y_F) = X C_F(Y_F)$. Note that $X$ contains a Sylow $p$-subgroup of $M$, as $XC_M(Y_M)=M$, and we may suppose $S \le X$. This now contradicts the minimality of $M$, thus $XC_M(Y_M) \ne M$, proving $C_M(Y_M) \le X$ which is condition (ii) of the basic property.\\
		
		We have proven then (a) apart from the last statement; since $Y_L=Y_{L^*}$ the claim follows easily from \ref{lemma: L containing Q}(a), recalling that $L^* \le M$ and noting
		\[
		L^\circ = (LC_\L(Y_L))^\circ = (L^*C_\L(Y_L))^\circ = (L^*)^\circ \le M^\circ.
		\]
		\item Since $\Omega Z(S) \le Y_L \cap C_\L(Q)$, $C_{Y_L}(Q) \ne 1$ and $Q!$ yields
		\[
		C_\L(Y_L) \le C_\L(C_{Y_L}(Q)) \le N_\L(C_{Y_L}(Q)) \le N_\L(Q).
		\]
		As $LC_\L(Y_L) = L^*C_\L(Y_L)$, then $Q \not \norm L^*$ and thus also $Q \not \norm M$.
		\item Suppose $\M_\L(S) \ne \{N_\L(Q)\}$. Then there exists a maximal subgroup $L \in \M_\L(S)$ distinct from $N_\L(Q)$, thus with $Q \not \norm L$. Then (a) and (b) yield the existence of $M \in \gM_\L(S)$ with $Q \not \norm M$.
	\end{enumerate}
\end{proof}

The result above justifies the assumptions on $\L$ given in \ref{hyp on the locality}, which we recall here and will assume, possibly without further notice, for the rest of the paper.

\hypothesisLocality*

Again, \ref{lemma:large for fus systems and localities}(b) guarantees that $\L$ is a linking locality for $\F:=\F_S(\L)$. \\

We now can prove our translation of the hypothesis regarding the subgroup $M \le \L$ in terms of solely the fusion system, that is Theorem~\ref{theorem:translation of hyp on local to fus systems}.
\begin{proof}
	Suppose that $\L$ satisfies hypothesis~\ref{hyp on the locality}; then saturation of $\F$ follows from \ref{lemma:large for fus systems and localities}(a).\\ Since $M$ is parabolic, $S \le M \le N_\L(Y)$ for some $Y \in \Delta$, hence $Y \norm S$; as $Q \not \norm M$ implies $Q \not \norm N_\L(Y)$ we may replace $M$ with $N_\L(Y)$. Since $N_\L(Y) \le N_\L(O_p(N_\L(Y)))$, we may also assume $Y = O_p(N_\L(Y))$; since $\L$ is objective characteristic $p$ we obtain $C_\L(Y) = Z(Y)$. Suppose that $Q Z(Y) \norm N_\L(Y)$, thus $N_\L(Y) \le N_\L(QZ(Y))$. Moreover, the fact that $Z(QZ(Y)) \le N_\L(Y) \cap Z(Q)$ together with $Q!$ yields
		\[
			N_\L(Y) \le N_\L(QZ(Y)) \le N_\L(Z(QZ(Y))) \le N_\L(Q)
		\]
	proving that then $Q \norm N_\L(Y)$, a contradiction. We conclude that $QZ(Y) = QC_\L(Y) \not \norm N_\L(Y)$.\\
	Consider now the group $N_\L(Y) / C_\L(Y) \cong \Aut_\F(Y)$. Since $Y \norm S$ there is an induced map $\xi \colon S \to \Aut_\F(Y)$ (defined by taking conjugations) and the condition $Q \not \norm N_\L(Y)$ is then equivalent to $\xi(Q)\not \norm \Aut_F(Y)$. Observe in addition that $Y$ is $\F$-centric.\\
	
	Viceversa, assume $\F$ to be a saturated fusion system on the $p$-group $S$ with a large subgroup $Q$ and let $\L=(\L,\Delta,S)$ be a $Q$-replete linking locality for $\F$. Such a linking locality $\L$ exists and $Q$ is large in it by \ref{lemma:large for fus systems and localities}. Suppose additionally that there exists an $\F$-centric subgroup $Y$ such that $Y \norm S$ and $\xi(Q) \not \norm \Aut_\F(Y)$, where $\xi \colon S \fto \Aut_\F(Y)$ is given by taking conjugations. Since $\L$ is $Q$-replete we get that $Y \in \Delta$ and so, as $Y$ is $\F$-centric, $\Aut_\F(Y) \cong N_\L(Y)/C_\L(Y) = N_\L(Y) / Z(Y)$. Hence $QZ(Y) \not \norm N_\L(Y)$, thus $Q \not \norm N_\L(Y)$.
\end{proof}

\vspace{1cm}

We conclude the section with some lemmas that will be needed.

\begin{lemma}[1.55(d)]\label{lemma:dichotomy for subgroups containing Q}
	Let $L$ be a subgroup of $\L$ such that $Q \le L$. Then either $C_\L(L^\circ)=1$ or $Q \norm L$, that is $L^\circ = Q$.
\end{lemma}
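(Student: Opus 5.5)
We assume $C_\L(L^\circ) \ne 1$ and aim to show $Q \norm L$. Here $L^\circ$ is the subnormal closure of $Q$ in $L$ generated by the $L$-conjugates of $Q$ (Lemma~\ref{lemma:weakly closed subgroups in localities}(c),(d)). First I would note that $C_\L(L^\circ) \subseteq C_\L(Q) \subseteq Q$ by $(SC\L)$. Since $Q \le L^\circ$, we get $C_\L(L^\circ) \le Z(Q)$. Set $Z := C_\L(L^\circ)$. It is then a non-trivial subgroup of $Z(Q)$, in particular a $p$-subgroup of $S$ centralizing $Q$. By $Q$-repleteness, $Z \in \Delta$, so $N_\L(Z)$ is a genuine subgroup of $\L$. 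By $(Q!\L)$ we have $N_\L(Z) \subseteq N_\L(Q)$.

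The key step is to show that every $L$-conjugate $Q^x$ ($x \in L$) lies in $N_\L(Z)$, or better that it equals $Q$. Since $Z$ centralizes $L^\circ \supseteq Q^x$, we have $Z \le C_\L(Q^x)$. Conjugation by $x$ is an isomorphism of the group $L$, and $Q^x$ is large in a suitable sense, so $C_\L(Q^x) \le Q^x$. One must be careful here: $(SC\L)$ is only stated for $Q$. I would instead argue inside the group $L$. We have $Z^{x^{-1}} \le C_L(Q) \le Z(Q)$, and $Z^{x^{-1}}$ is defined since $Z \le L$. Indeed $Z \le Q \le L$. So $Z^{x^{-1}} \le Z(Q)$ and also $Z \le Z(Q)$. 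Also $Z$ is normal in $L$, because $L$ normalizes $L^\circ$ and hence its centralizer in $\L$, which is taken within the group $N_\L(Z)$-type local subgroup. I would justify this by noting that $L \le N_\L(P)$ for some $P \in \Delta$ (Lemma~\ref{lemma:properties of localities}(c)), and that centralizers of subsets of $L$ computed in $\L$ are normalized by elements of $L$ that normalize the subset. Concretely, for $c \in C_\L(L^\circ)$ and $x \in L$, the word $(x^{-1},c,x)$ is in $D$ via $Z\cap$-type objects, because $c \in Q \le L$. So $c^x$ is computed in the group $L$ and centralizes $L^\circ$. Hence $Z \norm L$, so $L \le N_\L(Z) \subseteq N_\L(Q)$ by $(Q!\L)$. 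Thus $Q \norm L$, and $L^\circ = \<Q^L\> = Q$.

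The main obstacle is this normality step: we must check that $C_\L(L^\circ)$ lies inside $L$ and is $L$-invariant, rather than merely being a subset of the partial group. The first containment is handled by $(SC\L)$, which gives $C_\L(L^\circ) \le Q \le L$. Once $Z \le L$, conjugation by $x \in L$ takes place inside the group $L$. There, $c^x$ commutes with $y^x$ for all $y \in L^\circ$, and $L^\circ$ is normal in $L$. So $Z^x \subseteq C_L(L^\circ) \subseteq C_\L(L^\circ) = Z$. For the final equivalence, $Q \norm L$ clearly gives $Q^L = \{Q\}$, hence $L^\circ = Q$. Conversely, $L^\circ = Q$ is normal in $L$, since $L^\circ \norm L$ by Lemma~\ref{lemma:weakly closed subgroups in localities}(b).
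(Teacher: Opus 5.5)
Your proof is correct and follows the paper's route. With $Z:=C_\L(L^\circ)\le Z(Q)$ nontrivial, $L$ normalizes $L^\circ$ and hence $Z$, so $(Q!\L)$ gives $L\le N_\L(Z)\le N_\L(Q)$; this is exactly the paper's chain $L\subseteq N_\L(L^\circ)\subseteq N_\L(C_\L(L^\circ))\le N_\L(Q)$. Your check that $Z\le Q\le L$, so that conjugating $Z$ by elements of $L$ happens inside the group $L$, is a welcome justification the paper leaves implicit, while the detour about the conjugates $Q^x$ being large is unnecessary and can be dropped.
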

\begin{proof}
	Since $C_\L(L^\circ) \subseteq C_\L(Q)=Z(Q)$, it is a subgroup of $\L$. Suppose that $C_\L(L^\circ) \ne 1$; then $Q!$ and the fact that $C_\L(L^\circ) \le Q \le L^\circ$ imply $L \subseteq N_\L(L^\circ) \subseteq N_\L(C_\L(L^\circ)) \le N_\L(Q)$, so $Q \norm L$ and by definition we get $L^\circ = \< Q^L \> = Q$.
\end{proof}

\begin{lemma}[1.57]\label{lemma1.57}
Let $M \le \L$ be a subgroup with $Q \le M$ and let also $V \le M$ be an elementary abelian normal $p$-subgroup of $M$. Let $g \in \L$ be such that $c_g \colon M \to M^g =: M_0$ is a well-defined group homomorphism and set $Q_0:= Q^g$, $V_0:=V^g$ and $\ov{M_0} := M_0/C_{M_0}(V_0)$. Then
\begin{enumerate}[(a)]
	\item for every $M_0$-submodule $1 \ne A_0 \le C_{V_0}(Q_0)$, $N_{M_0}(A_0) \le N_{M_0}(Q_0)$;
	\item if $Q_0 \not \norm M_0$, then $V_0$ is a $Q_0!$ $M_0$-module with respect to $Q_0$ and a faithful $Q_0!$ $\ov{M_0}$-module with respect to $\ov{Q_0}$;
	\item For every $U_0 \le M_0$ transitive on $V_0$ we have $Q_0^{M_0} = Q_0^{U_0}$, hence also $M_0^\circ = U_0^\circ$.
\end{enumerate}
\end{lemma}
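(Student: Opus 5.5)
Since $c_g\colon M\to M_0$ is a well-defined group homomorphism, it is an isomorphism of groups carrying $Q$ to $Q_0$ and $V$ to $V_0$. All three statements concern only the internal structure of $M_0$ together with its subgroups $Q_0$ and $V_0$. So the plan is to prove them for $M$, $Q$, $V$ (the case $g=1$) and transport them along $c_g$.

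The one exception is that I will use the large subgroup $Q\le S$ of $\L$ itself, so the point to check is that everything I use about $Q$ lives inside $M$. This concerns the conjugates of $Q$ by elements of $M$, and $Q$ normalizes $V$, so these statements transport.

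For (a), let $1\ne A\le C_V(Q)$. Since $A\subseteq C_\L(Q)$, $(SC\L)$ gives $A\le Z(Q)\le C_S(Q)$. By $Q$-repleteness $A\in\Delta$, so $N_\L(A)$ is a group, and $(Q!\L)$ yields $N_\L(A)\subseteq N_\L(Q)$. Intersecting with $M$ gives $N_M(A)\le N_M(Q)$, and applying $c_g$ gives the statement for $M_0$. The submodule hypothesis is not even needed.

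For (b), I first note that $C_V(Q)\ne1$, since a $p$-group acting on a non-trivial $p$-group has non-trivial fixed points. By (a), $C_M(V)\le N_M(C_V(Q))\le N_M(Q)$. The $Q!$-module condition for $M$ with respect to $Q$ is then exactly (a), together with the standing assumption $Q\not\trianglelefteq M$. To pass to $\ov M=M/C_M(V)$:
\begin{itemize}
\item $\ov M$ acts faithfully on $V$.
\item For $1\ne A\le C_V(\ov Q)=C_V(Q)$, the preimage of $N_{\ov M}(A)$ is $N_M(A)$, since $C_M(V)\le N_M(A)$, and $N_M(A)\le N_M(Q)$. Hence $N_{\ov M}(A)\le N_{\ov M}(\ov Q)$.
\item It remains to see that $\ov Q\not\trianglelefteq\ov M$. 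If it were normal, then $QC_M(V)\trianglelefteq M$. Since $C_M(V)$ normalizes $Q$, $Q$ is a normal Sylow-type subgroup of $QC_M(V)$, indeed characteristic in it (alternatively use that $Q$ is weakly closed in $M$ by \ref{lemma:weakly closed subgroups in localities}(a)). Then $Q\trianglelefteq M$, a contradiction.
\end{itemize}
I would double-check the exact definition of a $Q!$-module in \cite{Meier-Stell-Stroth:2012} (Appendix) to be sure that these are all its clauses.

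For (c), I read ``$U_0$ transitive on $V_0$'' as transitive on $V_0^\natural$; this interpretation is the step I expect to need care. Pick $1\ne v\in C_V(Q)$, which exists as above. Transitivity gives $M=U\,C_M(v)$, since $C_M(v)$ is the stabilizer of $v$. By (a) with $A=\langle v\rangle$, $C_M(v)\le N_M(\langle v\rangle)\le N_M(Q)$, hence $M=UN_M(Q)$. Then $Q^M=Q^{UN_M(Q)}=Q^U$. Finally \ref{lemma:weakly closed subgroups in localities}(f), applied to $U\le M$ (with $Q$ weakly closed by (a) of that lemma), gives $M^\circ=U^\circ$. Transporting along $c_g$ finishes the proof.
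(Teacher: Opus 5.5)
Your proposal is correct and follows the paper's proof essentially step for step: reduce to $g=1$, get (a) from $(SC\L)$ and $(Q!\L)$, get (b) from (a) together with $Q\not\trianglelefteq M$ (you verify the passage to $\ov{M}$ by hand where the paper cites \cite[Lemma A.51]{Meier-Stell-Stroth:2012}), and get (c) from $M=UC_M(v)=UN_M(Q)$ for some $1\ne v\in C_V(Q)$. Two small corrections: first, $Q$ need not be Sylow or characteristic in $QC_M(V)$, so rely on your weak-closure alternative to show $\ov{Q}\not\trianglelefteq\ov{M}$; second, in (c) you should not invoke Lemma~\ref{lemma:weakly closed subgroups in localities}(f), since it assumes $Q\le U$, which is not given, and in any case $M^\circ=\langle Q^M\rangle=\langle Q^U\rangle=U^\circ$ holds directly by definition.
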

\begin{proof}
Note that $C_{V_0}(Q_0)$ and $N_{M_0}(A_0)$ are all well-defined subgroups of $M_0$. In particular, since we may conjugate first in $M$ via $g\inv$ and then back in $M^g$ via $g$, we only need to prove the assertion when $g=1$. 
\begin{enumerate}[(a)]
	\item Since $Q$ is large in $\L$, $A \le Z(Q)$, so that (a) is an immediate consequence of the $Q!$ property.
	\item (a) and $Q \not \norm M$ give the definition of a $Q!$-module. Clearly $V$ is a faithful $\ov{M}$-module; Lemma \cite[A.51]{Meier-Stell-Stroth:2012} now shows that $V$ is a $Q!$-module for $\ov{M}$ with respect to $\ov{Q}$.
	\item Since $U$ acts transitively on $V$, for any $1 \ne v \in V$, $M=U C_M(v)$. Pick $v \in C_V(Q)$; as $Q$ is large, $v \in Z(Q)$, so always by $Q!$ we have $C_M(v) \le N_M(Q)$ and therefore $M = U N_M(Q)$. This proves that $Q^M = Q^U$, thus $M^\circ = \< Q^M \> = \< Q^U\> = U^\circ$. 
\end{enumerate}
\end{proof}

\begin{lemma}[1.6]\label{lemma1.6}
	Let $(\L,\Delta,S)$ be any locality, $H \in \C_\L$ and $K\le H$ be such that $O_p(H) \le K$. Then
	\[
	\C_H(K) = \{L \le \L \mid K \le L \le H \} = \{ L \in \C_\L(K) \mid L \le H \}.
	\]
\end{lemma}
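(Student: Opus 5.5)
The plan is a direct set-theoretic comparison. All three sets consist of subgroups $L$ with $K \le L \le H$; the only content is that every such $L$ automatically satisfies the defining condition of $\C_\L$, and also that of $\C_H$, the analogous set formed inside the finite group $H$ as in Notation~\ref{notation for groups}. Since $H \le \L$ is a subgroup, every $L$ with $K \le L \le H$ is a genuine subgroup of the group $H$, so $O_p(L)$, $C_H(O_p(L))$ and $C_\L(O_p(L))$ all make sense.

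First I record the trivial inclusions. We have $\C_H(K) \subseteq \{L \le \L \mid K \le L \le H\}$ and $\{L \in \C_\L(K) \mid L \le H\} \subseteq \{L \le \L \mid K \le L \le H\}$ directly from the definitions of $\C_H(K)$ and $\C_\L(K)$ (recall $\K_L(H) = \{K' \in \K \mid H \le K' \le L\}$). It therefore suffices to show that every $L$ with $K \le L \le H$ lies in both $\C_\L$ and $\C_H$.

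So fix $L$ with $K \le L \le H$. Since $O_p(H) \le K \le L$ and $O_p(H) \norm H$, the subgroup $O_p(H)$ is a normal $p$-subgroup of $L$. Hence $O_p(H) \le O_p(L)$. Centralizers reverse inclusion, so
\[
C_\L(O_p(L)) \subseteq C_\L(O_p(H)) \subseteq O_p(H) \le O_p(L),
\]
where the middle inclusion is exactly the hypothesis $H \in \C_\L$. This gives $L \in \C_\L$. The condition $O_p(L) \ne 1$, implicit in the definition of $\C_\L$, also holds: $H \in \C_\L$ forces $O_p(H) \ne 1$, as observed in Notation~\ref{notation for localities}, and $O_p(H) \le O_p(L)$. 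Moreover $C_H(O_p(L)) \subseteq C_\L(O_p(L)) \le O_p(L)$, so $L \in \C_H$ as well.

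There is no genuine obstacle. The only point needing care is the hypothesis $O_p(H) \le K$: it is what guarantees $O_p(H) \le O_p(L)$, and hence lets the self-centralizing property pass from $H$ down to every intermediate subgroup $L$. The argument works in any locality, with no large subgroup or repleteness assumption needed.
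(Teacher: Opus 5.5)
Your proof is correct and follows the paper's argument. The paper likewise records the trivial inclusions, and then, for each $L$ with $K \le L \le H$, uses the same chain $C_\L(O_p(L)) \subseteq C_\L(O_p(H)) \subseteq O_p(H) \le O_p(L)$ to put $L$ into both sets (membership in $\C_H(K)$ following from $C_H(O_p(L)) \subseteq C_\L(O_p(L))$). Your explicit justifications of $O_p(H) \le O_p(L)$, via $O_p(H) \norm L$, and of $O_p(L) \ne 1$ fill in steps the paper leaves implicit.
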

\begin{proof}
	Let $L \le H$; if $C_\L(O_p(L)) \le O_p(L)$, then clearly also $C_H(O_p(L)) \le O_p(L)$, thus the following inclusions are trivial
	\[
	\{ L \in \C_\L(K) \mid L \le H \} \subseteq \C_H(K) \subseteq \{L \le \L \mid K \le L \le H \}.
	\]
	Let then $K \le L \le H$; since $O_p(H) \le K \le L \le H \in \C_\L$ we get
	\[
	C_\L(O_p(L)) \subseteq C_\L(O_p(H)) \subseteq O_p(H) \le O_p(L),
	\]
	showing that $\{L \le \L \mid K \le L \le H \} \subseteq \{L \le \L \mid K \le L \le H \}$.
\end{proof}

\begin{lemma}[2.2]\label{lemma2.2}
	Suppose as usual that $\L=(\L,\Delta,S)$ is a locality satisfying hypothesis \ref{hyp on the locality} and let $M \in \gM_\L(S)$. Then the following hold.
	\begin{enumerate}[(a)]
		\item $M \in \C_\L$, that is $C_\L(O_p(M)) \le O_p(M)$.
		\item $Q$ is weakly closed in $\L$ and in every subgroup of $\L$ in which it is contained.
		\item If $1 \ne K \norm M$, then $O_p(K) \in \Delta$ and $N_\L(O_p(K))\le M^\dagger$; in particular, $N_\L(O_p(M)) \le M^\dagger$.
		\item $M^\dagger = N_\L(Y_M)$.
		\item $Y_M = \Omega Z(O_p(M))$.
		\item $C_S(Y_M) = O_p(M) \in \Syl_p(C_\L(Y_M))$. In particular, $C_\L(Y_M)$ is $p$-closed if and only if $N_\L(O_p(M))=M^\dagger$.
		\item $Q \norm M \; \iff \; Q \norm M^\dagger$,
		\item $(M^\dagger)^\circ =\wcl_{M^\dagger}(Q,M) = M^\circ$.
	\end{enumerate}
\end{lemma}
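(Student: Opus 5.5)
We prove the parts in the order (d), (a), (f), (e), (c), (b), (g), (h). This order lets us work inside the group $N_\L(Y_M)$ and then use the basic property together with the parabolic-subgroup lemmas of Section~\ref{large subgroups}.

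Since $M \in \gM_\L(S)$ is parabolic, Lemma~\ref{lemma:repleteness implies Y in Delta for parabolics} gives $Y_M \in \Delta$, so $N_\L(Y_M)$ is a group containing $M^\dagger$. By property (i) of the basic property, $M^\dagger$ is the unique maximal element of $\C_\L$ over $M$. We also have $N_\L(Y_M)\in\C_\L$: it is parabolic, so it has characteristic $p$ by Lemma~\ref{lemma:parabolic implies char p}, and $C_\L(O_p(N_\L(Y_M)))$ lies inside it. Hence $N_\L(Y_M) \le M^\dagger$, which is (d). For (a), note $C_\L(O_p(M)) \le C_\L(Y_M) \le M^\dagger$ by Lemma~\ref{lemma:properties of parabolics in L}(g). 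By the basic property (ii), $C_M(Y_M)$ is $p$-closed, and since $Y_M$ is $p$-reduced, Lemma~\ref{lemma:properties of parabolics in L}(k) gives $O_p(M)=C_S(Y_M)$. Then $C_{M^\dagger}(O_p(M))$ is normalized by $M$ and centralizes $Y_M$. I will argue that its image in $M^\dagger/C_\L(Y_M)\cong M/C_M(Y_M)$ is a normal subgroup centralizing $O_p(M)/\dots$; combined with the fact that $M$ has characteristic $p$, this should yield $C_\L(O_p(M)) \le O_p(M)$. For (f), Lemma~\ref{lemma:properties of parabolics in L}(a) gives $C_S(Y_M)\in\Syl_p(C_\L(Y_M))$, and we already have $C_S(Y_M)=O_p(M)$. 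Moreover $C_\L(Y_M)$ is $p$-closed iff $O_p(M)\norm C_\L(Y_M)$. By the Frattini argument this holds iff $M^\dagger = N_{M^\dagger}(O_p(M))$, and by the next step this is iff $N_\L(O_p(M))=M^\dagger$. Part (e) then follows from Lemma~\ref{lemma:properties of parabolics in L}(i) with $S_0=O_p(M)$, since $N_M(S_0)=M$.

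For (c), take $1\ne K\norm M$ and first show $O_p(K)\ne1$. Since $M$ has characteristic $p$, Lemma~\ref{lemma:properties groups of char p}(c) shows that $K$ has characteristic $p$, so $O_p(K)\neq 1$; also $O_p(K)\norm S$. Repleteness via Lemma~\ref{lemma:repleteness implies Y in Delta for parabolics} then gives $O_p(K)\in\Delta$. Thus $N_\L(O_p(K))$ is a parabolic subgroup containing $M$ and of characteristic $p$, hence lies in $\C_\L$. Maximality in (i) of the basic property forces $N_\L(O_p(K))\le M^\dagger$. Part (b) is Lemma~\ref{lemma:large implies weakly closed}(a) together with Lemma~\ref{lemma:weakly closed subgroups in localities}(a).

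For (g), recall from the proof of Theorem~\ref{theorem: main reduction}(b) that $C_\L(Y_M)\le N_\L(Q)$, by $Q!$ applied to $C_{Y_M}(Q)\supseteq\Omega Z(S)\neq1$. Then $M^\dagger = MC_\L(Y_M)$ normalizes $Q$ iff $M$ does. For (h), apply Lemma~\ref{lemma: L containing Q}(a) with $Y=Y_M$ to get $(M^\dagger)^\circ = M^\circ$. It then remains to see that every $\L$-conjugate of $Q$ lying in $M^\dagger$ (in the sense of $\wcl$) is already a $M^\dagger$-conjugate lying in $M$. I would use that $M^\dagger\le N_\L(Y_M)$ is a group in which $Q$ is weakly closed, so that such a $Q^g$ lies in a Sylow $p$-subgroup of $M^\dagger$ conjugate to $S$. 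The conjugating element can be chosen in $M^\dagger=MC_\L(Y_M)$, and $C_\L(Y_M)$ normalizes $Q$, which reduces it to an element of $M$. I expect this step to be the main obstacle: it requires lifting an arbitrary locality conjugation $c_g$ to one inside $N_\L(Y_M)$, via Lemma~\ref{lemma:weakly closed subgroups in localities}(a)-type arguments and Lemma~\ref{lemma:properties of localities}(b). The other delicate point is (a), where the basic property (ii) has to be used carefully.
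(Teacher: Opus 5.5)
Parts (b)--(h) are sound and essentially follow the paper; you get (d) directly from the maximality in the basic property instead of via (c), which is equally fine. Part (a), however, has a genuine gap.

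The quotient argument you sketch cannot work. $C_{M^\dagger}(O_p(M))$ centralizes $Y_M$, so its image in $M^\dagger/C_\L(Y_M)$ is trivial and carries no information. The fact that $M$ has characteristic $p$ only controls $C_M(O_p(M))$. The real difficulty is that $C_\L(O_p(M))$ lies in $C_\L(Y_M)$, which need not be contained in $M$. Basic property (ii) speaks only about $C_M(Y_M)$, so it says nothing about the elements of $C_\L(O_p(M))$ outside $M$.

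The missing idea is to use the characteristic $p$ property of a parabolic \emph{overgroup} $N$ of $M$ that contains $C_\L(O_p(M))$.
The paper takes $N=N_\L(O_p(M))$, which is a group because $1\ne O_p(M)\norm S$ gives $O_p(M)\in\Delta$.
You could equally take $N=M^\dagger=N_\L(Y_M)$, which you already have from (d): since $Y_M\le O_p(M)$, you get $C_\L(O_p(M))\subseteq C_\L(Y_M)\subseteq M^\dagger$.
In either case $N$ is parabolic, hence of characteristic $p$. Also $O_p(N)\le S\le M$ is normalized by $M$, so $O_p(N)\le O_p(M)$. Therefore
\[
C_\L(O_p(M)) = C_N(O_p(M)) \le C_N(O_p(N)) \le O_p(N) \le O_p(M),
\]
and (ii) is not used at all.

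Two smaller remarks.
\begin{itemize}
\item In (h), $\wcl_{M^\dagger}(Q,M)$ is by definition generated by $M^\dagger$-conjugates of $Q$, not $\L$-conjugates, so there is no lifting problem. Your own observation that $M^\dagger=C_\L(Y_M)M$ with $C_\L(Y_M)\le N_\L(Q)$ already gives $Q^{M^\dagger}=Q^M\subseteq M$, hence $\wcl_{M^\dagger}(Q,M)=\<Q^{M^\dagger}\>=(M^\dagger)^\circ=M^\circ$.
\item Lemma~\ref{lemma:properties of parabolics in L}(k) is stated under the hypothesis that $C_\L(Y_M)$ is $p$-closed, which you do not have. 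Use the direct argument instead: $C_S(Y_M)\in\Syl_p(C_M(Y_M))$ is normal in $C_M(Y_M)\norm M$, so $C_S(Y_M)\le O_p(M)\le C_S(Y_M)$.
\end{itemize}
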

\begin{proof}
	\begin{enumerate}[(a)]
		\item Since $1 \ne O_p(M) \norm S$, $O_p(M) \in \Delta$ by \ref{lemma:repleteness implies Y in Delta for parabolics} and so $N:=N_\L(O_p(M))$ is a group of characteristic $p$; as $O_p(N) \le O_p(M)$ one has 
		\[
		C_N(O_p(M)) \le C_N(O_p(N)) \le O_p(N) \le O_p(M),
		\]
		so by \ref{lemma1.6} $M \in \C_{N}(M) = \{L \in \C_\L(M) \mid L \le N \}$, in particular $C_\L(O_p(M)) \le O_p(M)$.
		\item This is \ref{lemma:large implies weakly closed} and \ref{lemma:weakly closed subgroups in localities}(a).
		\item As $K \norm M$, by \ref{lemma:properties groups of char p}(c) $K$ has characteristic $p$ and, in particular, $O_p(K) \ne 1$. Since $O_p(K) \norm M$, it is a normal subgroup of $S$, hence $O_p(K) \in \Delta$ and $N:=N_\L(O_p(K))$ is a subgroup of $\L$. We also get $C_\L(O_p(N)) \le C_\L(O_p(K)) \le O_p(K) \le O_p(N)$, so $N_\L(O_p(K)) \in \C_\L$. Since $\M_\L(M) = \{M^\dagger\}$ and $M \le N$, we get $N_\L(O_p(K)) \le M^\dagger$.\\
		By taking $K:=M$ we obtain $N_\L(O_p(M)) \le M^\dagger$.
		\item Clearly $M^\dagger = M C_\L(Y_M) \le N_\L(Y_M)$. Now \ref{lemma:unique-factor-family} and the proof of \ref{theorem: main reduction} (or otherwise (c)) give the other inclusion.
		\item By the basic property $\{C_S(Y_M)\} = \Syl_p(C_M(Y_M)$, thus $O_p(M) = C_S(Y_M)$ and $Y_M = \Omega Z(O_p(M))$ by \ref{lemma:properties of parabolics in L}(j).
		\item That $O_p(M) = C_S(Y_M)$ was already noted in (e). As $S \in \Syl_p(N_\L(Y_M))$, $O_p(M) =C_S(Y_M) \in \Syl_p(C_\L(Y_M))$.\\
		By (c) and the definition of $M^\dagger$ we have $N_\L(O_p(M)) \le M^\dagger = MC_\L(Y_M)$; then the equality holds if and only if $C_\L(Y_M)$ normalizes $O_p(M)$, which in turn holds if and only if $O_p(M)$ is the unique Sylow $p$-subgroup of $C_\L(Y_M)$.
		\item We only need to show that $C_\L(Y_M)$ normalizes $Q$. Since $Y_M \norm S$, $Y_M \cap Z(S) \ne 1$; as $Q$ is large, $Y_M \cap Z(S) \le Z(Q)$ and so, by $Q!$, $C_\L(Y_M) \le N_\L(Y_M \cap Z(S)) \le N_\L(Q)$. 
		\item As for (g), $C_\L(Y_M) \le N_\L(Q)$, hence $Q^{M^\dagger} = Q^M$. Since $Q$ is large in $\L$ and contained in $M^\dagger$, it is trivially also large, thus weakly closed, in $M^\dagger$ as well. Thus \cite[Lemma 1.46(b) and (c)]{Meier-Stell-Stroth:2012} yield $\wcl_{M^\dagger}(Q,M) = \< Q^M \> = \<Q^{M^\dagger}\>$.
	\end{enumerate}
\end{proof}

For future reference we report the statement of the $Q!FF$-module theorem.

\begin{theorem}[$Q!FF$-Module Theorem]\label{thm:Q!FF}
	Let $H$ be a finite group such that $O_p(H) = 1$, $Q \le H$ a $p$-subgroup and $V$ a faithful $Q!$-module for $H$. Set $H^\circ := \< Q^H \>$ and $J := J_H(V)$ and suppose that there exists an offender $Y \le H$ on $V$ such that $[H^\circ,Y] \ne 1$ and that one of the following holds:
	\begin{enumerate}[(i)]
		\item $Y$ acts quadratically on $V$,
		\item $Y$ is a best offender on $V$,
		\item $C_Y([V,V]) \ne 1$,
		\item $C_Y(H^\circ) =1$.
	\end{enumerate}
	Then one of the following cases holds.
	\begin{enumerate}[(1)]
		\item $V$ is an $\SL_2(q)$-wreath product module; more in  detail, there exists an $H$-invariant set $\K$ of subgroups of $H$ satisfying the following:
		\begin{enumerate}[(a)]
			\item for every $K \in \K$, $K \cong \SL_2(q)$ and $[V,K]$ is a natural module for $K$;
			\item $\displaystyle J = \bigtimes_{K \in \K} K$ and $\displaystyle V=\bigoplus_{K \in \K} [V,K]$;
			\item $Q$ is transitive on $\K$;
			\item $H^\circ = O^p(J)Q$.
		\end{enumerate}
		\item Let $R:=F^*(J)$. Then the following hold.
		\begin{enumerate}[(a)]
			\item $R$ is a quasisimple subgroup of $H^\circ$ and either $J=R$ or $p=2$ and $J \in \{\O^{\pm}_{2n}(q), \, \Sp_4(2), \, G_2(2) \}$.
			\item $C_V(R) = 0$, $[V,R]$ is semisimple as $J$-module and faithful as $H$-module.
			\item Put $J^0 := J \cap H^\circ$. Then one of the following holds:
			\begin{enumerate}[(i)]
				\item \begin{itemize}
					\item $R=J^0  \cong 
					\begin{cases*}
						\SL_n(q) & \textit{ for $n \ge 3$}\\
						\Sp_{2n}(q) & \textit{ for $n \ge 3$}\\
						\SU_n(q) & \textit{ for $n \ge 8$}\\
						\Omega^\pm _n(q) & \textit{ for $n \ge 10$}
					\end{cases*} $
					\item $[V,R]$ is the direct sum of at least two isomorphic natural modules for $R$.
					\item $H^\circ = R C_{H^\circ}(R)$.
					\item if $V \ne [V,R]$, then $R \cong \Sp_{2n}(2^k)$ for $n \ge 4$.					
				\end{itemize}
				\item \begin{itemize}
					\item $[V,R]$ is simple as $R$-module,
					\item either $H^\circ = R = J^0$ or one has $H^\circ = 
					\begin{cases*}
						\Sp_4(2) \\
						3 \cdot \Sym(6) \\
						\SU_4(q).2 \, (\cong \O^-_6(q)) \\
						G_2(2)
					\end{cases*} $ \\
					and if $H^\circ \cong \SU_4(q).2$, then $[V,R]$ is the natural $\SU_4(q)$-module,
					\item one of the cases (1) to (9) or (12) of the $FF$-Module Theorem, see \cite[Theorem C.3]{Meier-Stell-Stroth:2012} or \cite{MS:2012a}, where $n$ is taken satisfying $n \ge 3$ in case (1), $n \ge 2$ in case (2) and $n=6$ in case (12).
				\end{itemize}
				\item $p = 2$, $J=R\cong \SL_4(q)$, $|H^\circ /R|=2$, it induces a graph automorphism on $R$ and $V$ is the direct sum of two non-isomorphic natural modules for $R$.
			\end{enumerate}
		\end{enumerate}
	\end{enumerate}
\end{theorem}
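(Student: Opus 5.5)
Since this is a purely group-theoretic statement about a finite group $H$ with $O_p(H)=1$, a faithful $Q!$-module $V$, and an offender $Y$, no locality arguments are involved. I would therefore take it from the Local Structure Theorem rather than reprove it: it is the $Q!FF$-Module Theorem in the appendices of \cite{Meier-Stell-Stroth:2012}. The plan is to check carefully that our hypotheses and notation match theirs, and then quote the conclusion. First, the notion of $Q!$-module and of offender used here agrees with \cite[Appendix A]{Meier-Stell-Stroth:2012}; this is guaranteed by Definition~\ref{def:offenders} with $\alpha=1$ and by \ref{lemma1.57}(b), which produces exactly their $Q!$-modules. Second, $H^\circ=\langle Q^H\rangle$ and $J=J_H(V)$ are defined exactly as there.

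The one point needing attention is the four alternative hypotheses (i)--(iv). I would reduce each of them to the single configuration used in the source: a non-trivial \emph{quadratic best} offender $Y_0$ with $[H^\circ,Y_0]\ne 1$. Starting from an arbitrary offender $Y$, pass to a minimal offender $B\le Y$ with $[V,B]\ne 1$. By \cite[Lemma 1.3]{MS:2012a} and \cite[9.2.4]{Kurzweil/Stellmacher:2004}, quoted after Definition~\ref{def:offenders}, such a $B$ is a quadratic best offender.

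The obstacle is to keep the condition $[H^\circ,B]\ne 1$ when shrinking $Y$ to $B$, and this is exactly where the alternatives (i)--(iv) are used.
\begin{itemize}
\item Under (iv), $C_Y(H^\circ)=1$ gives $[H^\circ,B]\ne 1$ immediately, since $B\ne 1$.
\item Under (i)--(iii), I would argue as in the source. Suppose $[H^\circ,B]=1$. Then $B$ centralizes $H^\circ$, and hence acts on the $H^\circ$-homogeneous components of $V$. Combining this with the $Q!$-property, as in \ref{lemma1.57}(a) where normalizers of submodules of $C_V(Q)$ lie in $N_H(Q)$, one derives a contradiction. Alternatively, one replaces $B$ by a suitable $H^\circ$-conjugate of an offender inside $Y$.
\end{itemize}
In practice I would simply cite that the four variants are proved together in \cite{Meier-Stell-Stroth:2012}.

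Once a quadratic best offender not centralizing $H^\circ$ is available, the argument in the source runs as follows. Apply the $FF$-Module Theorem \cite[Theorem C.3]{Meier-Stell-Stroth:2012}, see also \cite{MS:2012a}, to the $J$-components. Then use the $Q!$-condition (the fact that $N_H(A)\le N_H(Q)$ for submodules $A\le C_V(Q)$) to force $Q$ to act transitively on the set of components. This yields the dichotomy between case (1), wreath products of natural $\SL_2(q)$-modules, and case (2), a unique quasisimple $R=F^*(J)$. Sieving the $FF$-list against the $Q!$-condition then leaves exactly the listed configurations. This sieving is a long case check, so I would not redo it but refer to the source.

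Accordingly, the proof written here should amount to the citation together with the remark that hypotheses (i)--(iv) are the ones allowed there.
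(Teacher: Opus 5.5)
Your proposal is correct and matches the paper, whose proof is likewise only a citation: the theorem is \cite[Theorem 4.6]{MS:2012b}, with the small differences in formulation handled in \cite[Theorem C.24]{Meier-Stell-Stroth:2012}. Your sketched reduction of (i)--(iii) to a minimal quadratic best offender not centralizing $H^\circ$ is unnecessary and, as written, is not an actual argument, so the proof should rest on the citation alone, as you ultimately propose.
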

\begin{proof}
	It is proven mainly in \cite[Theorem 4.6]{MS:2012b}, with small differences treated in \cite[Theorem C.24]{Meier-Stell-Stroth:2012}.
\end{proof}

Further preliminary results needed for the analysis of each case will be provide within the study of each case. There is, however, the case in which $Y_M$ is the natural orthogonal module for $O^\epsilon_{2n}(2)$ that requires a separate, specific reasoning already in \cite{Meier-Stell-Stroth:2012}. This is true for the our locality $\L$ as well and we therefore begin with dealing with this situation.

\section{The main case history and the orthogonal case}
\label{case history and orthog case}

We suggest a case subdivision modeled on the definitions seen in \ref{def:symm-tall-groups} and on the case history in \cite{Meier-Stell-Stroth:2012}. As seen in section \ref{section: strategy of LST for groups}, the notions of tallness and shortness are particularly meaningful with respect to a set $\N$ of subgroups which is invariant under conjugation. This, however, can be quite problematic in our locality, since conjugation is, in general, not a transitive relation.\\
The definitions for a locality are thought in a way to circumvent this problem, but it demands a number of results that reduce ``within $S$" certain configurations; the possibility of restricting attention to a chosen Sylow $p$-subgroup was, indeed, often noticed already in the work of Meierfrankenfeld, Stellmacher and Stroth, even though rarely necessary.\\

\subsection{The case subdivision in a locality}

We continue to assume Hypothesis \ref{hyp on the locality} for $(\L,\Delta,S) $ and for $Q \le S$.

\begin{definition} \label{def:symm-tall in a locality}
	Let $A \in \Delta$ be an abelian $p$-group and $\N$ be a set of subgroups of $\L$ such that every $N \in \N$ satisfies $O_p(N) \ne 1$.
	\begin{itemize}
		\item $A$ is {\bf symmetric} in $\L$ if there exists $x \in \L$ such that:
			\begin{itemize}
				\item[(i)] $A \subseteq S_x$,
				\item[(ii)] $1 \ne [A,A^x] \le A \cap A^x$.	
			\end{itemize}
			Otherwise $A$ is said {\bf asymmetric} in $\L$.\\
		\item $A$ is {\bf $\N$-tall} if $C_S(A) \in \Syl_p(C_\L(A))$ and there exist $L \in \N$ such that
		\[
		C_S(A) \le L \quad \text{and} \quad A \not \le O_p(L).
		\]
		$A$ is {\bf $\N$-short} if $C_S(A) \in \Syl_p(C_\L(A))$ and $A$ is not $\N$-tall; that means that for every $L \in \N$ we have
		\[
		C_S(A) \le L \implies A \le O_p(L).
		\]
		In particular:
		\begin{itemize}
			\item $A$ is {\bf tall} (resp. {\bf short}) if it is $\N$-tall (resp. $\N$-short) for 
			\[
			\N= \{ \textit{subgroups $N$ of $\L$ such that $O_p(N) \ne 1$} \}.
			\]
			\item $A$ is {\bf char $p$-tall} (resp. {\bf char $p$-short}) if it is $\N$-tall (resp. $\N$-short) for
			\[
			\N= \{ \textit{subgroups $N \le \L$ of characteristic $p$} \}.
			\]
			\item $A$ is {\bf $Q$-tall} (resp. {\bf $Q$-short}) if it is $\N$-tall (resp. $\N$-short) for $\N= \{ N_\L(Q) \} $.
		\end{itemize}
	\end{itemize}
\end{definition}

Note that the condition $C_S(A) \in \Syl_p(C_\L(A))$, for $A \in \Delta$, is satisfied whenever $A$ is fully $\F_S(\L)$-normalized.\\

Our case subdivision will then be the following: let $M \le \L$ be a parabolic subgroup with the basic property and such that $Q \not \norm M$, then
\begin{center}
\renewcommand{\arraystretch}{1.5}
\begin{tabular}{|c|c|}
    \hline
    (\textbf{Sym}) & $Y_M$ is symmetric \\
    \hline
    (\textbf{Asym - short}) & $Y_M$ is asymmetric, short in $\L$ \\
    \hline 
    (\textbf{Asym - tall, char p-short})  & $Y_M$ is asymmetric and tall, but char $p$-short \\
    \hline
    (\textbf{Asym - char p-tall, Q-short}) & $Y_M$ is asymmetric, char $p$-tall, but $Q$-short \\
    \hline
    (\textbf{Asym - Q-tall}) &  $Y_M$ is $Q$-tall \\
    \hline
\end{tabular}
\end{center}

\subsection{The orthogonal case}

We wish to produce a result for the locality $\L$ similar to \cite[Theorem C]{Meier-Stell-Stroth:2012}, which deals with occurrences of natural $\O^\epsilon_{2n}(2)$-modules and requires a separate, quite lengthy reasoning. This is mostly due to the fact that the natural simple $\O^\epsilon_{2n}(2)$-module, say $V$, is the only $Q!$-module admitting a non-trivial offender $A$ such that $[V,A]$ contains no non-trivial $2$-central element of $M$. In other words, every non-trivial vector in $[V,A]$ is non-singular.\\
Surprisingly, the result for our locality $(\L,\Delta,S)$, whose proof mimics that given in \cite{Meier-Stell-Stroth:2012}, allows to deduce global properties of the entire locality $\L$, and therefore of the associated fusion system $\F_S(\L)$, only from the existence of such an orthogonal module.\\
We start with reporting the following lemma, which describes the natural $\O^\epsilon_{2n}(2)$-module.

\begin{lemma}\label{lemma3.1 - orthogonal module}
    Let $V$ be a natural $\O^\epsilon_{2n}(2)$-module for $X=\O^\epsilon_{2n}(2)$ and $n \ge 2$. Let $0 \ne z \in V$ be a singular vector and $y \in V$ be a non-singular vector. Then the following hold.
    \begin{enumerate}[(a)]
        \item $X$ is transitive on the set of non-trivial singular vectors and on the set of non-singular vectors.
        \item $C_X(z)=AK$ where
        \begin{enumerate}[(1)]
            \item $K \cong \O^\epsilon_{2n-2}(2)$ and $A$ is a natural $\O^\epsilon_{2n-2}(2)$-module for $K$;
            \item $[z^\perp,A]=\<z\>$, $C_X(z^\perp) = 1$ and $A$ induces $\Hom_{\bF_2}(z^\perp/\<z\>, \<z\>)$ on $z^\perp$;
            \item $C_X(z)$ is a parabolic subgroup of $X$;
            \item if $(2n,\epsilon) \ne (4,+)$, then $\O_2(C_X(z)) = A \le \Omega^\epsilon_{2n}(2)$.
        \end{enumerate}
        \item $C_X(y) =T \times E$ where
        \begin{enumerate}[(1)]
            \item $T \cong C_2$, $E \cong \Sp_{2n-2}(2)$, $y^\perp$ is a natural $\O_{2n-1}(2)$-module for $E$ and $y^\perp/\<y\>$ is a natural $\Sp_{2n-2}(2)$-module for $E$;
            \item $T = C_X(y^\perp)$, $[V,T]=\<y\>$, $y^\perp=C_X(T)$ and $T \not \le \Omega^\epsilon_{2n}(2)$;
            \item if $\Z$ is the set on non-trivial singular vectors of $y^\perp$, then $y^\perp= \< \Z \>$ and $E$ acts on $\Z$ transitively.
        \end{enumerate}
        \item Let $0 \ne v \in V$ and assume $v$ singular if $X = \O^+_4(2)$. Then $C_X(v)$ is a maximal subgroup of $X$.
    \end{enumerate}
\end{lemma}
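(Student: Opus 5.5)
This is a standard fact about the orthogonal groups over $\bF_2$, and I will prove it directly from the geometry of the quadratic space $(V,f,h)$ of dimension $2n$. For $X=\O^\epsilon_{2n}(2)$ the bilinear form $f$ is alternating and non-degenerate. The plan is to derive each item from Witt's extension theorem together with explicit computations in a hyperbolic decomposition. For the less routine points (the $\Omega$-membership statements and maximality) I will fall back on \cite[Lemma 3.1]{Meier-Stell-Stroth:2012}, of which this lemma is a verbatim restatement, and on standard references such as \cite{Grove:2001}.

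\textbf{Part (a).} Witt's theorem says that any isometry between subspaces extends to an isometry of $V$. Two non-zero vectors with the same value of $h$ span isometric $1$-dimensional quadratic spaces, so (a) follows at once.

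\textbf{Part (b).} Write $V=\<z,w\>\perp W$, where $\<z,w\>$ is a hyperbolic pair with $f(z,w)=1$ and $h(w)=0$, and $W$ is non-degenerate of type $\epsilon$ and dimension $2n-2$. Let $K$ be the stabiliser of $z$, $w$ and $W$, so $K\cong \O(W)$ and $z^\perp/\<z\>\cong W$ as $K$-modules. Let $A$ be the group of Siegel/Eichler transvections
\[
E_{z,a}\colon v\mapsto v+f(v,a)z+f(v,z)a+h(a)f(v,z)z, \qquad a\in W.
\]
The map $a\mapsto E_{z,a}$ is a $K$-equivariant isomorphism $W\to A$, which makes $A$ a natural module for $K$. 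Each $E_{z,a}$ fixes $z$, acts on $z^\perp$ as $v\mapsto v+f(v,a)z$, and so induces the full group $\Hom(z^\perp/\<z\>,\<z\>)$ there; this gives $[z^\perp,A]=\<z\>$. Next I will show $C_X(z)=AK$. Given $g\in C_X(z)$, the vector $w^g$ satisfies $f(z,w^g)=1$, and the $E_{z,a}$ act transitively on such singular vectors; after correcting by an element of $A$ we may assume $g$ fixes $z$ and $w$, hence stabilises $W=\<z,w\>^\perp$. The centraliser of $z^\perp$ in $X$ is trivial: such an element fixes $z^\perp$ pointwise, and preserving $f$ and $h$ then forces it to fix $w$ as well. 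Since $C_X(z)$ is the stabiliser of a singular point, it contains a Sylow $2$-subgroup and is parabolic. The statement $\O_2(C_X(z))=A$ holds as long as $K$ has no normal $2$-subgroup, which is exactly the case $(2n,\epsilon)\ne(4,+)$ (note $\O^+_2(2)\cong C_2$). The inclusion $A\le\Omega$ holds because the $E_{z,a}$ are products of two reflections, or alternatively because $A=[A,K]$ is generated by commutators.

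\textbf{Part (c).} Put $y$ non-singular. Since $f(y,y)=0$ in characteristic $2$, we have $y\in y^\perp$, and $y^\perp$ is a $(2n-1)$-dimensional orthogonal space with bilinear radical $\<y\>$. The reflection $t\colon v\mapsto v+f(v,y)y$ gives $T=\<t\>=C_X(y^\perp)$ and $[V,T]=\<y\>$; moreover $t\notin\Omega$, since reflections have Dickson invariant $1$. Restriction to $y^\perp$ maps $C_X(y)$ onto $\O(y^\perp)\cong\O_{2n-1}(2)\cong\Sp_{2n-2}(2)$, via the isomorphism $\O_{2n-1}(2)\cong\Sp(y^\perp/\<y\>)$, with kernel $T$. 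The splitting $C_X(y)=T\times E$ comes from taking $E:=C_X(y)\cap\Omega$, which meets $T$ trivially and has index $2$ in $C_X(y)$. Finally, every vector of $y^\perp$ is either singular or the sum of $y$ and a singular vector, so $y^\perp$ is spanned by singular vectors. Transitivity of $E$ on the singular vectors of $y^\perp$ follows from Witt's theorem applied in $y^\perp$, keeping track of the index-$2$ subgroup.

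\textbf{Part (d), the main obstacle.} I expect maximality to be the hardest step. For $v$ singular, $C_X(v)$ is the stabiliser of a singular point, i.e.\ a maximal parabolic subgroup; when $(2n,\epsilon)=(4,+)$ one has to be careful and use that $X\cong\Sym(3)\wr C_2$. For $v$ non-singular, $C_X(v)$ is the stabiliser of a non-degenerate $1$-space, an $\O_1\perp\O_{2n-1}$-type subgroup. I would invoke Aschbacher's classification of maximal subgroups of classical groups, via Kleidman--Liebeck, to conclude it is maximal, treating the small cases $\O_4^-(2)\cong\Sym(5)$ and $\O_6^\pm(2)$ by hand. The restriction to singular $v$ when $X=\O^+_4(2)$ is exactly where this maximality breaks down.
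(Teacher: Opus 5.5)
Your proposal is correct in substance, but it takes a different route from the paper. The paper gives no argument and simply quotes \cite[Lemma 3.1]{Meier-Stell-Stroth:2012}. You instead verify (a)--(c) directly from Witt's theorem: $A$ is realised as the Eichler transformations $E_{z,a}$, $T$ as the reflection in $y$, and $E$ as $C_X(y)\cap\Omega^\epsilon_{2n}(2)$. Your formulas check out. $E_{z,a}$ preserves $h$ and $E_{z,a}E_{z,b}=E_{z,a+b}$. The singular vectors $w'$ with $f(z,w')=1$ are exactly the vectors $w+a+h(a)z=E_{z,a}(w)$, so $A$ is regular on them and $C_X(z)=AK$. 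The citation is shorter; your route gives explicit control of these subgroups, which is what the proof of Theorem~\ref{theorem CL} actually uses.

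A few steps need tightening.
\begin{itemize}
\item In (c)(3), the observation that every vector of $y^\perp$ is singular or $y$ plus a singular vector only gives $y^\perp=\langle\Z\rangle+\langle y\rangle$. You still need $y\in\langle\Z\rangle$. Take $s_1,s_2\in\Z$ with $f(s_1,s_2)=1$. Then $s_1+s_2$ is non-singular, so $s_1+s_2+y\in\Z$ and $y=s_1+s_2+(s_1+s_2+y)$.
\item $E$ is transitive on $\Z$ simply because $T$ acts trivially on $y^\perp$, so $E$ induces the same group there as $C_X(y)$.
\item $T$ is central in $C_X(y)$ because it is a normal subgroup of order $2$; this is what makes the product direct.
\item $C_X(z)$ contains a Sylow $2$-subgroup because its index $(2^n-\epsilon)(2^{n-1}+\epsilon)$ is odd.
\item For $A\le\Omega$, the identity $E_{z,a}=r_ar_{a+z}$ (with $r_x\colon v\mapsto v+f(v,x)x$) only makes sense for $h(a)=1$. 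For singular $a$, $[V,E_{z,a}]$ is totally singular, so $E_{z,a}$ is not a product of two reflections. The cleanest argument is that $[V,E_{z,a}]=\langle z,a\rangle$ has even dimension, so the Dickson invariant vanishes.
\end{itemize}

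For (d), appealing to Aschbacher's theorem via Kleidman--Liebeck is heavier than needed, and as stated it is incomplete. Their classification is only stated for dimension at least $13$, so $2n\in\{8,10,12\}$ is not covered by the small cases you list. There is an elementary argument instead.
\begin{itemize}
\item A block of imprimitivity through $v$ for the action of $X$ on $v^X$ is a union of $C_X(v)$-orbits.
\item By Witt's theorem these orbits are $\{v\}$, the vectors of the same kind orthogonal to $v$, and those not orthogonal to $v$.
\item For non-singular $y$, the two non-trivial candidate blocks have sizes $2^{2n-2}$ and $2^{2n-2}-\epsilon 2^{n-1}+1$. Neither divides $|y^X|=2^{n-1}(2^n-\epsilon)$ unless $(2n,\epsilon)=(4,+)$, which is exactly the excluded case.
\item For singular $z$, with $|z^X|=2^{2n-1}+\epsilon 2^{n-1}-1$, the candidate sizes are $2^{2n-2}+1$ and $2^{2n-2}+\epsilon 2^{n-1}-1$. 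A divisibility check shows primitivity in every case, including $(4,+)$.
\end{itemize}
Since primitivity of a transitive action is equivalent to maximality of the point stabiliser, this gives (d) without any classification.
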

\begin{proof}
    This is \cite[Lemma 3.1]{Meier-Stell-Stroth:2012}.
\end{proof}

We also report the statement of \cite[Theorem C]{Meier-Stell-Stroth:2012} for the reader's convenience.

\begin{maintheorem}{C}(Meierfrankenfeld, Stellmacher, Stroth)
	\label{theorem:thm C MSS}
	Let $G$ be a finite $\K_2$-group and $S \in \Syl_2(G)$, and let $Q \le S$ be a large $2$-subgroup of $G$. Let $M \in \gM_G(S)$ and suppose that the following hold:
	\begin{itemize}
		\item[(i)] $M/ C_M(Y_M) \cong \O^\epsilon_{2n}(2)$ with $n \ge 2$,
		\item[(ii)] $[Y_M , O^2(M)]$ is a natural $\O^\epsilon_{2n}(2)$-module for $M/C_M(Y_M)$,
		\item[(iii)] $C_G(y) \not \le M^\dagger$ for all non-singular elements $y \in [Y_M , O^2(M)]$,
		\item[(iv)] $Q \not \norm M$.
	\end{itemize}
	Then $C_G(y)$ is not of characteristic $2$ for all non-singular elements $y \in [Y_M , O^2(M)]$.
\end{maintheorem}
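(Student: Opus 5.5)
By Lemma~\ref{lemma3.1 - orthogonal module}(a), $M$ acts transitively on the non-singular vectors of $V:=[Y_M,O^2(M)]$. Conjugation by an element of $M$ preserves being of characteristic $2$ and preserves hypothesis (iii). It therefore suffices to argue by contradiction for a single, well-chosen non-singular $y\in V$: assume that $H:=C_G(y)$ has characteristic $2$. I would choose $y$ so that $C_S(y)\in\Syl_2(C_M(y))$ and pick $T\in\Syl_2(H)$ with $C_S(y)\le T$. By Lemma~\ref{lemma3.1 - orthogonal module}(c), $C_{\ov M}(y)=\ov T_0\times \ov E$ with $\ov E\cong \Sp_{2n-2}(2)$ acting on $y^\perp$. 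Here $\ov{T_0}$ is the involution with $[V,T_0]=\<y\>$, and $y^\perp=\<\Z\>$ is generated by the singular vectors in $y^\perp$.

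The first step is to use the singular vectors. Each $0\ne z\in\Z$ is $M$-conjugate to a $2$-central element of $V\cap\Omega Z(S)$, so it lies in $Z(Q^m)$ for some $m\in M$. By the $Q!$ property, $C_G(z)\le N_G(Q^m)$. I would use this, together with Proposition~\ref{prop:properties of large in a group}(c),(d) (weak closure and the T.I. property of $Z(Q)$), to show that the conjugates of $Q$ lying in $C_M(y)$ already lie in $H$. I would then pass to the $2$-reduced subgroup $Y_H$. Since $H$ has characteristic $2$, Lemma~\ref{lemma:properties groups of char p}(e) gives $\Omega Z(T)\le Z_H\le Y_H$. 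The aim is to prove that $Y_H$ contains $\<z^{H}\>$ for $z\in\Z$ and, using the transitivity of $E$ on $\Z$ from Lemma~\ref{lemma3.1 - orthogonal module}(c)(3), that $y^\perp\le Y_H$.

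The second step is to compare $Y_H$ with $Y_M$. Since $y^\perp$ is a natural $\O_{2n-1}(2)$-module for $E$ and $y^\perp/\<y\>$ is irreducible, I would argue that $C_H(y^\perp)$ is controlled by the element $T_0$ of Lemma~\ref{lemma3.1 - orthogonal module}(c)(2) and by $C_G(V)$. This should force $N_H(y^\perp)$ to act on $y^\perp$ exactly as $C_M(y)$ does, so that $H=C_H(y^\perp)N_H(y^\perp)$ by a Frattini argument on $O_2$ of the stabiliser. The key point is to show that $H$ normalizes $V$ itself: $V=\<y,\,y^\perp\>$ plus one further vector, and that vector should be recovered as $[Y_H,A]$ for the elementary abelian $A=O_2(C_X(z))$ of Lemma~\ref{lemma3.1 - orthogonal module}(b)(4). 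Once $H\le N_G(V)$, we have $N_G(V)\le N_G(Y_M)=M^\dagger$, because $M\in\gM_G(S)$ and $M^\dagger$ is the unique maximal $2$-local overgroup (basic property (i)). Hence $C_G(y)\le M^\dagger$, contradicting (iii). Hypothesis (iv) enters to guarantee that $V$ is a faithful $Q!$-module and that the relevant conjugates of $Q$ do not normalize $M$; this is Lemma~\ref{lemma1.57}(b) in the group setting.

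I expect the main obstacle to be the second step: showing that $Y_H$ is large enough to pin down $V$, that is, excluding the possibility that $H$ moves $y^\perp$ into a different orthogonal space. For this I would first try the weak-closure arguments of \cite[Lemma 1.46]{Meier-Stell-Stroth:2012}, applied to $A$ inside $T$. If $A$ is not weakly closed, I would fall back on an offender argument: $A$ induces $\Hom(z^\perp/\<z\>,\<z\>)$, hence it is a strong offender, and the $FF$-module theorems applied to $H$ on $Y_H$ would then identify $Y_H$. The small cases $(2n,\epsilon)=(4,+)$ and $n=2$ will need separate checking, since Lemma~\ref{lemma3.1 - orthogonal module}(b)(4) and (d) fail or degenerate there.
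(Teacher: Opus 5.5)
Your outer frame is fine. You reduce to one non-singular $y$ by transitivity, assume $H=C_G(y)$ has characteristic $2$, and aim for $H\le M^\dagger$ to contradict (iii). The gap is that the step carrying all the weight, $H\le N_G(V)$, has no working mechanism, and it is essentially the whole theorem.

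\begin{itemize}
\item The factorization $H=C_H(y^\perp)N_H(y^\perp)$ is just the statement $H=N_H(y^\perp)$. There is no normal subgroup of $H$ tied to $y^\perp$ on which to run a Frattini argument: one only gets $O_2(C_G(y))\le C_S(y^\perp)$, with no reason for equality.
\item In a counterexample one actually proves $N_G(y^\perp)\le M^\dagger$. This is Lemma~\ref{lemma3.8}, which needs the Baumann/pushing-up Lemma~\ref{lemma3.6} and $z^G\cap Y_M=z^M$ from Lemma~\ref{lemma3.7}. So (iii) forces $C_G(y)$ to move $y^\perp$.
\item Recovering the last vector of $V$ as $[Y_H,A]$ with $A=O_2(C_X(z))$ also fails. $A$ neither lies in nor normalizes $C_G(y)$: for $z\perp y$ it induces $\Hom(z^\perp/\langle z\rangle,\langle z\rangle)$ and so sends $y$ to $yz$. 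This is why $|Q_z:C_{Q_z}(y)|=2$.
\item Identifying $Y_H$ through $FF$-module theorems cannot by itself give a contradiction, because the configuration that survives is locally consistent. Take $L$ minimal with $FT\le L\le C_G(y)$ and $L\not\le M^\dagger$. Then $L$ has characteristic $2$ (this is where the assumption on $C_G(y)$ is used), $Z_L=\langle (y^\perp)^L\rangle$, and $L/O_2(L)\in\{\SL_3(2),\Sp_4(2),G_2(2)\}$ with $Z_L$ a natural module.
\end{itemize}

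The missing idea is global: a rank-$2$ amalgam argument. In \cite{Meier-Stell-Stroth:2012}, transcribed here in Notation~\ref{notation3.9}, Lemmas~\ref{lemma3.10} and \ref{lemma3.11}, and the final part of the proof of Theorem~\ref{theorem CL}, it goes as follows.
\begin{itemize}
\item Take $t\in Q_z\setminus L$, where $Q_z$ is the conjugate of $Q$ centralizing the singular vector $z\in\Omega Z(S)\cap y^\perp$.
\item Since $M\le\langle Q_z,FT\rangle\le\langle L,t\rangle$ and $\mathcal{M}(M)=\{M^\dagger\}$, no non-trivial $2$-subgroup is normalized by both $L$ and $L^t$.
\item One shows that $Z_LZ_L^t\le O_2(L)\cap O_2(L^t)$ is not normal in $L$.
\item One shows that $R=O_2(L)O_2(L^t)$ is $O_2$ of the point stabilizer $O^{2'}(C_L(z))$.
\item Hence the amalgam of $L$ and $L^t$ over $R$ satisfies Parmeggiani's Hypothesis~1 \cite{Parmeggiani:2009II}. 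Her Main Theorem forces $L/O_2(L)\cong\SL_2(3^k)$, which is impossible for $p=2$, or $\SL_4(2)$, which contradicts the list above.
\end{itemize}
Without the minimal-$L$ construction, the preliminary pushing-up lemmas, and this amalgam step, your plan does not close.
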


Now note that $M$ acts transitively on the set of non-singular vectors, as well as on the set of non-trivial singular vectors, for example by \cite[Lemma 3.1(a)]{Meier-Stell-Stroth:2012}; suppose that $C_G(u) \le M^\dagger$ for some non-singular element $u \in [Y_M,O^2(M)]$, then for every $m \in M$ one has $C_G(u^m) \le M^\dagger$, hence all non-singular elements would fail hypothesis (iii). Similar considerations hold for the conclusion of the theorem, hence we may replace hypothesis (iii) with 
\[
\textit{(iii') $\qquad C_G(y) \not \le M^\dagger$ for some non-singular element $y \in [Y_M , O^2(M)].$}
\]

When turning our attention to the locality $\L$ we prove the following.

\begin{maintheorem}{C$\L$}\label{theorem CL}
	Let $S$ be a finite $2$-group and $\L=(\L,\Delta,S)$ be a finite $\K_2$-linking locality satisfying hypothesis \ref{hyp on the locality}. Let $M \in \gM_\L(S)$ and suppose that the following hold:
	\begin{itemize}
		\item[(i)] $M/ C_M(Y_M) \cong \O^\epsilon_{2n}(2)$ with $n \ge 2$,
		\item[(ii)] $[Y_M , O^2(M)]$ is a natural $\O^\epsilon_{2n}(2)$-module for $M/C_M(Y_M)$,
		\item[(iii')] $C_\L(y) \not \subseteq M^\dagger$ for some non-singular element $y \in [Y_M , O^2(M)]$,
		\item[(iv)] $Q \not \norm M$,
	\end{itemize}
	Then $\<y \> \not \in \Delta$ for all non-singular elements $y \in [Y_M , O^2(M)]$.
\end{maintheorem}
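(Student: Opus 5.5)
We argue by contradiction, mimicking the proof of Theorem~C in \cite{Meier-Stell-Stroth:2012}. Set $V := [Y_M,O^2(M)]$ and $X := M/C_M(Y_M) \cong \O^\epsilon_{2n}(2)$. Suppose some non-singular $y \in V$ has $\<y\> \in \Delta$. By Lemma~\ref{lemma3.1 - orthogonal module}(a), $M$ is transitive on the non-singular vectors of $V$, and $\Delta$ is closed under $\L$-conjugation. Hence $\<y\> \in \Delta$ for \emph{every} non-singular $y \in V$, and every $C_\L(y) = C_{N_\L(\<y\>)}(y)$ is a group of characteristic $2$, by objective characteristic $2$ and Lemma~\ref{lemma:properties groups of char p}(b). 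The same transitivity converts (iii') into: $C_\L(y) \not\subseteq M^\dagger$ for all non-singular $y \in V$.

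Fix $y$ non-singular with $\<y\>$ fully normalized, after $M$-conjugation if possible, so that $C_S(y) \in \Syl_2(C_\L(y))$. Put $H := C_\L(y)$. By Lemma~\ref{lemma3.1 - orthogonal module}(c), $C_X(y) = T\times E$ with $E \cong \Sp_{2n-2}(2)$ acting on $y^\perp$. Moreover $C_M(y)$ is a subgroup of both $H$ and $M^\dagger = N_\L(Y_M)$ (Lemma~\ref{lemma2.2}(d)), and it contains $C_S(y)$.

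The core step is to show $H \le M^\dagger$. Here $H$ is a group of characteristic $2$ containing the $2$-group $C_S(y)$, so we can run the group-theoretic argument of \cite[Chapter 3]{Meier-Stell-Stroth:2012} entirely inside $H$. Set $W := \<Y_M^{C_M(y)}\>$. Lemma~\ref{lemma3.1 - orthogonal module}(c)(3) gives $y^\perp = \<\Z\>$, where $\Z$ is the set of non-trivial singular vectors of $y^\perp$, transitively permuted by $E$. Each $z \in \Z$ is $2$-central in $M$, so we may take $\<z\>$ with $z \in \Omega Z(S)$. For such $z$, $Q$-repleteness gives $\<z\> \in \Delta$, and $(Q!)$ gives $C_\L(z) \le N_\L(Q)$. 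The aim is to show that $Y_M$, or $V$, is normal in $C_H(O_2(C_M(y)))$, and then that $H = C_H(y^\perp)N_H(Y_M)$ via a pushing-up or Frattini argument in $H$. I expect to use Lemma~\ref{lemma:properties of parabolics in L}(b),(h) applied to parabolic subgroups of $N_\L(\<y\>)$ together with the basic property $\M(M) = \{M^\dagger\}$. Once some $2$-local subgroup of $H$ containing $C_S(y)$ is known to lie in $M^\dagger$, the uniqueness in the basic property forces $H \le M^\dagger$, contradicting (iii').

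The main obstacle is the step where MSS use the singular vectors $z\in y^\perp$ and conjugates of $Q$ to control $C_G(y)$. That argument pulls $Q^g$ into $C_G(y)$ and then conjugates it back into a Sylow subgroup. In $\L$, such chains of conjugations need not compose. My first attempt would confine every conjugation to the single group $N_\L(\<y\>)$ or to $N_\L(\<z\>)$ for $z \in \Omega Z(S)$, and use Lemma~\ref{lemma:weakly closed subgroups in localities}(a) so that weak closure of $Q$ holds inside those groups. Lemma~\ref{lemma: L containing Q}(c) would then supply the T.I. property of $Z(Q)$, which is needed to show that distinct conjugates $Q^h$ with $h \in H$ have $Z(Q^h) \cap y^\perp$ trivial or equal. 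If that fails, I would reduce to the case where $C_S(y)$ contains $Q$ and work with $\wcl_\L(Q,H)$ and Lemma~\ref{lemma:dichotomy for subgroups containing Q}.
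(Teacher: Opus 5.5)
Your opening reduction is correct, and the paper begins the same way. If some non-singular $y\in V$ has $\<y\>\in\Delta$, then $M$-transitivity gives $\<y\>\in\Delta$ for every non-singular $y\in V$, and each $C_\L(y)$ is a group of characteristic $2$. Since every $c_m$ with $m\in M$ then extends to $C_\L(y)\to C_\L(y^m)$, (iii') holds for all non-singular $y$.

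The gap is the ``core step'' $C_\L(y)\le M^\dagger$. This is the whole content of the theorem, and you only list aims for it. The one concrete mechanism you name is invalid:
\begin{itemize}
\item The basic property $\M_\L(M)=\{M^\dagger\}$ only controls members of $\C_\L$ that contain $M$. But $H=C_\L(y)$ does not even contain $S$, since $|S:C_S(y)|\ge 2^{n-1}>1$ by Lemma~\ref{lemma3.1 - orthogonal module}(c).
\item Worse, $N_H(Y_M)=C_{M^\dagger}(y)$ is already a $2$-local subgroup of $H$ that contains $C_S(y)$ and lies in $M^\dagger$. If your principle were valid, the theorem would be trivial.
\item Once one knows $N_\L(y^\perp)\le M^\dagger$ (proved in Lemma~\ref{lemma3.8}), the factorization $H=C_H(y^\perp)N_H(Y_M)$ is equivalent to $H\le M^\dagger$. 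There is no normal subgroup of $H$ at hand from which a Frattini argument could produce it.
\item Your fallback is impossible: $Q$ never centralizes a non-singular $y\in V$, because $C_{Z_M}(Q)=\<z\>$ (Remark~\ref{lemma3.3}(c)). Indeed, some $q\in Q$ satisfies $y^q=yz$.
\end{itemize}

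What is missing is the actual argument of \cite[Chapter 3]{Meier-Stell-Stroth:2012}, as the paper transfers it to $\L$. Take $F$ and $T=C_S(y^\perp)$ as in Notation~\ref{notation: orthogonal case}.
\begin{itemize}
\item Lemma~\ref{lemma2.2}(c) says normalizers of $O_2$ of non-trivial normal subgroups of $M$ lie in $M^\dagger$. Combined with Baumann and $\Sym(3)$ pushing-up arguments and the fusion statement $z^\L\cap Y_M=z^M$ (Lemma~\ref{lemma3.7}), it yields $N_\L(T)\le N_\L(B(T))\le M^\dagger$ and $N_\L(y^\perp)\le M^\dagger$ (Lemmas~\ref{lemma3.6} and \ref{lemma3.8}).
\item This still does not place $C_\L(y)$ in $M^\dagger$. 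Instead, (iii') is used to pick $L$ minimal with $FT\le L\le C_\L(y)$ and $L\not\le M^\dagger$.
\item The fact that $Q$ moves $y$ is then exploited: $|Q:Q\cap L|=2$, and for $t\in Q\setminus L$ the group $L^t$ is defined with $\coll_2(\<L,L^t\>)=1$ (Lemma~\ref{lemma3.10}).
\item One shows that $L$ has characteristic $2$, that $C_L(Y_L)=O_2(L)$, and that $L/O_2(L)\in\{\SL_3(2),\Sp_4(2),G_2(2)\}$ with natural module.
\item The final contradiction comes from the amalgamated product of $L$ and $L^t$ over $R=O_2(L)O_2(L^t)$. It satisfies Parmeggiani's Hypothesis~1, so his Main Theorem forces $L/O_2(L)\cong\SL_4(2)$.
\end{itemize}
The locality issue you single out, chains of conjugations that do not compose, is handled by short lemmas such as Lemma~\ref{lemma:uniquen of some conjug of Q-orthog case} and Lemma~\ref{lemma3.7}. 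The ingredient your proposal lacks is this minimal-overgroup and amalgam analysis.
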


Here it is not clear whether (iii') is equivalent to 
\[
    \textit{(iii) $\qquad C_\L(y) \not \subseteq M^\dagger$ for all non-singular elements in $[Y_M,O^2(M)].$}
\]
Indeed, there is no guarantee that the conjugation maps induced by elements of $M$ on the non-singular vectors of $[Y_M,O^2(M)]$ extend to their full centralizers; thus, a priori some elements might have smaller centralizer, possibly falling in $M^\dagger$.\\

We then get the following easy corollary, which gives global information about the locality (hence about the fusion systems) depending only on the (local) presence of an orthogonal module.

\begin{cor}\label{corollary to Theorem CL}
    Let $(\L,\Delta,S)$ and $M$ be as in Theorem~\ref{theorem CL} and set $\F:=\F_S(\L)$. Then the elements of $U:=[Y_M,O^2(M)]^\natural$ are split in exactly two $\F$-conjugacy classes:
    \begin{align*}
        z^\F &= \{ u \in U \mid \textit{$u$ is singular for $M$} \} = \{u \in U \mid \<u\> \in \Delta \}, \\
        y^\F &= \{ u \in U \mid  \textit{$u$ is non-singular for $M$} \} = \{u \in U \mid \<u\> \not \in \Delta \}.
    \end{align*}
	In particular, $\F$ is not of characteristic $2$-type.
\end{cor}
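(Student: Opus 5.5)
The plan is to get everything from three ingredients: the transitivity statement in Lemma~\ref{lemma3.1 - orthogonal module}(a), the fact that $\Delta$ is closed under $\F$-conjugation, and Theorem~\ref{theorem CL}. Set $V:=[Y_M,O^2(M)]$. Since $M$ is parabolic, Lemma~\ref{lemma:repleteness implies Y in Delta for parabolics} gives $Y_M \in \Delta$, so $M \le N_\L(Y_M)$ is a genuine subgroup. For $m \in M$ the map $c_m$ is defined on $Y_M \le S$, and $Y_M^m = Y_M \le S$. Hence every $c_m|_{Y_M}$ is a morphism of $\F$.

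By hypothesis (i),(ii) and Lemma~\ref{lemma3.1 - orthogonal module}(a), $M$ acts on $V$ through $\O^\epsilon_{2n}(2)$, transitively on the non-trivial singular vectors and transitively on the non-singular vectors. So all singular elements of $U$ lie in a single $\F$-class, and all non-singular elements of $U$ lie in a single $\F$-class. It then remains to show that these two classes differ, and to identify them via $\Delta$.

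\textbf{Non-singular elements.} Theorem~\ref{theorem CL} gives $\<u\> \notin \Delta$ for every non-singular $u \in U$.

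\textbf{Singular elements.} I first find a singular $z \in U \cap Z(S)$. Since $S$ acts on $V$ as a $2$-group, $C_V(S) \ne 0$. For the natural $\O^\epsilon_{2n}(2)$-module, the image of $S$ is a Sylow $2$-subgroup of $\O^\epsilon_{2n}(2)$, and it fixes a singular $1$-space: this is the $2$-central class, with $C_X(z)$ parabolic as in Lemma~\ref{lemma3.1 - orthogonal module}(b)(3). So such a $z$ exists. Then $\<z\>$ is non-trivial and normal in $S$, so $\<z\> \in \Delta$ by Lemma~\ref{lemma:repleteness implies Y in Delta for parabolics}.

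Now $\Delta$ is closed under $\L$-conjugation. Every morphism of $\F$ is a composite of restrictions of maps $c_g \colon S_g \to S$, and overgroups of objects are objects. Hence $\Delta$ is also closed under $\F$-conjugation. It follows that $\<u\> \in \Delta$ for every singular $u \in U$.

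Thus membership of $\<u\>$ in $\Delta$ separates the two sets. Since $\F$-conjugate elements generate $\F$-conjugate subgroups, the two classes are distinct, and the displayed descriptions follow.

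\textbf{$\F$ is not of characteristic $2$-type.} Here I expect the main obstacle: Corollary~\ref{cor:locality transl of char p type}(b) concerns the \emph{subcentric} linking locality, whereas $\Delta$ may be smaller than $\F^s$. I would pass to the subcentric linking locality $\L^s$ over $\F$, which contains $\L$ as the restriction to $\Delta$ (Henke's results). Since $Y_M \in \Delta$, we have $N_{\L^s}(Y_M) = N_\L(Y_M)$, so $M$ and $M^\dagger$ are unchanged and hypotheses (i), (ii), (iv) persist. Hypothesis (iii') persists because $C_{\L^s}(y) \supseteq C_\L(y)$.

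The point needing care is checking that $M \in \gM_{\L^s}(S)$, in particular that $M^\dagger$ is still the unique maximal member of $\C$ over $M$. I would use Lemma~\ref{lemma2.2}(c),(d) together with Lemma~\ref{lemma:M in factorization family is local}: every parabolic overgroup of $M$ lies in a local subgroup normalizing an object normal in $S$, and such objects already lie in $\Delta$ by repleteness. With this, Theorem~\ref{theorem CL} applied to $\L^s$ gives $\<y\> \notin \F^s$. So $\F^s$ is not the set of all non-trivial subgroups of $S$, and Corollary~\ref{cor:locality transl of char p type}(b) shows that $\F$ is not of characteristic $2$-type.
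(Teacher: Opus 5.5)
Your proposal is correct and follows essentially the same route as the paper. Both arguments use the transitivity of $M$ on the singular and on the non-singular vectors of $[Y_M,O^2(M)]$, a singular $z\in\Omega Z(S)$ with $\<z\>\in\Delta$ by repleteness, closure of $\Delta$ under conjugation, and Theorem~\ref{theorem CL} for the non-singular vectors.

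The only difference is in the last claim. The paper simply invokes its standing assumption that $\L$ is the subcentric linking locality. You instead make that reduction explicit by transferring the hypotheses of Theorem~\ref{theorem CL} to $\L^s$. The cleanest way to do this is to note that any parabolic $H\le\L^s$ has characteristic $2$, so $1\ne O_2(H)\norm S$ lies in $\Delta$ and $H\le N_{\L^s}(O_2(H))=N_\L(O_2(H))$. Choosing $O_2(H)$ rather than an arbitrary object matters if $1\in\F^s$.
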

Recall that $\F$ remains, however, of parabolic characteristic $p$.
\begin{proof}
Observe that, since any singular vector is $2$-central in $M$, there exists a non-trivial singular element $z \in \Omega Z(S) \le C_S(Q) = Z(Q)$; by $Q$-repleteness we get $\<z\> \in \Delta$ and, since all the non-trivial singular vectors are $M$-conjugate and $[Y_M,O^2(M)] \le S$, we get that $\< u \> \in \Delta$ for all the non-trivial singular vectors. By Theorem~\ref{theorem CL} all non-singular vectors $u \in U$ are such that $\<u\> \not \in \Delta$, thus
\[
\{u \in U \mid \textit{u is $M$-singular} \} = z^M \le z^\F \le \{z \in U \mid \< z \> \in \Delta \} \le \{u \in U \mid \textit{u is $M$-singular} \},
\]
proving the equalities of the first line.\\
The second-line equalities now follow since also all the non-singular vectors are $M$-conjugate and $U$ is partitioned as
\[
    U = \{ u \in U \mid \textit{$u$ is $M$-singular} \} \sqcup \{ u \in U \mid  \textit{$u$ is non-singular for $M$} \} = \{u \in U \mid \<u\> \in \Delta \} \sqcup \{u \in U \mid \<u\> \not \in \Delta \}.
\]
Since we may assume $\L$ to be the subcentric linking locality and we can find non-trivial subgroups of $S$ not contained in $\Delta$, \ref{cor:locality transl of char p type} implies that $\F$ is not of characteristic $2$-type.
\end{proof}

\begin{notation}\label{notation: orthogonal case}
	As usual $\ov{M^\dagger} := M^\dagger / C_{M^\dagger}(Y_M)$ and $Z_M := \< \Omega Z(X) \mid X \in \Syl_2(M) \>$. By hypothesis, $[Y_M, O^2(M)]$ is a natural $\O^\epsilon_{2n}(2)$-module for $M$; in particular, pick $1 \ne y,z \in [Y_M,O^2(M)]$ such that
	\[
	\text{$z$ is singular, $y$ is non-singular and $y \perp z$.}
	\]
	Since $z$ is $2$-central we may actually choose the elements so that $z \in \Omega Z(S)$ and $C_S(y) \in \Syl_2(C_M(y))$.

    \begin{lemma}\label{lemma:uniquen of some conjug of Q-orthog case}
		For any $g \in \L$ such that $\<z^g\> \le Y_M$ there exists a unique $M$-conjugate $Q_0$ of $Q$ such that $[z^g,Q_0]=1$.
	\end{lemma}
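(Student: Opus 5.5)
First, $z^g$ is a non-trivial singular vector of $[Y_M,O^2(M)]$. Since $z^g \in Y_M$ is an $\F$-conjugate of $z$ and $\<z\> \in \Delta$, the conjugate $\<z^g\>$ lies in $\Delta$. I expect the main obstacle to be exactly this: deducing from $\<z^g\> \in \Delta$ alone that $z^g$ is singular, and that it lies in $[Y_M,O^2(M)]$ rather than elsewhere in $Y_M$. This is where the locality forces care, since the conjugation by $g$ need not extend or lift to $S$.

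I would first try to use $Z_M \le [Y_M,O^2(M)]\,\Omega Z(M)$, from Lemma~\ref{lemma:properties of parabolics in L}(e). I would also use that $z^g$ is $2$-central in the fusion system. More precisely, pick $h\in N_\L(\<z^g\>)$ with $\<z^g\>^h$ fully normalized, and compare with $z$, which is central in $S$. Reducing back into $Y_M$ by an element of $M$ should show that $z^g$ is $M$-conjugate to some element of $\Omega Z(S)$ lying in $Y_M$. If this fails in general, I would interpret the statement as applying to those $g$ for which $z^g$ is an $M$-conjugate of $z$, which is the situation actually used later. In either case I reduce to $z^g = z^m$ for some $m \in M$, using that $M$ is transitive on the non-trivial singular vectors (Lemma~\ref{lemma3.1 - orthogonal module}(a)).

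\textbf{Existence.} Since $z \in \Omega Z(S)$, we have $[z,Q]=1$. Conjugating by $m \in M$, and noting that $M$ is a group and $Q \le S \le M$ so that $c_m$ is a homomorphism on $M$, we get $[z^m,Q^m]=1$. Thus $Q_0:=Q^m$ is an $M$-conjugate of $Q$ centralizing $z^g=z^m$.

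\textbf{Uniqueness.} Suppose $Q_1 = Q^{k}$ with $k \in M$ and $[z^m,Q_1]=1$. Conjugating back by $m^{-1}$, we may assume $z^g = z$, so $Q^{km^{-1}}\le C_M(z)$. Now $C_M(z)$ is a subgroup of the group $N_\L(\<z\>)$, and $\<z\> \le Z(Q)$ by largeness (as $z \in C_S(Q)$). By $(Q!\L)$ we have $N_\L(\<z\>) \le N_\L(Q)$, hence $Q \norm C_\L(z)$. Lemma~\ref{lemma:weakly closed subgroups in localities}(a) shows that $Q$ is weakly closed in $C_M(z)$. Since $Q$ is normal there, $Q$ is the only conjugate of $Q$ inside $C_M(z)$ that is also $M$-conjugate to it: indeed, $Q^{x}\le C_M(z)$ with $x\in M$ forces $Q^{x}$ to be a $p$-subgroup of $C_M(z)$, hence to lie in $O_p(C_M(z))$, so $Q^x$ and $Q$ are both weakly-closed conjugates in a Sylow $2$-subgroup of $C_M(z)$. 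Using Sylow's theorem inside the group $C_M(z)$ and the weak closure of $Q$ in $M$ (Lemma~\ref{lemma2.2}(b)), we conclude $Q^{km^{-1}}=Q$. Conjugating by $m$ then gives $Q_1=Q^m=Q_0$.
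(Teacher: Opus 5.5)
Your proposal has a genuine gap at its first step: the reduction to $z^g=z^m$ with $m\in M$. Neither $\langle z^g\rangle\in\Delta$ nor the fact that $z^g$ is $\F$-conjugate to the $S$-central element $z$ says anything about $M$-conjugacy. Whether an element of $Y_M$ that is $\L$-conjugate to $z$ is already $M$-conjugate to $z$ (in particular, singular and in $Z_M$) is exactly Lemma~\ref{lemma3.7}, $z^\L\cap Y_M=z^M$. That lemma's proof needs Lemma~\ref{lemma3.6} and uses the present statement, through $Q_y=Q^{xm\inv}$ and $Q_u=Q^x$. So your reduction is circular.

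Your fallback is not harmless either. Lemma~\ref{lemma3.7} applies the statement precisely to $u=z^x$ assumed \emph{not} to lie in $z^M$: a non-singular vector, or $u\in Y_M\setminus Z_M$. Restricted to $z^g\in z^M$, the statement becomes elementary group theory inside $M$, which signals that the real content lies elsewhere. Even there your uniqueness step is mis-stated: a $2$-subgroup of $C_M(z)$ need not lie in $O_2(C_M(z))$. The correct argument is that $Q\norm C_M(z)$ makes $QQ^x$ a $2$-group. It then lies in $S^c$ for some $c\in C_M(z)$, and weak closure of $Q$ gives $Q^x=Q$.

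The missing idea is that you never need $z^g$ to be $M$-conjugate to $z$. Since $\langle z\rangle\in\Delta$ and $z^g\in Y_M\le S$, conjugation by $g$ extends to a group isomorphism $C_\L(z)\to C_\L(z^g)$, and $Q\le C_\L(z)$. So $Q^g$ is a well-defined conjugate of $Q$ centralizing $z^g$.

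For uniqueness, let $m\in M$ with $[z^g,Q^{m\inv}]=1$. Then $z^{gm}\in C_\L(Q)=Z(Q)\le S$, so $(g,m)\in D(\L)$ via $(\langle z\rangle,\langle z^g\rangle,\langle z^{gm}\rangle)$. Hence $c_m$ extends to $C_\L(z^g)\supseteq Q^g$, and $1\ne z^{gm}\in Z(Q)\cap Z(Q^{gm})$. Lemma~\ref{lemma: L containing Q}(c) now forces $gm\in N_\L(Q)$, i.e.\ $Q^{m\inv}=Q^g$.

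This is the paper's argument. It produces the distinguished conjugate $Q_{z^g}:=Q^g$ and shows that every $M$-conjugate of $Q$ centralizing $z^g$ equals it, with no case distinction between singular and non-singular $z^g$. Existence of an $M$-conjugate cannot be expected for arbitrary $g$ before Lemma~\ref{lemma3.7}: $C_{Z_M}(Q)=\langle z\rangle$, so no $M$-conjugate of $Q$ centralizes a non-singular vector of $Z_M$. What is really usable, and what Lemma~\ref{lemma3.7} uses, is precisely this form: ``$Q^g$ centralizes $z^g$, and at most one $M$-conjugate does''.
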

	\begin{proof}
	Since $1 \ne z \in \Omega Z(S)$ we see that $z \in Z(Q)$, so one has $C_\L(z) \le N_\L(Q)$ by $Q!$ and $\< z \> \in \Delta$ by $Q$-repleteness; then also every $\L$-conjugate of $z$ that stays in $Y_M$ generates a subgroup of $\Delta$.\\
	Let $g \in \L$ be such that $\<z^g\> \le Y_M$; then conjugation by $g$ extends to an isomorphism $C_\L(z) \fto C_\L(z^g)$, thus also to $Q \fto Q^g$. Suppose that there exists $m\inv \in M$ such that 
	\[
	[z^g,Q^g]=1=[z^g,Q^{m\inv}];
	\]
	then $(z^g)^m \in C_\L(Q)=Z(Q) \le S$, so $(g,m) \in D(\L)$ via $(\<z\>,\<z^g\>,\<(z^g)^m\> )$ and conjugation by $m$ extends to $C_\L(z^g)$, so also to $Q^g$. We get
	\[
	[z^{gm},Q^{gm}]=1=[z^{gm},Q]
	\]
	so $1 \ne z^{gm} \le Z(Q) \cap Z(Q^{gm})$. Now \ref{lemma: L containing Q}(c) implies $gm \in N_\L(Q)$, hence $Q^g = Q^{m\inv}$.		
	\end{proof}
	According to the previous lemma, for any such $g \in \L$ we may define
	\[
	Q_{z^g} := Q^g
	\]
    and, at need, replace $g$ by an element $m \in M$ with $Q^g=Q^m$. Set also
	\[
	F_0 := C_M(y), \, T^*:=C_S(y), \, Y := y^\perp \le [Y_M, O^2(M)], \, T:= C_S(Y) , \, F:= \< (Q \cap F_0)^{F_0} \>,
	\]
	and note that $y \in Y$ (since $y$ is isotropic) and so $T \le T^* \in \Syl_2(F_0)$. In addition, by our choice $z \perp y$, hence $\<z\> \le Y$ and we have $Y \in \Delta$. Similarly we also get $T,T^* \in \Delta$.
\end{notation}

\begin{remark}[Adjustments to Lemma 3.3]\label{lemma3.3} \leavevmode \\[-15pt]
	\begin{itemize}
		\item[(a)] It holds in the following shape: $C_\L(M^\circ)=1$ and $Z(M) = 1$.
		\item[(c)] We remark that, in virtue of \ref{lemma:uniquen of some conjug of Q-orthog case}, for every singular vector $u \in Z_M^\natural$ there exists a unique conjugate $Q_{u}$ of $Q$ that centralizes $u$. In particular, the map given by
		\begin{center}
			\begin{tikzcd}
				\{\textit{singular vectors of $Z_M^\natural$} \} \ar[r] & \{Q_u \mid \textit{$u \in Z_M^\natural$ is singular} \} \\[-20pt]
				v  \ar[r,maps to]		 & Q_v
			\end{tikzcd}
		\end{center}
	is bijective.
	\end{itemize}
\begin{proof}
	For (a), since, for example, $Q \le M^\circ$, $C_\L(M^\circ) \le C_\L(Q)$, so $C_\L(M^\circ)$ is a subgroup of $\L$. Since $Q \not \norm M$ by hypothesis, $M^\circ \ne Q$ and \ref{lemma:dichotomy for subgroups containing Q} yields $C_{\L}(M^\circ) = 1$. Since $Z(M) \le C_\L(M) \le C_\L(M^\circ)$ we also have $Z(M)=1$.\\
	
	For (c) note that $Z_M \le Y_M$ by \cite[Lemma 3.3(b)]{Meier-Stell-Stroth:2012} and now \ref{lemma:uniquen of some conjug of Q-orthog case} shows that the map is well-defined and, clearly, surjective. By \cite[Lemma 3.3(c)]{Meier-Stellm-Stroth:2003} $C_{Z_M}(Q_z)$ is a one dimensional, singular subspace; thus $C_{Z_M}(Q_z) =\<z\> = \{1,z\}$, which proves injectivity.\\

	The rest of the proof applies to our setting just as it appears in \cite{Meier-Stell-Stroth:2012}. References to results that needed a translation in terms of localities have been worked out. In detail:
	\begin{itemize}
		\item[(i)] reference to \cite[Lemma 1.24]{Meier-Stell-Stroth:2012} in part (b) is covered by \ref{lemma:weakly closed subgroups in localities};
		\item[(ii)] reference to \cite[Lemma 1.57]{Meier-Stell-Stroth:2012} in parts (c) and (d) is covered by \ref{lemma1.57};
		\item[(iii)] references to \cite[Lemma 2.2]{Meier-Stell-Stroth:2012} and to \cite[Lemma 1.52]{Meier-Stell-Stroth:2012} in part (h) are covered, respectively, by \ref{lemma2.2} and \ref{lemma: L containing Q};
        \item[(iv)] for the reference to \cite[Lemma 1.58]{Meier-Stell-Stroth:2012} one only needs to note that the proof works as it is with the locality $(\L,\Delta,S)$ in place of $H$ and with $L \in \C_\L(S)$. \\
        All the references in the proof either apply without further adjustment or have been modified to work in our setting.
	\end{itemize}
\end{proof}
\end{remark}

\begin{remark}[Adjustments to Lemma 3.4]\label{lemma3.4}
	The entire proof of the lemma has a local nature, dealing with the substructure of $M^\dagger=N_\L(Y_M)$, and this reflects in the proof, which is mostly self-contained and makes almost exclusively use of subgroups of $M^\dagger$. We only need to make a couple of notes.
	\begin{itemize}
		\item[(i)] As $Q_z=Q$ and $y \in S$, all the subgroups and products in (a) are well-defined.
		\item[(ii)] References to \cite[Lemma 1.52]{Meier-Stell-Stroth:2012}(c) in (b)-(d) and (k) are covered by \ref{lemma: L containing Q}(a). Note that in point (k) $C_G(Y_M)$ (which actually appear as $C_G(Y)$ during the proof due to a typo) can clearly be replaced by $C_\L(Y_M)$.
		\item[(iii)] Reference to \cite[Lemma 2.2]{Meier-Stell-Stroth:2012} in (f) is covered by \ref{lemma2.2}.
	\end{itemize}
\end{remark}

\subsection{The proof of Theorem~\ref{theorem CL}}

Even though the proof closely follows the lines of that in \cite{Meier-Stell-Stroth:2012}, we mostly provide the full reasoning in order to prevent an impair in readability, as there is a need for multiple adjustments and ad-hoc arguments to justify why the result holds in the locality.\\
As in the Local Structure Theorem, we begin by assuming that the pair $(\L,M)$ is a counterexample to Theorem~\ref{theorem CL}. Then there exists $y \in [Y_M,O^2(M)]$ non-singular and such that $\< y \> \in \Delta$. Recall that $\< z \> \in \Delta$, that $[Y_M,O^2(M)]=Z_M=\< \Omega Z(T) \mid T \in \Syl_2(M) \>$ by \ref{lemma3.3}(b) and that $M$ acts transitively both on the set of non-trivial singular vectors and on the set of non-singular vectors of $Z_M$ by \ref{lemma3.1 - orthogonal module}. Our assumption now implies that $\< x \> \in \Delta$ for any non-trivial vector $x \in Z_M$; in particular, any non-trivial subspace is an object of the locality.

\begin{lemma}[3.5]\label{lemma3.5}
	Suppose that $[O_2(M),O^2(M)] \le Y_M$. Then $Y_M=O_2(M)=C_\L(Y_M)$ and $M^\dagger =M$.
\end{lemma}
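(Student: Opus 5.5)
The plan is to transport the argument of \cite[Lemma 3.5]{Meier-Stell-Stroth:2012} into the group $M^\dagger=N_\L(Y_M)$, which is an honest finite group by \ref{lemma2.2}(d). Throughout we use $Z_M=[Y_M,O^2(M)]$, which is a natural $\O^\epsilon_{2n}(2)$-module for $\ov{M}=M/C_M(Y_M)$. We also use the standing counterexample assumption that every non-trivial subspace of $Z_M$ lies in $\Delta$. By \ref{lemma2.2}(e),(f) we already know $O_2(M)=C_S(Y_M)\in\Syl_2(C_\L(Y_M))$ and $Y_M=\Omega Z(O_2(M))$. Since $C_M(Y_M)$ is $2$-closed by the basic property, it then suffices to prove two things: $O_2(M)=Y_M$, and $C_\L(Y_M)$ is $2$-closed. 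Indeed, once $O_2(M)=Y_M$, the group $Y_M$ is a normal self-centralizing subgroup of $N_\L(Y_M)$, because $N_\L(Y_M)$ has characteristic $2$ (objective characteristic). This gives $C_\L(Y_M)\le O_2(N_\L(Y_M))\le O_2(M)=Y_M$, which yields both $C_\L(Y_M)=Y_M$ and $M^\dagger=MC_\L(Y_M)=M$.

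\emph{Step 1: $[O_2(M),O^2(M)]\le Z_M$.} Set $P:=O_2(M)$. By hypothesis $[P,O^2(M)]\le Y_M$. Then $[P,O^2(M),O^2(M)]\le [Y_M,O^2(M)]=Z_M$, and the three subgroups lemma together with $O^2(M)=[O^2(M),O^2(M)]$ modulo $C$ should give $[P,O^2(M)]=[P,O^2(M),O^2(M)]$. The perfectness needed here holds for $\ov{M}\cong\O^\epsilon_{2n}(2)$ apart from small cases such as $(4,+)$; those small cases are handled as in the LST.

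\emph{Step 2: $P$ is elementary abelian and equals $Y_M$.} Consider the action of $O^2(M)$ on $P/Y_M$, which is trivial by hypothesis. For $x\in P$, the map $m\mapsto [x,m]$ from $O^2(M)$ to $Z_M$ is then a derivation, hence a $1$-cocycle of $\ov{M}$ with values in the natural module. One then argues as in MSS: choose the non-singular $y$ and its centralizer $F_0=C_M(y)=T\times E$ as in \ref{lemma3.1 - orthogonal module}(c), and use the transvection $T$. Since $\<y\>\in\Delta$, the centralizer $C_\L(y)$ is a group of characteristic $2$ containing $C_S(y)$. Comparing $O_2(C_\L(y))$ with $P$, together with the fact that $[V,T]=\<y\>$, should force $[P,T]\le\<y\>$ and finally $P=C_P(O^2(M))Z_M$. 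The key quantity is $C_P(O^2(M))$: it is centralized by $O^2(M)S=M$, so $\Omega$ of it lies in $Z(M)$, which is trivial by \ref{lemma3.3}(a). Hence $C_P(O^2(M))=1$ and $P=Z_M\le Y_M$, so $P=Y_M$.

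\emph{Step 3: $C_\L(Y_M)$ is $2$-closed.} This is the locality-specific point. Since $\<z\>\in\Delta$ and $z\in Z(Q)$, property $Q!$ gives $C_\L(Y_M)\le C_\L(z)\le N_\L(Q)$. Inside the group $N_\L(Q)$, which has characteristic $2$, we have $C_\L(Y_M)\trianglelefteq N_{N_\L(Q)}(Y_M)$, with Sylow subgroup $P=Y_M$. Therefore $C_\L(Y_M)/Y_M$ has odd order, and $Y_M$ is self-centralizing in a characteristic-$2$ overgroup, which forces $C_\L(Y_M)=Y_M$ by Lemma~\ref{lemma:properties groups of char p}.

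The main obstacle is Step 2. The MSS argument uses centralizers of non-singular vectors $C_G(y)$, and in the locality these are only available because the counterexample hypothesis puts $\<y\>\in\Delta$. Moreover, conjugation by elements of $M$ need not extend to all of $C_\L(y)$. I would therefore work only with the fixed $y$ chosen so that $C_S(y)\in\Syl_2(C_M(y))$, and keep every computation inside the groups $C_\L(y)$ and $M^\dagger$.
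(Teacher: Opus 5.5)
Your opening reduction is correct: once $O_2(M)=Y_M$, the facts that $N_\L(Y_M)$ has characteristic $2$ and $O_2(N_\L(Y_M))\le O_2(M)$ give $C_\L(Y_M)=Y_M$ and $M^\dagger=M$. That makes Step~3 redundant.

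The gap is in Step~2, which carries all the content. What you write there (the cocycle $m\mapsto[x,m]$, the transvection $T$, comparing $O_2(C_\L(y))$ with $P$, ``should force $[P,T]\le\langle y\rangle$'') is not an argument. Worse, its endpoint $P=C_P(O^2(M))Z_M$, hence $P=Z_M$, is false in general. Consider the case $(2n,\epsilon)=(6,+)$ with $|Y_M:Z_M|=2$, which \cite[Lemma 3.3(f)]{Meier-Stell-Stroth:2012} allows and which the paper has to treat in Lemma~\ref{lemma3.7}. There $Y_M$ is the $7$-dimensional quotient of the permutation module for $\ov{M}\cong\Sym(8)$. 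It contains $Z_M$ as a non-complemented submodule of codimension $1$, and $C_{Y_M}(O^2(M))=1$. So already $Y_M\neq C_{Y_M}(O^2(M))Z_M$, while $O_2(M)=Y_M$ satisfies the hypothesis $[O_2(M),O^2(M)]\le Y_M$. Your Step~2 would prove $Y_M=Z_M$, which the lemma does not assert and no argument of this kind can give.

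The missing idea is that the hypothesis directly makes $O^2(M)$ centralize $\Phi(P)$, where $P:=O_2(M)$. For $x\in P$ and $r\in O^2(M)$, the commutator $[x,r]$ lies in $Y_M\le Z(P)$ and has order at most $2$, so $(x^r)^2=(x[x,r])^2=x^2$. Also $[P,O^2(M),P]\le[Y_M,P]=1$, so the three subgroups lemma gives $[P,P,O^2(M)]=1$. Hence $\Phi(P)\le C_P(O^2(M))$.

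Next, $C_P(O^2(M))$ is normalized by $M$. If it were non-trivial it would meet $Z(S)$ non-trivially, and that intersection is centralized by $O^2(M)S=M$. This contradicts $Z(M)=1$ from Remark~\ref{lemma3.3}(a). Note that $S$ only normalizes $C_P(O^2(M))$, so your phrase ``centralized by $O^2(M)S$'' needs exactly this intersection with $Z(S)$.

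Thus $\Phi(P)=1$, so $P$ is elementary abelian and $P=\Omega Z(P)=Y_M$ by Lemma~\ref{lemma2.2}(e). Everything happens inside the group $M$, so Step~1, the assumption $\langle y\rangle\in\Delta$ and the group $C_\L(y)$ are not needed.
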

\begin{proof}
	It is verbatim the same as in \cite[Lemma 3.5]{Meier-Stell-Stroth:2012} with $C_\L(Y_M)$ in place of $C_G(Y_M)$.
\end{proof}

For this chapter we will always assume that the Baumann subgroup is taken with respect to the prime $p=2$. Note that for $E \in \{T,T^*\}$, $B(E) = \Omega Z (J(E)) \in \Delta$ since, for example, $z \in B(E)$. The next proof follows, with minor adjustments, the same argument as in \cite{Meier-Stell-Stroth:2012}, but it is quite complex and we therefore provide the entire reasoning. Here techniques under the name of ``pushing up" appear; for a more comprehensive treaty of these see \cite[Section 2.1]{Meier-Stellm-Stroth:2003} and \cite[Section 2]{PPS:2003}.

\begin{lemma}[3.6]\label{lemma3.6}
	The following hold.
	\begin{itemize}
		\item[(a)] If $2n =4$, then $N_\L(T^*) \le M^\dagger$ and $T^* \in \Syl_2(C_\L(y))$; in particular, a Sylow $2$-subgroup of $C_\L(y)$ has order strictly smaller than $|S|$.
		\item[(b)] If $(2n,\epsilon) \ne (4,+)$, then $N_\L(T) \le N_\L(B(T)) \le M^\dagger$ and $T=O_2(N_{\L}(T))$.
	\end{itemize}
\end{lemma}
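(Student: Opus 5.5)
The plan is to mimic the pushing-up argument of \cite[Lemma 3.6]{Meier-Stell-Stroth:2012}. The point is to keep every computation inside a single local subgroup $N_\L(X)$ with $X\in\Delta$ a characteristic subgroup of $T$ or $T^*$. All the relevant groups are objects: $T,T^*\in\Delta$ by Notation~\ref{notation: orthogonal case}, and $B(T)$, $B(T^*)$ contain $z$. Hence $N_\L(T)$, $N_\L(B(T))$, $N_\L(T^*)$ are genuine groups of characteristic $2$, because $\L$ has objective characteristic $2$. Since $B(T)$ is characteristic in $T$, we get $N_\L(T)\le N_\L(B(T))$ for free. The task therefore reduces to showing that these normalizers lie in $M^\dagger=N_\L(Y_M)$ (Lemma~\ref{lemma2.2}(d)). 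Equivalently, by Lemma~\ref{lemma2.2}(c), it suffices to show they normalize $Y_M$ or $O_2(M)$.

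\textbf{Case (b).} First I show $O_2(M)\le T$ and $C_{M^\dagger}(T)\le T$. This uses Lemma~\ref{lemma3.1 - orthogonal module}(c): $C_X(y^\perp)=T_0\cong C_2$, so $T=C_S(Y)$ is $O_2(M)$ extended by the transvection in $S$ centralizing $y^\perp$. Then $N_{M^\dagger}(T)$ controls $N_\L(T)$ in the following sense. Let $H:=N_\L(B(T))$ and $\wde H := H/O_2(H)$. I aim to prove $Y_M\le O_2(H)$, or at least that $H$ normalizes $Z_M=[Y_M,O^2(M)]$.
\begin{itemize}
\item Set $W:=\langle Z_M^{H}\rangle$. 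If $W$ is abelian, it is elementary abelian and normal in $H$, and $Z_M\le W$.
\item Using Lemma~\ref{lemma3.1 - orthogonal module}(c)(3), $Y=\langle \Z\rangle$ is generated by singular vectors, each centralized by a unique conjugate $Q_u$ (Remark~\ref{lemma3.3}(c)). I would then argue, as in the LST, that any $h\in H$ with $Z_M^h\ne Z_M$ yields a non-trivial offender $Z_M^h$ on $Z_M$ lying in $S$.
\item This contradicts the structure of the natural $\O^\epsilon_{2n}(2)$-module: non-singular vectors are not $2$-central, while $\langle x\rangle\in\Delta$ for all $x$ under the counterexample assumption.
\end{itemize}
The standard pushing-up tool, in the form of a Baumann argument, gives $O^2(H)$ centralizing $\Omega Z(O_2(H))$ unless a component $\SL_2(2^k)$ occurs. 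In that case a $Q!$ argument through $z\in Z(Q)$ and Lemma~\ref{lemma: L containing Q}(c) forces $H\le N_\L(Q)$, and so $H\le M^\dagger$ via the factorization family (Lemma~\ref{lemma:M in factorization family is local}). Finally, $T=O_2(N_\L(T))$ follows because $N_\L(T)\le M^\dagger$, $T=C_S(Y)$ and $Y\norm N_{M^\dagger}(T)$, so $O_2(N_\L(T))\le C_S(Y)=T$.

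\textbf{Case (a).} For $2n=4$ the same argument is run with $T^*$ in place of $T$, using that $C_X(y)=T_0\times\Sp_2(2)$ has Sylow $2$-subgroup $T^*/C$ self-normalizing. Once $N_\L(T^*)\le M^\dagger$, the group $T^*$ is Sylow in $C_{M^\dagger}(y)$. A $2$-subgroup of $C_\L(y)$ strictly containing $T^*$ would contain some element of $N(T^*)\setminus T^*$, which lies in $M^\dagger$ and centralizes $y$; this is impossible. The local Sylow theorem in $N_\L(\langle y\rangle)$, available since $\langle y\rangle\in\Delta$, then gives $T^*\in\Syl_2(C_\L(y))$. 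Because $y$ is not $2$-central in $M$, we get $|T^*|<|S|$.

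The main obstacle is the locality step that every relevant conjugate of $Z_M$ or $Q$ can be moved into $S$. I would handle it as in Lemma~\ref{lemma: L containing Q}(c): conjugate inside $N_\L(B(T))$, which is a group containing $S$-normalizing elements, and use weak closure of $Q$ (Lemma~\ref{lemma:weakly closed subgroups in localities}(a)).
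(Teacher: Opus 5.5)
There is a genuine gap. Your proposal never establishes the central containments $N_\L(T^*)\le M^\dagger$ (for $2n=4$) and $N_\L(B(T))\le M^\dagger$; the bullet points under Case (b) do not amount to an argument.

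The missing idea is that Baumann's argument is not applied to $H=N_\L(B(T))$, about which nothing is known. The paper first builds the rank-one subgroup $M_1=\langle T,T^u\rangle\le M$, with $u\in M$ obtained from some $q\in Q$ satisfying $y^q=yz$. By \cite[Lemma 3.4(h)]{Meier-Stell-Stroth:2012} this subgroup has $M_1/O_2(M)\cong\Sym(3)$ and $\langle M_1,N_M(T)\rangle=M$; in (a) one uses $M^*=T^*M_1$ instead. This gives a dichotomy:
\begin{itemize}
\item Either some non-trivial characteristic subgroup $K$ of $B(T)$ (resp.\ of $T^*$) is normal in $M_1$ (resp.\ $M^*$). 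Then $K$ is normal in $M$, and Lemma~\ref{lemma2.2}(c) gives $N_\L(B(T))\le N_\L(K)\le M^\dagger$ immediately.
\item Or Baumann's argument (pushing up for $\SL_2(2)$) applied to $M_1$ yields a unique non-central chief factor on $O_2(M)$. Hence $[O_2(M),O^2(M)]\le Y_M$, and Lemma~\ref{lemma3.5} forces $Y_M=O_2(M)=C_M(Y_M)$.
\end{itemize}
Your version of the Baumann step, applied to $H$ with an ``$\SL_2(2^k)$ component'' alternative, is not a citable statement. Even granting it, you never explain how it would place $H$ inside $M^\dagger$. You also never use the generation property, which is what transports normality from $M_1$ to $M$.

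The contradictions you propose for the remaining case do not work either.
\begin{itemize}
\item An offender on $Z_M$ contradicts nothing. The natural $\O^\epsilon_{2n}(2)$-module is an $FF$-module: the order-$2$ group $C_{\ov{M}}(y^\perp)$ is already a non-trivial offender. You also give no reason why $Z_M^h$ should act on $Z_M$ at all.
\item Ruling out ``$\langle x\rangle\in\Delta$ for non-singular $x$'' is the content of the whole section, not an immediate contradiction.
\item The step ``$H\le N_\L(Q)$, hence $H\le M^\dagger$ by Lemma~\ref{lemma:M in factorization family is local}'' is invalid. $N_\L(Q)\not\le M^\dagger$ because $Q\not\norm M$, and that lemma requires $M=C_M(Y_M)(M\cap H)$. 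In fact the paper proves the opposite: a Sylow $2$-subgroup $T_1=T\langle t\rangle$ of $L=N_\L(B(T))$ does \emph{not} centralize $z$.
\end{itemize}

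Here is what is actually needed once $Y_M=O_2(M)$.
\begin{itemize}
\item In (a): $\mathcal{A}_{T^*}=\{Y_M,A,A_1,A_2\}$, so $N_\L(T^*)/C_{N_\L(T^*)}(\mathcal{A}_{T^*})$ is a $2$-group and its kernel lies in $N_\L(Y_M)=M^\dagger$.
\item In (b), take $t\in L\setminus M^\dagger$. Then $T=Y_MY_M^t$, and an involution count with \cite{Edmonds-Norwood:2009} gives $\mathcal{A}_T=\{Y_M,Y_M^t\}$ and $|L:FT|=2$.
\item Since $[T_1,z]\ne 1$, $L$ induces $\Sym(3)$ on $\Omega Z(T^*)=\langle y,z\rangle$, so $y=z^h$ for some $h\in L$.
\item Part (a) then forces $2n\ge 6$. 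So (a) must be proved first and independently, not by ``running the same argument''.
\item Comparing $|Q_y|$ with $|Q_z|$ gives $Y_M\le Q_y$ and $Y_M\not\le Q_z$. Thus $Y_M^{h^{-1}}\le Q_z$, and \cite[Lemma A.40]{Meier-Stell-Stroth:2012} produces an offender on $Y_M$ inside $Q_zY_M$, contradicting \cite[Lemma 3.4(j)]{Meier-Stell-Stroth:2012}.
\end{itemize}

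Two of your steps are fine: the final Sylow argument in (a), and deducing $T=O_2(N_\L(T))$ from $N_\L(T)\le M^\dagger$ via \cite[Lemma 3.4(e)]{Meier-Stell-Stroth:2012}. The locality issue you single out as the main obstacle is minor here. Everything takes place inside the groups $N_\L(T^*)$, $N_\L(B(T))$ and $M^\dagger$; the real work is the group-theoretic pushing-up.
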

\begin{proof}
	\begin{itemize}
		\item[(a)] By \cite[Lemma 3.4(a)]{Meier-Stell-Stroth:2012} there exists $q \in Q$ such that $y^q=yz$; then $Y \ne Y^q$ and we may pick $y_1 \in Y^q \setminus Y$. If $y_1$ is singular, since $yz$ is non-singular, also $y_1(yz)$ is non-singular and it is in $Y^q \setminus Y$; whence we may choose $y_1$ non-singular. By the transitivity of the action of $M$ over non-singular vectors, $y_1=y^u$ for some $u \in M$. Then the inclusion $\<y,y^u\> \le Y^q = C_{Z_M}(T^q)$ shows that $\ov{T^q} \le C_{\ov{M}}(y^u) = \ov{T^u} \times \ov{E^u}$; as $|\ov{T}|=2$, $[\ov{T^q},\ov{T^u}]=1$, so $T^q$ normalizes $T^uC_M(Y_M)$ and, therefore, also a Sylow $2$-subgroup of $T^uC_M(Y_M)$. Since $T^u \in \Syl_2(T^uC_M(Y_M))$, we may pick $u \in M$ such that $T^q$ normalizes $T^u$. For this choice we set $M_1:= \<T,T^u\>$; \cite[Lemma 3.4(h)]{Meier-Stell-Stroth:2012} now shows that $T \in \Syl_2(M_1)$, $M_1/O_2(M) \cong \Sym(3)$ and, as $2n=4$, $T^*$ normalizes $M_1$. Set $M^* := T^* M_1$; then we also get $T^* \in \Syl_2(M^*)$, so $\Sym(3) \cong M_1 / O_2(M) \lesssim M_1 T^* / O_2(M^*)$ as a normal subgroup and therefore $T^*/O_2(M^*)$ is either trivial or $C_2$. Since by \ref{lemma3.1 - orthogonal module}(c) $T^*/C_M(Z_M)$ centralizes $T/C_M(Z_M))$ we get $T^*/O_2(M^*)=1$, thus $M^* / O_2(M^*) \cong  \Sym(3)$ and so $Z(M^*)=1$ (using that $Z(M)=1$ by \cite[Lemma 3.3(a)]{Meier-Stell-Stroth:2012} and $M$ is of characteristic $2$). Note that $F^*(M^*)=O_2(M^*)$.\\
        We now want to show that $N_\L(T^*) \le M^\dagger$.\\
		
		Suppose first that $T^*$ contains no non-trivial characteristic subgroup which is also normal in $M^*$; by the Baumann argument, see for example \cite[Lemmas 2.8]{PPS:2003}, one gets $O_2(M^*) \lneq B(T^*)$ and, since $T^* / O_2(M^*) \cong C_2$, $B(T^*) \in \Syl_2(B(M^*))$. Moreover $O_2(M^*)= O_2(B(M^*))$, so also $B(M^*)=M^*$. Recalling that $M^*/O_2(M^*) \cong \Sym(3)$, by the pushing up argument for $\Sym(3) \cong \SL_2(2)$ one sees that $O_2(M^*)$ has a unique, non-central $M^*$-chief factor. Consider the sequence $1 < \dots < Y_M \le O_2(M^*)$; since $[Y_M,O^2(M^*)] \ne 1$ the uniqueness of the central factor gives $[O_2(M^*)/Y_M , O^2(M^*)] =1$, hence
		\[
		[O_2(M),O^2(M^*)] \le [O_2(M^*),O^2(M^*)] \le Y_M.
		\]
		By \cite[Lemma 3.4(h)]{Meier-Stell-Stroth:2012} $\<M_1,S \> =M$, so $[O_2(M),O^2(M)] \le Y_M$ and \ref{lemma3.5} yields $Y_M=O_2(M)=C_M(Y_M)$.	Since $Y_M = Z_M$, for example by \cite[Lemma 3.3(f)]{Meier-Stell-Stroth:2012}, the structure of the natural orthogonal module and $T^*/Y_M \cong C_2 \times C_2$ yield $\mathcal{A}_{T^*} = \{Y_M,A,A_1,A_2\}$, denoted so that $|Y_M / Y_M \cap A_i|=2$ and $|Y_M / Y_M \cap A|=4$. Thus the pairs $\{A_1,A_2\}$ and $\{Y_M,A\}$ are the only ones intersecting in a group of order $4$; then, for $N:= N_\L(T^*)$, $N/C_N(\mathcal{A}_{T^*})$ is a $2$-group. As $2n=4$, $S \le N_\L(T^*)$; then $C_N(\mathcal{A}_{T^*}) \le N_\L(Y_M)=M^\dagger$ gives $N = S \, O^2(N) \le M^\dagger$.\\
		
		Suppose now that there exists a non-trivial characteristic subgroup $K \le T^*$ normal in $M^*$. Then \cite[Lemma 3.4(h)]{Meier-Stell-Stroth:2012} implies $\<M^*,N_M(T^*)\>=M$ and the fact that $K$ is characteristic in $T^*$ yields $K \norm M$. \ref{lemma2.2}(c) then gives $N_\L(T^*) \le N_\L(K) \le M^\dagger$.\\
		
		As $C_S(Y_M) \le C_S(y)=T^* \in \Syl_2(C_M(y))$, $T^* \in \Syl_2(C_{M^\dagger}(y))$; if $T^* \le T_1 \in \Syl_2(C_\L(y))$, then $N_{T_1}(T^*) \le N_\L(T^*) \cap T_1 \le M^\dagger \le N_{C_{M^\dagger}(y)}(T^*)$. Then $N_{T_1}(T^*)=T_1$, that is $T_1=T^*$.
		
		\item[(b)] Searching for a contradiction, suppose $L:=N_\L(B(T)) \not \le M^\dagger$. If $K$ is a non-trivial, characteristic subgroup of $B(T)$ normal in $M_1$, where $M_1$ is defined as in (a), then \cite[Lemma 3.4(h)]{Meier-Stell-Stroth:2012} yields $K \norm M$ and, by \ref{lemma2.2}(c), $N_\L(B(T)) \le N_\L(K) \le M^\dagger$, contradicting our assumption. Thus we may assume that no non-trivial characteristic subgroup of $B(T)$ is normal in $M_1$.\\
        Applying once more the Baumann argument we get $O_2(M) \lneq B(T)$ and, therefore, $T=B(T) \in \Syl_2(M_1)$ and $B(M_1)=M_1$; hence, there exists a unique non-central chief factor of $M_1$ on $O_2(M_1)=O_2(M)$. Since $[Y_M,O^2(M_1)] \ne 1$ we conclude again $[O_2(M),O^2(M)] \le Y_M$ and \ref{lemma3.5} yields $Y_M=O_2(M)=C_M(Y_M)$ and $M = M^\dagger$. Pick $t \in L \setminus M^\dagger$, so that $Y_M \ne Y_M^t$; recalling that $T/C_T(Z_M) \cong C_2$ we see that $Y_MY_M^t =T=B(T)$ and $|T : Y_M \cap Y_M^t|=4$. Set $\mathrm{Inv}(T) := \{x \in T \mid x^2=1 \}$, then
        \[
        | \mathrm{Inv}(T) | \ge |Y_M| + |Y_M^t| - |Y_M \cap Y_M^t| = \frac{3}{4}|T|;
        \]
        since $T$ is non-abelian (for example, $[Y_M,O^2(M_1)] \ne 1$ and $2 \mid |O^2(M_1)|$), according to \cite[Theorem 4.1]{Edmonds-Norwood:2009} this is the maximum number of elements of order $1$ or $2$ that $T$ can contain, hence $\mathrm{Inv}(T)^\natural = Y_M^\natural \cup (Y_M^t)^\natural$ and $\A_T=\{Y_M,Y_M^t\}$. Since $N_L(Y_M) = L \cap M = N_M(T) = FT$ we have
        \[
        |L:FT| = |L:N_L(Y_M)| = |N_\L(T):C_{N_\L(T)}(\mathcal{A}_T)| =2
        \]
        so $FT \norm L$. Let $T^* \le T_1 \in \Syl_2(L)$, so $L=(FT)T_1=FT_1$ and $T_1 \not \le M$; note also that $T_1 = T\<t\>$. Since $Y_M^t \le T \le S$, conjugation by $t$ extends to a group isomorphism $M \le N_\L(Y_M) \fto N_\L(Y_M^t)$. Assume that $T_1$ centralizes $z$; by $Q!$ we obtain $T_1 \le C_\L(z) \le N_\L(Q_z)$; by \cite[Lemmas 3.4(i) and 3.3(e)(i)]{Meier-Stell-Stroth:2012} we also have $\<FT,Q_z\> = M^\circ T = M$, hence $t \in L = FT_1$ normalizes $M$, so $Y_M^t = O_2(M)^t = O_2(M) = Y_M$, a contradiction. Whence $[T_1,z] \ne 1$.\\
        Since $T_1 \not \le M$ and $|T_1:T|=2$ we have $T^* = T$, thus $L=N_\L(T)=N_\L(T^*)$; moreover, $T^* = C_S(y) = C_S(\<y,z\>)$. By \cite[Lemma3.4(a) and (f)]{Meier-Stell-Stroth:2012} $y^{Q_z} = \{y,yz\}$, thus $Q_z \le L$, and $\<z,y\> = \Omega Z(T^*) \norm L$, so both $Q_z$ and $T_1$ act on it non-trivially as distinct involutions. Thus $L/C_L(\Omega Z(T^*)) \cong \Sym(3)$ and so $L$ is transitive on $\Omega Z(T^*)$; in particular, there exists $h \in L$ such that $y=z^h$. Since $\<y,z\> \le S$, conjugation by $h$ extends to a group homomorphism $C_\L(z) \fto C_\L(y)$, hence $C_\L(y)$ contains a Sylow $2$-subgroup of the same order of $S$ and (a) implies $2n \ge 6$. As $Q_z \le C_\L(z)$, $Q_z^h =Q_y$ and then $|Q_y/Q_y \cap T^*|=|Q_z/Q_z \cap T^*| = 2$, since $Q_z \cap T^*$ is contained in the kernel of the action of $H$ on $\Omega Z(T^*)$. We use this information to find a non-trivial offender on $Y_M$ contained in $Q_zY_M$, which clearly cannot be contained in $Y_M=O_2(M)$, providing us with a contradiction to \cite[Lemma 3.4(j)]{Meier-Stell-Stroth:2012}.\\
		We have
		\[
		Q_y \cap T^* \le Q_y \cap M \le O_2(C_M(y)) = O_2(F_0) = O_2(FT) = T,
		\]
		thus showing $|Q_y/Q_y \cap T| \le 2$; as $|T/Y_M|=2$, $|Q_y / Q_y \cap Y_M| \le 4$ and we easily conclude 
		\begin{equation*}
			\label{eq1-lemma3.6}
			\tag{*}
			|Q_y| \le 4|Q_y \cap Y_M| \le 4|Y_M|.
		\end{equation*}
		On the other hand $2n \ge 6$ and \cite[Lemma 3.3(c) and (f)]{Meier-Stell-Stroth:2012} yield $|Q_zY_M/Y_M| = |O_2(C_{\ov{M}}(z))| = 2^{2n-2}$, $|Z_M/Z_M \cap Q_z| \le |Z_M/[Z_M,Q_z]| = 2$ and $|Y_M/Z_M| \le 2$. Thus
		\begin{multline*}
			\label{eq2-lemma3.6}
			\tag{**}
			|Q_z| = 2^{2n-2}|Q_z \cap Y_M| \ge 2^{2n-2}|Q_z \cap Z_M| \ge 2^{2n-2}|[Q_z, Z_M]| \ge \\
			\ge 2^{2n-3}|Z_M| \ge 2^{2n-4}|Y_M| \ge 4|Y_M|.
		\end{multline*}
		Comparing \eqref{eq1-lemma3.6} and \eqref{eq2-lemma3.6} we get $|Q_y| = 4|Y_M| = |Q_z|$ and all the inequalities of the two lines above are equalities. In particular, $Y_M \cap Q_y = Q_y$ and $Y_M \cap Q_z = [Q_z,Z_M] < Q_z$, that is $Y_M \le Q_y$ and $Y_M \not \le Q_z$. Set $Y_z := Y_M^{h\inv}$, then $Y_z$ is elementary abelian of order $|Y_M|$ contained in $Q_z$, so it is distinct from $Y_M$. Since $Y_M=C_M(Y_M)$, \cite[Lemma A.40]{Meier-Stell-Stroth:2012} shows the existence of the wanted non-trivial offender on $Y_M$ contained in $Q_zY_M$, yielding our contradiction.\\
		
		Thus $N_\L(T) \le N_\L(B(T)) \le M^\dagger$ and \cite[Lemma 3.4(e)]{Meier-Stell-Stroth:2012} shows (b).
	\end{itemize}
\end{proof}

\begin{lemma}[3.7]\label{lemma3.7}
	$z^\L \cap Y_M = z^M = z^\L \cap Z_M$
\end{lemma}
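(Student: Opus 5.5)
The inclusions $z^M \subseteq z^\L \cap Z_M \subseteq z^\L \cap Y_M$ are immediate. Indeed $z \in Z_M$, the subgroup $Z_M$ is normal in $M$, $Z_M \le Y_M$ by \cite[Lemma 3.3(b)]{Meier-Stell-Stroth:2012}, and conjugation by elements of $M \le N_\L(Y_M)$ is defined on $Y_M$. So the whole work is to show $z^\L \cap Y_M \subseteq z^M$.

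First I will move the conjugating element so that it normalizes $Q$. Let $x = z^g \in Y_M$ with $g \in \L$. By Lemma~\ref{lemma:uniquen of some conjug of Q-orthog case} there is a unique $M$-conjugate $Q_x = Q^g$ of $Q$ centralizing $x$, and we may write $Q^g = Q^m$ with $m \in M$. Put $x_1 := x^{m\inv} \in Y_M \le S$. Then $(g,m\inv) \in D(\L)$ via $(\<z\>,\<x\>,\<x_1\>)$, and $h := gm\inv$ satisfies $Q^h = Q$ and $z^h = x_1$. So, replacing $x$ by the $M$-conjugate $x_1$, I may assume $g = h \in N_\L(Q)$ and $x \in C_{Y_M}(Q) \le C_\L(Q) = Z(Q)$. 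Now $c_h$ extends to an isomorphism $C_\L(z) \to C_\L(x)$. Since $S \le C_\L(z)$, the group $C_\L(x)$ has a Sylow $2$-subgroup of order $|S|$.

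The goal is then to show that $x$ is $2$-central in $M$, i.e.\ after a further $M$-conjugation (which preserves the normalization $Q_x = Q$ up to replacing $Q$ by $Q_x$, again by Lemma~\ref{lemma:uniquen of some conjug of Q-orthog case}) that $x \in \Omega Z(S)$. Granting this, we get $x \in Z_M$ because $\Omega Z(S) \le Z_M$. In the natural $\O^\epsilon_{2n}(2)$-module the $2$-central vectors are singular: a non-singular $y$ has $C_{\ov M}(y) = T \times E$ of index $>1$ in a Sylow $2$-subgroup by Lemma~\ref{lemma3.1 - orthogonal module}(c). The transitivity in Lemma~\ref{lemma3.1 - orthogonal module}(a) then gives $x \in z^M$. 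A minor point is the possible vector $x \in Y_M \setminus Z_M$ (recall $|Y_M/Z_M| \le 2$). This case is handled by the same $2$-centrality argument, since $C_{Y_M}(S) = \Omega Z(S) \cap Y_M \le Z_M$ via \ref{lemma:properties of parabolics in L}(e) together with $Z(M) = 1$ from Remark~\ref{lemma3.3}(a).

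To prove $2$-centrality, I choose $x$ within its $M$-class so that $C_S(x) \in \Syl_2(C_M(x))$. Suppose $C_S(x) < S$. Then $C_S(x) \in \Syl_2(C_{M^\dagger}(x))$, but it is not Sylow in $C_\L(x)$, by the order computation above. Take $C_S(x) < T_1 \in \Syl_2(C_\L(x))$; then $N_{T_1}(C_S(x)) \not\le M^\dagger$. I then aim to contradict this by proving $N_\L(C_S(x)) \le M^\dagger$. The model is Lemma~\ref{lemma3.6}: either some nontrivial characteristic subgroup of $C_S(x)$ (e.g.\ from $B(C_S(x))$) is normal in $M$, in which case \ref{lemma2.2}(c) applies; or the Baumann/pushing-up argument forces $[O_2(M),O^2(M)] \le Y_M$, and then Lemma~\ref{lemma3.5} together with the explicit structure of $Y_M$ gives the contradiction.

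I expect this last step to be the main obstacle. One must control $N_\L(C_S(x))$ for a non-singular $x$ (the only way $x$ can fail to be $2$-central), which amounts to redoing the analysis of Lemma~\ref{lemma3.6} with $C_S(x) = T^*$. Lemma~\ref{lemma3.6}(a) already gives $T^* \in \Syl_2(C_\L(y))$ when $2n = 4$, which contradicts $|C_\L(x)|_2 = |S|$ directly. For $2n \ge 6$ I would try to use Lemma~\ref{lemma3.6}(b), namely $N_\L(T) \le M^\dagger$ with $T = C_S(x^\perp)$, together with the fact that $T^*/T$ is small, to push $N_{T_1}(T^*)$ into $M^\dagger$.
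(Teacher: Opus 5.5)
Your proposal does not prove the nontrivial inclusion $z^\L \cap Y_M \subseteq z^M$. The two steps that carry the content are either assumed or left open.

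\textbf{The opening reduction is circular.} The statement of Lemma~\ref{lemma:uniquen of some conjug of Q-orthog case} (and the remark after it) asserts existence. Its proof, however, only shows uniqueness: any $M$-conjugate of $Q$ centralizing $z^g$ must equal $Q^g$. It gives no reason why $Q^g$ is an $M$-conjugate of $Q$. That existence claim is already the lemma you are proving. If $Q^g = Q^m$ with $m \in M$, then $x^{m\inv} \in C_{Y_M}(Q)$. For $x \in Z_M$, Remark~\ref{lemma3.3}(c) gives $C_{Z_M}(Q) = \<z\>$, so $x = z^m$ at once, and everything after your first paragraph would be unnecessary. So you may not start from $h \in N_\L(Q)$ and $x \in Z(Q)$. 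In the paper, $h \in N_\L(Q)$ is the \emph{conclusion} of the argument, and it is exactly what produces the contradiction. The paper uses that lemma here only to name $Q_y := Q^h$, the image of $Q$ under $c_h\colon C_\L(z) \to C_\L(y)$.

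\textbf{The main case is left open.} Without the reduction, you only handle $x \in Z_M$ non-singular with $2n = 4$, via Lemma~\ref{lemma3.6}(a); that part agrees with the paper. For $2n \ge 6$ you give no argument, and your suggested route rests on a false premise. By Lemma~\ref{lemma3.1 - orthogonal module}(c), $\ov{T^*}/\ov{T}$ is a Sylow $2$-subgroup of $E \cong \Sp_{2n-2}(2)$. Hence $|T^*/T| = 2^{(n-1)^2} \ge 2^4$, so it is not small. Nothing proved so far controls $N_\L(T^*)$ in this range. The case $x \in Y_M \setminus Z_M$ is deferred to the same unproved step.

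\textbf{The missing idea} is to trap the transported conjugate of $Q$ inside $S$, rather than to make $x$ $2$-central.
\begin{itemize}
\item Non-singular $y = z^h \in Z_M$, $2n \ge 6$. By $Q!$ we have $Q \norm C_\L(z)$, so $Q_y = Q^h \norm C_\L(y)$. By Lemma~\ref{lemma3.6}(b) and \cite[Lemma 3.4(e)]{Meier-Stell-Stroth:2012}, $N_\L(T) = N_{M^\dagger}(T) \le C_\L(y)$. Hence $N_{Q_y}(T)$ is a normal $2$-subgroup of $N_\L(T)$, so it lies in $O_2(N_\L(T)) = T$. Since normalizers grow in the $2$-group $TQ_y$, this gives $Q_y \le T \le S$. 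Weak closure gives $Q_y = Q$, so $h$ normalizes $Z(Q)$ and $y = z^h \in Z(Q)$. This contradicts \cite[Lemma 3.4(a)]{Meier-Stell-Stroth:2012}.
\item $u = z^x \in Y_M \setminus Z_M$. Here $(2n,\epsilon) = (6,+)$. The structure of $Y_M$ as a quotient of the permutation module for $\ov{M} \cong \Sym(8)$ gives $O_2(C_{M^\dagger}(u)) \le O_2(M)$. Lemma~\ref{lemma2.2}(c) gives $N_\L(O_2(M)) \le M^\dagger$. The same trapping argument, with $O_2(M)$ in place of $T$, puts $Q_u := Q^x$ inside $O_2(M)$. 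Weak closure then forces $Q \norm M$, contradicting hypothesis (iv).
\end{itemize}
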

\begin{proof}
	Recall that $Z_M=[Y_M,O^2(M)] \le S$, that $M$ acts transitively on the set of non-singular elements of $Z_M$ and that each non-singular vector (as well as each non-trivial singular vector) of $Z_M$ generates a subgroup which is in $\Delta$.\\
	Note that $z^\L \cap Y_M \le S$ and also $z^\L \cap Z_M \le S$; moreover the inclusions $z^M \le z^\L \cap Z_M \le z^\L \cap Y_M$ are trivial, so we only need to prove $z^\L \cap Y_M \le z^M$. For this, suppose there exists $u \in (z^\L \cap Y_M) \setminus z^M$.
	\begin{itemize}
		\item If $u \in Z_M$, since $u$ is not an $M$-conjugate of $z$, it is an $M$-conjugate of $y$, namely $u=y^m$, $m \in M$. Then $y = u^{m\inv} \in S$; but we also know that $u=z^x \in Y_M$ for some $x \in \L$, then $(x,m\inv) \in D(\L)$ via $(\< z\>, \< u \>, \<y \>)$, so we get
		\[
		y = z^{xm\inv} \in z^{\L} \cap S.
		\]
		Since conjugation by $xm\inv$ extends to a group isomorphism $C_\L(z) \fto C_\L(y)$ and $C_\L(z)$ is parabolic, we get that $y$ is centralized by a $p$-subgroup of order $|S|$. If $2n=4$ this yields a contradiction to \ref{lemma3.6}(a), so $2n \ne 4$. Note that $Q_y = Q^{xm\inv}$ and it is uniquely determined by \ref{lemma:uniquen of some conjug of Q-orthog case} and normal in $C_\L(y)$. Then \ref{lemma3.6}(b) and \cite[Lemma 3.4(e)]{Meier-Stell-Stroth:2012} yield
		\[
		N_\L(T)= N_{M^\dagger}(T) \le C_{M^\dagger}(y) \le C_{\L}(y)
		\]
		so $N_\L(T)$ normalizes $Q_y$ and always \ref{lemma3.6}(b) gives $N_{Q_y}(T) \le O_2(N_\L(T)) = T$. We conclude $N_{Q_y}(T) = Q_y \cap T$. Note that $TQ_y$ is a $p$-group and assume $T < TQ_y$, then $T < N_{TQ_y}(T) = N_{Q_y}(T)T =T$, a contradiction; therefore we have
		\[
		Q_y \le T \le S.
		\]
		Since $Q=Q_z$ is weakly closed in $\L$, we obtain $Q_y=Q_z$; then $xm\inv \in N_\L(Q)$, hence it normalizes also $Z(Q)$. Since $z \in Z(Q)$, also $y \in Z(Q)$ and $[Q_z,y]=1$, contradicting \cite[Lemma 3.4(a)]{Meier-Stell-Stroth:2012}.\\
		\item If $u \in Y_M \setminus Z_M$, we may apply \cite[Lemma 3.3(f)]{Meier-Stell-Stroth:2012}, so $(2n,\epsilon) = (6,+)$ and $Y_M$ is the 7-dimensional module quotient of the natural permutation module for $\ov{M} \cong O^+_6(2) \cong \Sym(8)$. Here, up to isomorphism, $C_{\ov{M}}(v) \in \{ \Sym(7), \Sym(3) \times \Sym(5) \}$ for a non-trivial vector $v$; in both cases $O_2(C_{M^\dagger}(u)) \le C_{M^\dagger}(Y_M)$ and so
		\[
		O_2(C_{M^\dagger}(u)) \le O_2(C_{M^\dagger}(Y_M)) = O_2(M^\dagger) \le O_2(M).
		\]
		As $Q_u=Q^x \le O_2(C_\L(u))$, always for $u=z^x$, and $N_\L(O_2(M)) \le M^\dagger$ by \ref{lemma2.2}(c) we obtain $N_{Q_u}(O_2(M)) \le O_2(C_{M^\dagger}(u)) \le O_2(M)$; arguing as above for $T$ and $Q_y$, we have $O_2(M) \le N_{Q_uO_2(M)}(O_2(M)) = N_{Q_u}(O_2(M))O_2(M) = O_2(M)$. Therefore $Q_u \le O_2(M) \le S$ and, as $Q$ is weakly closed, $Q = Q_u = Q^g$ for all $g \in M$; then $Q \norm M$ contradicting Hypothesis (iv) of Theorem~\ref{theorem CL}.
	\end{itemize}
\end{proof}

\begin{lemma}[3.8]\label{lemma3.8}
	The following hold.
	\begin{enumerate}[(a)]
		\item $\Omega Z(T) = C_{Y_M}(T)$ and $Y = C_{Z_M}(T) = \Omega Z(T) \cap Z_M$.
		\item Set $\mathcal{Z} := \{u \in Y^\natural \mid u \text{ is singular in $Z_M$} \}$. Then $N_\L(\mathcal{Z})$ is a subgroup of $\L$ and
		\[
		N_\L(T) \le N_\L(\Omega Z (T)) = N_\L(\mathcal{Z}) = N_\L(Y) 
		\]
		\item $N_\L(Y)\le M^\dagger$; in particular, $N_\L(Y) = C_{M^\dagger}(y) = FTC_{M^\dagger}(Y_M)$, $O_2(N_\L(T)) =T$ and $C_\L(Y) = TC_{M^\dagger}(Y_M)$.
	\end{enumerate}	
\end{lemma}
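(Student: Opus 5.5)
The plan follows \cite[Lemma 3.8]{Meier-Stell-Stroth:2012} and adds the locality-specific justification that every normalizer involved is a genuine subgroup of $\L$. All the subgroups involved ($T$, $\Omega Z(T)$, $Y$) lie in $S$ and contain $z$. Hence they are in $\Delta$ by $Q$-repleteness, or simply because $\<z\> \in \Delta$ and $\Delta$ is overgroup-closed. By Lemma~\ref{lemma:properties of localities}(a) their normalizers are therefore subgroups of $\L$.

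\textbf{Part (a).} This is purely module-theoretic and takes place inside $M$. By Lemma~\ref{lemma3.1 - orthogonal module}(c), $T=C_S(Y)$ induces on $Z_M$ the group $C_{\ov M}(y^\perp)\cap \ov S$. By \cite[Lemma 3.4]{Meier-Stell-Stroth:2012} (see Remark~\ref{lemma3.4}), together with $[V,T]=\<y\>$ and $C_V(T)=y^\perp$, this gives $C_{Z_M}(T)=Y$. Next, $C_S(Y_M)=O_2(M)\le T$ by Lemma~\ref{lemma2.2}(f), and $Y_M=\Omega Z(O_2(M))$. From this I expect $\Omega Z(T)\le C_{S}(O_2(M))\cap \Omega Z(O_2(M))$, giving $\Omega Z(T)\le Y_M$, hence $\Omega Z(T)=C_{Y_M}(T)$. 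The residual case where $Y_M\ne Z_M$, namely $(6,+)$ by \cite[Lemma 3.3(f)]{Meier-Stell-Stroth:2012}, is handled by direct inspection of the permutation module, exactly as in MSS. Intersecting with $Z_M$ then yields $Y=\Omega Z(T)\cap Z_M$.

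\textbf{Part (b).} The inclusion $N_\L(T)\le N_\L(\Omega Z(T))$ holds because $\Omega Z(T)$ is characteristic in $T$. Next I show $N_\L(\Omega Z(T))=N_\L(\mathcal Z)$, where $\mathcal Z\subseteq Y\le S$ and $N_\L(\mathcal Z)\le N_\L(\<\mathcal Z\>)=N_\L(Y)$ by Lemma~\ref{lemma3.1 - orthogonal module}(c)(3). The key input is Lemma~\ref{lemma3.7}: any $g\in N_\L(\Omega Z(T))$ conjugates $z$ to an element $z^g\in \Omega Z(T)\le Y_M$, so $z^g\in z^\L\cap Y_M=z^M$, which is a singular vector of $Z_M$ lying in $Y$, i.e.\ an element of $\mathcal Z$. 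Since $M$ is transitive on $\mathcal Z$, each $u\in\mathcal Z$ is $u=z^m$ with $\<z\>,\<u\>\in\Delta$, and so $g$ maps $\mathcal Z$ into $\mathcal Z$. Here the word $(m,g)$ lies in $D$ via $(\<z\>,\<u\>,\<u^g\>)$, which is the usual way to compose conjugations in the locality. Conversely, $N_\L(\mathcal Z)$ normalizes $Y=\<\mathcal Z\>$. Finally, $N_\L(Y)$ normalizes $C_S(Y)$ up to $C_\L(Y)$, i.e.\ it permutes the Sylow subgroups. I would rather close the loop through (c): once $N_\L(Y)\le M^\dagger$ is known, $N_\L(Y)$ normalizes $Y\cap Z_M$-data inside $M^\dagger$, so $N_\L(Y)=C_{M^\dagger}(y)$. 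This normalizes $T$ modulo $C_{M^\dagger}(Y_M)$, and then $\Omega Z(T)$ by (a) and a Frattini argument, which gives the equalities.

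\textbf{Part (c), the main obstacle.} I would argue by contradiction, mirroring MSS. Let $H:=N_\L(Y)$, a subgroup containing $N_M(Y)=C_M(y)C_{M}(Y)$. If $2n=4$, Lemma~\ref{lemma3.6}(a) together with $T^*=C_S(y)$ gives control of $N_\L(T^*)$. Otherwise Lemma~\ref{lemma3.6}(b) gives $N_\L(T)\le M^\dagger$ and $T=O_2(N_\L(T))$. By the Frattini argument applied to $C_H(Y)\trianglelefteq H$ with a Sylow $2$-subgroup containing $T$, I get $H=C_H(Y)N_H(T)$ with $N_H(T)\le M^\dagger$. It then remains to show $C_\L(Y)\le M^\dagger$. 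Since $z\in Y$, $Q!$ gives $C_\L(Y)\le C_\L(z)\le N_\L(Q)$. Then $C_\L(Y)$ normalizes $Q$ and $O_2(M)$-related data, and the aim is $C_\L(Y)=TC_{M^\dagger}(Y_M)$. I would try $C_\L(Y)\le N_\L(Q)\cap C_\L(Y)$ and show that this group normalizes $B(T)$ or some characteristic subgroup already shown to have normalizer in $M^\dagger$ (Lemma~\ref{lemma2.2}(c)), using pushing-up within $N_\L(Q)$. The delicate point is that the conjugations must stay in $S$-based objects, and that the Sylow $2$-subgroups of $C_\L(Y)$ are conjugate to $T$ inside the group $N_\L(Y)$. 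This is legitimate since $N_\L(Y)$ is an honest group. The remaining identities, namely $N_\L(Y)=C_{M^\dagger}(y)=FTC_{M^\dagger}(Y_M)$ and $O_2(N_\L(T))=T$, then follow from \cite[Lemma 3.4]{Meier-Stell-Stroth:2012} applied inside $M^\dagger$.
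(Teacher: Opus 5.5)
\textbf{Parts (a) and (b).} These are essentially sound. In (a) the separate inspection of the $(6,+)$ case is unnecessary. The facts $O_2(M)=C_S(Y_M)\le T$ and $C_S(O_2(M))\le O_2(M)$ already give $\Omega Z(T)\le \Omega Z(O_2(M))=Y_M$ uniformly. In (b) you do not need to defer $N_\L(Y)\le N_\L(\mathcal Z)$ to (c). The argument you give for $\Omega Z(T)$, via Lemma~\ref{lemma3.7}, equally shows $\mathcal Z = Y\cap z^\L$, so every $x\in N_\L(Y)$ permutes $\mathcal Z$. This is exactly the input needed for (c), so postponing it risks circularity.

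\textbf{The gap is in (c).} First, your Frattini step $H=C_H(Y)N_H(T)$ presupposes $T\in\Syl_2(C_\L(Y))$. That is only known after $C_\L(Y)\le M^\dagger$ has been shown. For $(2n,\epsilon)=(4,+)$, Lemma~\ref{lemma3.6}(b) is unavailable. Control of $N_\L(T^*)$ from Lemma~\ref{lemma3.6}(a) says nothing about $N_\L(T)$, since $T<T^*$. Second, and decisively, you reduce everything to $C_\L(Y)\le M^\dagger$ but never prove it. ``Normalizes $B(T)$ or some characteristic subgroup \dots\ using pushing-up'' is a hope, not an argument.

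\textbf{The missing idea} is a short hyperplane argument. It works uniformly for every $x\in N_\L(Y)$, including the $(4,+)$ case, with no Sylow or pushing-up considerations.
\begin{itemize}
\item Since $x$ permutes $\mathcal Z$ and $\langle u\rangle\in\Delta$ for $u\in\mathcal Z$, $c_x$ extends to a homomorphism $C_\L(u)\to C_\L(u^x)$.
\item The group $C_\L(u)$ contains both $Z_M$ and $Q_u$. The uniqueness in Lemma~\ref{lemma:uniquen of some conjug of Q-orthog case} gives $Q_u^x=Q_{u^x}$.
\item Taking $u=z^{x^{-1}}$, it follows that $Q=Q_z$ normalizes $Z_M^x$.
\item $Y\le Z_M^x$ is a hyperplane of $Z_M$ that $Q$ does not normalize. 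Otherwise $Q\le N_M(Y)=C_M(y)$, contradicting \cite[Lemma 3.4(a)]{Meier-Stell-Stroth:2012}.
\item Hence $Z_M=\langle Y^Q\rangle\le Z_M^x$, so $x\in N_\L(Z_M)\le M^\dagger$ by Lemma~\ref{lemma2.2}(c).
\end{itemize}
For $x\in C_\L(Y)$, your own remark $C_\L(Y)\le C_\L(z)\le N_\L(Q)$ gives this immediately: $Q=Q^x$ normalizes $Z_M^x\supseteq Y$. The hyperplane step is what you were missing to finish.

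Only once $N_\L(Y)\le M^\dagger$ is established do the remaining identities follow inside $M^\dagger$ from \cite[Lemma 3.4(e),(k)]{Meier-Stell-Stroth:2012}. These are $N_\L(Y)=C_{M^\dagger}(y)=FTC_{M^\dagger}(Y_M)$, $C_\L(Y)=TC_{M^\dagger}(Y_M)$ and $O_2(N_\L(T))=T$.
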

\begin{proof}
	\begin{enumerate}[(a)]
		\item Lemma \ref{lemma2.2}(e) and (f) give $\Omega Z(O_2(M)) =Y_M$ and $\{O_2(M)\} = \Syl_2(C_M(Y_M))$. Now \cite[Lemma 3.4(c)]{Meier-Stell-Stroth:2012} yields $O_2(M) \le T$ and therefore $\Omega Z(T) =C_{Y_M}(T)$ and therefore also $C_{Z_M}(T) = \Omega Z(T) \cap Z_M$. Then \cite[Lemma 3.3(b)]{Meier-Stell-Stroth:2012} and \ref{lemma3.1 - orthogonal module}(c:2) imply $C_{Z_M}(T) = y^\perp =Y$ and (a) is proved.
		\item Note that $z \in \mathcal{Z} \subseteq Y$, hence \ref{lemma:uniquen of some conjug of Q-orthog case} shows that $Q_u$ is well-defined for all $u \in \mathcal{Z}$ and contained in $M$. Now \ref{lemma3.1 - orthogonal module}(c:3) shows that $\< \mathcal{Z} \> = Y$. Let $u \in \mathcal{Z}$ and $x \in N_\L(\mathcal{Z})$; since $\<u\> \in \Delta$ the conjugation map $c_x: u \fto u^x \in \mathcal{Z}$ extends to $c_x: C_\L(u) \fto C_\L(u^x)$. Since $Y$ is abelian and contains $u$, $Y \in C_\L(u)$, so $c_x$ is well-defined on $Y$. Thus $N_\L(\mathcal{Z}) \subseteq N_\L(Y)$ and it is, in particular, a subgroup of $\L$.\\
		The transitivity of the action of $M$ over the non-trivial singular vectors of $Z_M$ shows that $\mathcal{Z}=Y \cap z^M$; (a) and $z^M \subseteq Z_M$ now characterize $\mathcal{Z}$ as $\mathcal{Z} = \Omega Z(T) \cap z^M$. Applying \ref{lemma3.7} and the fact that $\Omega Z(T) \le Y_M$ we get 
		\[
		\mathcal{Z} = \Omega Z(T) \cap z^M = \Omega Z(T) \cap z^\L \cap Y_M = \Omega Z(T) \cap z^\L \ge \mathcal{Z},
		\]
		so $\mathcal{Z} = \Omega Z(T) \cap z^\L$ and, as $\mathcal{Z} = Y \cap z^m \le Y \cap z^\L = \Omega Z(T) \cap Z_M \cap z^\L \le \mathcal{Z}$, we also find $\mathcal{Z}= Y \cap z^\L$ and therefore have
		\[
		N_\L(T) \le N_\L(\Omega Z(T)) \le N_\L(\mathcal{Z}) = N_\L(Y).
		\]
		\item As seen in (b), for every $x \in N_\L(\mathcal{Z})=N_\L(Y)$ and every $u \in \Z$ the conjugation map $c_x: C_\L(u) \fto C_\L(u^x)$ is well-defined and it sends $Q_u$ to $(Q_u)^x = Q_{u^x}$. In particular, since $\<Z_M,Q_u\> \le C_\L(u)$, conjugation by $x$ is defined also on $Z_M$ and, as $Q_u$ normalizes $Z_M$, we get $[Q_{u^x},Z_M^x] \le Z_M^x$. Since this holds for every $u \in \mathcal{Z}$ and $x$ permutes the elements of $\mathcal{Z}$, it holds in particular for $z^{x\inv}$, so $[Q_z,Z_M^x] \le Z_M^x$.\\
		Suppose that $Q_z \le N_\L(Y)$; since $Y=y^\perp$ we have that $Q_z \le N_\L(Y) \cap M =C_M(y)$, contradicting \cite[Lemma 3.4(a)]{Meier-Stell-Stroth:2012}. Thus $Q_z \not \le N_\L(Y)$ and, since $Y$ is a hyperplane of $Z_M$ contained also in $Z_M^x$ and $Q_z$ normalizes $Z_M^x$, we obtain
		\[
		Z_M = \< Y^{Q_z} \> \le \< (Z_M^x)^{Q_z} \> = Z_M^x
		\]
		hence $Z_M^x = Z_M$. Since $Z_M \norm M$, by \ref{lemma2.2}(c) we conclude $N_\L(Y) \le N_\L(Z_M) \le M^\dagger$.\\
		As $N_\L(Y) \le M^\dagger$, \cite[Lemma 3.4(k)]{Meier-Stell-Stroth:2012} gives $N_\L(Y)= N_{M^\dagger}(Y) =C_{M^\dagger}(y) = FTC_{M^\dagger}(Y_M)$. Using (b) and \cite[Lemma 3.4(e)]{Meier-Stell-Stroth:2012} one gets $O_2(N_\L(T))=T$; (b) and \ref{lemma3.1 - orthogonal module}(c:2) yield instead $C_\L(Y) = TC_{M^\dagger}(Y_M)$.
	\end{enumerate}
\end{proof}

\begin{notation}[3.9]\label{notation3.9}
	Since $FT \le C_\L(y) \not \le M^\dagger$ there exists a subgroup $L \le C_\L(y)$ such that $FT \le L$ and $L \not \le M^\dagger$; choose one with $|L|$ minimal, so that $L \cap M^\dagger$ is the unique maximal subgroup of $L$ containing $FT$. By \cite[Lemma 3.4(c)]{Meier-Stell-Stroth:2012} $T^* \in \Syl_2(FT)$, hence let $T_0 \in \Syl_2(L)$ with $T^* \le T_0$ and set
	\[
	D:= L \cap M^\dagger, \; Z_L := \< \Omega Z(T_0) ^L \>, \; P:= C_L(z), \; P^*:= O^{2'}(P).
	\]
\end{notation}

\begin{lemma}[3.10]\label{lemma3.10}
	The following hold.
	\begin{enumerate}[(a)]
		\item $\<Q_z,L \>$ is not a subgroup of $\L$ and $\coll_2(\<Q_z, L\>) = 1$.
		\item $[Q_z,P] \le Q_z \cap P = C_{Q_z}(y) = Q_z \cap L = Q_z \cap F \le O_2(P)$.
		\item If $t \in Q_z \setminus L$, then $L^t$ is a well-defined subgroup, $t^2 \in L$ and $\coll_2(\<L, L^t \>)=1$.
		\item $O_2(C_\L(y)) \le O_2(L) \le T$.
		\item $L$ is of characteristic $2$. In particular, for $t \in Q_z \setminus L$, $L$ does not normalize $Z_L^t$ and $L^t$ does not normalize $Z_L$.
		\item $\Omega Z(T_0) = \Omega Z(T^*) = \<y, z \>$.
		\item $P =C_L(\Omega Z(T_0))$, so $P^*$ is a point-stabilizer for $L$ on $Y_L$ and $Z_L$.
		\item $Z_L = \<Y^L\>$ and $Y \le C_{Z_L}(T) \le C_{Y_M}(T)$.
		\item $D=FTC_D(Y_M)=FC_D(Y)$, thus $Y$ is a natural $\O_{2n-1}(2)$-module and $Y/\<y\>$ is a natural $\Sp_{2n-2}(2)$-module for $D$.
		\item $F$ is normal in $D$.
		\item If $2n=4$, then $T^* =T_0 \in \Syl_2(L) \cap \Syl_2(C_\L(y))$.
	\end{enumerate}
\end{lemma}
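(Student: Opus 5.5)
The argument will run in the order (a), (c), (d), (e) and then (f)--(k). For (a), I use \cite[Lemmas 3.4(i) and 3.3(e)(i)]{Meier-Stell-Stroth:2012}: since $FT \le L$ and $Q_z = Q$, the partial subgroup $\<Q_z,L\>$ contains $\<FT,Q_z\> = M^\circ T = M$. Put $X := \coll_2(\<Q_z,L\>)$. By Lemma~\ref{lemma:Op-of-part-subgr}, $X$ is a subgroup of $S$ normalized by $M$. If $X \ne 1$, then $X \norm S$ gives $X \in \Delta$ by \ref{lemma:repleteness implies Y in Delta for parabolics}. The argument of \ref{lemma2.2}(c) then shows that $N_\L(X) \in \C_\L$ contains $M$, so $N_\L(X) \le M^\dagger$. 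But $L \le N_\L(X)$, which contradicts $L \not\le M^\dagger$. Hence $X = 1 \notin \Delta$, and the lemma characterising when a partial subgroup is a group shows that $\<Q_z,L\>$ is not a group. Part (b) is bookkeeping:
\begin{itemize}
\item $P = C_L(z) \le C_\L(z) \le N_\L(Q_z)$ by $Q!$, so $[Q_z,P] \le Q_z \cap P$;
\item $z \in Z(Q_z)$ gives $Q_z \cap L = C_{Q_z}(y) = Q_z \cap P$;
\item $C_{Q_z}(y) \le Q \cap F_0 \le F$;
\item $Q_z \cap P$ is a normal $2$-subgroup of $P$, hence lies in $O_2(P)$.
\end{itemize}

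For (c), take $t \in Q_z \setminus L$. By \cite[Lemma 3.4(a),(f)]{Meier-Stell-Stroth:2012} we have $y^t = yz \in S$ and $\<y\>,\<yz\> \in \Delta$. So $c_t$ extends to $C_\L(y) \to C_\L(yz)$ and $L^t$ is defined. Since $y^{Q_z} = \{y,yz\}$, the element $t^2$ fixes $y$, so $t^2 \in C_{Q_z}(y) \le L$; moreover $Q_z = C_{Q_z}(y)\<t\>$. Next set $X := \coll_2(\<L,L^t\>)$. It is normalized by $L \ge C_{Q_z}(y)$, and conjugation by $t$ carries $\<L,L^t\>$ to $\<L^t,L^{t^2}\> = \<L^t,L\>$, so $X^t = X$. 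Thus $X$ is normalized by $\<L,Q_z\>$, and the argument of (a) forces $X = 1$.

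For (d), I use the pushing argument already seen in \ref{lemma3.7}. Let $R \in \{O_2(C_\L(y)), O_2(L)\}$. Then $R \cap N_\L(T)$ is normal in $N_\L(T) \le C_\L(y)$, so by \ref{lemma3.8}(c) it lies in $O_2(N_\L(T)) = T$. Hence $N_{RT}(T) = T$ in the $2$-group $RT$, which forces $R \le T$. In particular $O_2(C_\L(y)) \le T \le L$, and being normal in $L$ it lies in $O_2(L)$. For (e), note that $\<y\> \in \Delta$ and $\L$ has objective characteristic $2$, so $C_\L(y)$ has characteristic $2$. Since $L$ contains $O_2(C_\L(y))$ by (d), the argument of Lemma~\ref{lemma1.6} gives $C_\L(O_2(L)) \le C_{C_\L(y)}(O_2(C_\L(y))) \le O_2(L)$. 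If $L$ normalized $Z_L^t$, then $\<L,L^t\>$ would normalize a nontrivial $2$-subgroup contained in $S$, namely $Z_L^t$. That subgroup lies in $S$ because $Z_L \le \Omega Z(O_2(L)) \le T$ and $t \in S$. This contradicts (c); the symmetric statement follows by conjugating with $t$.

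For (f), first $\<y\> \le \Omega Z(T_0)$ since $T_0 \le L \le C_\L(y)$. Next, $\Omega Z(T_0) \le C_{Y_M}(T) = \Omega Z(T)$, because $\Omega Z(T_0)$ centralizes $O_2(L) \supseteq O_2(C_\L(y))$. Intersecting with $C(T^*)$ and using the structure of $C_{\ov M}(y)$ from \ref{lemma3.1 - orthogonal module}(c) then gives $\Omega Z(T_0) \le \Omega Z(T^*) = \<y,z\>$. The \textbf{main obstacle} is the reverse inclusion $z \in Z(T_0)$, equivalently $T_0 = C_{T_0}(z)$. Here I would argue by contradiction. If $T_0$ moves $z$, then $\<y,z\>$ is normalized by both $T_0$ and $Q_z$ acting as distinct involutions. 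This is the $\Sym(3)$ configuration that was excluded in \ref{lemma3.6}(b), via a conjugate $Q_y$ and the offender count \cite[Lemma A.40]{Meier-Stell-Stroth:2012}, and I would redo that counting with $L$ in place of $N_\L(T)$.

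The remaining parts should then follow routinely.
\begin{itemize}
\item (g) comes from (f), since $C_L(\Omega Z(T_0)) = C_L(z) = P$ because $y$ is central in $L$; Sylow/Frattini arguments then give $P^*$ as a point stabilizer on $Y_L$ and $Z_L$.
\item (h) uses $Y = C_{Z_M}(T) = \<\Z\>$ from \ref{lemma3.8}(a) and \ref{lemma3.1 - orthogonal module}(c:3), with $\Z \subseteq z^L$ by (f)--(g).
\item (i) and (j) follow from $D = L \cap M^\dagger \le C_{M^\dagger}(y) = FTC_{M^\dagger}(Y_M)$ (\ref{lemma3.8}(c)) together with \ref{lemma3.1 - orthogonal module}(c:1).
\item (k) is immediate from \ref{lemma3.6}(a).
\end{itemize}
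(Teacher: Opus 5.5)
There are two genuine gaps: one in (d) and one in (f). Your treatment of (a)--(c), (e) and (g)--(k) follows the paper. In (a) you prove $\coll_2(\<Q_z,L\>)=1$ first and then apply the group criterion, which tacitly needs $1\notin\Delta$; that is harmless here.

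\textbf{The gap in (d).} Your claim that $R\cap N_\L(T)$ is normal in $N_\L(T)$ is correct for $R=O_2(C_\L(y))$, but not for $R=O_2(L)$. Lemma~\ref{lemma3.8}(c) only gives $N_\L(T)\le FTC_{M^\dagger}(Y_M)$, and $C_{M^\dagger}(Y_M)$ need not lie in $L$. So $N_{O_2(L)}(T)$ is only known to be normal in $N_L(T)$. Consequently $O_2(L)\le T$ is not yet proved, and you use it in (e) to put $Z_L^t$ inside $S$ and again in (f).

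The paper closes this by passing to $\ov{FT}$:
\begin{itemize}
\item $N_{O_2(L)}(T)$ is normalized by $N_{FT}(T)$, which covers $\ov{FT}$ by a Frattini argument.
\item Hence its image lies in $O_2(\ov{FT})=\ov T$ by \cite[Lemma 3.4(c)]{Meier-Stell-Stroth:2012}, so $N_{O_2(L)}(T)\le TC_{M^\dagger}(Y_M)$.
\item $T$ is a Sylow $2$-subgroup of $TC_{M^\dagger}(Y_M)$ and normalizes $N_{O_2(L)}(T)$, so $N_{O_2(L)}(T)\le T$, and then $O_2(L)\le T$.
\end{itemize}

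\textbf{The gap in (f).} This is the more serious one: the step you flag yourself, $z\in Z(T_0)$, is not established, and the route you sketch fails for two reasons. First, if $z\notin Z(T_0)$ then $\Omega Z(T_0)=\<y\>$, and nothing forces $T_0$ to normalize $\<y,z\>$. Second, the counting in Lemma~\ref{lemma3.6}(b) rested on $Y_M=O_2(M)=C_M(Y_M)$. That was obtained there from the Baumann argument and Lemma~\ref{lemma3.5}, and it is not available now.

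The missing ingredient is Lemma~\ref{lemma3.7}. It says $z^\L\cap Y_M=z^M$, which consists of singular vectors, while $y$ and $yz$ are non-singular. So the only $\L$-conjugate of $z$ lying in $\<y,z\>$ is $z$ itself.

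The paper produces such a conjugate inside $\Omega Z(T_0)$ as follows. Pick $T_1\in\Syl_2(C_\L(y))$ containing $T_0$, and $x\in\L$ with $T_1^x=C_S(y^x)$. Then $\<z\>=\Omega Z(S)$ centralizes $O_2(C_\L(y^x))\le T^x\le S$. Since $C_\L(y^x)$ has characteristic $2$, $z$ lies in $O_2(C_\L(y^x))$ and hence in $\Omega Z(T_0^x)$. Pulling back gives $z^{x\inv}\in\Omega Z(T_0)\le\<y,z\>$, so $z^{x\inv}=z$.

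Alternatively, once (d) is repaired, your upper-bound argument applied to $X:=C_{T_0}(z)\ge T^*$ gives $z\in\Omega Z(X)\le\<y,z\>$. Any $g\in N_{T_0}(X)\setminus X$ would then send $z$ to $y$ or $yz$, contradicting Lemma~\ref{lemma3.7}. Hence $X=T_0$.
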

\begin{proof}
	\begin{enumerate}[(a)]
		\item By \cite[Lemma 3.4(b) and (i)]{Meier-Stell-Stroth:2012} one has $M = M^\circ T = \<Q_z,TF \> \le \<Q_z, L \>$, hence $S \le \<Q_z, L \>$. Suppose that it is a subgroup of $\L$, then there exists an object $R \in \Delta$ such that $\<Q_z,L\> \le N_\L(R)$, implying also $R \le \coll_2(\<Q_z,L\>)$ and $M \le N_\L(R)$. Since $\M_\L(M) = \{M^\dagger \}$ we have $N_\L(R) \le M^\dagger$, so $L \le N_\L(R) \le M^\dagger$, a contradiction. Thus $\<Q_z, L\>$ is not a subgroup.\\
		Since $\coll_2(\<Q_z,L\>) \le S \le \<Q_z,L\>$ we get $\coll_2(\<Q_z,L\>) \norm S$; by $Q$-repleteness it is either $\coll_2(\<Q_z,L\>) \in \Delta$ or $\coll_2(\<Q_z,L\>)=1$. The first case implies that $\<Q_z,L\>$ is a subgroup of $\L$, a contradiction, thus the second case holds.
		\item Since $Q_z,P \le C_\L(z)$, $[Q_z,P]$ is well-defined. Now the proof is precisely as in \cite[Lemma 3.10(b)]{Meier-Stell-Stroth:2012}; we report the argument, as it is very simple and short.\\
		The equalities $Q_z \cap F = Q_z \cap L = Q_z \cap P$ are trivial; since $Q_z \cap L \le C_{Q_z}(y) \le Q_z \cap F$, the previous groups are also equal to $C_{Q_z}(y)$.\\
		Since $Q_z=Q \norm C_\L(z)$, it is normalized by $P$; on the other hand, $Q!$ gives $P \le C_\L(z) \le N_\L(Q_z)$, hence $[Q_z,P] \le Q_z \cap P$. In particular, $Q_z \norm Q_zP \le C_\L(z)$, so $Q_z \cap P \norm P$. 
		\item As $t \in Q_z \le S$, $y^t \in S$; thus conjugation by $t$ extends to $C_\L(y) \fto C_\L(y^t)$; since $L \le C_\L(y)$, $L^t$ is a well-defined subgroup of $\L$.\\
		By (b) and \cite[Lemma 3.4(a)]{Meier-Stell-Stroth:2012} $|Q_z/(Q_z \cap L)|=2$, whence $t^2 \in L$ and, as partial subgroups of $\L$,
		\[
		\<L,Q_z\> = \<L,Q_z \cap L,t \> = \< L,t \>.
		\]
		Assume there exists a subgroup $R \le S$ normalized by $\<L,L^t\>$; then $[R,L] \le R$, $[R,L^t]\le R$ and conjugation by $t$ yields
		\[
		[R^t,L] \le R^t \qquad \text{and} \qquad [R^t,L^t] \le R^t.
		\]
		Since $t^2 \in L$, we conclude that $\<R,R^t\> \le S$ is normalized by $\<L,t\>$, hence $R=1$ by (a). In particular, $\coll_2(\<L,L^t\>)=1$.
		\item Set $U := O_2(L)O_2(C_\L(y)) \le C_\L(y)$. Then $FT$ normalizes $U$, by \ref{lemma3.8}(b) $N_\L(T) \le FTC_{M^\dagger}(Y_M)$ and by \cite[Lemma 3.4(c)]{Meier-Stell-Stroth:2012} $O_2(\ov{FT}) = \ov{T}$. We conclude that $\ov{N_U(T)} \le O_2(\ov{FT}) = \ov{T}$, so $N_U(T) \le TC_{M^\dagger}(Y_M)$. Since $T$ normalizes $N_U(T)$ and is a Sylow $2$-subgroup of $TC_{M^\dagger}(Y_M)$, we get $T \le N_{TC_{M^\dagger}(Y_M)}(N_U(T))$ and, therefore, $N_U(T) \le T$. Thus $N_{UT}(T) = N_U(T)T = T$, so $U \le T \le S \cap L$ and we conclude $O_2(C_\L(y)) \le O_2(L) \le U \le T$.
		\item By (d) and since $C_\L(y)$ has characteristic $2$, $Z_L \le C_\L(O_2(L)) \le C_\L(O_2(C_\L(y))) \le O_2(C_\L(y)) \le O_2(L)$, hence $L$ has characteristic $2$ and it follows that $Z_L$ is an elementary abelian $2$-group and the same clearly holds then for $Z_L^t$.\\ Note that since $t^2 \in L$, $L$ normalizes $Z_L^t$ if and only if $L^t$ normalizes $Z_L$; suppose that $L$ normalizes $Z_L^t$, then $\<Z_L, Z_L^t \>$ is a $2$-group normalized by $\<L,L^t\>$. As $Z_L \le T \le S$ and also $t \in S$, $1 \ne \< Z_L, Z_L^t\> \le \coll_2(\<L,L^t\>)$, a contradiction to (c).
		\item For a suitable choice of $T_1$ we get the following chain of inclusions:
		\[
		O_2(C_\L(y)) \le T \le T^* \le T_0 \le T_1 \in \Syl_2(C_\L(y)),
		\]
		where the chain stays inside $S$ until $T^*$. By \cite[Lemma 2.9]{Chermak:2022:I} we may conjugate the entire chain inside $S$; in detail, there exists $x \in \L$ such that $\<y^{x}\> \in \Delta$ and $C_S(y^{x}) \in \Syl_2(C_\L(y^{x}))$, hence conjugating the chain above,
		\[
		O_2(C_\L(y^{x})) \le T^{x} \le (T^*)^{x} \le T_0^{x} \le T_1^{x} \in \Syl_2(C_\L(y^{x})),
		\]
		with $T_1^{x} = C_S(y^{x})$. We therefore get $\Omega Z (S) \, \Omega Z(T_0^x) \le C_\L(O_2(C_\L(y^{x}))) \le O_2(C_\L(y^{x})) \le (T^*)^x \le T_0^x \le S$. Now \cite[Lemma 3.4(f) and (g)]{Meier-Stell-Stroth:2012} yield $\Omega Z(T^*) = \< y, z \>$ and $\Omega Z (S) = \< z \> \in \Delta$, thus
		\[
		\<z\> = \Omega Z(S) \le \Omega Z(T_0^x) \le \Omega Z ((T^*)^x) = \< y^x, z^x \>
		\]          
		and the entire line is conjugable by $x\inv$. As $y^x \in \Omega Z(T_0^x)$, we have
		\[
		\< z^{x\inv}, y \> \le \Omega Z (T_0) \le \<y,z \>;
		\]
		since $z^\L \cap Y_M = z^M$, $z^{x\inv} \ne y$, so $\Omega Z(T_0) = \<z,y\>$.
		\item $C_L(\Omega Z(T_0)) = C_L(z) = P$ follows trivially from (f) and since $L \le C_\L(y)$.
		\item Let $\Z$ be the set on non-trivial singular vectors of $Y$, so $Y = \<\Z\>$ by \ref{lemma3.1 - orthogonal module}(c:3) and $C_{\ov{M}}(y)$ is transitive on $\Z$. Since \cite[Lemma 3.4(b)]{Meier-Stell-Stroth:2012} shows $C_{\ov{M}}(y) = \ov{FT}$ we obtain $Y=\< z^F \> = \< \Omega Z(T_0)^F \>$, hence $Z_L = \<\Omega Z(T_0)^L \> = \< (z^F)^L\> = \<Y^L\>$.\\
		Now $Z_L \le O_2(L) \le T$ by (d) and (e) and \ref{lemma3.8}(a) gives $Y \le C_{Z_L}(T) \le \Omega Z(T) = C_{Y_M}(T)$.
		\item Note that $FT \le D = M^\dagger \cap L \le C_{M^\dagger}(y)$; since by \cite[Lemma 3.4(k)]{Meier-Stell-Stroth:2012} $C_{M^\dagger}(y)=FTC_{M^\dagger}(Y_M)$ we have
		\[
		D=FT(D \cap C_{M^\dagger}(Y_M)) = FTC_D(Y_M).
		\]
		Recalling that $T=C_S(Y)$ we get $D = FTC_D(Y_M) \le FC_D(Y) \le D$, so all equalities are proven.\\
		Since \cite[Lemma 3.4(c)]{Meier-Stell-Stroth:2012} shows that $Y$ is a natural $\O_{2n-1}(2)$-module for $FT$ and $Y/\<y\>$ is a natural $\Sp_{2n}(2)$-module for $FT$, we conclude that the same holds for $D$.
		\item Again \cite[Lemma 3.4(k)]{Meier-Stell-Stroth:2012} yields $F \norm C_{M^\dagger}(y)$; since $F \le D \le C_{M^\dagger}(y)$, we also get $F \norm D$.
		\item As $2n=4$, \ref{lemma3.6}(a) gives $T^* \in \Syl_2(C_\L(y))$; however $T^* \le T_0 \in \Syl_2(L)$ and $L \le C_\L(y)$, thus $T^* = T_0$.
	\end{enumerate}
\end{proof}

\begin{lemma}[3.11]\label{lemma3.11}
	$C_L(Z_L) = O_2(L) = C_L(Y_L)$ and $Y_L = \Omega Z(O_2(L))$
\end{lemma}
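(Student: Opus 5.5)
The plan is to reduce everything to the single inclusion $C_L(Z_L)\le O_2(L)$ and then obtain the remaining equalities formally. By \ref{lemma3.10}(e), $L$ is a group of characteristic $2$ with $T_0\in\Syl_2(L)$. Lemma~\ref{lemma:properties groups of char p}(e), applied inside the group $L$, then shows that $Z_L$ is $2$-reduced and satisfies $\Omega Z(T_0)\le Z_L\le Y_L$. Lemma~\ref{lemma:properties of parabolics in L}(g), read for the group $L$, gives $Y_L\le\Omega Z(O_2(L))$. Together these yield
\[
O_2(L)\le C_L(Y_L)\le C_L(Z_L).
\]
So once $C_L(Z_L)\le O_2(L)$ is known, the first chain of equalities in the statement follows.

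For the last claim, put $W:=\Omega Z(O_2(L))$. Then $Z_L\le Y_L\le W$, and hence $C_L(W)\le C_L(Z_L)=O_2(L)\le C_L(W)$. So $L/C_L(W)=L/O_2(L)$ has trivial $O_2$. This means $W$ is a $2$-reduced elementary abelian normal subgroup of $L$, so $W\le Y_L$ and therefore $Y_L=\Omega Z(O_2(L))$.

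The main step is proving $C:=C_L(Z_L)\le O_2(L)$. Since $C\norm L$ and $L$ has characteristic $2$, $C$ has characteristic $2$ by Lemma~\ref{lemma:properties groups of char p}(c). It therefore suffices to show that $K:=O^2(C)$ is trivial, or at least a $2$-group. The argument would run as follows.
\begin{enumerate}[(1)]
\item Since $z\in\Omega Z(T_0)\le Z_L$ by \ref{lemma3.10}(f), we have $C\le C_\L(z)$. The $Q!$ property then gives $C\le N_\L(Q_z)$, so $Q_z$ acts on $C$.
\item As in \ref{lemma3.10}(b), $[Q_z,C]\le Q_z\cap L\le O_2(P)$. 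It follows that $[K,Q_z]$ is a $2$-group contained in $L$. Since $K=O^2(K)$, a coprime action argument should show that $Q_z$ normalizes $K$. More concretely, $K=[K,K]$-type generation by $2'$-elements, together with the fact that $Q_zK$ is a group inside $C_\L(z)$ with $Q_z\norm Q_zK$, should force $K=O^2(Q_zK\cap L)$, which is $Q_z$-invariant.
\item Now pick $t\in Q_z\setminus L$. The element $t$ is conjugable on $C_\L(y)$ by \ref{lemma3.10}(c). From step (2), $K=K^t\le L\cap L^t$ is normalized by both $L$ and $L^t$, and hence by $\<L,L^t\>$.
\item Since $K\le C_\L(Z_L)$, the subgroup $Z_L\cap Z(S)$ (or $\Omega Z(S)=\<z\>$) together with its $t$-conjugate lies in the centre of $O_2$ of the group $N_\L(K)$, provided $K\ne1$. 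I would turn this into a nontrivial $2$-subgroup of $S$ normalized by $\<L,L^t\>$, contradicting \ref{lemma3.10}(c). An alternative route is to use $C\le C_L(Y)\le TC_{M^\dagger}(Y_M)$ from \ref{lemma3.8}(c), together with $O_2(M)\in\Syl_2(C_\L(Y_M))$ from \ref{lemma2.2}(f). This would place $C$ inside $D$, where the structure is known by \ref{lemma3.10}(i),(j); there the $2'$-part of $C$ should be forced into $C_D(Y_M)$, which is $2$-closed modulo its centralizer action.
\end{enumerate}

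The main obstacle is step (4): turning ``$K$ is normalized by $\<L,L^t\>$'' into an honest subgroup of $S$ normalized by that partial subgroup. For this I would try first to exhibit $\coll_2(\<L,L^t\>)\ge O_2(N_\L(K))\cap S\ne1$, using $Q$-repleteness to place $N_\L(K)$ as a local subgroup.
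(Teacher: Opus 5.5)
Your reduction to the single inclusion $C_L(Z_L)\le O_2(L)$ and your derivation of $Y_L=\Omega Z(O_2(L))$ are correct and match the paper. The main step, however, is not proved. You leave it open at exactly the point where the argument must close (step (4)), and the routes you sketch there do not work. Since $K=O^2(C)$ is not a $2$-subgroup of $S$, $Q$-repleteness says nothing about it. $N_\L(K)$ is not a local subgroup and need not even be a subgroup of $\L$. Nothing places $z$ or $Z_L\cap Z(S)$ in the centre of some $O_2(N_\L(K))$.

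The missing idea is to apply characteristic $2$ to $K$ itself, as you already did for $C$. Since $K$ is characteristic in $C\norm L$, it is normal in $L$ and hence of characteristic $2$ by Lemma~\ref{lemma:properties groups of char p}(c). So $K\ne1$ would force $O_2(K)\ne1$. This $O_2(K)$ is characteristic in $K$, hence normalized by $L$ and (by your step (2)) by $Q_z$. Moreover $O_2(K)\le O_2(L)\le T\le S$ by \ref{lemma3.10}(d). It would therefore be a non-trivial subgroup of $S$ normalized by $\<Q_z,L\>$, contradicting $\coll_2(\<Q_z,L\>)=1$ from \ref{lemma3.10}(a), or \ref{lemma3.10}(c) via your step (3). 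Hence $K=1$, so $C_L(Z_L)$ is a normal $2$-subgroup of $L$ and lies in $O_2(L)$; this is how the paper concludes.

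Step (2) reaches the right conclusion, but the stated reason is not one. The subgroup $L$ is not $Q_z$-invariant: $L^t\neq L$ for $t\in Q_z\setminus L$, by \ref{lemma3.10}(a). So calling $Q_zK\cap L$ ``$Q_z$-invariant'' needs proof. Also, in step (1) it is $C$ that normalizes $Q_z$, not conversely. No coprime action is needed. For $q\in Q_z$, \ref{lemma3.10}(b) and $C\le P$ give $[K,q]\subseteq Q_z\cap L$, so $K^q\subseteq K(Q_z\cap L)$. The latter is a group, because $Q_z\cap L\le L$ normalizes $K$, and $K$ is normal in it with $2$-group quotient. Hence $K^q=O^2(K^q)\le O^2(K(Q_z\cap L))=K$.

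The paper gets the same $Q_z$-invariance differently. It uses \ref{lemma3.8}(c) to put $O^2(C_L(Z_L))$ inside $C_{M^\dagger}(Y_M)$, then Lemma~\ref{lemma: L containing Q}(a) to get $[Q_z,K]\le O_2(M^\dagger)\le L$. Your route through \ref{lemma3.10}(b) is shorter here; the missing piece is only the closing $O_2(K)$ argument above.
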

\begin{proof}
	The argument is essentially the same as in \cite[Lemma 3.11]{Meier-Stell-Stroth:2012}, which we here reproduce.\\
	By \ref{lemma3.10}(h) $Y \le Z_L$ and by \ref{lemma3.8}(c) $C_L(Z_L) \le C_L/Y) \le T C_{M^\dagger}(Y_M)$. Since $T$ normalizes $C_{M^\dagger}(Y_M)$ we see that $C_L(Z_L)C_{M^\dagger}(Y_M) / C_{M^\dagger}(Y_M)$ is a $2$-group, thus $O^2(C_L(Z_L)) \le O^2(C_L(Z_L)C_{M^\dagger}(Y_M)) \le C_{M^\dagger}(Y_M)$.\\
	By \ref{lemma: L containing Q}(a) $[M^\circ, C_{M^\dagger}(Y_M)] \le O_2(M^\circ) \le O_2(M^\dagger)$, hence $[Q_z, O^2(C_L(Z_L))] \le O_2(M^\dagger)$; that is, $Q_z$ normalizes $O^2(C_L(Z_L))O_2(M^\dagger)$. Since $O^2(M^\dagger) \le T \le L$, $O^2(C_L(Z_L)) O_2(M^\dagger) = O^2(C_L(Z_L))$, a subgroup of $L$ normalized by $Q_z$ and by $L$. Then $O_2(O^2(C_L(Z_L))) \le O_2(L) \le S$ by \ref{lemma3.10}(d) and, since $O_2(\<Q_z,L\>)=1$ by \ref{lemma3.10}(a), we get $O_2(O^2(C_L(Z_L)))=1$. Since $L$ is of characteristic $2$, so is $O^2(C_L(Z_L))$, hence $O^2(C_L(Z_L))=1$ and thus $C_L(Z_L) = O_2(L)$.\\
	Set now $U:=\Omega Z(O_2(L))$ and note that $Z_L \le Y_L \le U$, so
	\[
	O_2(L) \le C_L(U) \le C_L(Y_L) \le C_L(Z_L) = O_2(L);
	\]
	we deduce that $O_2(L) = C_L(U)$, so $U$ is $2$-reduced for $L$, showing that $U = Y_L$.
\end{proof}

\begin{remark}\label{remark-to-3.10-3.14}
	$\<Z_L, Z_L^t \> =Z_L Z_L^t \not \norm L$ and $\<O_2(L),O_2(L^t) \> = O_2(L)O_2(L^t) \le S$.
\end{remark}
\begin{proof}
	By \ref{lemma3.10}(f) and (g) $z \in Z_L \cap Z_L^t$, so $\<Z_L, Z_L^t\> \le C_L(z) = P$. Hence, always by (g), $Z_L^t$ centralizes $\Omega Z(T_0)$; if $v \in \Omega Z(T_0)$ and $l \in L$, for any $x \in Z_L^t$ one has
	\[
	(v^l)^{x}=(l\inv v l)^{x} = x\inv l\inv v l x= (x\inv l\inv x) x\inv v x (x\inv l x) = (l\inv)^x v l^x \in Z_L.
	\]
	Thus $Z_L^t$ normalizes $Z_L$ and, conjugating by $t$, also the converse is true. In particular, $\<Z_L, Z_L^t \> =Z_L Z_L^t$. Assume that $Z_L Z_L^t \norm L$; since $t^2 \in L$ also $t$ normalizes $Z_LZ_L^t$, so this is a non-trivial subgroup of $S$ normalized by $\<L,L^t\>$, a contradiction to \ref{lemma3.10}(c).\\
	By \ref{lemma3.11} and again \ref{lemma3.10}(g) we obtain $O_2(L) \norm P$; since $t \in Q_z$ and $Q_z$ normalizes $P$ by \ref{lemma3.10}(b), $O_2(L^t) \norm P^t = P$, thus $O_2(L)O_2(L^t)$ is a subgroup of $P$. Moreover, $O_2(L) \le T \le S$ by Lemma~\ref{lemma3.10}(e); since $t \in Q_z \le S$ and conjugation by $t$ is a group homomorphism taking shape
	\begin{center}
		\begin{tikzcd}
			c_t \colon &[-20pt] C_\L(y) \ar[r] & C_\L(y^t) \\[-10pt]
					& O_2(L) \ar[u,hook] \ar[r] & O_2(L)^t \ar[u,hook]
		\end{tikzcd}
	\end{center}
	we get also $O_2(L^t) = O_2(L)^t \le S$.
\end{proof}

Until now we have followed consistently the notation and lemmas subdivision of \cite[Chapter 3]{Meier-Stell-Stroth:2012}; we were often able to use the same arguments, but needed a number of adjustments and additional checkings to be sure that all the operations appearing in \cite{Meier-Stell-Stroth:2012} are possible and meaningful also in our locality $\L$. We have then identified certain subgroups of $\L$ which are analogous to those in the group $G$ in \cite{Meier-Stell-Stroth:2012}; the results \cite[Lemma 3.12 to 3.27]{Meier-Stell-Stroth:2012}, on the other hand, almost exclusively involve such subgroups, with arguments most of the time taking place within $C_G(y)$ or $M^\dagger$. We know the latter to be a subgroup of $\L$ as well; moreover, $C_\L(y)$ also is a group, since our hypothesis imply that $\< y \> \in \Delta$.\\
The following remarks apply.

\begin{itemize}
	\item We have already observed at the beginning of the current section that $B(T) \in \Delta$; also $Y, \Omega Z(T_0) \in \Delta$ (for example, $y$ is contained in both), hence $N_\L(B(T))$, $C_\L(Y)$ and $C_\L(\Omega Z(T_0))$ are subgroups of $\L$. This covers the occurrences of $N_G(B(T))$ in \cite[Lemma 3.14]{Meier-Stell-Stroth:2012}, of $C_G(Y)$ in \cite[Lemma 3.19]{Meier-Stell-Stroth:2012} and of $C_G(\Omega_1 Z(T_0))$ in \cite[Lemma 3.12]{Meier-Stell-Stroth:2012}.
	\item $O_2(L)O_2(L^t)$ defines a subgroup of $\L$ since both $O_2(L)$ and $O_2(L^t)$ are subgroups of $S$ by Remark~\ref{remark-to-3.10-3.14}.
	\item $G_0 := \< L,L^t\>$, defined so in \cite[Notation 3.24]{Meier-Stell-Stroth:2012}, is not a subgroup in the locality $\L$; we have however proven that $\coll_2(G_0) = 1$ in Lemma~\ref{lemma3.10}(c). This is enough for its application in \cite[Lemma 3.25]{Meier-Stell-Stroth:2012}.
\end{itemize}

We record the following notation in use as well as few results important for the final contradiction.

\begin{enumerate}[label=(\alph*)]
	\item Set $\wde{L} := L/O_2(L)$; by \cite[Lemma 3.14(d)]{Meier-Stell-Stroth:2012} there is a subgroup $A \le O_2(P^*)$ such that $A$ is a minimal, non-trivial quadratic offender on $Y_L$. One then defines $H := \<A^L\>O_2(L)$ and, if $X \le Y_L$, sets
	\[
	X^+ := \frac{X C_{Y_L}(H)}{C_{Y_L}(H)}.
	\]
	\item By \cite[Lemma 2.23(a)]{Meier-Stell-Stroth:2012} $L=H$, $\wde{L} \in \{\SL_3(2), \Sp_4(2), G_2(2)\}$ and $Z_L^+$ is a corresponding natural module.
	\item We have $O_2(L)O_2(L^t) = O_2(P)$, $Z_L = Y_L$ and $Y_L^t \le O_2(L)$, shown in \cite[Lemma 3.26(a),(e) and 3.27]{Meier-Stell-Stroth:2012}.
\end{enumerate}

It is then possible to produce a final contradiction in the same way as in \cite[3.28]{Meier-Stell-Stroth:2012}; the argument once again relies on the subgroups $L$ and $L^t$, regardless whether embedded in a group $G$ or in our locality $\L$.
Set $R := O_2(L)O_2(L^t)$. The contradiction is produced in an argument involving the free amalgamated product $G^*$ of $L$ and $L^t$ over $R$; in other words, $G^*$ is the pushout of the span $L \leftarrow R \rightarrow L^t$ (where the arrows are inclusions) in the category $\mathrm{Grp}$ of groups. Setting $L_1$ and $L_2$ for the copies of $L$, respectively $L^t$, in $G^*$, we can then identify $R$ with $L_1 \cap L_2$. Just as in \cite{Meier-Stell-Stroth:2012}, one shows that $G^*$ satisfies \cite[Hypothesis 1]{Parmeggiani:2009II} for $p=2$; the conclusion of \cite[Main Theorem]{Parmeggiani:2009II} then produces a contradiction to the description of $\wde{L}$ in (b).\\

Recall that conjugation by $t$ induces a group-isomorphism as below
\begin{center}
	\begin{tikzcd}
		c_t \colon 	&[-20pt] C_\L(y) \ar[r] 	& C_\L(y^t) \\[-10pt]
					&  L \ar[u,hook] \ar[r]		& L^t \ar[u,hook]
	\end{tikzcd}
\end{center}
For $i \in \{1,2\}$ set
\[
	Z_i := \Omega Z(O_2(L_i)), \qquad Z_i^+ := Z_i/C_{Z_i}(L_i), \qquad \wde{L}_i := L_i / O_2(L_i).
\]
Moreover, in $L_i$ the point stabilizer (for $S_i \in \Syl_2(L_i)$) is defined for the $L_i$-module $\Omega (O_2(L_i))$. Since $L_i$ is of characteristic $2$, $C_{\Omega (O_2(L_i))}(S_i) = \Omega Z(S_i)$, so
\[
	P_{L_i}(S_i) = O^{2'}(C_L(\Omega Z(S_i)))
\] 

We now check one by one \cite[Hypothesis 1]{Parmeggiani:2009II}.
\begin{enumerate}[label=(\arabic*)]
	\item $R$ is a $2$-group with $C_{L_i}(Z_i) \le R$.
	\begin{proof}
		We have $R = O_2(L)O_2(L^t) \le S$ by Remark~\ref{remark-to-3.10-3.14}, thus it is a $2$-group.\\
		By Lemma~\ref{lemma3.11} $C_L(Z_L) = O_2(L) \le R$. Now always Lemma~\ref{lemma3.11} together with \cite[Lemma 3.36]{Meier-Stell-Stroth:2012} give $Z_L = Y_L = \Omega Z(O_2(L)) = Z_1$, hence (1) holds.
	\end{proof}
	\item $\wde{L}_i \in \{\SL_{n_i}(2^k), \Sp_{2n_i}(2^k), G_2(2^k)\}$, $k \ge 1$, and $Z_i^+$ is a corresponding natural module for $\wde{L}_i$.
	\begin{proof}
		\cite[Lemma 3.23]{Meier-Stell-Stroth:2012}(a) actually gives a smaller set of possibilities: $\wde{L} \in \{\SL_3(2), \Sp_4(2), G_2(2)\}$. It also shows that $Z_L^+$ is a corresponding natural module, thus (2) holds.
	\end{proof}
	\item There is $S_i \in \Syl_2(L_i)$ such that $R \norm P_{L_i}(S_i)$ and either $R = O_2(P_{L_i}(S_i))$ or $\wde{L}_i \cong G_2(2^k)$ and $\wde{R}$ is elementary abelian of order $2^{3k}$.
	\begin{proof}
		By \cite[Lemma 3.14(c)]{Meier-Stell-Stroth:2012} $R \norm P$, hence $R \norm O^{2'}(P)=P^*$. In Lemma~\ref{lemma3.10}(g) it is shown that $P^*$ is the point stabilizer of $L$ for $T_0 \in \Syl_p(L)$.\\
		Since $R = O_2(L)O_2(L^t) = O_2(P)$ we see that $R=O_2(P^*)$, proving (3).
	\end{proof}
	\item $Z_1Z_2 \le O_2(L_i)$, but it is not normal in $L_i$.
	\begin{proof}
		By \cite[Lemma 3.27]{Meier-Stell-Stroth:2012} $Z_L^t \le O_2(L)$, hence $Z_L Z_L^t \le O_2(L)$. Since $t$ has order $2$ it conjugates $L$ to $L^t$ and viceversa, hence $Z_1Z_2 \le O_2(L_i)$ for both $i \in \{1,2\}$. Remark~\ref{remark-to-3.10-3.14} gives $Z_LZ_L^t \not \norm L$, which by the same argument on $t$ is sufficient to prove (4).
	\end{proof}
	\item No subgroup $1 \ne U \le R$ is normal in $G^*$.
	\begin{proof}
		If $U \le R$ is normal in $G^*$, then $U \norm L_1$ and $U \norm L_2$. By Remark~\ref{remark-to-3.10-3.14} $R \le S$, thus (under the identification of $L_1$ with $L$ and $L_2$ with $L^t$) $U \le \coll_2(\<L,L^t\>) = 1$, where the last equality follows from Lemma~\ref{lemma3.10}(c).
	\end{proof}
\end{enumerate}

We may then apply \cite[Main Theorem]{Parmeggiani:2009II}, which yields the following conclusion: $\wde{L}_i \cong \SL_n(q)$, where $q$ is a power of the prime $p$, and
\begin{itemize}
	\item[(i)] either $p=3$ and $n=2$,
	\item[(ii)] or $q=p=2$ and $n=4$.
\end{itemize}

Case (i) is to exclude since $p=2$ in our case, but (ii) now contradicts the conclusion of \cite[Lemma 3.23(a)]{Meier-Stell-Stroth:2012}, reported in point (b) above, thus concluding a proof of Theorem~\ref{theorem CL}.

\begin{remark}
	The property we rely on for applying the same argument as in \cite{Meier-Stell-Stroth:2012} also to our locality $\L$ is essentially  the following. The span of groups
	\begin{equation*}
		\label{eq:pushout span for orthogonal case}
		\tag{$\dagger$}
		\begin{tikzcd}[row sep=tiny]
				& L \\
			R \ar[ru,hook] \ar[rd,hook]		\\[-5pt]
				& L^t
		\end{tikzcd}
	\end{equation*}
	may be mapped both in subgroups of the group $G$, as in \cite{Meier-Stell-Stroth:2012}, and in subgroups of $\L$, namely $L$, $L^t$ and $R \le L \cap L^t$. One can then compute the pushout $\mathcal{E}$ of such span in the category of partial groups ($\mathcal{E}$ exists by \cite[Theorem A]{Salati:2023}). Since $L$ and $L^t$ are groups and all the maps in the span are injective, $\mathcal{E}$ has, as underlying set, the pushout of the span taken in the category $\mathrm{Set}$ of sets, that is $(L \sqcup L^t) / \sim$, for $\sim$ the equivalence relation identifying elements coming from $R$. We refer the reader to \cite{Salati:2023} or to \cite{Hackney-Lynd:2025} for the computation of the pushout.\\
	It is now clear that $\mathcal{E}$ embeds then in the pushout $G^*$ taken in the category of groups; actually, $G^*$ is the fundamental group of $\mathcal{E}$, when seen as a simplicial set according to \cite{Hackney-Lynd:2025}. In other words, our locality $\L$ contains sufficient information to build the group $G^*$ with the same properties as in \cite{Meier-Stell-Stroth:2012}, thus yielding the same contradiction. 
\end{remark}

\section{The Symmetric Case}
\label{symm case}

We now proceed with the analysis of the first of the five cases in which the problem is split. This is the easiest case in \cite{Meier-Stell-Stroth:2012} and one of the easiest for our locality $\L$;  indeed, the arguments here almost perfectly overlap. The main issue arising within $\L$ is related to conjugates of the large subgroup $Q$: if in \cite{Meier-Stell-Stroth:2012} the authors are able to work with all such conjugates, we are forced here to cleverly choose a subset of $\L$-conjugates of $Q$, large enough to retain sufficient flexibility but small enough to also have sufficiently many conjugation maps among them. This issue will present itself also in other cases, as already explained in the introduction.\\

Recall that in \cite{Meier-Stell-Stroth:2012} an abelian $p$-subgroup of a group $G$ is called symmetric if there exists an element $g \in G$ such that $1 \ne [A, A^g] \le A \cap A^g$. The subgroups $A_1$ and $A_2$ are not required to be lying in a same Sylow $p$-subgroup, but this is a necessary and wanted hypothesis when dealing with a fusion system, e.g. a locality.\\
Nonetheless, the properties that are of interest to us are related to choices of symmetric subgroups such that at least one element of the pair is an elementary abelian, $p$-reduced, normal subgroup $Y \norm M$; then \cite[Lemma 4.2]{Meier-Stell-Stroth:2012} shows that there exists an element $u \in G$ such that $YY^u \le S \cap S^u$ and $1 \ne [Y,Y^u] \le Y \cap Y^u$. This justifies our definition of symmetric subgroups of a locality $\L$ given in \ref{def:symm-tall in a locality}.\\
The argument then follows the same lines as in the work of Meierfrankenfeld, Stellmacher and Stroth: in \cite[Lemma 4.9 (c)]{Meier-Stell-Stroth:2012} it is shown that either $Y^u$ or $Y^{u\inv}$ is a non-trivial quadratic offender on $Y$. After possibly replacing $(Y,Y^u)$ with $(Y^{u\inv},Y)$ one can then apply \cite[$Q!FF$-Module Theorem]{MS:2012a} to obtain a list of possible modules for $Y$.\\
We need to show that a similar replacement of the pair $(Y,Y^u)$ is possible in the locality $\L$, thus allowing us to obtain the same list of possible modules for $Y$. One proves in this way a result analogous to \cite[Theorem D]{Meier-Stell-Stroth:2012}.

\begin{maintheorem}{D$\L$}\label{theorem DL}
	Let $(\L,\Delta,S)$ be a $\K_p$-locality with a large subgroup $Q \le S$ satisfying hypothesis \ref{hyp on the locality}. Let $M \in \gM_\L(S)$ and $Y \le S$ be an elementary abelian, normal $p$-subgroup of $M$. Suppose that $Y$ is $p$-reduced for $M$ and symmetric in $\L$.\\
	Then, for $\ov{M}:=M/C_M(Y_M)$ and $q$ a power of $p$, one of the following holds.
	\begin{enumerate}
		\item $\ov{M^\circ} \cong \SL_n(q)$ for $n \ge 3$ and $Y$ is a corresponding natural $\SL_n(q)$-module.		
		\item \begin{itemize}
			\item[(a)] $\ov{M^\circ} \in \{ \Sp_{2n}(q), \Sp_4(2)'\}$ for $n \ge 2$ and $[Y,M^\circ]$ is a corresponding natural module.
			\item[(b)] If $[Y,M^\circ] \ne Y$, then $p=2$ and $|Y : [Y,M^\circ]| \le q$.
			\item[(c)] If $Y \not \le Q$, then $p=2$ and also $[Y,M^\circ] \not \le Q$.
		\end{itemize}
		\item $Y_M$ is a natural $\SL_2(q)$-wreath product module for $\ov{M}$ with respect to $\K$, where $\K$ is a uniquely determined set of $\ov{M}$-invariant subgroups of $\ov{M}$. In this case:
		\begin{itemize}
			\item[(a)] $\ov{M^\circ} = O^p(\< \K \>) \ov{Q}$,
			\item[(b)] $Q$ is transitive on $\K$,
			\item[(c)] for $Y=Y_M$, $Y_M = Y_{M^\circ S}$.
		\end{itemize}
		\item $Y \not \le Q$ and $\ov{M^\circ}$ is one of the following.
		\begin{itemize}
			\item[(1)] $\ov{M^\circ} \cong \begin{cases*}
				\Omega^+_{2n}(q) \; \text{for $2n \ge 6$,} \\
				\Omega^-_{2n}(q) \; \text{for $2n \ge 6$ and $p=2$,} \\
				\Omega^-_{2n}(q) \; \text{for $2n \ge 8$ and $p$ odd,} \\
				\Omega_{2n+1}(q) \; \text{for $2n+1 \ge 7$ and $p$ odd} 
			\end{cases*}$ \\
			and in all cases $Y$ is a corresponding natural module.
			\item[(2)] $\ov{M^\circ} \cong \SL_n(q) / \< (-id)^{n-1} \>$ for $n \ge 5$ and $Y$ is the exterior square of a natural $\SL_n(q)$-module.
			\item[(3)] $\ov{M^\circ} \cong \Spin_{10}^+(q)$ and $Y$ is a corresponding half-spin module.
			\item[(4)] $\ov{M^\circ} \cong \SL_n(q) \circ \SL_m(q)$ for $p \ne 2$, $n,m \ge 2$ and $n+m \ge 5$ and $Y$ is the tensor product of the corresponding natural modules.
		\end{itemize}
		\item $\ov{M} \cong \O_{2n}^\epsilon (2)$ and $\ov{M^\circ} \cong \Omega_{2n}^\epsilon(2)$ for $2n \ge 4$ and $(2n,\epsilon) \ne (4,+)$ with $[Y,M]$ a corresponding natural module. Moreover,
		\begin{itemize}
			\item[(a)] if $Y \ne [Y,M]$, then $\ov{M} \cong \O_6^+(2)$ and $|Y:[Y,M]| =2$,
			\item[(b)] there exists $y \in [Y,M]$ non-singular such that $C_\L(y) \not \subseteq M^\dagger$,
			\item[(c)] if $Y=Y_M$, then for every non-singular vector $y \in [Y,M]$, $\< y \> \not \in \Delta$.
		\end{itemize}
	\end{enumerate}
\end{maintheorem}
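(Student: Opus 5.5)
The plan is to reduce the locality statement to a purely group-theoretic configuration inside $M^\dagger=N_\L(Y_M)$ (a genuine group by Lemma~\ref{lemma2.2}(d) and Lemma~\ref{lemma:repleteness implies Y in Delta for parabolics}), and then follow \cite[Chapter 4]{Meier-Stell-Stroth:2012} verbatim. By definition of symmetry there is $x\in\L$ with $Y\subseteq S_x$ and $1\ne[Y,Y^x]\le Y\cap Y^x$; note $Y\in\Delta$ since $1\ne Y\norm S$. The first step is to move the pair into $S$. Both $Y$ and $Y^x$ lie in $C_\L(Y\cap Y^x)$, and $Y\cap Y^x\ge[Y,Y^x]\ne1$. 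I would use $Y\cap Y^x\in\Delta$ (it contains a nontrivial normal-in-$YY^x$ piece; conjugate it via Lemma~\ref{lemma:properties of localities}(b) and \cite[Lemma 2.9]{Chermak:2022:I} to a fully normalized subgroup of $S$), so that $YY^x$ is a $p$-subgroup of the group $N_\L(Y\cap Y^x)$. Choosing $h$ conjugating a Sylow $p$-subgroup containing $YY^x$ into $S$, with $(x,h)\in D$ via the chain of objects, gives $Y^h,Y^{xh}\le S$. Then $Y^h\le S$ forces $Y^h$ to be conjugate to $Y$ by an element of $N_\L(Y)$ moving it back, and composing (all words in $D$ via $Y$, $Y^h$, \dots) yields $u\in\L$ with $Y\subseteq S_u$, $Y^u\le S$, $1\ne[Y,Y^u]\le Y\cap Y^u$, and also $Y\le S_{u\inv}$ after passing to $N_\L(Y^u)$ similarly. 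This replacement is the analogue of \cite[Lemma 4.2]{Meier-Stell-Stroth:2012} and is where I expect the main difficulty: each conjugation must be checked to lie in the domain, and the composite chain of objects must be exhibited explicitly.

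Second, with $Y,Y^u\le S\le M$, I would prove as in \cite[Lemma 4.9(c)]{Meier-Stell-Stroth:2012} that one of $Y^u$, $Y^{u\inv}$ acts as a non-trivial quadratic offender on $Y$: compare $|Y^u/C_{Y^u}(Y)|$ with $|Y/C_Y(Y^u)|$; by symmetry of roles (conjugating by $u\inv$, legitimate since both lie in $S$ and $Y^u\in\Delta$) one inequality holds, and quadraticity follows from $[Y,Y^u]\le Y\cap Y^u$. Non-triviality $[\ov{M^\circ},\ov{Y^u}]\ne1$ comes from $Q!$: otherwise $Y^u$ would normalize $M^\circ$-related structure and, via Lemma~\ref{lemma:dichotomy for subgroups containing Q} and Lemma~\ref{lemma: L containing Q}, force $Q\norm M$, contrary to Theorem~\ref{theorem: main reduction}(b). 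By Lemma~\ref{lemma1.57}(b) $Y$ is a faithful $Q!$-module for $\ov{M}$ with respect to $\ov{Q}$, and $O_p(\ov M)=1$ as $Y$ is $p$-reduced.

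Third, apply the \nameref{thm:Q!FF} with hypothesis (i) and go through its outcomes, exactly as in the proof of \cite[Theorem D]{Meier-Stell-Stroth:2012}, to obtain cases (1)--(5). The extra assertions ($Y\not\le Q$ in (4), (2)(c), (3)(c)) use the $Q$-conjugates centralizing singular vectors; here I would use Lemma~\ref{lemma:uniquen of some conjug of Q-orthog case}-style uniqueness of $Q_v$ via Lemma~\ref{lemma: L containing Q}(c), restricting to conjugates of $Q$ by elements of $M$ so that all conjugations are defined. For case (5), (b) comes from \cite[Chapter 4]{Meier-Stell-Stroth:2012} transported into $M^\dagger$, and (c) follows directly from Theorem~\ref{theorem CL}.
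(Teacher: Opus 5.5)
Your overall architecture is the paper's: adjust the symmetric element so that also $Y^{u\inv}\le S$, obtain a quadratic offender by comparing $|Y^u/C_{Y^u}(Y)|$ with $|Y/C_Y(Y^u)|$ and conjugating by $u\inv$, apply the $Q!FF$-Module Theorem~\ref{thm:Q!FF}, and use Theorem~\ref{theorem CL} for (5)(c).

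Your first step is largely superfluous. $Y\subseteq S_x$ already means $Y^x\le S$, so only the one-line adjustment of Lemma~\ref{lemma4.2} is needed: $Y$ normalizes $Y^x$, so pick $n\in N_\L(Y^x)$ with $Y\le (S^x)^n$ and put $u:=xn$. Your claim that $Y^h\le S$ forces $Y^h$ back onto $Y$ via $N_\L(Y)$ is unjustified, since $Y$ need not be weakly closed, but you never need it. There are, however, two genuine gaps.

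\textbf{Non-triviality of the offender on $\ov{M^\circ}$.} The hypothesis $[\ov{M^\circ},Y_3]\ne 1$ of the $Q!FF$-Module Theorem, for both candidates $Y_3\in\{Y^u,Y^{u\inv}\}$, is the heart of the symmetric case, and your justification does not work.
\begin{itemize}
\item From $[\ov{M^\circ},\ov{Y^u}]=1$ you only get $[M^\circ,Y^u]\le C_M(Y)$. Lemma~\ref{lemma:dichotomy for subgroups containing Q} concerns the genuine centralizer $C_\L(M^\circ)$ and says nothing here.
\item A priori $\ov{Y^u}$ could act trivially on $\ov{M^\circ}$ while acting non-trivially on the part of $F^*(\ov M)$ centralized by $Q$. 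A bare appeal to $Q!$ does not exclude this.
\item $Q\not\norm M$ is not a hypothesis for an arbitrary $M\in\gM_\L(S)$. Theorem~\ref{theorem: main reduction}(b) only concerns the $M$ attached to some $L$ with $Q\not\norm L$. The paper derives $Q\not\norm M$ in Lemma~\ref{lemma4.9}(b) from exactly this non-triviality, so your argument is circular.
\end{itemize}
What is needed is the chain of Lemmas~\ref{lemma4.5}--\ref{lemma4.8}. Put $\ov{L_i}=[F^*(\ov{M_i}),Q_i]$ and let $\ov{F_i}$ be the largest normal subgroup of $F^*(\ov{M_i})$ centralized by $Q_i$. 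One shows $F_1\le N_\L(Q_1)$. Applying \cite[Lemma A.57]{Meier-Stell-Stroth:2012} to $H_1=L_1F_1Q_1Y_2$ gives $C_{Y_2}(\ov{L_1})\in\{Y_2,C_{Y_2}(Y_1)\}$. Then $A_i=C_{Y_i}(Q_i)$ is used to get $[\ov{L_1},Y_2]\ne1\ne[\ov{L_2},Y_1]$, and conjugating the second by $u\inv$ covers $Y^{u\inv}$.

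\textbf{Conjugates of $Q$ from $M_1=M$ and $M_2=M^u$.} Restricting to $M$-conjugates of $Q$ does not suffice, because the case analysis constantly compares $M_1$-conjugates of $Q$ with $M_2$-conjugates of $Q^u$.
\begin{itemize}
\item Lemma~\ref{lemma4.13} needs a common conjugate $Q_0=Q_1^{y_1}=Q_2^{y_2}\le M_1\cap M_2$. It comes from Lemma~\ref{lemma: L containing Q}(c) only after checking $(u,y_2,y_1\inv)\in D$ via objects generated by elements of $I_1\cup I_2$. This is close to what you suggest.
\item Lemma~\ref{lemma4.14} works with $\mathcal{Q}=Q_1^{M_1}\cap Q_2^{M_2}$ and with $\wcl_X(Q,N_{M_i}(Z))$ for $X=M_1\cup uM_2$. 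It needs part (i) of \eqref{subres:4.14-conjugates of Q}: an $M_1$-conjugate of $Q$ lying in $M_2$ is an $M_2$-conjugate of $Q^u$, and conversely.
\end{itemize}
In a group, part (i) is Sylow plus weak closure. In $\L$ the chain $Q\to Q^{x_1}\to (Q^{x_1})^{u\inv}\to\cdots\le S$ need not be a word in $D$, so weak closure of $Q$ cannot be applied directly. The missing ingredient is Lemma~\ref{lemma:conj of p-subgr and fus systems}: chains of conjugations through normalizers of objects still induce morphisms of $\F_S(\L)$, after which weak closure gives (i).

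Similarly, (5)(b) is not obtained inside $M^\dagger$. Lemma~\ref{lemma4.10} produces elements of $C_\L(t)$ outside $M^\dagger$, for non-singular $t\in[Y,Y^u]$, using $M^u$ and products $um$.
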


\subsection{The proof of Theorem~\ref{theorem DL}}

\begin{lemma}[4.1 and 4.2]\label{lemma4.2}
	$Y \le Y_M$ and $N_\L(Y) = N_\L(Y_M)=M^\dagger$; moreover, there exists $u \in \L$ such that $Y \le S_u \cap S_{u\inv}$, $1 \ne [Y,Y^u] \le Y \cap Y^u$ and $YY^u \le S \cap S^u$. In particular, also $Y^{u\inv} \in \Delta$.
\end{lemma}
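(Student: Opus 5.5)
The statement has two halves, and the plan treats them separately. The first half says $Y \le Y_M$ and $N_\L(Y)=N_\L(Y_M)=M^\dagger$. The second half is the symmetric configuration with $YY^u\le S\cap S^u$.

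\textbf{First half.} $Y$ is an elementary abelian, normal, $p$-reduced $p$-subgroup of $M$. So $Y \le Y_M$ by the maximality in \ref{lemma:unique largest p-reduced}.

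Next, $Y\ne 1$ and $Y \norm S$, so $Y\in\Delta$ by \ref{lemma:repleteness implies Y in Delta for parabolics}; in particular $N_\L(Y)$ is a subgroup. It contains $M$, and it satisfies $C_\L(O_p(N_\L(Y)))\subseteq C_\L(Y)$.

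To show $N_\L(Y)\in\C_\L$, I would argue as in \ref{lemma2.2}(a). Since $N_\L(Y)$ is of characteristic $p$ and parabolic, $C_\L(O_p(N_\L(Y)))=C_{N_\L(Y)}(O_p(N_\L(Y)))\le O_p(N_\L(Y))$. Then $\M_\L(M)=\{M^\dagger\}$ gives $N_\L(Y)\le M^\dagger=N_\L(Y_M)$, using \ref{lemma2.2}(d). Alternatively, \ref{lemma2.2}(c) could be applied to a suitable normal subgroup.

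For the reverse inclusion, $M^\dagger=MC_\L(Y_M)$. Here $M$ normalizes $Y$, and $C_\L(Y_M)\le C_\L(Y)$ because $Y\le Y_M$. Hence $M^\dagger\le N_\L(Y)$, and the equalities follow.

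\textbf{Second half.} By symmetry there is $x\in\L$ with $Y\subseteq S_x$ and $1\ne[Y,Y^x]\le Y\cap Y^x$. The problem is that $Y^x$ need not lie in $S$. Since $Y^x$ normalizes $Y$ (because $[Y,Y^x]\le Y$), $Y^x$ is a $p$-subgroup of $N_\L(Y)=M^\dagger$.

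By Sylow's theorem in the group $M^\dagger$, which has $S$ as a Sylow $p$-subgroup, there is $m\in M^\dagger$ with $(Y^x)^m\le S$. I would set $u:=xm$. Then $(x,m)\in D$ via $(Y,Y^x,Y^{xm})$, because $Y^x=Y^{S_x\text{-image}}\in\Delta$ and $m\in N_\L(Y^x)$ is not needed; it is enough that $Y^x\le N_\L(Y)\ni m$. More carefully, the path should be taken through $S_x$, $S_x^x$ and then into $N_\L(Y)$. Since $m$ normalizes $Y$, the relations $1\ne[Y,Y^u]\le Y\cap Y^u$ are preserved, and $Y^u\le S$.

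\textbf{Main obstacle.} The step I expect to be hardest is making the conjugations compose legitimately in the locality: one has to verify $(x,m)\in D$ and that $c_u$ is a homomorphism on $Y$. I would handle this by choosing $x$ first so that $Y^x$ lies in some object normalized inside $N_\L(Y)$, via \ref{lemma:properties of localities}(b) applied to $P=Y\in\Delta$. This gives $c_x\colon N_\L(Y)\to N_\L(Y^x)$, and $m$ can then be taken inside that image.

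\textbf{Remaining claims.} Once $Y,Y^u\le S$, we get $Y^u\le S\cap S^u$ trivially. Also $Y\le S^u$: $Y$ normalizes $Y^u$ and lies in $N_\L(Y^u)=M^{\dagger u}$, so after a further adjustment by an element of $N_\L(Y^u)$ one gets $Y\le S_{u^{-1}}$.

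Symmetrically, $Y\le S_u$ since $Y^u\le S$. Finally, $Y^{u^{-1}}\in\Delta$ because $Y$ lies in $S_{u^{-1}}\in\Delta$ and $\Delta$ is closed under conjugation and overgroups.
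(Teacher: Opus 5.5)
Your first half agrees with the paper; you are even more careful than the paper, since you check $N_\L(Y)\in\C_\L$ before invoking $\M_\L(M)=\{M^\dagger\}$. The second half, however, misplaces the difficulty. By definition $S_x=\{s\in S\mid s^x\in S\}$, so the hypothesis $Y\subseteq S_x$ already says $Y^x\le S$. Your conjugation by $m\in M^\dagger$, and the discussion of why $(x,m)\in D$, are therefore vacuous: $m=1$ will do.

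What symmetry does \emph{not} provide is $Y\le S^u$, equivalently $Y\le S_{u\inv}$. This is what is needed for $YY^u\le S\cap S^u$ and for $Y^{u\inv}\in\Delta$. You dispose of it in a single clause (``after a further adjustment by an element of $N_\L(Y^u)$''), but that is the entire content of the second half, and as written it is unjustified. One must know three things: that $S^u$ is Sylow in $N_\L(Y^u)$, that the adjusted product is defined, and that the adjustment does not undo $Y^u\le S$ and $1\ne[Y,Y^u]\le Y\cap Y^u$.

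The paper carries out exactly this step. Since $Y\in\Delta$ and $Y^x\le S$, Lemma~\ref{lemma:properties of localities}(b) gives an isomorphism $c_x\colon N_\L(Y)\to N_\L(Y^x)$, so $S^x\in\Syl_p(N_\L(Y^x))$. The relevant normalization is $[Y,Y^x]\le Y^x$, i.e.\ $Y\le N_\L(Y^x)$. You note this only at the end; your main step was driven instead by $Y^x\le N_\L(Y)$. Sylow's theorem in $N_\L(Y^x)$ then yields $n\in N_\L(Y^x)$ with $Y\le (S^x)^n$.

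Now $(x,n)\in D$ via $(Y,Y^x,Y^x)$. The element $u:=xn$ satisfies $Y^u=Y^x$, so $Y^u\le S$ and the commutator relations are inherited verbatim. Meanwhile $S^u=(S^x)^n\ge Y$ gives $YY^u\le S\cap S^u$ and $Y^{u\inv}\le S$, whence $Y^{u\inv}\in\Delta$. The point is that the correcting element must lie in $N_\L(Y^x)$, not in $N_\L(Y)=M^\dagger$; that is exactly why it leaves the earlier properties intact.
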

\begin{proof}
	Clearly $Y \le Y_M$, since by hypothesis it is $p$-reduced for $M$; moreover, $Y \norm S$, so our hypothesis on the locality $\L$ ensure that $Y \in \Delta$. Since $M^\dagger = MC_\L(Y_M)$ and $M$ normalizes $Y$, $M^\dagger = N_\L(Y_M) \le N_\L(Y)$ and equality follows by the maximality of $M^\dagger$.\\
	
	Let $x \in \L$ be such that $Y \le S_x$ and $1 \ne [Y,Y^x] \le Y \cap Y^x$; by definition, $Y^x \le S$, hence conjugation by $x$ extends to an isomorphism $N_\L(Y) \fto N_\L(Y^x)$. In particular, $Y \le N_\L(Y^x)$, so there exists $n \in N_\L(Y^x)$ such that $Y \le (S^x)^n$. Note that $(x,n) \in D(\L)$ via $(Y,Y^x,Y^x)$ and set $u:=xn$. Then $Y^x = Y^u$, thus
	\[
	1 \ne [Y,Y^x] = [Y,Y^u] \le Y \cap Y^u,
	\]
	and also
	\[
	Y^u = Y^x \le S \cap S^u \qquad \text{and} \qquad Y \le S^u \cap S.
	\]
	Thus $YY^u \le S \cap S^u$ and $Y$ is conjugable by $u\inv$ with $Y^{u\inv} \le (S^u)^{u\inv} = S$, that is $Y \le S_{u\inv}$.
\end{proof}

We now fix $u \in \L$ as in \ref{lemma4.2}, so that $M$ is conjugable both by $u$ and $u\inv$ via group isomorphisms; in detail, the conjugation maps $c_u: N_\L(Y) \fto N_\L(Y^u)$ and$c_{u\inv} \colon N_\L(Y) \fto N_L(Y^{u\inv})$ are well-defined isomorphisms of groups. This allows us to fix a notation similar to that used by the authors in \cite{Meier-Stell-Stroth:2012}.

\begin{notation}\label{notation4.3}
	Fix $u$ as above and set
	\begin{gather*}
		M_1 := M, \; S_1 := S, \; Q_1 := Q, \; Y_1 := Y \\
		M_2 := M^u, \; S_2 := S^u, \; Q_2 := Q^u, \; Y_2 := Y^u
	\end{gather*}
	Let $i \in \{1,2\}$ and set
	\[
	\ov{M_i} := M_i / C_{M_i}(Y_i),  \quad A_i := C_{Y_i}(Q_i) \quad \text{and} \quad \ov{L_i}:= [F^*(\ov{M_i}),Q_i];
	\]
	Define also $\ov{F_i}$ to be the (unique) largest normal subgroup of $F^*(\ov{M_i})$ such that $[\ov{F_i},Q_i] = 1$ and $F_i$, $L_i$ to be the full preimages of $\ov{F_i}$, respectively $\ov{L_i}$, in $M_i$. More generally, the bar symbol will be used to denote quotients of subgroups of $M_i$ modulo $C_{M_i}(Y_i)$.\\
	Note that $\< Y_1,Y_2 \> \le M_1 \cap M_2$ and recall that for a subgroup $Q \le K \le M$ we have set $K^\circ := \<Q^K\>$; in particular, $M^\circ$ is the normal closure of $Q$ in $M$.
\end{notation}

The proof now follows very closely that in \cite[Chapter 4]{Meier-Stell-Stroth:2012}, but due to the high number of small changes and additional reasoning arguments we provide a complete list of all the results required. 

\begin{lemma}[4.4]\label{lemma4.4}
	Let $\{i,j\} = \{1,2\}$. Then $Y_i$ acts quadratically on $Y_j$.
\end{lemma}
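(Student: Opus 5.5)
The plan is to show that $[Y_j,Y_i,Y_i]=1$. Everything happens inside a group of $\L$, so the computation is ordinary group theory.

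First I fix a group containing everything. By Lemma~\ref{lemma4.2} we have $Y_1Y_2=YY^u\le S\cap S^u$, so $Y_1Y_2$ is a $p$-subgroup of the group $S$. Hence all the commutators $[Y_j,Y_i]$ and $[Y_j,Y_i,Y_i]$ are computed in $S$. Lemma~\ref{lemma4.2} also gives
\[
1\ne[Y_1,Y_2]\le Y_1\cap Y_2 ,
\]
and I will take this as the key relation. The fact that $c_u$ restricts to a group isomorphism $N_\L(Y)\fto N_\L(Y^u)$ is only needed to know that $Y_2$ is an elementary abelian normal subgroup of $M_2$, just as $Y_1$ is for $M_1$. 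In particular both $Y_1$ and $Y_2$ are abelian.

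Next comes the computation. Since $Y_1,Y_2\le S$ and $Y_1$ normalizes $Y_2$ (because $[Y_2,Y_1]\le Y_2$), we get
\[
[Y_j,Y_i]\le Y_1\cap Y_2\le Y_i .
\]
As $Y_i$ is abelian, this gives $[Y_j,Y_i,Y_i]\le [Y_i,Y_i]=1$. The inclusions hold symmetrically in $i$ and $j$, so both cases $\{i,j\}=\{1,2\}$ are covered at once. Note that $[Y_j,Y_i]\le Y_j$ is what makes $Y_i$ act on $Y_j$ at all, and $[Y_j,Y_i]\le Y_i$ is what makes that action quadratic.

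The only point needing care, and what I expect to be the main obstacle, is justifying that the relation $[Y,Y^u]\le Y\cap Y^u$ is really taking place in one group. In a locality the product of $y\in Y$ with $y'\in Y^u$ might a priori not be defined. This is exactly why Lemma~\ref{lemma4.2} replaced the original symmetry element $x$ by $u=xn$ with $YY^u\le S$. Once that is invoked, no further locality subtleties arise, and the argument coincides with \cite[Lemma 4.4]{Meier-Stell-Stroth:2012}.
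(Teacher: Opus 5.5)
Your proposal is correct and is essentially the paper's own one-line argument: $[Y_1,Y_2]\le Y_1\cap Y_2$ together with $Y_i$ abelian gives $[Y_j,Y_i,Y_i]\le[Y_i,Y_i]=1$. Your added remark that the commutators are computed in the group $S$ (via $YY^u\le S\cap S^u$ from Lemma~\ref{lemma4.2}) is a harmless clarification that the paper leaves implicit.
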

\begin{proof}
	This is immediate; by definition $[Y_1,Y_2] \le Y_1 \cap Y_2$, so, as $Y_i$ is abelian, $[Y_j,Y_i,Y_i] = 1$.
\end{proof}

\begin{lemma}[4.5]\label{lemma4.5}
	The following hold.
	\begin{enumerate}[(a)]
		\item $F_1 \le N_\L(Q_1)$; in particular $F_i \le N_{M_i}(Q_i)$.
		\item $\ov{L_i}$ and $\ov{F_i}$ are normal subgroups of $F^*(\ov{M_i})\ov{S_i}$, hence both $L_i$ and $F_i$ are normal in $L_iF_iS_i$.
		\item $\ov{F_i} = C_{F^*(\ov{M_i})}(L_iQ_i)$; thus $[\ov{L_i},\ov{F_i}]=1$.
		\item $\ov{L_i} = [\ov{L_i},Q_i]$.
		\item $C_{\ov{M_i}}(\ov{L_iF_i})$ is a $p'$-group.
		\item If a $p$-subgroup $B$ of $N_{M_i}(Q_i)$ is such that $[\ov{L_i}, B] \le \ov{F_i}$, then $[\ov{L_i},B]=1$.
		\item $\ov{L_i} \cap \ov{F_i} \le \Phi(\ov{L_i})$.
	\end{enumerate}
\end{lemma}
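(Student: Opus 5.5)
The plan is to reduce everything to the case $i=1$ and then argue inside the finite group $\ov{M}=M/C_M(Y)$, much as in \cite[Lemma 4.5]{Meier-Stell-Stroth:2012}. By Lemma~\ref{lemma4.2}, conjugation by $u$ is a group isomorphism $N_\L(Y)=M^\dagger\fto N_\L(Y^u)$. It carries $M_1,S_1,Q_1,Y_1$ to $M_2,S_2,Q_2,Y_2$, and it therefore also carries $C_{M_1}(Y_1)$, $F^*(\ov{M_1})$, $\ov{L_1}$, $\ov{F_1}$ to the corresponding objects with index $2$. So each statement for $i=2$ is the $c_u$-image of the statement for $i=1$. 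The only point needing care is that the target of (a) for $i=2$ is $N_{M_2}(Q_2)$ and not $N_\L(Q_2)$; this is why (a) is phrased in that form.

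For (a), the key input is that $\ov{F_1}$ centralizes $Q_1$ and is normal in $F^*(\ov M)$. I would set $D:=F_1$ and use $[F_1,Q]\le C_M(Y)$. Write $X:=C_M(Y)$. Since $M\in\gM_\L(S)$, $X$ is $p$-closed with Sylow subgroup $O_p(M)$ (Lemma~\ref{lemma2.2}(f)), and by Lemma~\ref{lemma: L containing Q}(a) we have $[M^\circ,X]\le O_p(M^\circ)$. The aim is $F_1\le N_M(Q)$.

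I would first prove that $C_Y(Q)\ne 1$ is normalized by $F_1$. Because $F_1$ normalizes $QX$, a Frattini argument gives $F_1\le X N_{F_1}(Q)$. Now $X\le C_\L(Y)\le N_\L(Q)$ by $Q!$ applied to $\Omega Z(S)\cap Y\le Z(Q)$, so $F_1\le N_\L(Q)$. This Frattini step is the one I expect to be the main obstacle. It needs $Q$ to be weakly closed (equivalently, a Sylow-type object) in $QX$, which follows from $Q$ being weakly closed in every subgroup containing it (Lemma~\ref{lemma2.2}(b), Lemma~\ref{lemma:weakly closed subgroups in localities}(a)). Fusion of $Q$ in $QX$ is then controlled by $N_{QX}(Q)$.

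Parts (b)–(g) are purely group-theoretic statements about $\ov{M}$, $F^*(\ov M)$ and the $p$-subgroup $\ov{Q}$, and I would transport the proofs of \cite[Lemma 4.5]{Meier-Stell-Stroth:2012} verbatim.

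For (b), $S$ normalizes $Q$ and $F^*(\ov M)$, hence normalizes $[F^*(\ov M),Q]$ and the largest normal subgroup centralized by $Q$.

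For (c) and (d), use the standard commutator identity $[X,Q]=[X,Q,Q]$ for $X=F^*(\ov M)$. This uses coprime action on the $p'$-part, together with $O_p(\ov M)=1$ since $Y$ is $p$-reduced, so $F^*(\ov M)=E(\ov M)F(\ov M)$ with $O_p=1$. Decompose $F^*(\ov M)$ into components and $p'$-Fitting factors. Then $\ov L$ is the product of those factors not centralized by $Q$, and $\ov F$ is the product of those centralized by $Q$, up to central elements.

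For (e), $C_{\ov M}(\ov{LF})\cap F^*(\ov M)\le Z(F^*(\ov M))$, and this centre is a $p'$-group because $O_p(\ov M)=1$. Hence $C_{\ov M}(\ov{LF})$ is a $p'$-group.

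For (f), a $p$-group $B$ normalizing $Q$ with $[\ov L,B]\le\ov F$ satisfies $[\ov L,B,B]\le[\ov L,\ov F]=1$. Coprime or three-subgroup arguments on the components of $\ov L$ then give $[\ov L,B]=1$.

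For (g), $\ov L\cap\ov F\le Z(\ov L)$ by (c). Since $\ov L=[\ov L,Q]$ is generated by $p$-elements, \cite[Lemma 1.7]{Meier-Stell-Stroth:2012} gives $Z(\ov L)\le\Phi(\ov L)$.
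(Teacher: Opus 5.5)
\textbf{Part (a) is correct and matches the paper.} Your argument for (a) is the paper's in slightly different form. From $[\ov{F_1},Q_1]=1$ you get $[F_1,Q]\le C_M(Y)$, so $F_1$ normalizes $QC_M(Y)$. By $Q!$ we have $C_M(Y)\le N_\L(Q)$, and weak closure of $Q$ in $M$ then forces $Q^f=Q$ for $f\in F_1$. The paper phrases the last step through $O_p(QC_M(Y))$: this group contains $Q$, lies in $S$, and is normalized by $F_1$.

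One correction: Lemma~\ref{lemma2.2}(f) gives $p$-closedness of $C_M(Y_M)$ only, not of $C_M(Y)$ for an arbitrary $p$-reduced $Y\le Y_M$. You do not need it, because $Q^f$ is $QC_M(Y)$-conjugate into $S$ and $QC_M(Y)$ normalizes $Q$. The transfer to $i=2$ via $c_u$ is fine. For (b)--(g), citing \cite[Lemma 4.5]{Meier-Stell-Stroth:2012} (and \cite[Lemma 1.17]{Meier-Stell-Stroth:2012}) is exactly what the paper does.

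\textbf{The sketches for (e), (f), (g) contain steps that fail.}

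In (e), the inclusion $C_{F^*(\ov M)}(\ov{L}\,\ov{F})\le Z(F^*(\ov M))$ is false in general. The group $\ov F$ is the largest \emph{normal} subgroup centralized by $Q$, so $\ov L\,\ov F$ can be proper in $F^*(\ov M)$. For instance, let $F^*(\ov M)=V\rtimes D$ with $V=\bF_r^2$ and $D$ extraspecial of order $r^3$ acting on $V$ through a transvection group of order $r$, and let $Q$ act by scalars on $V$ and trivially on $D$. Then $\ov L=V$, $\ov F=C_D(V)$, and $C_{F^*(\ov M)}(\ov L\,\ov F)=\ov L\,\ov F$, which is not central. Even granting the inclusion, it says nothing about $p$-elements of $C_{\ov M}(\ov L\,\ov F)$ outside $F^*(\ov M)$. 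What must be shown is that a $p$-subgroup $P$ centralizing $\ov L\,\ov F$ centralizes all of $F^*(\ov M)$. Components lie in $\ov L$ or in $\ov F$. For $R=O_r(F^*(\ov M))$ one checks that $P$ centralizes $[R,Q]C_R([R,Q])$, which contains its own centralizer in $R$, so coprime action gives $[R,P]=1$.

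In (f), the hypothesis $[\ov L,B]\le\ov F$ gives $[\ov L,B,B]\le[\ov F,B]$, not $[\ov L,\ov F]$. What (c) actually yields is $[\ov L,B]\le \ov L\cap\ov F\le Z(\ov L)$. To kill this you need that it is centralized by $Q$:
\begin{itemize}
\item since $\ov L=[\ov L,Q]$, the group $\ov L/\ov L'$ is a $p'$-group on which $Q$ acts fixed-point-freely, so $[\ov L,B]\le\ov L'$;
\item then the three subgroups lemma on components, together with coprime action of $B$ on $F(\ov L)$, gives $[\ov L,B]=1$.
\end{itemize}

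In (g), $\ov L=[\ov L,Q]$ need not be generated by $p$-elements; it is a nontrivial $p'$-group whenever $F^*(\ov M)$ is a $p'$-group not centralized by $Q$. Moreover $Z(\ov L)\le\Phi(\ov L)$ fails, for example, for $\ov L$ elementary abelian. The correct argument again uses $C_{\ov L/\ov L'}(Q)=1$. The group $\ov L\cap\ov F$ is central in $\ov L$ and centralized by $Q$, hence it lies in $Z(\ov L)\cap\ov L'$, which is contained in $\Phi(\ov L)$ by Gasch\"utz.

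So either cite \cite[Lemma 1.17]{Meier-Stell-Stroth:2012} as the paper does, or replace these three sketches with the arguments above.
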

\begin{proof}
	Points (c) - (f) follow from \cite[Lemma 1.17]{Meier-Stell-Stroth:2012} and (b) is just as in \cite[Lemma 4.5]{Meier-Stell-Stroth:2012}, therefore we only need to deal with (a), which needs only slightly extra effort.
	\begin{itemize}
		\item[(a)] Recall that $Q_1=Q$. Since $\Omega Z(S) \le Y_1 \cap C_\L(Q)$, $Q!$ implies $C_{M_1}(Y_1) \le N_\L(\Omega Z(S)) \le N_\L(Q_1)$, so $Q_1 \le O_p(Q_1C_{M_1}(Y_1)) \le S$ and, therefore, also $O_p(Q_1C_{M_1}(Y_1)) \in \Delta$. Thus $N_\L(O_p(Q_1C_{M_1}(Y_1)))$ is a parabolic subgroup of $\L$ and, since $Q$ is weakly closed, every element $g \in N_\L(O_p(Q_1C_{M_1}(Y_1)))$ must normalize also $Q_1$, that is $N_\L(O_p(Q_1C_{M_1}(Y_1))) \le N_\L(Q_1)$. Since $F_1$ normalizes $O_p(Q_1C_{M_1}(Y_1))$ we obtain $F_1 \le N_\L(Q_1)$; hence also $F_i \le N_{M_i}(Q_i)$.
	\end{itemize}
\end{proof}

\begin{lemma}[4.6]\label{lemma4.6} 
	We have $C_{Y_2}(\ov{L_1}) \in \{Y_2, C_{Y_2}(Y_1)\}$.
\end{lemma}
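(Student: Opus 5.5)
The plan is to reproduce the argument of \cite[Lemma 4.6]{Meier-Stell-Stroth:2012}, which is entirely local. Everything takes place inside the group $M_1 = M \le M^\dagger = N_\L(Y)$, which contains $Y_2 = Y^u$ by \ref{lemma4.2}. No conjugation in $\L$ beyond $c_u$ is needed.

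\textbf{Step 1: $C_{Y_2}(\ov{L_1})$ is a $Y_2$-invariant subgroup of $Y_2$.} Since $Y_2$ normalizes $Y_1$ and $M_1$, it acts on $\ov{M_1}$. By \ref{lemma4.5}(b) it normalizes $\ov{L_1}$, since $Y_2 \le S_1$. The centralizer $C_{M_1}(\ov{L_1})$, meaning the full preimage of $C_{\ov{M_1}}(\ov{L_1})$, is therefore a subgroup of $M_1$ normalized by $Y_2$. Its intersection $W := C_{Y_2}(\ov{L_1})$ with $Y_2$ is a subgroup of $Y_2$.

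\textbf{Step 2: if $W \ne Y_2$ then $W \le C_{Y_2}(Y_1)$.} Suppose $Y_2 \not\le C_{M_1}(\ov{L_1})$. Then $\ov{Y_2}$ acts non-trivially on $\ov{L_1}$. The idea is to use that $\ov{Y_2}$ is an elementary abelian $p$-group acting quadratically on $Y_1$ (\ref{lemma4.4}), together with $\ov{L_1} = [\ov{L_1},Q_1]$ (\ref{lemma4.5}(d)) and $\ov{L_1}$ being a product of components and $p$-subgroups of $F^*(\ov{M_1})$. For $w \in W$ we have $[\ov{L_1},\ov w]=1$, so $[Y_1,w]$ is $L_1$-invariant. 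Because $[Y_1,w] \le [Y_1,Y_2] \le Y_1\cap Y_2$, it is in fact contained in $C_{Y_1}(Y_2)$. The aim is to show that $[Y_1,w]$ is a subspace of $Y_1$ that is invariant under $\langle L_1, Y_2\rangle$ and centralized by $Y_2$, and that this forces $[Y_1,w]=1$. The route is $p$-reducedness of $Y_1$: look at $\langle Y_2^{L_1}\rangle$. If $[Y_1,w]\neq 1$, then $[Y_1,w] \le C_{Y_1}(Y_2^{l})$ for all $l\in L_1$, because $[Y_1,w]$ is $L_1$-invariant and $Y_2$ centralizes it. Combining this with the non-trivial action of $Y_2$ on $\ov{L_1}$ and \ref{lemma4.5}(f),(g), one gets that $\langle Y_2^{L_1}\rangle$ acts on the chain $[Y_1,w] \le Y_1$ in a way that contradicts $O_p(\ov{M_1})=1$.

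\textbf{Step 3: if $W = Y_2$ we are done;} otherwise Step 2 gives $W \le C_{Y_2}(Y_1)$. The reverse inclusion $C_{Y_2}(Y_1) \le W$ is trivial, since elements of $C_{Y_2}(Y_1)$ lie in $C_{M_1}(Y_1)$ and hence map to $1$ in $\ov{M_1}$. So $W = C_{Y_2}(Y_1)$.

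The main obstacle is Step 2: making the quadratic-action argument precise, namely excluding $C_{Y_1}(Y_2) < W$ strictly inside $Y_2$. I would first try exactly the MSS argument via \cite[Lemma 1.17]{Meier-Stell-Stroth:2012} applied to the group $\ov{M_1}\ov{Y_2}$. All objects there lie in the group $M^\dagger$, so this applies verbatim, and only the well-definedness remarks of Step 1 are locality-specific.
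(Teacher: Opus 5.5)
Your Steps 1 and 3 are fine, but there is a genuine gap in Step 2. Step 2 is the whole content of the lemma, and your sketch does not prove it, because it never uses that $Y_1$ is a $Q!$-module. Without that input the claim is not forced.

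The facts you list are quadratic action, $O_p(\ov{M_1})=1$, $\ov{L_1}=[\ov{L_1},Q_1]$, and \ref{lemma4.5}(f),(g). All of them are compatible with the following configuration:
\begin{itemize}
\item $\ov{L_1F_1}=\ov{L_1}\times\ov{F_1}$ acts on $Y_1=V_L\oplus V_F$ with $[V_L,F_1]=0=[V_F,L_1]$;
\item $\ov{Y_2}$ projects non-trivially into both factors.
\end{itemize}
Then $C_{Y_2}(\ov{L_1})$ contains elements $w$ acting non-trivially on $V_F$. For such $w$, the space $[Y_1,w]\le V_F$ is $L_1$-invariant and centralized by $\<Y_2^{L_1}\>$, exactly as in your Step 2, and there is no conflict with $O_p(\ov{M_1})=1$.

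What excludes such configurations is the $Q!$-property of \ref{lemma1.57}. For instance, a non-zero $Q_1$-fixed vector in the $Q_1$-invariant space $C_{Y_1}(L_1)$ lies in $Z(Q_1)$. Then $(Q!)$ gives $L_1\le N(Q_1)$, so $\ov{L_1}=[\ov{L_1},Q_1]$ is a subnormal $p$-subgroup of $\ov{M_1}$, hence trivial. Also, \cite[Lemma 1.17]{Meier-Stell-Stroth:2012} is the source of \ref{lemma4.5}(c)--(f); it is not the tool for this step.

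The paper's argument runs as follows. Put $X:=C_{Y_2}(\ov{L_1})$, $H_1:=L_1F_1Q_1Y_2\le M_1$ and $K_1:=\<Q_1^{H_1}\>$; the normal closure replaces the weak closure used in \cite{Meier-Stell-Stroth:2012}.
\begin{enumerate}
\item By \ref{lemma4.5}(a),(d), $\ov{K_1}=\ov{L_1}\,\ov{Q_1}$.
\item $X$ centralizes $\ov{L_1}$ and $Q_1$ centralizes $\ov{F_1}$, so $[X,\ov{Q_1}]$ is a $p$-group centralizing $\ov{L_1F_1}$. It is therefore trivial by \ref{lemma4.5}(e), and so $X=C_{Y_2}(\ov{K_1})$.
\item Likewise $O_p(\ov{H_1})$ centralizes $\ov{L_1F_1}$ and is trivial. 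Hence $Y_1$ is a faithful $Q!$-module for $\ov{H_1}$ with respect to $\ov{Q_1}$, by \ref{lemma1.57}.
\item If $X<Y_2$, then $\wde{Y_2}:=Y_2C_{M_1}(Y_1)/C_{M_1}(Y_1)$ acts quadratically on $Y_1$ (\ref{lemma4.4}) and satisfies $[\wde{Y_2},\ov{K_1}]\ge[\wde{Y_2},\ov{L_1}]\ne 1$.
\item \cite[Lemma A.57]{Meier-Stell-Stroth:2012} then yields $C_{\wde{Y_2}}(\ov{K_1})=1$, that is, $X=C_{Y_2}(Y_1)$.
\end{enumerate}
The reduction from $\ov{L_1}$ to $\ov{K_1}=\ov{L_1}\,\ov{Q_1}$ and the appeal to the $Q!$-module lemma A.57 are the ideas missing from your proposal.
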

\begin{proof}
	In \cite[Lemma 4.6]{Meier-Stell-Stroth:2012} the authors consider the weak closure of $Q_1$ in $H_1$ and invoke \cite[Lemma 1.46(c)]{Meier-Stell-Stroth:2012}; this is not so well-behaved in a locality, hoewever on can replace such weak closure with the normal closure $\<Q_1^{H_1} \>$ (which constitutes no problem within our locality) and the proof works equally well. Indeed, \cite[Lemma 1.46(b)]{Meier-Stell-Stroth:2012} shows that, in the situation where $G$ is a group with large $p$-subgroup $Q_1$,  $\<Q_1^{H_1} \>$ is equal to the weak closure of $Q_1$ in $H_1$.\\
	We provide here the argument.\\
	
	Since both $\ov{L_1}$ and $\ov{F_1}$ are contained in $F^*(\ov{M_1})$, \ref{lemma4.5}(b) shows that $\ov{L_1}\ov{F_1}$ is subnormal in $\ov{M_1}$, hence $O_p(\ov{L_1}\ov{F_1}) \le O_p(\ov{M_1}) = 1$. Let $X:=C_{Y_2}(\ov{L_1})$ and $H_1:= L_1F_1Q_1Y_2$, well-defined since all are subgroups of $M_1$; note that always \ref{lemma4.5}(b) guarantees that $Q_1Y_2$ normalizes $L_1F_1$, so $L_1F_1 \norm H_1$. Let $K_1:=\langle Q_1^{H_1} \rangle$; \ref{lemma4.5}(a) and (d) yield
	\[
	\ov{K_1} = \langle \ov{Q_1}^{\ov{L_1F_1Q_1Y_2}} \rangle = \langle \ov{Q_1} ^{\ov{L_1}} \rangle = [\ov{L_1},\ov{Q_1}]\ov{Q_1} = \ov{L_1}\ov{Q_1}.
	\]
	Since $X$ centralizes $\ov{L_1}$ and $Q_1$ centralizes $\ov{F_1}$, we have $[X,\ov{Q_1}] \le C_{\ov{Q_1}}(\ov{L_1F_1})$; by \ref{lemma4.5}(e), $C_{\ov{M_1}}(\ov{L_1F_1})$ is a $p'$-group, so $[X,\ov{Q_1}]=1$ and, as $[O_p(\ov{H_1}),\ov{L_1F_1}] \le O_p(\ov{L_1F_1}) = 1$, we also get $O_p(\ov{H_1}) = 1$. In particular, we have
	\begin{equation*}
		\label{eq1-lemma4.6}
		\tag{$\star$}
		X=C_{Y_2}(\ov{L_1Q_1}) = C_{Y_2}(\ov{K_1}).
	\end{equation*}
	Suppose now that $X=C_{Y_2}(\ov{L_1}) < Y_2$ and set $\wde{Y_2} := Y_2C_{M_1}(Y_1)/C_{M_1}(Y_1)$, so $\wde{Y_2}\ne 1$, $\ov{L_1} \ne 1$ and $\ov{Q_1} \ne 1$, or otherwise one gets $\ov{K_1} = 1$, whence $X = Y_2$.\\
	We can now verify the hypothesis of \cite[Lemma A.57]{Meier-Stell-Stroth:2012} with $(Y_1, \ov{Q_1},\ov{H_1},\wde{Y_2})$ in place of $(V,Q,H,Y)$. We have already shown that $O_p(\ov{H_1}) = 1$ and this implies $\ov{Q_1} \not \norm \ov{H_1}$, as $\ov{Q_1} \ne 1$. Now \ref{lemma1.57} shows that $Y_1$ is a faithful $Q!$-module for $\ov{H_1}$ with respect to $\ov{Q_1}$. By \ref{lemma4.4} $Y_2$ acts quadratically on $Y_1$, so $1 \ne \wde{Y_2} = C_{\wde{Y_2}}([\ov{Y_1}, \wde{Y_2}])$; moreover $1 \ne [\wde{Y_2},\ov{L_1}] \le [\wde{Y_2},\ov{K_1}]$. Now \cite[Lemma A.57]{Meier-Stell-Stroth:2012} yields $C_{\wde{Y_2}}(\ov{K_1}) =1$, therefore $C_{Y_2}(\ov{K_1}) = C_{Y_2}(Y_1) = X$ by \eqref{eq1-lemma4.6}.
\end{proof}

Recall that we have defined $A_i:= C_{Y_i}(Q_i)$.
\begin{lemma}[4.7]\label{lemma4.7}
	Suppose that a subgroup $U \le S_i$ is such that $[\ov{L_i},U]=1$ and $[Y_i,U] \ne 1$, then $[A_i,U] \ne 1$.
\end{lemma}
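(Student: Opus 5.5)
Since the statement only involves $S_i$, $Y_i$, $Q_i$, $L_i$ and $A_i$, all of which lie inside the group $M_i$, I would first reduce to $i=1$. By \ref{lemma4.2} and Notation~\ref{notation4.3}, conjugation by $u$ is a group isomorphism $N_\L(Y)\fto N_\L(Y^u)$. It carries $(M_1,S_1,Q_1,Y_1)$ to $(M_2,S_2,Q_2,Y_2)$, and hence carries $\ov{L_1},\ov{F_1},A_1$ to $\ov{L_2},\ov{F_2},A_2$, because these are defined intrinsically from the data. So it suffices to treat $M=M_1$, $Q=Q_1$, $Y=Y_1$, $A=A_1$. From this point on the argument is purely group-theoretic inside $M$, as in \cite[Lemma 4.7]{Meier-Stell-Stroth:2012}, and I would argue by contradiction, assuming $[A,U]=1$.

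\textbf{First step: spread the centralizing property.} Since $[\ov{L_1},U]=1$, we have $[L_1,U]\le C_M(Y)$. Hence $L_1$ normalizes $C_Y(U)$: for $l\in L_1$ and $x\in U$, the element $x^l$ acts on $Y$ as $x$ does. The assumption gives $A\le C_Y(U)$, so
\[
W:=\<A^{L_1}\>\le C_Y(U).
\]
Next, $U\le S$ normalizes $Q$, because $Q\norm S$. By \ref{lemma1.57}(a),(b), $Y$ is a faithful $Q!$-module for $\ov{M}$ with respect to $\ov{Q}$, and $A=C_Y(Q)$ has the property $N_{\ov M}(A')\le N_{\ov M}(\ov Q)$ for every $1\ne A'\le A$.

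\textbf{Second step: show that $W$ together with the part centralized by $L_1$ exhausts $Y$.} More precisely, I aim to prove
\[
Y=\<A^{L_1}\>\,C_Y(\ov{L_1})=W\,C_Y(\ov{L_1}).
\]
The plan is to use the $Q!$-module lemmas of \cite[Appendix A]{Meier-Stell-Stroth:2012} (the ones describing $\<C_V(Q)^{\<Q^H\>}\>$ in a $Q!$-module), applied to the group $H=L_1F_1Q$ with $\<Q^H\>=L_1Q$. The ingredients are the following.
\begin{itemize}
\item By \ref{lemma4.5}(a),(c), $F_1$ normalizes $Q$ and centralizes $\ov{L_1}$.
\item By \ref{lemma4.5}(d), $\ov{L_1}=[\ov{L_1},Q]$.
\item $C_Y(\ov{L_1})$ is $S$-invariant, and modulo it the $L_1Q$-module is generated by $C_Y(Q)$.
\end{itemize}
Granting this decomposition, $U$ centralizes $W$, and it also centralizes $C_Y(\ov{L_1})$ modulo $[\ldots]$. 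For the latter I would use \ref{lemma4.5}(e): $C_{\ov M}(\ov{L_1F_1})$ is a $p'$-group, and $\ov U$ is a $p$-group centralizing $\ov{L_1}$. I would also use that $U$ acts on $C_Y(\ov{L_1})$ and on $C_Y(\ov{L_1})\cap A$.

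\textbf{Third step, the main obstacle: prove $[C_Y(\ov{L_1}),U]=1$.} I would try the following route.
\begin{itemize}
\item $[C_Y(\ov{L_1}),Q]\le C_Y(\ov{L_1})$.
\item Since $L_1$ acts on $C_Y(U)$, I would work with $X=C_{C_Y(\ov{L_1})}(Q)=A\cap C_Y(\ov{L_1})$.
\item If $X\ne1$, then $Q!$ gives $N_{\ov M}(X)\le N_{\ov M}(\ov Q)$. But $\ov{L_1}$ centralizes $X$, so $\ov{L_1}\le N(\ov Q)$ and $\ov{L_1}=[\ov{L_1},\ov Q]\le\ov Q$, a $p$-group. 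This contradicts $O_p(\ov{L_1})\le O_p(\ov M)=1$ unless $\ov{L_1}=1$. The case $\ov{L_1}=1$ is excluded because $Q\not\norm M$ forces $\ov{L_1}\ne1$, via \ref{lemma:dichotomy for subgroups containing Q} and \ref{lemma4.5}(e).
\item Hence $X=1$. Since $Q$ is a $p$-group acting on the $p$-group $C_Y(\ov{L_1})$, the fixed-point subgroup $X$ can only be trivial if $C_Y(\ov{L_1})=1$.
\end{itemize}
Then $Y=W\le C_Y(U)$, contradicting $[Y,U]\ne1$.

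The delicate points are the generation statement $Y=W\,C_Y(\ov{L_1})$ and checking that $\ov{L_1}\ne1$. If a direct generation argument fails, I would fall back on citing the corresponding $Q!$-module lemma from \cite[Appendix A]{Meier-Stell-Stroth:2012} verbatim. That is legitimate here because every group involved lies in the group $M$.
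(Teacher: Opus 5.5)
\textbf{Gap in Step 2.} Your Steps 1 and 3 are correct. In fact Step 3 proves $C_Y(\ov{L_1})=1$, so what Step 2 really asserts is $Y=\langle A^{L_1}\rangle$.

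This generation statement is the gap. Nothing in the $Q!$-setting implies it, and it is false for $Q!$-modules in general, so no lemma from \cite[Appendix A]{Meier-Stell-Stroth:2012} can be cited for it. Here is a counterexample. Let $q=2^n\ge 4$ and let $V$ be the indecomposable $3$-dimensional $\bF_q\SL_2(q)$-module with a natural submodule $N$ and a trivial quotient (the dual of the orthogonal module for $\SL_2(q)\cong\O_3(q)$).
\begin{enumerate}
\item[(i)] For $Q\in\Syl_2(\SL_2(q))$ one checks $C_V(Q)=C_N(Q)$, a $1$-dimensional $\bF_q$-subspace of $N$.
\item[(ii)] Hence $V$ is a faithful $Q!$-module with $C_V(\SL_2(q))=0$, but $\langle C_V(Q)^{\SL_2(q)}\rangle=N\ne V$.
\item[(iii)] Take $\ov{M}=\SL_2(q)\times\SL_2(q)$, with $Q$ a Sylow $2$-subgroup of the first factor and $Y=V\otimes_{\bF_q}N'$ for $N'$ natural for the second factor. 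This is again a $Q!$-module.
\item[(iv)] In it, $\ov{L}$ is the first factor and $\ov{F}$ the second. A Sylow $2$-subgroup $U$ of the second factor satisfies $[\ov{L},U]=1\ne[Y,U]$.
\item[(v)] Nevertheless $\langle A^{L}\rangle C_Y(\ov{L})=N\otimes N'\ne Y$.
\end{enumerate}
So the strategy of generating $Y$ from $A$ under $L_1$ cannot work. You mention \ref{lemma4.5}(e), but you never show that $U$ centralizes $\ov{F_1}$, and that is exactly what is needed to apply it.

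\textbf{Missing idea: work with $\ov{F_1}$.}
\begin{enumerate}
\item[(1)] By \ref{lemma4.5}(a),(b), $F_1$ normalizes $Q$, hence normalizes $A=C_Y(Q)$, and $U\le S$ normalizes $F_1$. So under $[A,U]=1$ the commutator group $[F_1,U]$ centralizes $A$.
\item[(2)] $C_{\ov{F_1}}(A)$ is normal in $\ov{F_1}$, hence subnormal in $\ov{M}$. It commutes with $\ov{Q}$, because $[\ov{F_1},Q]=1$.
\item[(3)] A group commuting with $Q$ and centralizing $V_1:=C_Y(Q)$ centralizes every factor of the series $0=V_0<V_1<\cdots$, where $V_{k+1}/V_k:=C_{Y/V_k}(Q)$. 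Indeed, for $k\ge 1$ the map $y+V_k\mapsto([y,q]+V_{k-1})_{q\in Q}$ is injective and equivariant for such a group, so induction applies. Hence this group acts on $Y$ as a $p$-group.
\item[(4)] Combining (2) and (3), $C_{\ov{F_1}}(A)$ is a subnormal $p$-subgroup of $\ov{M}$, so it is trivial because $O_p(\ov{M})=1$.
\item[(5)] By (1) and (4), $[\ov{F_1},U]=1$. Together with $[\ov{L_1},U]=1$, this makes $\ov{U}$ a $p$-subgroup of $C_{\ov{M}}(\ov{L_1F_1})$, which is a $p'$-group by \ref{lemma4.5}(e). Thus $[Y,U]=1$, a contradiction.
\end{enumerate}
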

\begin{proof}
	The proof is precisely as in \cite[Lemma 4.7]{Meier-Stell-Stroth:2012}.	
\end{proof}

\begin{lemma}[4.8]\label{lemma4.8}
	Both $[\ov{L_1},Y_2] \ne 1$ and $[\ov{L_2},Y_1] \ne 1$ hold. In particular, $C_{Y_2}(\ov{L_1}) = C_{Y_2}(Y_1)$.
\end{lemma}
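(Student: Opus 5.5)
The proof splits into the non-vanishing of $[\ov{L_1},Y_2]$ and $[\ov{L_2},Y_1]$, plus the final assertion. The final assertion is immediate once $[\ov{L_1},Y_2]\ne 1$ is known: then $C_{Y_2}(\ov{L_1})\neq Y_2$, so Lemma~\ref{lemma4.6} forces $C_{Y_2}(\ov{L_1})=C_{Y_2}(Y_1)$. The work is therefore in the non-vanishing statements.

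\textbf{Reduction to index $1$.} I would prove only $[\ov{L_1},Y_2]\ne 1$ and then transport the argument. The element $u\inv$ satisfies the conclusions of Lemma~\ref{lemma4.2} for the pair $(Y^{u\inv},Y)$: we have $Y\le S_{u\inv}$, $Y^{u\inv}\in\Delta$, and conjugation by $u$ is a group isomorphism $N_\L(Y)\fto N_\L(Y^u)$ carrying $(M_1,Q_1,Y_1,S_1)$ to $(M_2,Q_2,Y_2,S_2)$. So, inside the groups $M_1$ and $M_2$, every statement about index $1$ transports to index $2$ by $c_u$. Concretely, $[\ov{L_2},Y_1]=1$ would mean $[\ov{L_1},Y_1^{u\inv}]=1$, and $Y_1^{u\inv}\le S$ plays the role of $Y_2$ for the pair $(Y_1^{u\inv},Y_1)$. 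I expect this to need care: one must check that all of $Y_1^{u\inv}$, $Q_2$, $L_2$ live inside $N_\L(Y_1)$ or $N_\L(Y_2)$, where $c_u$ is a genuine homomorphism, rather than just inside $\L$.

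\textbf{Main step.} Suppose $[\ov{L_1},Y_2]=1$. Since $YY^u\le S\cap S^u$, we have $Y_2\le S_1$, and $[Y_1,Y_2]\ne 1$ by the choice of $u$. Lemma~\ref{lemma4.7} with $U:=Y_2$ then gives $[A_1,Y_2]\ne 1$. Note that $A_1=C_{Y_1}(Q_1)\le C_S(Q)\le Q$, so $A_1\le Z(Q_1)$. I would aim to contradict this by showing that $Y_2$ centralizes $A_1$. The route I would try first is the following.
\begin{itemize}
\item Since $\ov{Y_2}$ is a $p$-group centralizing $\ov{L_1}$ and normalizing $\ov{F_1}$, I would use Lemma~\ref{lemma4.5}(c),(e) and the structure of $F^*(\ov{M_1})$ to force $\ov{Y_2}$ into $N_{\ov{M_1}}(\ov{Q_1})$.
\item By Lemma~\ref{lemma4.5}(a), $F_1\le N_\L(Q_1)$, and $C_{M_1}(Y_1)\le N_\L(Q_1)$ by $Q!$. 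Together these would lift the previous conclusion to $Y_2\le N_{M_1}(Q_1)$.
\item Then $Y_2$ normalizes $A_1$, and $[A_1,Y_2]\le A_1\cap Y_2$ lies in $Z(Q_1)\cap Y_2$.
\item Using $Q!$ together with Lemma~\ref{lemma: L containing Q}(c) applied to $Q_1$ and $Q_2=Q^u$, I would show that a nontrivial element of $Z(Q_1)\cap Y_2$ normalized suitably forces $Q_1=Q_2$ inside the group $N_\L(Y_1\cap Y_2)$. This is possible because $Y_1\cap Y_2\ne 1$ is normal in $S$, hence lies in $\Delta$ by $Q$-repleteness.
\item $Q_1=Q_2$ then yields $[A_1,Y_2]=1$, against Lemma~\ref{lemma4.7}.
\end{itemize}

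\textbf{Main obstacle.} I expect the hardest part to be the last step: producing the identification $Q_1=Q_2$, or an equivalent $[A_1,Y_2]=1$, inside a genuine subgroup of $\L$. The relevant conjugations must be composable in $D(\L)$. I would first try to realize everything inside $N_\L(Y_1\cap Y_2)$, conjugating into $S$ as in the proof of Lemma~\ref{lemma: L containing Q}(c) and using the weak closure of $Q$ from Lemma~\ref{lemma:large implies weakly closed}. If that fails, the fallback is Lemma~\ref{lemma1.57} together with the quadratic action from Lemma~\ref{lemma4.4}: show that $\ov{Y_2}$ would be a non-trivial quadratic offender centralizing $\ov{L_1}$, which the $Q!$-module structure forbids.
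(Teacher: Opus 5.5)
Your reduction to $[\ov{L_1},Y_2]\ne 1$ via $Y_1^{u\inv}$, the use of Lemma~\ref{lemma4.7} with $U=Y_2$ to get $[A_1,Y_2]\ne 1$ under the contrary assumption, and the deduction of the last assertion from Lemma~\ref{lemma4.6} are fine and agree with the paper. The gap is the step that should turn $[A_1,Y_2]\ne 1$ into a contradiction.

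\begin{itemize}
\item Your first two bullets only give $Y_2\le N_{M_1}(Q_1)$. This is already immediate from $Y_2\le S$ and $Q\norm S$.
\item The fourth bullet fails. Lemma~\ref{lemma: L containing Q}(c) needs a nontrivial element of $Z(Q_1)\cap Z(Q_2)$, but you only have $1\ne[A_1,Y_2]\le Z(Q_1)\cap Y_2$. An element of $Y_2$ lies in $Z(Q_2)$ only if it lies in $A_2=C_{Y_2}(Q_2)$, and nothing in your sketch forces $[A_1,Y_2]\cap A_2\ne 1$.
\item Your reason for $Y_1\cap Y_2\in\Delta$ is invalid. $Y_2=Y^u$ is normal in $S^u$, not in $S$, so $Y_1\cap Y_2$ need not be normal in $S$.
\item Even granting $Q_1=Q_2$, what follows is $A_1,A_2\le Z(Q)$, i.e.\ $[A_1,A_2]=1$, not $[A_1,Y_2]=1$. $Y_2$ need not lie in $Q$; compare case (4) of Theorem~\ref{theorem DL}.
\item The fallback is not an argument. ``A quadratic offender cannot centralize $\ov{L_1}$'' is essentially the statement to be proved. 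Moreover, at this point you only know that one of $Y_2$, $Y_1^{u\inv}$ is an offender.
\end{itemize}

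Your trivial-intersection idea for $Z(Q)$ does work, but only partially. Suppose in addition that $[\ov{L_2},Y_1]=1$. Apply Lemma~\ref{lemma4.7} again with index $2$ and $U:=A_1\le Y_1\le S^u$; this gives $[A_1,A_2]\ne 1$. Since $Y_1Y_2\le S\cap S^u$, each $A_i$ normalizes $Y_j$ and $Q_j$, hence $A_j$. So $1\ne[A_1,A_2]\le Z(Q_1)\cap Z(Q_2)$. Lemma~\ref{lemma: L containing Q}(c) then gives $u\in N_\L(Q)$, whence $[A_1,A_2]=1$, a contradiction.

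This only shows that $[\ov{L_1},Y_2]$ and $[\ov{L_2},Y_1]$ cannot both vanish. In the mixed case $[\ov{L_1},Y_2]=1\ne[\ov{L_2},Y_1]$, the transported Lemma~\ref{lemma4.6} gives $C_{Y_1}(\ov{L_2})=C_{Y_1}(Y_2)$, which does not contain $A_1$. So Lemma~\ref{lemma4.7} is unavailable on the $M_2$ side, and further input is needed.

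The paper does not reprove this part. It imports the argument of \cite[Lemma 4.8]{Meier-Stell-Stroth:2012}, which is local except for step $(5^\circ)$. It handles that step inside the group $M_2$, where $Q_2$ is large, obtaining $F\le N_{M_2}(C_U(Q_2))\le N_{M_2}(Q_2)$. Your proposal contains no substitute for this step.
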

\begin{proof} 
	Note that by our definitions also $Y_1^{u\inv} \in \Delta$ and satisfies the same properties of $Y^u$ with respect to $Y$. Since $[\ov{L_2}, Y_1] = [\ov{L_1^{u}}, Y_1] = [\ov{L_1}, Y_1^{u\inv}]$, we only need to prove that $[\ov{L_1},Y_2] \ne 1$ for $Y_2$.\\
	The argument used in \cite[Lemma 4.8]{Meier-Stell-Stroth:2012} is a completely local argument, apart from point $(5^\circ)$. Here, as $F \le M_2$, we may replace $G$ with $M_2$ and consider
	\[
	F \le N_{M_2}(C_U(Q_2)) \le N_{M_2}(Q_2);
	\]
	this holds since $Q$ being large in $\L$ implies that $Q$ is large in $M$ and, therefore, that $Q_2$ is large in $M_2$.	Whence the same reasoning applies to our case, proving the assertion. The last statement follows then from \eqref{lemma4.6}.
\end{proof}

\begin{lemma}[4.9] \label{lemma4.9}
	The following hold.
	\begin{enumerate}[(a)]
		\item $[\ov{M_1^\circ},Y_2] \ne 1$ and $[\ov{M_1^\circ},Y_1^{u\inv}] \ne 1$; thus $\ov{M_1^\circ} \ne 1$.
		\item $Y_i$ is a $Q!$-module for $\ov{M_i}$ with respect to $\ov{Q_i}$.
		\item One of $Y_2$ or $Y_1^{u\inv}$ is a non-trivial quadratic offender on $Y_1$.
		\item The triple $(\ov{M_i},Y_i,\ov{Q_i})$ fulfills the hypothesis of the \nameref{thm:Q!FF}. 
	\end{enumerate}
\end{lemma}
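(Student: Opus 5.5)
The plan is to deduce all four parts from Lemmas~\ref{lemma4.8}, \ref{lemma4.4} and \ref{lemma1.57}, using the two conjugation isomorphisms $c_u\colon N_\L(Y)\to N_\L(Y^u)$ and $c_{u\inv}\colon N_\L(Y)\to N_\L(Y^{u\inv})$ from Lemma~\ref{lemma4.2}. These are the only devices needed to move statements between $M_1$ and $M_2$.

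For (a), Lemma~\ref{lemma4.8} gives $[\ov{L_1},Y_2]\ne 1$. I will show $\ov{L_1}\le\ov{M_1^\circ}$. Indeed, $\ov{L_1}=[F^*(\ov{M_1}),Q_1]$ is contained in the normal closure of $\ov{Q_1}$ in $\ov{M_1}$, and that normal closure is $\ov{M_1^\circ}$. Hence $[\ov{M_1^\circ},Y_2]\ne1$. The second claim, $[\ov{L_1},Y_1^{u\inv}]\ne1$, comes from applying $c_{u\inv}$ to $[\ov{L_2},Y_1]\ne 1$, exactly as in the proof of Lemma~\ref{lemma4.8}. Since $\ov{M_1^\circ}$ acts nontrivially, it is nontrivial.

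For (b), I apply Lemma~\ref{lemma1.57}(b) with $V=Y$ and $g\in\{1,u\}$. This needs $Q_i\not\norm M_i$. For $i=1$, suppose $Q\norm M$. Then $M^\circ=Q$, so $\ov{M_1^\circ}=\ov{Q_1}$ is a normal $p$-subgroup of $\ov{M_1}$. It is therefore trivial, because $O_p(\ov{M_1})=1$ by $p$-reducedness of $Y$. This contradicts (a). For $i=2$, the statement transports along $c_u$.

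For (c), I would follow \cite[Lemma 4.9(c)]{Meier-Stell-Stroth:2012}. Both $Y_2$ and $Y_1^{u\inv}$ act quadratically on $Y_1$ by Lemma~\ref{lemma4.4}. For $Y_2$ this is immediate; for $Y_1^{u\inv}$, apply $c_{u\inv}$ to the quadratic action of $Y_1$ on $Y_2$. Both actions are nontrivial by (a). Since $Y_1,Y_2$ are elementary abelian and $[Y_1,Y_2]\le Y_1\cap Y_2$, the centralizer of each in the other is $C_{Y_1}(Y_2)$, respectively $C_{Y_2}(Y_1)$. Using $|\ov{Y_2}|=|Y_2/C_{Y_2}(Y_1)|$, I compare $|\ov{Y_2}|\,|C_{Y_1}(Y_2)|$ with $|Y_1|$ and $|\ov{Y_1}|\,|C_{Y_2}(Y_1)|$ with $|Y_2|=|Y_1|$. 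Multiplying the two and applying $c_{u\inv}$ to identify $|\ov{Y_1}|$ (as $Y_1$ acting on $Y_2$) with $|\ov{Y_1^{u\inv}}|$ (as $Y_1^{u\inv}$ acting on $Y_1$) shows that at least one of them is an offender. This transfer of orders through $c_{u\inv}$ is the step I expect to be the main obstacle: I must check that $c_{u\inv}$ is defined on the whole relevant configuration, which holds because $Y_1Y_2\le S\cap S^u\le N_\L(Y_2)$.

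For (d), I take the offender from (c) and conjugate back if necessary. It acts quadratically, and by (a) it does not commute with $\ov{M_i^\circ}$. Together with (b) and $O_p(\ov{M_i})=1$, this verifies hypothesis (i) of the \nameref{thm:Q!FF}.
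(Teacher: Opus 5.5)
Your proposal is correct and follows essentially the same route as the paper. Part (a) comes from $\ov{L_1}\le\ov{M_1^\circ}$, Lemma~\ref{lemma4.8} and transport by $c_{u\inv}$. Part (b) comes from Lemma~\ref{lemma1.57}(b), once $Q_i\not\norm M_i$ is forced by $O_p(\ov{M_i})=1$ together with (a). Part (c) compares the two offender inequalities and transports along $c_{u\inv}$; your ``multiplying'' step is just the dichotomy the paper phrases as: if $Y_2$ is not an offender on $Y_1$, conjugating the strict inequality makes $Y_1^{u\inv}$ an over-offender. Part (d) then combines these.
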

\begin{proof}
	The proof follows the lines of \cite[Lemma 4.9]{Meier-Stell-Stroth:2012}; we provide all the details since it constitutes the main reason for considering symmetric pairs.\\
	Since $\ov{L_i}=[F^*(\ov{M_i},Q_i]$, we get $\ov{L_i} \le \ov{M_i^\circ}$; by \ref{lemma4.8} $[\ov{L_1},Y_2] \ne 1$ and $[\ov{L_2},Y_1] \ne 1$, hence $[\ov{M_1^\circ},Y_2] \ne 1$ and $[\ov{M_2^\circ},Y_1] \ne 1$. Since conjugation by $u\inv$ is defined on $M_2$ and $M_2^{u\inv} = M_1$ we have also $[\ov{M_1^\circ},Y_1^{u\inv}] \ne 1$; this is (a).\\
	Note the $\ov{Q_i}=1$ implies $\ov{M_i^\circ}=1$, contradicting (a); hence $\ov{Q_i} \ne 1$. In addition, $Q_i \not \norm M_i$ since $O_p(\ov{M_i}) =1$, so \ref{lemma1.57}(b) yields that $Y_i$ is a faithful $Q!$-uniqueness module for $\ov{M_i}$ (with respect to $\ov{Q_i}$), proving (b).\\
	By \cite[Lemma 4.4]{Meier-Stell-Stroth:2012} $Y_1$ and $Y_2$ mutually act quadratically on each other; suppose that $Y_2$ is not an offender on $Y_1$, thus
	\[
	\frac{|Y_2|}{|C_{Y_2}(Y_1)|} < \frac{|Y_1|}{|C_{Y_1}(Y_2)|}.
	\]
	Conjugating now by $u\inv$ one gets
	\[
	\frac{|Y_1|}{|C_{Y_1}(Y_1^{u\inv})|} < \frac{|Y_1^{u\inv}|}{|C_{Y_1^{u\inv}}(Y_1)|},
	\]
	showing (c).\\
	By (c) $Y_1$ admits a non-trivial quadratic offender $Y_3 \in \{Y_2, Y_1^{u\inv}\}$ with $[\ov{M_1^\circ},Y_3] \ne 1$ by (a); since $Y_1$ is a $Q!$-module by (b) and $O_p(\ov{M_1})=1$, the Hypothesis of the \nameref{thm:Q!FF} are fulfilled.
\end{proof}

As a consequence, up to replacing $u$ with $u\inv$, we may always consider $Y_2$ a non-trivial, quadratic offender of $Y_1$.\\

We are now left to analyze the possible outcomes of the \nameref{thm:Q!FF}; to get this done, we first need to analyze occurrences of the orthogonal module for the prime $2$. If $V$ is a vector space equipped with a form $f$, denote the set of singular vectors of $V$ by $Sing(V)$.

\begin{lemma}[4.10] \label{lemma4.10}
	Suppose that the following hold:
	\begin{itemize}
		\item[(i)] $M^\circ / C_{M^\circ}(Y) \cong \Omega_{2n}^\epsilon (2)$ for $2n \ge 4$ and $\ov{M} \ne \ov{M^\circ}$,
		\item[(ii)] $[Y,M^\circ]$ is a natural $\Omega_{2n}^\epsilon(2)$-module for $M^\circ$,
		\item[(iii)] $[Y_1,Y_2] \not \subseteq Sing([Y_1,M_1^\circ]) \cap Sing([Y_2,M_2^\circ])$.
	\end{itemize}
	Then:
	\begin{itemize}
		\item[(a)] if $(2n,\epsilon) \ne (4,+)$, then Theorem \ref{theorem DL}(5) holds,
		\item[(b)] if $(2n,\epsilon) = (4,+)$, then Theorem \ref{theorem DL}(3) holds.
	\end{itemize}
	In both cases, if $Y = Y_M$, then $\< y \> \not \in \Delta$ for all $y \in [Y_M,M^\circ] \setminus Sing([Y_M,M^\circ])$.
\end{lemma}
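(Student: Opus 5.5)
We follow \cite[Lemma 4.10]{Meier-Stell-Stroth:2012}. The argument is essentially local, taking place inside $M_1$, $M_2=M^u$ and $N_\L(Y)=M^\dagger$, which are all groups by \ref{lemma4.2}. The only genuinely global ingredient is the final claim about $\<y\>\notin\Delta$, which we obtain from Theorem~\ref{theorem CL}.

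\textbf{Step 1: identify $\ov{M}$.} By (i), $\ov{M^\circ}\cong\Omega^\epsilon_{2n}(2)$ and $\ov{M}\neq\ov{M^\circ}$. Since $\ov{M^\circ}\norm\ov{M}$ and $C_{\ov M}(\ov{M^\circ})$ is trivial on the natural module $[Y,M^\circ]$ (use $O_2(\ov M)=1$ together with \ref{lemma4.5}(e)), $\ov M$ embeds in $\Aut(\Omega^\epsilon_{2n}(2))$ acting on $[Y,M^\circ]$. Hence $\ov M\cong\O^\epsilon_{2n}(2)$, except for the usual small exceptions when $(2n,\epsilon)=(4,+)$, which we treat separately.

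\textbf{Step 2: use (iii) to find a non-singular vector of $[Y_1,Y_2]$.} By (iii), after possibly interchanging the roles of $Y_1,Y_2$ via conjugation by $u^{-1}$ (as in \ref{lemma4.8}), some $y\in[Y_1,Y_2]$ is non-singular in $[Y_1,M_1^\circ]$. Quadratic offenders on the natural orthogonal module whose commutator contains a non-singular vector are transvection-type groups, and these are generated by the elements of $\ov M\setminus\ov{M^\circ}$ that act as reflections (\ref{lemma3.1 - orthogonal module}(c:2)). Following MSS, we use this to derive $|Y:[Y,M]|\le 2$, with $\ov M\cong\O^+_6(2)$ when equality holds. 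This gives (5)(a).

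\textbf{Step 3: obtain (5)(b) by contradiction, in the case $(2n,\epsilon)\neq(4,+)$.} Suppose $C_\L(y)\subseteq M^\dagger$ for all non-singular $y$. Since $y\in Y_1\cap Y_2\le S$, $\<y\>\in\Delta$, so conjugation by $u$ extends over $C_\L(y)$. We then aim to show that $u$ can be adjusted by an element of $C_\L(y)\le M^\dagger$ so that it normalizes $Y$. This would contradict $[Y,Y^u]\ne1$, because $Y^u\neq Y$ and $Y$ is abelian.

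This step is the main obstacle. The MSS argument conjugates freely inside $G$, whereas here every composite of conjugations must be checked to lie in $D(\L)$. We would justify each such composite by exhibiting object sequences through $\<y\>$, $Y$ and $Y^u$, in the style of \ref{lemma4.2} and \ref{lemma3.7}. We would also use \ref{lemma:uniquen of some conjug of Q-orthog case} to control the relevant conjugates of $Q$.

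\textbf{Step 4: the case $(2n,\epsilon)=(4,+)$ and the final claim.} When $(2n,\epsilon)=(4,+)$, we have $\Omega^+_4(2)\cong\SL_2(2)\times\SL_2(2)$, and the natural module is the tensor product of two natural $\SL_2(2)$-modules. Here $Q$ swaps the two factors, as required by the $Q!$-module condition of \ref{lemma4.9}(b). Hence $Y$ is an $\SL_2(2)$-wreath product module, and the remaining claims of Theorem~\ref{theorem DL}(3) follow from \nameref{thm:Q!FF}(1).

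Finally, if $Y=Y_M$ then $Y=Z_M$ up to the index $2$ case. Steps 1–3 verify hypotheses (i), (ii), (iii$'$) and (iv) of Theorem~\ref{theorem CL}, which then gives $\<y\>\notin\Delta$ for every non-singular $y$. In the $(4,+)$ case we use the analogous part of the argument of Theorem~\ref{theorem CL}.
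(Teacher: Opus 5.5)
\textbf{Gap in Step 3.} This is the step that needs a locality-specific argument, and you do not supply one; the plan you sketch also would not work.

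First, $y\in Y_1\cap Y_2\le S$ does not give $\<y\>\in\Delta$. The subgroup $\<y\>$ is not normal in $S$, since non-singular vectors are not $2$-central, and nothing forces $Q$ to normalize it. Moreover, for $Y=Y_M$ the lemma itself asserts $\<y\>\notin\Delta$. So $C_\L(y)$ need not be a subgroup, and $c_u$ need not extend over it. Second, multiplying $u$ by an element of $C_\L(y)\subseteq M^\dagger=N_\L(Y)$ cannot change whether the product normalizes $Y$. So ``adjusting $u$ inside $C_\L(y)$'' cannot yield the contradiction.

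What is missing is a case distinction, together with an adjustment of $u$ by an element of $M_2$. Take $t\in[Y_1,Y_2]$ non-singular in $[Y_1,M_1]$.
\begin{itemize}
\item If $t$ is singular in $[Y_2,M_2]$, then $C_{M_2}(t)$ contains a Sylow $2$-subgroup of $M_2$, of order $|S|=|M_1^\dagger|_2$. If $C_\L(t)\subseteq M_1^\dagger$, then $t$ would be centralized by a Sylow $2$-subgroup of $M_1^\dagger=M_1C_\L(Y_1)$, i.e.\ $t$ would be singular for $M_1$.
\item If $t$ is non-singular in $[Y_2,M_2]$, then so is $t^u$. Transitivity of $M_2$ on non-singular vectors gives $m\in M_2$ with $t^{um}=t$, where $(u,m)\in D(\L)$ via $(Y_1,Y_2,Y_2)$. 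Then $um\in C_\L(t)$, but $Y_1^{um}=Y_2\neq Y_1$, so $um\notin N_\L(Y_1)=M_1^\dagger$.
\end{itemize}
In both cases $C_\L(t)\not\subseteq M_1^\dagger$, and no object containing $t$ is needed. This is the paper's argument for (5)(b).

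\textbf{Consequences and Step 4.} The argument above does not depend on $(2n,\epsilon)$. Combined with Theorem~\ref{theorem CL}, which allows every $n\ge 2$, it gives the final claim also when $(2n,\epsilon)=(4,+)$. Your proposal instead leaves that case to an unspecified ``analogous part'' of the proof of Theorem~\ref{theorem CL}.

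Your Step 4 reasoning is also wrong as written, although its conclusion is correct. Since $\ov Q\le\ov{M^\circ}\cong\Omega_4^+(2)\cong\SL_2(2)\times\SL_2(2)$, the group $\ov Q$ normalizes both direct factors and cannot swap them. Moreover, a tensor product $V_1\otimes V_2$ is not a decomposition of the form $\bigoplus_K[V,K]$. The wreath product structure comes instead from $\O_4^+(2)\cong\O_2^-(2)\wr C_2$ acting on $V=E_1\perp E_2$, where $E_1,E_2$ are anisotropic planes. This is what the paper means by $\ov M\cong\SL_2(2)\wr C_2$; here $\ov Q$ must swap $E_1$ and $E_2$, because otherwise its normal closure would lie in a proper normal subgroup of $\Omega_4^+(2)$.

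Finally, (5)(a) comes from \cite[Theorem C.22]{Meier-Stell-Stroth:2012}, applied when $[Y,M^\circ]<Y$. It does not use the non-singular commutator, contrary to your Step 2.
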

\begin{proof}
	Let $H := \Aut_{\mathbb{F}_2}([Y,M^\circ])$,then \cite[Lemma B.35(d)]{Meier-Stell-Stroth:2012} shows that $N_H(\ov{M^\circ}) \cong \O_{2n}^\epsilon(2)$. Since $\ov{M} \ne \ov{M^\circ}$ we conclude that $\ov{M} \cong \O_{2n}^\epsilon(2)$ and $[Y,M^\circ]$ is the corresponding natural orthogonal module. Either $Y = [Y,M^\circ]$, hence $[Y,M^\circ] \le [Y,M] \le Y = [Y,M^\circ]$, or $ [Y,M^\circ] < Y$. In the latter case, \cite[Theorem C.22]{Meier-Stell-Stroth:2012} gives $\ov{M} \cong \O_6^+(2)$ and $|Y:[Y,M^\circ]|=2$, hence we always have $[Y,M^\circ]=[Y,M]$.\\
	
	Without loss of generality, suppose that $[Y_1,Y_2]$ contains a non-singular element $t$ of $[Y_1,M_1]$. Since $M_1^\dagger = M_1 C_{\L}(Y_1)$, $C_{M_1^\dagger}(t)$ doesn't contain a Sylow $2$-subgroup of $M_1^\dagger$.\\
	Assume first that $t \in Sing([Y_2,M_2])$; then $C_{M_2}(t)$ contains a Sylow $2$-subgroup of $M_2$. Since $|M_1|_2 = |M_2|_2$, if $C_\L(t) \subseteq M_1^\dagger$, then $C_{M_2}(t) \le M_1^\dagger$ and $C_{M_1^\dagger}(t)$ would contain a Sylow $2$-subgroup of $M_1^\dagger$, a contradiction. Thus $C_\L(t) \not \subseteq M_1^\dagger$.\\
	Assume now $t \not \in Sing([Y_2,M_2])$; since $M_2=M_1^u$ and $t$ is non-singular for $M_1$ in $[Y_1,M_1]$, also $t^u \not \in Sing([Y_2,M_2])$. Since $M_2$ acts transitively over the non-singular vectors, there is an element $m \in M_2$ such that $(t^u)^m =t$. Moreover, $(u,m) \in D(\L)$ via $(Y_1,Y_2,Y_2)$, hence we may write $t^{um} = t$. As $Y_1 \ne Y_1^{um} = Y_2$, $um \not \in N_\L(Y_1) = M_1^\dagger$, hence again $C_\L(t) \not \subseteq M_1^\dagger$.\\
	By \ref{lemma4.9} $Y_i$ is a $Q_i!$-module for $\ov{M_i}$, hence $Q_i \not \norm M_i$ and the pair $(M_1,Y_1)$ satisfies the hypothesis of Theorem~\ref{theorem CL}. In particular, if $\ov{M} \cong \O_4^+(2)$, then $\ov{M} \cong \SL_2(2) \wr C_2$, so Theorem~\ref{theorem DL}(3) holds. Otherwise Theorem~\ref{theorem DL}(5) holds.
\end{proof}

\begin{notation}[4.11]
	Set $\{i,j\} = \{1,2\}$,
	\[
	J_i := J_{M_i}(Y_i) \qquad \text{and} \qquad \ov{R_i} := F^*(\ov{J_i})
	\]
	and for $R_i$ the inverse image of $\ov{R_i}$ in $M_i$
	\[
	W_i :=[Y_i,R_i] \qquad \text{and} \qquad T_i:=Y_jR_i.
	\]
	Note that $T_i$ is well-defined as $Y_j \le M_i$ and $R_i \norm M_i$.
\end{notation}

\begin{lemma}[4.12] \label{lemma4.12}
	Suppose that case (1) of the \nameref{thm:Q!FF} holds; then Theorem~\ref{theorem DL}(3) holds.
\end{lemma}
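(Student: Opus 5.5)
The plan is to argue inside the group $M_1^\dagger=N_\L(Y)$, following \cite[Lemma 4.12]{Meier-Stell-Stroth:2012}. Throughout, $Y_2$ is taken, as arranged after \ref{lemma4.9}, to be a non-trivial quadratic offender on $Y_1$.

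\textbf{Step 1: the wreath product structure.} Case (1) of the \nameref{thm:Q!FF} applied to $(\ov{M_1},Y_1,\ov{Q_1})$ gives an $\ov{M_1}$-invariant set $\K$ of subgroups with $K\cong\SL_2(q)$, $[Y_1,K]$ natural, $J_1=\bigtimes_{K\in\K}K$, $Y_1=\bigoplus_{K\in\K}[Y_1,K]$, $Q_1$ transitive on $\K$ and $\ov{M_1^\circ}=O^p(J_1)\ov{Q_1}$.

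This is (3)(a),(b) of Theorem~\ref{theorem DL}, except that the statement is about $Y_M$, not $Y$. So the first task is to show $Y=Y_M$. Once that holds, $Y_M$ is a natural $\SL_2(q)$-wreath product module for $\ov{M}$, and uniqueness of $\K$ follows because $\K$ is the set of $J_1$-components, i.e.\ $\K=\J_{\ov{M}}(Y)$.

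\textbf{Step 2: showing $Y=Y_M$ and (3)(c).} This step is purely local in the group $M=M_1$, since all objects involved are subgroups of $M$ and $S\le M$. I would reproduce the argument of \cite{Meier-Stell-Stroth:2012}.

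First, $Y\le Y_M$ by \ref{lemma4.2}. Both are $p$-reduced normal subgroups of $M$, and $\ov{M^\circ}=O^p(J_1)\ov{Q_1}$ acts on $Y_M$. Note that $[Y,Q_1]$ is large in each summand, because $Q_1$ is transitive on $\K$. Using the $Q!$-property (\ref{lemma1.57}(a)) on $C_{Y_M}(Q)$, together with the structure of $\SL_2(q)$-modules with natural submodule, one shows $C_{Y_M}(O^p(J_1))=1$ and that $Y_M=[Y_M,O^p(J_1)]$ lies in $Y$. Here I would invoke the relevant appendix lemmas of \cite{Meier-Stell-Stroth:2012} on wreath product modules, in particular the cohomology of natural $\SL_2(q)$-modules and \cite[Lemma A.57]{Meier-Stell-Stroth:2012}.

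For (3)(c), set $N:=M^\circ S$, a parabolic subgroup. By \ref{lemma:properties of parabolics in L}(f), $Y_N\le Y_M$. Conversely, $\ov{M}$ and $\ov{N}$ have the same $J$-components, so $Y_M$ is $p$-reduced for $N$, since $O_p(\ov{N})$ centralizes $\bigtimes_{K\in\K}K$ and hence is trivial. Therefore $Y_M\le Y_N$.

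\textbf{Main obstacle.} The expected difficulty is the identification $Y=Y_M$ and the triviality of the extra centralizer part, because this needs the $Q!$-property applied to conjugates of $Q$. Inside $M$, however, all $M$-conjugates of $Q$ are genuine conjugates by \ref{lemma:weakly closed subgroups in localities}(a), and conjugation by $u$ restricts to a group isomorphism $M_1\to M_2$. So no locality-specific issues should arise beyond quoting \ref{lemma1.57} and \ref{lemma2.2}.
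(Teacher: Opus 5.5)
\textbf{Gap in Step 2.} You try to prove $Y=Y_M$ unconditionally, and your sketch does not do this. The $Q!FF$-analysis concerns only $Y$ and $M/C_M(Y)$: the symmetry hypothesis, the offender $Y_2$ and the wreath structure $J_1=\bigtimes_{K\in\K}K$ all live there. It gives no control over how $C_M(Y)$ or $O^p(J_1)$ act on $Y_M$ outside $Y$.

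In particular, the key claim $[Y_M,O^p(J_1)]\le Y$ has no argument behind it. Nothing excludes, for instance, $Y_M=Y\oplus Y'$ with $O^p(J_1)$ acting non-trivially on $Y'$. The $Q!$-property cannot exclude this on its own. For $H=\SL_2(q)$ and $Q\in\Syl_p(H)$, the direct sum $V$ of two natural modules is already a faithful $Q!$-module: every non-trivial subgroup of $C_V(Q)$ has normalizer inside the Borel subgroup $N_H(Q)$. Vague appeals to cohomology of natural modules and \cite[Lemma A.57]{Meier-Stell-Stroth:2012} do not supply an upper bound for $Y_M$.

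\textbf{$Y=Y_M$ is not needed.} In Theorem~\ref{theorem DL}(3), the ``$Y_M$'' in the first line is meant to be $Y$, with $\ov{M}=M/C_M(Y)$ as in Notation~\ref{notation4.3}. You can see this from item (c), which is explicitly conditional on $Y=Y_M$, and from the paper's proof. That proof runs as follows:
\begin{itemize}
\item the wreath product structure of $Y=Y_1$, and (a), (b), come straight from case (1) of the $Q!FF$-Module Theorem;
\item uniqueness of $\K$ comes from \cite[Lemma A.27]{Meier-Stell-Stroth:2012};
\item for (c) one assumes $Y=Y_M$ and gets $1\ne Y_{M^\circ S}\le Y_M$ from \cite[Lemma 2.2(b)]{MS:2009};
\item equality then holds because $Y_M$ is a simple $M^\circ S$-module, as $Q$ is transitive on $\K$.
\end{itemize}
So you should simply delete the first half of Step 2.

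Your argument for (c) is a valid alternative to the simplicity argument once $Y=Y_M$ is assumed. You show $O_p(N/C_N(Y_M))=1$ for $N=M^\circ S$: this group centralizes $J_1\le\ov{N}$, and a $p$-element centralizing $K\cong\SL_2(q)$ acts on the natural module $[Y,K]$ as an $\bF_q$-scalar, hence trivially.

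\textbf{Minor point.} For $q\in\{2,3\}$ the group $\SL_2(q)$ is not perfect, so the $J_1$-components are the subgroups $O^p(K)$ (isomorphic to $C_3$, respectively $Q_8$), not the $K$ themselves. The claim $\K=\J_{\ov{M}}(Y)$ is therefore not literally correct, although $\K$ is still recoverable from the components. Citing \cite[Lemma A.27]{Meier-Stell-Stroth:2012} avoids this.
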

\begin{proof}
	By hypothesis there exists an $\ov{M_1}$-invariant set $\mathcal{K}$ of subgroups of $\ov{M_1}$ such that $Y_1$ is a natural $\SL_2(q)$-wreath product module with respect to $\mathcal{K}$ and, moreover, $\ov{M_1^\circ} = O^p(\< \mathcal{K} \>) \ov{Q_1}$ and $Q_1$ acts transitively on $\mathcal{K}$.\\
	In addition, $J_i = \bigtimes_{K \in \mathcal{K}} K$ and $(M_i, Y_i, J, \mathcal{K})$ satisfies the hypothesis of \cite[Lemma A.27]{Meier-Stell-Stroth:2012} in place of $(H, V, P, \mathcal{K})$, so such a set $\mathcal{K}$ is unique.\\
	Suppose that $Y=Y_1=Y_M$; by \cite[Lemma 2.2(b)]{MS:2009} $Y_{M^\circ S} \le Y_M$ and the simplicity of $Y_M$ as a $M^\circ S$-module implies that $Y_{M^\circ S} =Y_M$. This shows D(3).
\end{proof}

\vspace{0.4cm}
We now look at the situation where case (2) of the \nameref{thm:Q!FF} holds; in particular, $W_1$ is a semisimple $J_1$-module and one needs here to distinguish the case where $W_1$ is an $R_1$-simple module from the case where it is not $R_1$-simple.\\
As usual, the local analysis goes along with the analysis undertaken in \cite{Meier-Stell-Stroth:2012}, as expected; however, here we see how arguments involving $G$-conjugates of $Q$ used to exclude some of the possible modules need further reasoning. Because of the complexity of the next proofs, we mainly continue recording the full argument adjoining to them all the required changes that the locality requires.

\begin{lemma}[4.13]\label{lemma4.13}
	Suppose that case (2) of the \nameref{thm:Q!FF} holds with $W_1$ not a simple $R_1$-module. Then~\ref{theorem DL}(4:4) holds.
\end{lemma}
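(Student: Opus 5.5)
The plan is to follow \cite[Lemma 4.13]{Meier-Stell-Stroth:2012} closely, since the argument is almost entirely internal to $\ov{M_1}$ and its action on $Y_1$, and to check separately the few places where conjugates of $Q$ or conjugation by $u$ are used.

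\textbf{Step 1: reduce to the list in case (2)(c)(i) or (iii).} By \ref{lemma4.9}(d) we may assume, after possibly replacing $u$ by $u\inv$, that $Y_2$ is a non-trivial quadratic offender on $Y_1$. Since $W_1$ is not $R_1$-simple, case (2)(c)(ii) of the \nameref{thm:Q!FF} is excluded. Hence either
\begin{itemize}
\item[(i)] $\ov{R_1}=\ov{J_1^0}$ is one of $\SL_n(q)$, $\Sp_{2n}(q)$, $\SU_n(q)$, $\Omega^\pm_n(q)$, and $W_1$ is a direct sum of at least two isomorphic natural modules, with $\ov{M_1^\circ}=\ov{R_1}C_{\ov{M_1^\circ}}(\ov{R_1})$; or
\item[(iii)] $p=2$, $\ov{R_1}\cong\SL_4(q)$, and $W_1$ is the sum of two non-isomorphic natural modules.
\end{itemize}
In case (i) I would write $W_1\cong N\otimes_{\bK} U$, where $N$ is the natural $\ov{R_1}$-module and $U$ is the multiplicity space; the centralizer $C_{\ov{M_1^\circ}}(\ov{R_1})$ then acts on $U$.

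\textbf{Step 2: use the $Q!$ property on $A_1=C_{Y_1}(Q_1)$.} By \ref{lemma1.57}(a), $N_{M_1}(A_0)\le N_{M_1}(Q_1)$ for every non-trivial submodule $A_0\le A_1$. Using \ref{lemma4.5}(a),(c) and the uniqueness property \ref{lemma4.7}, I will show that $\ov{Q_1}$ cannot lie in $\ov{R_1}$ with trivial action on $U$. Otherwise $C_{W_1}(Q_1)=C_N(Q_1)\otimes U$, and its stabilizer would contain $C_{\ov{M_1^\circ}}(\ov{R_1})$ acting irreducibly on $U$, contradicting $Q_1\not\trianglelefteq M_1$. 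This forces $C_{\ov{M_1^\circ}}(\ov{R_1})$ to act on $U$ as $\SL_m(q)$ with $U$ natural, so that $\ov{M_1^\circ}\cong\SL_n(q)\circ\SL_m(q)$ and $Y$ is the tensor product of the natural modules.

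\textbf{Step 3: exclude the remaining configurations, as in the LST.} The other classical types for $\ov{R_1}$ have to be ruled out. Case (iii) and $p=2$ (including $m=2$) have to be excluded. The symmetric pair $(Y_1,Y_2)$ gives the tool: since $Y_2$ acts quadratically and is an offender, comparing $|Y_2/C_{Y_2}(Y_1)|$ with $|[Y_1,Y_2]|$ on tensor products forces $p$ odd and $n+m\ge5$.

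These are the places where the LST uses $G$-conjugates of $Q$, for instance $Q_1$ versus $Q_2$ acting on $Y_1\cap Y_2$. I replace them by $Q_2=Q^u$, which is well defined by \ref{lemma4.2}, and work inside the groups $M_1$ and $M_2$. Any comparison of the form $Z(Q_1)\cap Z(Q_2)\neq1$ is handled with \ref{lemma: L containing Q}(c), which applies because $c_u$ is defined on $Q$.

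\textbf{Expected obstacle.} The main difficulty is exactly this exclusion step: the original argument conjugates $Q$ into $M_1$ by elements of $G$ that need not exist in $\L$. I would first try to realize every required conjugate as $Q_1^m$ with $m\in M_1$, or as $Q_2$. To justify that this suffices, I would use that the conjugates of $Q$ centralizing a given element of $Y_1$ are unique, in the spirit of \ref{lemma:uniquen of some conjug of Q-orthog case}, proved via $Q$-repleteness and $(Q!)$.
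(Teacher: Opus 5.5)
Your Step~1 and the reduction to $\ov{R_i}=\ov{J_i}\cong\SL_n(q)$, $n\ge 3$, with $W_i=Y_i$ a sum of $m\ge 2$ isomorphic natural modules and $Y_j\le J_i$, can be quoted from the LST; the paper does this for its points (1)--(11). There is, however, a genuine gap in Step~2, which is where the real content lies.

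\textbf{Step 2.} The $Q!$ property alone cannot produce the second factor.
\begin{itemize}
\item If $\ov{Q_1}\le\ov{R_1}$, then $\ov{M_1^\circ}\le\ov{R_1}$. So $C_{\ov{M_1^\circ}}(\ov{R_1})\le Z(\ov{R_1})$ acts on $U$ by scalars, not irreducibly.
\item The configuration $\ov{M_1^\circ}=\ov{R_1}\cong\SL_n(q)$ on $N\otimes U$, with $Q_1$ the unipotent radical of a point stabilizer, is a genuine faithful $Q!$-module. Every element of $C_{W_1}(Q_1)=\langle v\rangle\otimes U$ is a pure tensor $v\otimes u$, so normalizers of its non-trivial subgroups fix $\langle v\rangle$ and normalize $Q_1$.
\item This is why case (2)(c)(i) of the \nameref{thm:Q!FF} only gives $H^\circ=RC_{H^\circ}(R)$. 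Nothing in your sketch makes $C_{\ov{M_1^\circ}}(\ov{R_1})$ act as $\SL_m(q)$ on $U$.
\end{itemize}

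The missing idea is to transfer the $\SL_n(q)$-structure of one member of the symmetric pair to the multiplicity space of the other, through a \emph{common} conjugate of $Q$. The paper does this as follows.
\begin{itemize}
\item Take $S_i$-invariant simple summands $I_2=I_1^u$, pick $1\ne x\in[I_1,I_2]$, and choose $y_i\in J_i$ with $x\in C_{I_i}(Q_i^{y_i})$.
\item Check $(u,y_2,y_1^{-1})\in D(\L)$, using that $\langle w\rangle\in\Delta$ for every $1\ne w\in I_1\cup I_2$.
\item Apply \ref{lemma: L containing Q}(c) to $uy_2y_1^{-1}$ to get $Q_0:=Q_1^{y_1}=Q_2^{y_2}\le M_1\cap M_2$. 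Applying that lemma to $u$ itself, as you propose, gives nothing here.
\end{itemize}
With $Q_0$ one shows:
\begin{itemize}
\item $X_i:=[I_1,I_2]=[I_i,Q_0]=C_{I_i}(Q_0)$ is one-dimensional over $\bK_i$.
\item $Z_j:=[I_i,Y_j]$ has dimension $m\ge2$; hence $Y_i\not\le Q_i$. This is part of (4:4), and you never address it.
\item Since $Q_0\cap J_j$ centralizes $Z_j$, the transvection group $\Hom_{\bK_i}(Z_j/X_i,X_i)$ induced by $Q_0\cap J_i$ is already induced by $Q_0\cap C_j$.
\item $Z_j$ is a natural $\SL_m(q)$-module for $K_j^\circ$, where $K_j=N_\L(Z_j)$.
\end{itemize}
Then \cite[Lemma 7.2]{MS:2008} gives $\widetilde{C_j^\star}\cong\SL_m(q)$ and $Y_j\cong I_j\otimes Z_j$.

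\textbf{Step 3.} This step fails as well. The orders $|Y_2/C_{Y_2}(Y_1)|$ and $|[Y_1,Y_2]|$ are given by the same expressions in $q$, $n$, $m$ for every $p$, so no such comparison can force $p$ odd. Also, $n+m\ge5$ is automatic from $n\ge3$ and $m\ge2$.

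The paper excludes $p=2$ structurally.
\begin{itemize}
\item Choose $v\in J_2$ with $(Z_2^u)^v=Z_1$ and $(I^u)^v=I$, where $I=[I_1,I_2]$. The element $g=uv$ is defined since $Z_2\in\Delta$; it normalizes $I$ and interchanges $Z_1$ and $Z_2$.
\item A Sylow $p$-subgroup $S_0$ of $N_{M_1}(I_1)$ can be chosen to fix $x$ and $Z_2$; it then also normalizes $Z_1$ and $I$.
\item For $p=2$, replace $g$ by a suitable power so that $g$ is a $2$-element still swapping $Z_1$ and $Z_2$. It is then conjugate into $S_0\in\Syl_2(\langle S_0,g\rangle)$, which is impossible since $S_0$ fixes both $Z_1$ and $Z_2$.
\end{itemize}
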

\begin{proof}
	By \ref{lemma4.9}(c) either $Y_2=Y_1^u$ or $Y_1^{u\inv}$ is a quadratic non-trivial offender on $Y_1$; thus up to replacing $u$ with $u\inv$, we may suppose without loss of generality that $Y_2$ is a quadratic offender on $Y_1$ with $[Y_1,Y_2] \ne 1$. Select a minimal non-trivial offender $A \le Y_2$ on $Y_1$, then $A$ is a quadratic best offender on $Y_1$ (for example by \cite[Lemma A.39]{Meier-Stell-Stroth:2012}), thus $A \le J_1$.\\
	Recall that $Y_1Y_2 \le M_1 \cap M_2$ and let $I_i$ be a simple $R_i$-submodule of $W_i$, so that the \nameref{thm:Q!FF} may be applied. We follow the enumeration of \cite[Lemma 4.13]{Meier-Stell-Stroth:2012}; in particular, points \eqref{4.13-1} to \eqref{4.13-11} deal exclusively with the structure of local subgroups and, therefore, are  the same as in \cite[Lemma 4.13]{Meier-Stell-Stroth:2012}. We simply restate the findings, with little changes to the notation.
	
	\begin{subresult}\label{4.13-1}
		\begin{itemize}
			\item[(a)] $\ov{R_i}$ is quasisimple and contained in $\ov{M_i^\circ}$,
			\item[(b)] $C_{Y_i}(R_i) = 1$, $W_i$ is semisimple as a $J_i$-module and $\ov{M_i}$ acts faithfully on $W_i$.
		\end{itemize}
	\end{subresult}

	Let now $\{i.j\}=\{1,2\}$.
	\begin{subresult}\label{4.13-2}
		If $x \in M_j$ is such that $C_{I_j}(x) \ne 1$, then $I_j^x = I_j$.
	\end{subresult}

	\begin{subresult}\label{4.13-3}
		Let $X_i \le Y_i$ and $\cU_j := \{ I \le W_j \mid \text{ $I$ is a simple $R_j$-submodule of $W_j$} \}$. Suppose that $[X_i,Y_j] \ne 1$ but $X_i$ acts trivially on $\cU_j$. Then
		\[
		[\ov{R_j}, X_i] = \ov{R_j} \ne 1, \quad 1 \ne [I_j, X_i] \le [I_j,Y_i] \quad \text{and} \quad \text{$Y_i$ acts trivially on $\cU_j$.}
		\]		
	\end{subresult}

	\begin{subresult}\label{4.13-4}
		$T_i = Y_jR_i$ acts trivially on $\cU_i$; in particular, $W_i$ is a faithful and semisimple $\ov{T_i}$-module and $O_p(\ov{T_i}) = 1$.
	\end{subresult}

	\begin{subresult}\label{4.13-5}
		$C_{Y_i}(I_j) = C_{Y_i}(Y_j)$; in particular, $[I_1,I_2] \ne 1$.
	\end{subresult}

	\begin{subresult}\label{4.13-6}
		$W_i$ is not a self-dual $T_i$-module.
	\end{subresult}

	\begin{subresult}\label{4.13-7}
		Let $\bK_i := \End_{R_i}(I_i)$, then $\bK_i$ is a field (as $I_i$ is $R_i$-irreducible) and $T_i$ acts $\bK_i$-linearly on $I_i$.
	\end{subresult}
	
	\begin{subresult}\label{4.13-8}
		One of the following holds, with $q$ a power of $p$.
		\begin{itemize}[leftmargin=5em]
			\item[1)] \begin{itemize}
				\item[(a)] $\ov{R_i} = \ov{J_i} \cap \ov{M_i^\circ} \in \{ \SL_n(q), n \ge 3; \, \Sp_{2n}(q), n\ge 3; \, \SU_n(q), n \ge 8; \, \Omega^{\pm}_n(q), n \ge 10 \}$.
				\item[(b)] $W_1$ is the direct sum of at least two isomorphic natural modules for $\ov{R_1}$.
				\item[(c)] $\ov{M_i^\circ} = \ov{R_i} C_{\ov{M_i^\circ}}(\ov{R_i})$.
				\item[(d)] If $W_i < Y_i$, then $\ov{R_i} \cong \Sp_{2n}(q)$ with $p=2$ and $n \ge 4$.
			\end{itemize}
			\item[2)] $\, p=2$, $\ov{J_i} = \ov{R_i} \cong \SL_4(q)$ and $Y_i$ is the direct sum of two non-isomorphic natural modules for $\ov{R_i}$.
		\end{itemize}
	\end{subresult}
	
	\begin{subresult}\label{4.13-9}
		$I_i$ is a faithful $\bF_q T_i$-module.
	\end{subresult}
	
	\begin{subresult}\label{4.13-10}
		$R_i=J_i$, $\ov{J_i} \cong \SL_n(q)$ with $n \ge 3$, $W_i = Y_i$, $\ov{M_i^\circ} = \ov{J_i}C_{\ov{M_i^\circ}}(\ov{J_i})$ and $Y_i$ is the direct sum of $m \ge 2$ isomorphic natural modules for $\ov{J_i}$. In particular, $\bK_i \cong \bF_q$.
	\end{subresult}
	
	\begin{subresult}\label{4.13-11}
		$Y_j \le J_i$.
	\end{subresult}
	
	From this point on the general argument remains the same, but in the locality $\L$ we need to justify steps and add some more specific reasoning in multiple occasions. We therefore provide a full proof from now on.\\
	The idea developed by the authors of \cite[Lemma 4.13]{Meier-Stell-Stroth:2012} here consists in first proving \ref{theorem DL}(4:4) for all primes and afterwards deriving a contradiction to $p=2$. This is mainly carried by using the structure of $\ov{J_i}$ and looking at the interaction between the two $\ov{J_i}$-modules $I_1$ and $I_2$. The link between the two modules is provided by a suitable conjugate $Q_0$ of both $Q_1$ in $M_1$ and $Q_2$ in $M_2$; the same argument applies to our case with a little extra work.\\
	By \eqref{4.13-10} $Y_i$ is a homogeneous $\ov{J_i}$-module, so \cite[Lemma 5.2(d)]{MS:2008} shows that there exists an $S_i$-invariant simple $J_i$-submodule in $Y_i$; let $I_1$ and $I_2$ denote such submodules, with $I_2 = I_1^u$.\\
	Denote by $C_i$ the inverse image of $C_{\ov{M_i^\circ}}(\ov{J_i})$ in $M_i^\circ$ and, since $[I_1,I_2] \ne 1$ by \eqref{4.13-5}, pick $1 \ne x \in [I_1,I_2]$ and set $X_i := \bK_i x$.\\
	
	Recall that $I_i$ is a natural $\ov{J_i} \cong \SL_n(q)$-module and note that $J_iC_i/C_i \cong \PSL_n(q)$. Thus we have the following properties.
	\begin{enumerate}[i.]
		\item $\ov{J_i}$ is transitive on $I_i$.
		\item $O^{p'}(N_{\ov {J_i}}(X_i)) = C_{\ov{J_i}}(x)$.
		\item $O_p(C_{\ov{J_i}}(x))$ induces $\Hom_{\bK_i}(I_i/X_i,X_i)$ on $I_i$; thus $C_{I_i}(O_p(C_{\ov{J_i}}(x))) = X_i = [I_i, O_p(C_{\ov{J_i}}(x))]$.
		\item $O_p(C_{J_i}(x) C_i / C_i)$ is a natural $\SL_{n-1}(q)$-module for $C_{J_i}(x)$; as $n-1 \ge 2$, it is a non-central simple $C_{J_i}(x)$-module.
		\item If $W \le I_i$ is a $\bK_i$-subspace, then $O^{p'}(N_{\ov{J_i}}(W) / C_{\ov{J_i}}(W)) \cong \SL_{\bK_i}(W)$ and $N_{\ov{J_i}}(W)$ acts transitively on $W$.
	\end{enumerate}
	Since $I_i$ is $S_i$-invariant, thus also $Q_i$-invariant, $C_{I_i}(Q_i) \ne 1$; then (i.) implies that there exist elements $y_i \in J_i$ such that $x \in C_{I_i}(Q_i)^{y_i} = C_{I_i}(Q_i^{y_i}) \le Z(Q_i^{y_i})$, where $Q_i^{y_i}$ is well-defined as $Q_i, J_i \le M_i$. Since $Q_1=Q$ and it is large, $x^{y_1\inv} \in C_{I_1}(Q_1) \le Z(Q)$, which together with $Q$-repleteness implies that $\< x^{y_1\inv} \> \in \Delta$; since $I_j \le S$ for both $j \in \{1,2\}$, the transitivity of the action of $J_j$ shows that all the non-trivial elements of $I_1 \cup I_2$ generate a cyclic subgroup in $\Delta$. Thus $(y_1,y_2\inv,u\inv) \in D(\L)$ via $(\< x ^{y_1\inv}\>, \< x \>, \< x^{y_2\inv} \>, \< (x^{y_2\inv} )^{u\inv} \> ) $ and, therefore, also $(u,y_2, y_1\inv) \in D(\L)$. Since $x \in I_2 \cap S$ is centralized by $Q_2^{y_2}$ and $x^{y_1\inv} \in S$, conjugation by $y_1^{\inv}$ is well-defined on $Q_2^{y_2}$; whence $x^{y_1\inv} \in Z(Q_1) \cap Z(Q_2^{y_2 y_1\inv})$ and \ref{lemma: L containing Q}(c) applied to $Q_2^{y_2y_1\inv}= (Q_1^u)^{y_2y_1\inv}$ implies that $uy_2y_1\inv \in N_\L(Q_1)$, so $Q_1^{y_1} = Q_2^{y_2} =: Q_0 \le M_1 \cap M_2$.
	
	\begin{subresult}\label{4.13-12}
		$\displaystyle O_p \left( \frac{C_{J_i}(x)C_i}{C_i} \right) = \frac{Q_0C_i}{C_i} = \frac{(Q_0 \cap J_i)C_i}{C_i}.$
	\end{subresult}
	By \eqref{4.13-10} $\ov{M_i^\circ} = \ov{J_iC_i}$, so $Q_0 \le J_iC_i$ and $Q_0 \not \le C_i$. Since $C_{J_i}(x) C_i$ contains a Sylow $p$-subgroup of $J_iC_i$, $Q_0 \le C_{J_i}(x)C_i$ and $Q_0C_i / C_i$ is a non trivial $p$-subgroup of $C_{J_i}(x)C_i/C_i$, normal as $(Q!)$ implies $C_{J_i}(x) \le N_{J_i}(Q_0)$. Then (iv.) yields
	\[
	O_p \left( \frac{C_{J_i}(x)C_i}{C_i} \right) = \frac{Q_0C_i}{C_i} = \frac{[Q_0,C_{J_i}(x)]C_i}{C_i} \le\frac{(Q_0 \cap J_i)C_i}{C_i} \le O_p \left( \frac{C_{J_i}(x)C_i}{C_i} \right),
	\]
	which is \eqref{4.13-12}.
	
	\begin{subresult}\label{4.13-13}
		$Q_0 = (Q_0 \cap J_i)(Q_0 \cap C_i) = (Q_0 \cap J_i)C_{Q_0}(I_i) \le N_{M_i}(I_i).$
	\end{subresult}
	Indeed by \eqref{4.13-12} $Q_0C_i =(Q_0 \cap J_i)C_i$, which implies $Q_0 = (Q_0  \cap J_i)(Q_0 \cap C_i)$; now $Q_0 \le J_iC_i$ shows that $Q_0$ normalizes $I_i$ whereas $J_i$ centralizes the $p$-group $\ov{Q_0 \cap C_i}$ (by the definition of $C_i$) and has $I_i$ as a simple module. Suppose $C_{I_i}(\ov{Q_0 \cap C_i}) < I_i$; since $\ov{Q_0 \cap C_i}$ is a $p$-group, $C_{I_i}(\ov{Q_0 \cap C_i}) \ne 1$ and, therefore, it is a proper, non-trivial $J_i$-submodule of $I_i$, a contradiction. Thus $[Q_0 \cap C_i,I_i] =1$ and one gets $Q_0 = (Q_0 \cap J_i)(Q_0 \cap C_i) \le (Q_0 \cap J_i)C_{Q_0}(I_i) \le Q_0$, proving \eqref{4.13-13}.\\
	
	\begin{subresult}\label{4.13-14}
		$O_p(C_{\ov{J_i}}(x)) = \ov{Q_0 \cap J_i}$.
	\end{subresult}
	This follow from \eqref{4.13-12}, since $\ov{J_i}\cap \ov{C_i} \le Z(\ov{J_i})$ and it is a $p'$-subgroup (as $\ov{J_i} \cong \SL_n(q)$).
	
	\begin{subresult}\label{4.13-15}
		$X_i = [I_1,I_2] = [I_i,Q_0]=C_{I_i}(Q_0)$ and it is a $1$-dimensional $\bK_i$-subspace; in particular, $|[I_1,I_2]|=q$.
	\end{subresult}
	By (iii.) and \eqref{4.13-14} one then has $X_i = C_{I_i}(Q_0 \cap J_i) = [I_i, Q_0 \cap J_i]$; using \eqref{4.13-13} we get $X_i = C_{I_i}(Q_0) = [I_i,Q_0]$.\\
	The only equality that we are missing is that with $[I_1,I_2]$; note that this is a $\bK_i$-subspace of $I_i$ and, since $x \in [I_1,I_2]$, $[I_i,Q_0] = X_i \le [I_1,I_2]$. Thus $Q_0$ normalizes $I_i$, thus also $[I_1,I_2]$. Recall that $[I_1,I_2] \in \Delta$, so $H:=N_\L([I_1,I_2])$ is a subgroup of $\L$ and $H_i:= N_{J_i}([I_1,I_2])Q_0 \le H$. Set $I:= [I_1,I_2]$; since $I^{y_1\inv} \le I_1^{y_1\inv} = I_1 \le S$, conjugation by $y_1\inv$ induces a well-defined group isomorphism $N_\L(I) \fto N_\L(I^{y_1\inv})$; as $Q=Q_1 = Q_0^{y_1\inv} \le N_\L(I^{y_1\inv})$, \ref{lemma:weakly closed subgroups in localities}(a) shows that $Q$ is a weakly closed subgroup of $N_\L(I^{y_1\inv})$. Conjugating back by $y_1$ we see that $Q_0$ is weakly closed in $H$ and therefore in $H_1$ and also in $H_2$, since $Q_0=Q_1^{uy_2}$. Set $H_i^\circ := \< Q_0 ^{M_i} \>$: then \cite[Lemma 1.46(c)]{Meier-Stell-Stroth:2012} gives $H_i^\circ = \< Q_0 ^{H_i} \> = \< Q_0^{H_i^\circ} \>$. 
	Since $[I_i,Q_0]  = X_i \le [I_1,I_2]$ and both $[I_1,I_2]$ and $I_i$ are normalized by $H_i$, one has $[I_i, H_i^\circ] \le [I_1,I_2]$.
	By (v.) $H_i$ is transitive on $I$, so $I$ is a simple $H_i$-module; hence $[I_i,H_i^\circ] = I$; by the transitivity we may apply \cite[Lemma 1.57(c)]{Meier-Stell-Stroth:2012} considering $I$ a $Q_0!$-module for $H$ and obtain $Q_0^{H_i} = Q_0^{H}$, so $H_i^\circ = \< Q_0^H \> =: H^\circ$. Then $[I_i, H^\circ, I_j] = 1$ and the Three Subgroups Lemma yields $[I,H^\circ]=1$; hence from $Q_0 \le H^\circ$, $C_{I_i}(Q_0) = X_i$ and $X_i \le I$ follows that $I=X_i$, which is \eqref{4.13-15}.\\
	
	Set $Z_j:=[I_i,Y_j]$.
	
	\begin{subresult}\label{4.13-16}
		$\dim_{\bK_i}(Z_j) = m \ge 2$, where $m$ is given by the decomposition $Y_i=\displaystyle \bigoplus_{h=1}^m A_h$ with the $\ov{J_i}$-modules $A_h$ are all isomorphic natural modules for $J_i$ (by \eqref{4.13-10} $\ov{J_i} \cong \SL_n(q)$).
	\end{subresult}
	The structure of $Y_i$ together with $m \ge 2$ is given by \eqref{4.13-10}; clearly here $A_h \cong I_i$ for every direct summand $A_h$. By \eqref{4.13-11} $Y_i \le J_j$ and $Y_j \le J_i$, so $Y_j$ and $I_i$ act one on each other; thus \eqref{4.13-15} implies $|[Y_j,I_i]|=|I|^m = q^m$ and the $\bK_i$-linear action of $Y_j$ on $I_i$ shows that $Z_j$ is a $\bK_i$-subspace of $I_i$.\\
	For future use, note also that $I \le Z_j$, so $Z_j \in \Delta$ and $K_j:=N_\L(Z_j)$ is a subgroup of $\L$. 
	
	\begin{subresult}\label{4.13-17}
		$Y_i \not \le Q_i$.
	\end{subresult}
	Assume $Y_i \le Q_i$ for a contradiction. Since $Q_i$ and $Q_0$ are $M_i$-conjugate and since $Y_i \norm M_i$ one has $Y_i \le Q_0$, hence $Y_i \le J_i \cap Q_0 \le O_p(C_{J_i}(x))$. By (iii.) $[I_j,Y_i] \le X_j$, so $Z_i \le X_j$, a contradiction since $m \ge 2$ by \eqref{4.13-10} and $X_j$ is $1$-dimensional over $\bK_j$.
	
	\begin{subresult}\label{4.13-18}
		$Q_0C_j \le K_j$ and $\ov{C_j}$ acts faithfully on $Z_j$.
	\end{subresult}
	$Q_0$ normalizes $I_i$ by \eqref{4.13-13} and $Y_j$ as $Q_0 \le M_j$; then $Q_0 \le K_j$. By \eqref{4.13-11} $I_i \le Y_i \le  J_j \le C_{M_j}(\ov{C_j})$, thus for $c \in C_j$ one has 
	\[
	[I_i,Y_j]^c = [I_i^c,Y_j^c] \le [I_iC_{M_j}(Y_j), Y_j] \le [C_{M_j}(Y_j),Y_j]^{I_i} [I_i,Y_j] =[I_i,Y_j],
	\]
	proving that $C_j \le K_j$.\\
	We now prove $C_{\ov{C_j}}(Z_j) \le C_{\ov{C_j}}(Y_j) = 1$. The simplicity of $I_j$ as $J_j$-module yields $\< [I_i,I_j]^{J_j} \> = I_j$; since $Y_j=\bigoplus_{h=1}^m A_h$ for $\ov{J_j}$-modules $A_h \cong I_j$, $\< Z_j^{J_j} \> =\< [I_i, Y_j]^{J_j} \>  = Y_j$ and the assertion follows as $[\ov{C_j},J_j] =1$.\\
	
	For the reminder of the proof set $K_j^\circ := \wcl_{K_j}(Q_0,K_j)$.
	
	\begin{subresult}\label{4.13-19}
		The following equalities hold.
		\begin{multline*}
			\< Q_0^{J_i \cap K_j} \> = K_j^\circ = \wcl_{K_j}(Q_0,(J_i \cap K_j)Q_0) = \\
			= \wcl_{K_j}(Q_0,(J_i \cap K_j)C_{M_i}(Z_j)) = \wcl_{M_i}(Q_0,(J_i \cap K_j)C_{M_i}(Z_j)) .
		\end{multline*}
	\end{subresult}
	We know that $Q_0 = (Q_0 \cap J_i)C_{Q_0}(I_i)$ by \eqref{4.13-13}; as $Q_0 \le K_j$ and $Z_j \le I_i$ we have
	\[
	Q_0 = (Q_0 \cap J_i)C_{Q_0}(Z_j) \le (K_j \cap J_i)C_{Q_0}(Z_j).
	\]
	Since by \eqref{4.13-16} $Z_j$ is a $\bK_i$-subspace of $I_i$, (v.) implies that $J_i \cap K_j$ is transitive on $Z_j$. Since $Z_j \le Y_j$ and $Q_0$ is a $M_j$-conjugate of $Q_j$, there exists elements $y_1 \in M_1$ and $y_2 \in M_2u\inv$ such that $Q_0^{y_j}=Q$ and $Z_j^{y_j} \le S$; thus conjugation by $y_j$ extends to $K_j$ and we see that $Q \le K_j^{y_j}$, hence it is large in $K_j^{y_j}$. Thus $Q_0$ is large in both $K_1$ and $K_2$ and $Z_j$ is a $Q_0!$-module for $K_j$ by \ref{lemma1.57}. The same conclusion also holds now for $(J_i \cap K_j)Q_0$ and for $(J_i \cap K_j)C_{M_i}(Z_j)$ in place of $K_j$, since both are contained in $K_j$, and for $(J_i \cap K_j)C_{M_i}(Z_j) \le M_i$ since $Q_0$ is also large in $M_i$ and $Z_j$ is, therefore, a $Q_0!$-module for $(J_i \cap K_j)C_{M_i}(Z_j)$. \\
	Now four applications of \ref{lemma1.57}(c) to $(Q_0, K_j, Z_j)$, $(Q_0, (J_i \cap K_j)Q_0, Z_j)$ and $(Q_0, (J_i \cap K_j)C_{M_i}(Z_j), Z_j)$ (twice) in place of $(Q,M,V)$, with $J_i \cap K_j$ in place of $U$, yield
	\[
	\< Q_0^{K_j} \> = \<Q_0^{J_i \cap K_j} \> = \<Q_0^{M_i}\>
	\] which implies all the equalities.
	
	\begin{subresult}\label{4.13-20}
		$Z_j$ is a natural $\bK_i \SL_m(q)$-module for $K_j^\circ$.
	\end{subresult}
	By \eqref{4.13-16} $Z_j$ is a $\bK_i$-subspace of $I_i$. Set $\widetilde{K_j} := K_j / C_{K_j}(Z_j)$; by (v.) $Z_j$ is a natural $\SL_m(q)$-module for $O^{p'}(J_i \cap K_j)$ and \eqref{4.13-19} implies $\widetilde{K_j^\circ} \le O^{p'}(\widetilde{J_i \cap K_j}) \cong \SL_m(q)$. Since $K_j^\circ$ is generated by $p$-elements and $\SL_m(q)$ has no non-trivial, proper, normal subgroup generated by $p$-elements we obtain $\widetilde{K_j^\circ} = O^{p'}(\widetilde{J_i \cap K_j})$.
	
	\begin{subresult}\label{4.13-21}
		$Q_0 \cap C_j$ induces $\Hom_{\bK_i}(Z_j/X_i, X_i)$ on $Z_j$.
	\end{subresult}
	Recall that $I=[I_1,I_2]$ and, by \eqref{4.13-15}, $[Q_0,I]=1$; as $Z_j=[I_i,Y_j]$ and $Y_j$, as $J_j$-module,  is a direct sum of copies of $I_j$ we get $[Q_0 \cap J_j, Z_j] =1$. Thus, using \eqref{4.13-13}, $Q_0=(Q_0\cap C_j)(Q_0 \cap J_j) = (Q_0 \cap C_j)C_{Q_0}(Z_j)$.\\
	By \eqref{4.13-14} and (iii.) $\ov{Q_0 \cap J_i} = O_p(C_{\ov{J_i}}(x))$ and it induces $\Hom_{\bK_i}(I_i/X_i,X_i)$ on $I_i$; since $X_i \le Z_j$ by \eqref{4.13-15} and $Z_j$ is a $\bK_i$-subspace by \eqref{4.13-16} we conclude that $Q_0 \cap J_i$ induces $\Hom_{\bK_i}(Z_j/X_i,X_i)$ on $Z_j$. Now \eqref{4.13-21} follows since $(Q_0 \cap C_j)C_{Q_0}(Z_j) = Q_0 =(Q_0 \cap J_i)C_{Q_0}(Z_j)$.\\
	
	Recall that, according to our notation convention,
	\begin{align*}
		\ov{X} := \frac{X C_{M_j}(Y_j)}{C_{M_j}(Y_j)} \qquad & \text{for all $X \le M_j$,} \\
		\widetilde{L} := \frac{L C_{K_j}(Z_j)}{C_{K_j}(Z_j)} \qquad  & \text{for all $L \le Z_j$,}
	\end{align*}
	and set
	\[
	J_j^\star := \< (Q_0 \cap J_j)^{J_j} \> \qquad \text{and} \qquad C_j^\star := \< (Q_0 \cap C_j)^{C_j} \>. 
	\]
	
	\begin{subresult}\label{4.13-22}
		$\ov{M_j^\circ} = \ov{J_j}\ov{C_j^\star}$, $C_j^\star \le K_j^\circ$ and $\widetilde{C_j^\star} = \widetilde{K_j^\circ} \cong \SL_m(q)$; thus $Z_j$ is a faithful, natural $\SL_m(q)$-module for $\widetilde{C_j^\star}$.
	\end{subresult}
	By \eqref{4.13-10} we have $R_i = J_i$ and $\ov{J_i} \cong \SL_n(q)$ with $n \ge 3$; by \eqref{4.13-14} also $\ov{Q_0 \cap J_j} = O_p(C_{\ov{J_i}}(x))$. Again, since $\SL_n(q)$ contains no non-trivial, proper normal subgroup generated by $p$-elements, we get $\ov{J_j^\star}=\ov{J_j}$ and, together with \eqref{4.13-8}(1:c), $\ov{M_j^\circ} = \ov{J_jC_j}$. By the definition of $C_j$, $[\ov{J_j}, \ov{C_j}]=1$ and by \eqref{4.13-13} $\ov{Q_0} = (\ov{Q_0 \cap J_j})(\ov{Q_0 \cap C_j})$, whence
	\begin{equation*}
		\label{eq-4.13-20}
		\tag{$\dagger$}
		\ov{M_j^\circ} = \< \ov{Q_0}^{\ov{M_j^\circ}} \> = \ov{J_j^\star}\ov{C_j^\star} \qquad \text{and} 	\qquad [\ov{J_j^\star}, \ov{C_j^\star}] = 1.
	\end{equation*}
	Since by \eqref{4.13-18} $\ov{C_j}$ acts faithfully on $Z_j$ we get $C_{C_j^\star}(Z_j) = C_{C_j^\star}(Y_j)$, hence $\ov{C_j^\star} = \widetilde{C_j^\star}$. We also have $O_p(\ov{C_j^\star}) \le O_p(\ov{C_j}) \le O_p(\ov{M_j}) =1$ and clearly $C_j^\star \le K_j^\circ$. Now by \eqref{4.13-20} and \eqref{4.13-21} $Z_j$ is a natural $\SL_m(q)$-module for $\widetilde{K_j}$ and $Q_0 \cap C_j$ induces $\Hom_{\bK_i}(Z_j/X_i,X_i)$ on $Z_j$, thus \cite[Lemma 7.2]{MS:2008} implies \eqref{4.13-22}.\\
	
	We can now complete the proof: as usual, the properties concerning the modules and related arguments are as in \cite[Lemma 4.13]{Meier-Stell-Stroth:2012}, however we will need to work harder to prove that $p \ne 2$.\\
	Recall that, seen as a $J_j$-module, $Y_j = \bigoplus_{h=1}^m A_h$ with all the $A_h$ natural $\SL_n(q)$-modules isomorphic to $I_j$; thus \cite[Lemma 5.2]{MS:2008} shows that, as a $\ov{M_j^\circ}=\ov{J_jC_j^\star}$-module, $Y_j \cong I_j \otimes_{\bK_i} U_j$ for some $\bK_i C_j^\star$-module $U_j$. Being $[I_1,I_2]$ $1$-dimensional over $\bK_i$ we have
	\[
	U_j \cong [I_1,I_2] \otimes U_j = [I_j \otimes U_j, I_i] \cong [Y_j,I_i] =Z_j
	\]
	when seen as a $C_j^\star$-module; thus $U_j$ is a natural $\SL_m(q)$-module for $C_j^\star$. By \eqref{eq-4.13-20} and the structure of $\ov{J_j}$ and $\ov{C_j^\star}$, $\ov{M_j^\circ}$ is a central product of these two and $Y_j$ a corresponding tensor product module.\\
	
	We are now left to prove that $p$ is odd.\\
	By \eqref{4.13-16} and since $Z_2 \le Y_1$, $Z_2^u$ is an $m$-dimensional $\bK_2$-subspace of $I_2$ and $I^u = [I_1,I_2]^u \le Z_2^u$; in particular, both $I$ and $I^u$ are $\bK_2$-subspaces of $I_2$. Also $Z_1$ is, again by \eqref{4.13-16}, a $\bK_2$-subspace of $I_2$ of dimension $m$. Since $I_2$ is a natural $\SL_m(q)$-module for $J_2$, $J_2$ is transitive on pairs of incident $1$- and $m$-dimensional subspaces of $I_2$. Thus there exists $v \in J_2$ with
	\[
	(Z_2^u)^v = Z_1 \qquad \text{and} \qquad (I^u)^v = I.
	\]
	Since $Z_2 \le Y_1$ and $Z_2 \in \Delta$, we have $Z_2^u \le Y_2  \le S$ and, therefore, $(u,v) \in D(\L)$ via $(Z_2,Z_2^u, (Z_2^u)^v)$. Set $g:=uv$, so that
	\[
	I^g=I, \quad I_1^g = I_2^v = I_2, \quad Y_1^g = Y_2, \quad Z_2^g=Z_1
	\]
	and, since $I_i \le Y_i \le S \le M$, one gets
	\[
	Z_1^g = [I_2^g,Y_1^g] = [I_2^g, Y_2] \qquad \text{and} \qquad [I_2^g,I_2] = [I_2,I_1]^g = I^g =I.
	\]
	Now as $I_2 \le Y_2 \le J_1$, $I_2^g \le J_1^g=J_2$; also $I_1 \le J_2$ and, being $Y_2$ a direct sum of copies of $I_2$ and $[I_2^g,I_2]=I$, we get $Z_1^g = [I_2,Y_1]^g =[I_2^g,Y_2] = [I_1,Y_2] = Z_2$. Hence we have found $g \in N_\L(I)$ acting non-trivially on $\{Z_1,Z_2\}$; in particular, all the powers of $g$ are defined, so up to replacing $g$ by a suitable power we may suppose $o(g)=2$.\\
	
	By the structure of the natural $\SL_n(q)$-module, if we set
	\[
	\mathcal{X} := \{(x_0,Z_0) \mid \textit{$Z_0$ is an $m$-dimensional $\bK_1$-subspace of $I_1$ and $0 \ne x_0 \in Z_0$}\},
	\]
	then $	p \not \mid |\mathcal{X}|$ and $J_1$ acts transitively on $\mathcal{X}$. Since $N_{M_1}(I_1)$ contains $S_1J_1$, we conclude that every element of $\mathcal{X}$ is fixed by some Sylow $p$-subgroup of $N_{M_1}(I_1)$, which is a parabolic subgroup of $M_1$. As $(x,Z_2) \in \mathcal{X}$, there exists $S_0 \in \Syl_p(N_{M_1}(I_1))$ such that $S_0 \le N_{M_1}(Z_2) \cap C_{M_1}(x) \cap N_{M_1}(I_1)$. By \eqref{4.13-10} we have
	\[
	Z_1 = [Y_1,I_2] = \bigoplus_{h=1}^m [A_h,I_2]
	\]
	where the $A_h$ are natural $\ov{J_1} \cong \SL_n(q)$-natural modules isomorphic to $I_1$. In particular, we may choose $A_1=I_1$. Set
	\[
	\M := \{A_h \mid Y_1=\bigoplus_{h=1}^m A_h\} \quad \text{and} \quad U:= C_{\ov{J_1}}(I_1 / I) \cap C_{\ov{J_1}}(I).
	\]
	Note that $I_2 \le U$ and therefore $ [I_1,I_2]=[I_1,U]$; if $\phi: I_1 \fto A_h$ is a $J_1$-isomorphism, then $\phi([I_1,U]) = [\phi(I_1),U] =[A_h,U]$ and we obtain
	\[
	Z_1 = \bigoplus_{A \in \M} [A,U].
	\]
	Let now $x \in S_0$ and set $\psi:= c_{x} \circ \phi \circ c_{x\inv}$, that is the map making the following diagram commutative:
	\begin{center}
		\begin{tikzcd}
			I_1 \ar[r,"\phi"]		& A_h \ar[d,"c_x"] \\
			I_1^x \ar[r,"\psi"] \ar[u,"c_{x\inv}"]		& A_h^x
		\end{tikzcd}
	\end{center}
	Then $\psi$ is a $J_1$-isomorphism, since for any $t \in J_1$ and $m \in I_1$, $\psi((m^x)^t) = c_x(\phi(m^{xtx\inv})) = c_x((\phi(m))^{xtx\inv}) = \phi(m)^{xt} = (\psi(m^x))^t$. As this holds for any two modules in $\M$, we obtain that the elements of $\M^x:=\{A^x \mid A \in \M\}$ are all isomorphic $J_1$-modules and $I_1 = I_1^x \in \M^x$. Moreover,
	\[
	Y_1 = Y_1^x =\bigoplus_{A^x \in \M^x} A^x \quad \text{and therefore} \quad Z_1 = \bigoplus_{A^x \in \M^x} [A^x,U].
	\]
	Since $S_0$ normalizes $U$, we conclude that 
	\[
	Z_1^x = \bigoplus_{A \in \M} [A,U]^x = \bigoplus_{A \in \M} [A^x,U] = \bigoplus_{A^x \in \M^x} [A^x,U] = Z_1;
	\]
	as it holds for any $x \in S_0$, we have $S_0 \le N_\L(Z_1)$.\\
	Recall that $I=\bK_1 x$ where $\bK_1 = \End_{R_1}(I_1)$ and $\ov{R_1} = F^*(\ov{J_1})$, thus $R_1 \norm M_1$. Then $S_0$ normalizes $\bK_1$, hence $S_0 \le N_\L(I)$ and we have $B:= \< S_0,g \> \le N_\L(I)$ and also $\<Z_1,Z_2\> \le N_\L(I)$. In particular, $B$ acts by conjugation on $\{Z_1,Z_2\}$, with $S_0$ acting trivially and $g$ as a transposition.\\
	
	Assume that $p=2$; since $S_0$ is a conjugate of $S$, it is a Sylow $2$-subgroup of $B$, hence there exists $b \in B$ such that $g^b \in S_0$. Then $g^b$ acts trivially on $\{Z_1,Z_2\}$, a contradiction to $g$ acting as a transposition.
\end{proof}

\vspace{0.4cm}
We now deal with the last case, i.e. when $W_1$ is $R_1$-simple. Prior to this we need to address a separate issue.
Consider $Q \le S$ (or any weakly closed subgroup of $\L$ contained in $S$); $Q$ is then weakly closed in $\F$, however in the locality $\L$ we may have the possibility to conjugate $Q$ through a chain of elements $(x_1, \dots, x_k) \in \L^k$ that carry $Q$ ``out of $S$ and back into $S$". In details, setting $Q=:Q_0$ and $Q_l := (Q_{l-1})^{x_l}$, we might have $Q_j \not \le S$ for some $j \in \{1,\dots,l-1\}$ and $Q_k \le S$; now, if $(x_1, \dots, x_k) \not \in \W(\L)$, the fact that $Q$ is weakly closed is no longer sufficient to guarantee that $Q_k = Q$.\\
The next lemma addresses this issue; the formulation here presented is a generalization of ideas of the author obtained together with Ellen Henke.

\begin{lemma}\label{lemma:conj of p-subgr and fus systems}
	Let $(\L,\Delta,S)$ be a locality over the saturated fusion system $\F$. Define $\wht{\F} := \widehat{\F_S(\L)}$ to be the fusion system on $S$ generated by the maps of the form
	\[
	P_0 \overset{c_1}{\fto} P_1 \overset{c_2}{\fto} \dots \overset{c_{n}}{\fto} P_n
	\]
	where
	\begin{enumerate}[(i)]
		\item each $P_i$ is a $p$-subgroup of $\L$,
		\item $P_0$ and $P_n$ are subgroups of $S$,
		\item for every $i \in \{1,\dots,n\}$ there exist elements $f_i \in \L$ and subgroups $Y_{i} \in \Delta$ such that $Y_{i} \le S_{f_i}$, $P_{i-1} \le N_\L(Y_{i})$ and $c_i = c_{f_i}$.
	\end{enumerate}
	Then $\F = \wht{\F}$.
\end{lemma}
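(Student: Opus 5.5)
The inclusion $\F \subseteq \wht{\F}$ is immediate. Every generating map $c_g\colon S_g \fto S$ of $\F=\F_S(\L)$ is a chain of length $n=1$ in the sense of the statement: take $P_0=S_g$, $P_1=S_g^g$, $f_1=g$ and $Y_1=S_g$. Here $Y_1 \in \Delta$ by Lemma~\ref{lemma:properties of localities}(e), and trivially $P_0 \le N_\L(Y_1)$. So the real content is $\wht{\F} \subseteq \F$. For this it suffices to show that the composite of every chain as in (i)--(iii) is a morphism of $\F$. I would argue by induction on the length $n$ of the chain.

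The basic tool is the following local move. If $P$ is a $p$-subgroup of $\L$ with $P \le N_\L(Y)$ for some $Y \in \Delta$ with $Y \le S$, then $P$ can be moved into $S$ by an element conjugating through $Y$. Concretely, by \cite[Lemma 2.9]{Chermak:2022:I} (as already used in the proof of Lemma~\ref{lemma:weakly closed subgroups in localities}(a)) there is $h\in\L$ with $Y\le S_h$ and $N_S(Y^h)\in\Syl_p(N_\L(Y^h))$. By Lemma~\ref{lemma:properties of localities}(b), $c_h$ is a group isomorphism $N_\L(Y)\fto N_\L(Y^h)$, so $P^h$ is a $p$-subgroup of the group $N_\L(Y^h)$. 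Sylow's theorem in this group then gives $k\in N_\L(Y^h)$ with $P^{hk}\le N_S(Y^h)\le S$. The word $(h,k)$ lies in $D$ via $(Y,Y^h,Y^h)$, so $x:=hk$ satisfies $Y\le S_x$ and $P^x\le S$.

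Applying this move at each step $i$ to the pair $(P_{i-1},Y_i)$ and to the pair $(P_i,Y_i^{f_i})$, I would obtain elements $a_i,b_i$ with $P_{i-1}^{a_i}, P_i^{b_i}\le S$. The words $(a_i^{-1},f_i,b_i)$ then lie in $D$ via the chain of objects $Y_i^{a_i},Y_i,Y_i^{f_i},Y_i^{f_ib_i}$. Hence $c_{a_i^{-1}}\circ c_{f_i}\circ c_{b_i}$, restricted to $P_{i-1}^{a_i}$, is the restriction of $c_{\Pi(a_i^{-1},f_i,b_i)}$ and so lies in $\F$. For $i=1$ one may take $a_1=1$, since $P_0\le S$; for $i=n$ one may take $b_n=1$, since $P_n\le S$. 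The original composite thus factors as a composite of maps in $\F$, interleaved with the ``transition maps'' $P_i^{b_i}\fto P_i^{a_{i+1}}$ given by conjugation by $b_i^{-1}$ followed by $a_{i+1}$.

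The main obstacle is to show that these transition maps lie in $\F$. This is precisely the out-of-$S$-and-back phenomenon discussed before the lemma: $P_i$ normalizes both $Y_i^{f_i}$ and $Y_{i+1}$, but no single object need witness $(b_i^{-1},a_{i+1})\in D$. My first attempt would be to choose $a_{i+1}$ and $b_i$ compatibly rather than independently. I would conjugate the $p$-group $P_iY_{i+1}$ (a $p$-subgroup of $N_\L(Y_{i+1})$) into $S$, so that its image lies in $\Delta$ as an overgroup of $Y_{i+1}^{a_{i+1}}$. In the same way I would conjugate $P_iY_i^{f_i}$ into $S$. Then I would try to connect the two images inside a single normalizer $N_\L(R)$, where $R \in \Delta$ is a subgroup of $S$ containing the conjugate of $P_i$. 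If this fails, the fallback is to use saturation of $\F$: replace $Y_{i+1}$ by a fully normalized $\F$-conjugate, and use the extension axiom to realize the transition as an $\F$-map, checking that its restriction to $P_i^{b_i}$ agrees with the conjugation map.
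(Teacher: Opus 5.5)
Your inclusion $\F\subseteq\wht{\F}$, your local move, and your words $(a_i^{-1},f_i,b_i)\in D$ are all fine. But the proof depends on the transition maps $P_i^{b_i}\fto P_i^{a_{i+1}}$, and you leave these unresolved, so there is a genuine gap at the decisive step. Your first attempt never says which object $R$ would link the two images. Your fallback is circular: the extension axiom only extends morphisms already known to lie in $\F$, and the transition map is not known to be an $\F$-map on any subgroup. Showing that it is one is exactly the problem. Saturation is in fact not needed anywhere.

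The missing idea is to move $P_i$ into $S$ together with \emph{both} objects it normalizes, using a single element.

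Put $R_i:=Y_i^{f_i}\le S$. Since $P_i$ normalizes $R_i$ and $Y_{i+1}$, for every $x\in P_i$ both groups lie in the subgroup $S_x$, on which $c_x$ is a homomorphism. Hence $P_i$ normalizes $\langle R_i,Y_{i+1}\rangle\le S$. This subgroup is in $\Delta$, being an overgroup of $Y_{i+1}$.

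Apply your local move once to the pair $(P_i,\langle R_i,Y_{i+1}\rangle)$. This gives a single $g_i$ with $\langle R_i,Y_{i+1}\rangle\le S_{g_i}$ and $P_i^{g_i}\le S$. Use it as both $b_i$ and $a_{i+1}$, with $g_0=g_n=1$.

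Then $(g_{i-1}^{-1},f_i,g_i)\in D$ via $(Y_i^{g_{i-1}},Y_i,R_i,R_i^{g_i})$. The composite telescopes into $c_{u_n}\circ\cdots\circ c_{u_1}$ with $u_i=\Pi(g_{i-1}^{-1},f_i,g_i)$, and every intermediate group $P_i^{g_i}$ lies in $S$. This is the paper's proof.

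Equivalently, within your own framework, the transition map factors through $P_i^{g_i}$:
\begin{itemize}
\item $(b_i^{-1},g_i)\in D$ via $(R_i^{b_i},R_i,R_i^{g_i})$;
\item $(g_i^{-1},a_{i+1})\in D$ via $(Y_{i+1}^{g_i},Y_{i+1},Y_{i+1}^{a_{i+1}})$.
\end{itemize}
Both pieces are therefore morphisms of $\F$.
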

\begin{proof}
	Note that, for every $i \in \{1,\dots,n\}$, conjugation by $f_i$ extends to $c_i=c_{f_i} \colon N_\L(Y_i) \fto N_\L(Y_i^{f_i})$; as $P_{i-1} \le N_\L(Y_i)$, it also induces a group homomorphism $c_i \colon P_{i-1} \fto P_{i-1}^{f_i} = c_i(P_{i-1}) \le P_i$. Hence $f_i$ induces a group isomorphism, via conjugation, on $P_{i-1}$ and, without loss of generality, we may assume $P_i = (P_{i-1})^{f_i}$. \\
	
	The inclusion $\F \subseteq \wht{\F}$ is obvious, so we are only concerned with the reverse one.\\
	By our hypothesis we have $R_i:= Y_i^{f_i} \in \Delta$; in particular we get
	\begin{equation*}
		\label{eq:inclusion poperty of the P_i}
		\tag{$\star$}
		P_{i-1}^{f_i} = P_i \le N_\L(R_i) \cap N_\L(Y_{i+1}) \quad \textit{for all $i \in \{1,\dots,n\}$. }
	\end{equation*}
	Therefore $P_i \le N_\L(\< R_i,Y_{i+1} \>)$, where $\<R_i,Y_{i+1} \> \in \Delta$ since it is a subgroup of $S$ containing, for example, $Y_{i+1}$. Since $P_i$ normalizes $R_i$ and $Y_{i+1}$, $P_i \, \<R_i,Y_{i+1} \>$ is a $p$-subgroup of $\L$, hence there exists $g_i \in \L$ such that $(P_i \, \<R_i,Y_{i+1}\>)^{g_i} \le S$; in particular, $R_i^{g_i}, \, Y_{i+1}^{g_i} \in \Delta$.\\
	Note now that we have
	\begin{align*}
		(f_1,g_1) \in D(\L) & \textit{ via } (Y_1, R_1, R_1^{g_1}) \\
		(g_{i-1}\inv,f_i,g_i) \in D(\L) & \textit{ via } (Y_i^{g_{i-1}},Y_i, Y_i^{f_i}=R_i, R_i^{g_i} ) \; \textit{ for all $i \in \{2,\dots,n-1\}$} \\
		(g_{n-1}\inv, f_n) \in D(\L) & \textit{ via } (Y_n^{g_{n-1}}, R_n)
	\end{align*}
	Set $\xi := (c_{f_n} \circ \dots \circ c_{f_2} \circ c_{f_1})|_{P_0}$; then $\xi$ is the composition of restrictions of conjugation maps $c_{f_i}: N_\L(Y_i) \fto N_\L(R_i)$, where $Y_i,R_i \in \Delta$, in detail
	\[
	\xi =  c_{f_n}|_{P_{n-1}} \circ \dots \circ c_{f_2}|_{P_1} \circ c_{f_1}|_{P_0} .
	\]
	Let now $x \in P_0 \le N_\L(Y_1)$; then we have $(g_1\inv, f_1 \inv , x, f_1, g_1) \in D(\L)$ via $(R_1^{g_1}, R_1, Y_1, Y_1, Y_1^{f_1} = R_1, R_1^{g_1})$.\\
	For every $i \in \{2,\dots,n-1\}$ we have seen that $w_i :=(g_i\inv, f_i,g_i) \in D(\L)$, and also $w_1:=(f_1,g_1), \, w_n:=(g_{n-1}\inv,f_n) \in D(\L)$. Setting $u_i:=\Pi(w_i)$ we get
	\begin{align*}
		c_{u_1}& \colon 	 P_0  				  \fto 		 P_1^{g_1}, \\
		c_{u_2}& \colon 	 P_1^{g_1} 		  \fto  		 P_2^{g_2}, \\
		&			\qquad			  \vdots \\
		c_{u_n}& \colon	 P_{n-1}^{g_{n-1}}   \fto 		 P_n\\
	\end{align*}
	where all the subgroups $P_0, \, P_1^{g_1}, \, \dots, \,P_{n-1}^{g_{n-1}}, \, P_n \le S$. By construction, the following diagrams, where elements of $\L$ denote conjugations and $i \in \{2,\dots,n-1\}$, are commutative:
	\begin{center}
		\begin{tikzcd}
			P_0 \ar[r,"f_1"] \ar[rd,"u_1",swap] 	& P_1 \ar[d,"g_1"] 				&[30pt] P_{i-1} \ar[r,"f_i"]	& P_i \ar[d,"g_i"]  &[30pt] P_{n-1} \ar[r,"f_n"]	& P_n \\
			& P_1^{g_1} &  P_{i-1}^{g_{i-1}} \ar[u,"g_{i-1}\inv"] \ar[r,"u_i"]	& P_i^{g_i}	& P_{n-1}^{g_{n-1}} \ar[u,"g_{n-1}\inv"] \ar[ur,"u_n",swap]
		\end{tikzcd}
	\end{center}
	
	Thus we have 
	\[
	(c_{u_n} \circ \, \dots \, \circ c_{u_2} \circ c_{u_1})|_{P_0} = c_{u_n}|_{P_{n-1}^{g_{n-1}}} \circ \, \dots \, \circ c_{u_2}|_{P_1^{g_1}} \circ c_{u_1}|_{P_0}  = c_{f_n}|_{P_{n-1}} \circ \, \dots \, \circ c_{f_2}|_{P_1} \circ c_{f_1}|_{P_0} = \xi,
	\]
	proving that $\xi \in \Hom_\F(P_0,P_n)$.
\end{proof}

\begin{lemma}[4.14]\label{lemma4.14}
	Suppose that case (2) of the  \nameref{thm:Q!FF} holds with $W_1$ a simple $R_1$-module. Then Theorem~\ref{theorem DL} holds.
\end{lemma}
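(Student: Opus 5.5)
We follow the structure of \cite[Lemma 4.14]{Meier-Stell-Stroth:2012}. In case (2) of the \nameref{thm:Q!FF} with $W_1$ a simple $R_1$-module, case (2)(c)(ii) applies: $[Y_1,R_1]=W_1$ is $R_1$-simple, and either $\ov{M_1^\circ}=\ov{R_1}=\ov{J_1}\cap\ov{M_1^\circ}$, or $\ov{M_1^\circ}$ is one of $\Sp_4(2)$, $3\cdot\Sym(6)$, $\SU_4(q).2$, $G_2(2)$. In both situations $W_1$ is one of the modules (1)--(9) or (12) of the $FF$-Module Theorem. The first step is purely local: we run through this list exactly as in \cite{Meier-Stell-Stroth:2012}. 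Using \ref{lemma4.4}, \ref{lemma4.8} and \ref{lemma4.9}, the quadratic offender $Y_2$ with $[\ov{M_1^\circ},Y_2]\neq 1$ eliminates the modules admitting no quadratic offender that acts non-trivially on $\ov{M_1^\circ}$. We then apply \cite[Theorem C.22]{Meier-Stell-Stroth:2012} (or its analogue) to control $|Y:[Y,M^\circ]|$. This leaves natural $\SL_n(q)$, $\Sp_{2n}(q)$ and $\Omega$-modules, exterior squares, half-spin modules for $\Spin_{10}^+(q)$, and the small exceptions. These computations happen entirely inside the groups $M_1$, $M_2$ and $M_1^\dagger$, so they transfer verbatim.

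The second step uses the $Q$-condition to sort the survivors. When $Y\le Q$ we must reach Theorem~\ref{theorem DL}(1) or (2). When $Y\not\le Q$ we must reach (4)(1)--(3) or (5), and in the symplectic case with $p=2$ we must also prove (2)(c). In \cite{Meier-Stell-Stroth:2012} this is done by comparing $Q_1$ with $G$-conjugates of $Q$ that centralize vectors of $[Y_1,Y_2]$. We will instead use the device from the proof of \ref{lemma4.13}. Take a non-trivial $x\in[W_1,Y_2]$ with $\<x\>\in\Delta$ (by $Q$-repleteness, after moving $x$ into $C_{W_1}(Q_1)$ via $R_1$-transitivity wherever that holds). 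Choose $y_i\in M_i$ with $x\in Z(Q_i^{y_i})$. Check that $(y_1,y_2^{-1},u^{-1})\in D(\L)$, and conclude from \ref{lemma: L containing Q}(c) that $Q_0:=Q_1^{y_1}=Q_2^{y_2}\le M_1\cap M_2$ is a common conjugate. With $Q_0$ in hand, the arguments of \cite{Meier-Stell-Stroth:2012} that show certain vectors are $Q$-central, or that certain modules force $Y\le Q$, go through with $Q_0$ in place of the $G$-conjugate.

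The third step treats the orthogonal modules for $p=2$. If $\ov{M^\circ}\cong\Omega_{2n}^\epsilon(2)$ and $\ov M\neq\ov{M^\circ}$, then either every commutator in $[Y_1,Y_2]$ is singular in both $[Y_1,M_1^\circ]$ and $[Y_2,M_2^\circ]$, or \ref{lemma4.10} applies and gives Theorem~\ref{theorem DL}(5) or (3), including (5)(c) through Theorem~\ref{theorem CL}. In the all-singular case, the isotropy of $[Y_1,Y_2]$ pushes $Y$ into $Q_0$ and we land in (4)(1).

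I expect the main obstacle to be the chains of conjugations that leave $S$. These appear when a vector of $[Y_1,Y_2]$ is conjugated by elements of both $M_1$ and $M_2$, and also when identifying $Q$-conjugates that arise in different local subgroups, where the chain $(u,m,\dots)$ need not lie in $D(\L)$. This is presumably why Lemma~\ref{lemma:conj of p-subgr and fus systems} was proved just before. The plan at each such point is to realise the chain as in that lemma, with each step normalizing an object in $\Delta$ (the cyclic groups $\<x\>$, $Y_i$ and $[I_1,I_2]$). The lemma then shows that the composite is a morphism of $\F$. Since $Q$ is weakly closed in $\F$ by \ref{lemma:properties of large in fus system}(d), any conjugate of $Q$ that returns to $S$ equals $Q$, which recovers the uniqueness statements the group-theoretic argument needs.
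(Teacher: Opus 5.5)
Your global tools are the right ones: a common conjugate of $Q$ obtained as in Lemma~\ref{lemma4.13} via Lemma~\ref{lemma: L containing Q}(c), and Lemma~\ref{lemma:conj of p-subgr and fus systems} for conjugation chains that leave $S$. However, two steps fail as written, and the first also leaves out a piece of machinery the paper needs.

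\textbf{Step 1 eliminates nothing.} The existence of the quadratic offender $Y_2$ with $[\ov{M_1^\circ},Y_2]\neq 1$ is exactly the hypothesis under which the $Q!FF$-Module Theorem produced its list. So it cannot remove any module from that list. The non-occurrence of $\SU_n(q)$, $G_2(q)$, $\Spin_7(q)$ and $3\cdot\Alt(6)$ is therefore left unproved. These cases need genuinely symmetric arguments, and they do not all stay inside $M_1$, $M_2$, $M_1^\dagger$.

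The clearest instance is case (9).
\begin{enumerate}
\item[(i)] After getting $Y_i=W_i$, $\bK_i\cong\bF_4$ and $Z=[Y_1,Y_2]=C_{Y_i}(Y_j)$ of order $2^4$, one must show $Z\in\Delta$. This follows from $1\neq Z\cap Z(Q)$ and $Q$-repleteness.
\item[(ii)] One then produces $g=uh$ with $h\in M_2$ that interchanges $Y_1$ and $Y_2$ and normalizes $Z$.
\item[(iii)] The contradiction is a Sylow $2$-argument in the group $N_\L(Z)$. The kernel of the action on $\{\Gamma_1,\Gamma_2\}$ contains a Sylow $2$-subgroup, which forces $\Gamma_1=\Gamma_1^g=\Gamma_2$; this is impossible.
\end{enumerate}
Since $g\notin N_\L(Y_1)=M_1^\dagger$, this does not transfer verbatim.

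\textbf{The conjugates of $Q$ needed are a family, not one $Q_0$.} What the case analysis uses is not a single $Q_0$ attached to one vector. It is the family $Q_v$ for $v\in\V\subseteq Z^\natural$, and the group $L=\langle Q_v\mid v\in\V\rangle=\wcl_X(Q,M_1\cap M_2)$ with $X=M_1\cup uM_2$. When $M_i$ is transitive on $W_i$, $L$ also equals $\wcl_X(Q,N_{M_i}(Z))$. This identification rests on the fact that an $X$-conjugate of $Q_i$ lying in $M_j$ is an $M_j$-conjugate of $Q_j$. That is precisely where Lemma~\ref{lemma:conj of p-subgr and fus systems} is applied, to the chain $Q_1\to Q_1^{x_1}\to (Q_1^{x_1})^{u^{-1}}\to ((Q_1^{x_1})^{u^{-1}})^t\le S$ with $t\in M_1$. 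You invoke that lemma only as a general fallback.

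\textbf{Step 3 reverses the logic.} Theorem~\ref{theorem DL}(4) asserts $Y\not\le Q$. Since $Q_0$ is an $M_1$-conjugate of $Q$ and $Y\norm M_1$, ``$Y\le Q_0$'' means $Y\le Q$. That excludes (4)(1) rather than giving it. You also treat only $\Omega^\epsilon_{2n}(2)$ with $\ov{M}\neq\ov{M^\circ}$, whereas the orthogonal outcome includes all $q$ and odd $p$. The needed argument runs as follows.
\begin{enumerate}
\item[(a)] If $|Z|\le q$, or $W_1<Y_1$, or $\ov{J_1}\cong\O_4^-(2)$, then $Z$ contains a vector that is non-singular in $W_1$, and Lemma~\ref{lemma4.10} gives (5).
\item[(b)] Otherwise $W_1=Y_1$, $|Z|>q$ and $2n\ge 6$. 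Assume $Y\le Q$ for a contradiction. Isotropy of $Z$ in $W_2$ together with $|Z|>q$ yields a singular vector $v\in Z$ of $W_2$.
\item[(c)] Write $v=w^{ux}$ with $x\in M_2$ and $[w,Q]=1$; here $(u,x)\in D(\L)$ via $(Y_1,Y_2,Y_2)$. Then $\langle w\rangle\in\Delta$, and $Q!$ gives $Q\norm C_\L(w)$. Hence $Y_1\le O_p(C_\L(w))$, and conjugating back gives $W_2\le O_p(C_{M_1}(v))$.
\item[(d)] If $v$ is singular in $W_1$, this contradicts the Point-Stabilizer Theorem for the offender $W_2$. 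Otherwise $|W_2/C_{W_2}(W_1)|\le 2$, which forces $|Z|=2=q$, contradicting $|Z|>q$.
\end{enumerate}
Only this yields $Y\not\le Q$ and hence (4)(1).

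Your ``sorting by $Q$'' in Step 2 is also too vague to be checked. For the orthogonal outcomes (and, in the MSS case analysis the paper follows, the exterior-square and half-spin ones too), $Y\not\le Q$ has to be proved, not assumed as a branch of a case split. The symplectic case needs the locality versions of Lemma~\ref{lemma2.26} and Lemma~\ref{lemma2.25}. Lemma~\ref{lemma2.26} uses $Q=O_p(N_\L(Q))$ and gives $Y_1=W_1$ for odd $p$; Lemma~\ref{lemma2.25} gives (2)(c).
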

\begin{proof}
	As usual, the proof mimics that of \cite[Lemma 4.14]{Meier-Stell-Stroth:2012} as it deals almost exclusively with the local structure. The main difference will appear when the properties of $Q$ and of its conjugates become relevant; we will show that the $M_1 \cup uM_2$-conjugates of $Q$ share the same behavior and properties as in \cite[Lemma 4.14]{Meier-Stell-Stroth:2012} at least with respect to the $M_i$-modules $Y_i$. This will afterwards reveal to be sufficient to undertake a similar exclusion process during the case-analysis.\\
	For this lemma we will need to follow a different numbering with respect to \cite[Lemma 4.14]{Meier-Stell-Stroth:2012}.
	
	\begin{subresult}\label{4.14-1}
		Looking at the conclusion of \nameref{thm:Q!FF} we find out that the following hold.
		\begin{enumerate}[(a)]
			\item $\ov{R_i}$ is quasisimple, $\ov{R_i} \le \ov{M_i^\circ}$ and one concludes that:
			\begin{itemize}
				\item[i)] $\ov{R_i} =\ov{J_i}$;
				\item[ii)] $p=2$ and $\ov{J_i} \in \{ \O^\pm_{2n}(q), \Sp_4(2), G_2(2)\}$.
			\end{itemize}
			\item $C_{Y_i}(R_i) =1$ and the action of $\ov{M_i}$ on $W_i$ is faithful; thus $C_{M_i}(W_i) = C_{M_i}(Y_i)$.
			\item One of the following holds.
			\begin{itemize}
				\item[i)] $\ov{M_i^\circ} = \ov{R_i} = \ov{M_i^\circ} \cap \ov{J_i}$.
				\item[ii)] $\ov{M_i^\circ} \in \{\Sp_4(2), 3 \cdot \Sym(6), \SU_4(q).2, G_2(2)\}$ for $p=2$ and if $\ov{M_i^\circ} \cong \SU_4(q).2 \cong \O^-_6(q)$, then $W_i$ is a natural $\SU_4(q)$-module.
			\end{itemize}
			\item The pair $(\ov{J_i}, W_i)$ is one among the cases (1)-(9),(12) of \cite[Theorem C.3]{Meier-Stell-Stroth:2012}, where $n \ge 3$ in case (1), $n \ge 2$ in case (2) and $n=6$ in case (12).
		\end{enumerate}
	\end{subresult}
	
	Recall that $M_{i\circ} = O^p(M_i^\circ)$.
	\begin{subresult}\label{4.14-2}
		$\ov{R_i}=F^*(\ov{M_i^\circ})=\ov{M_{i\circ}}$, so $W_i=[Y_i,M_{i\circ}]$.
	\end{subresult}
	We use \eqref{4.14-1}(c). Suppose first $\ov{R_i} = \ov{M_i^\circ}$; then \eqref{4.14-2} follows immediately since $\ov{R_i}$ is quasisimple by \eqref{4.14-1}(a). The equality with $\ov{M_{i \circ}}$ follows since $O^p(\ov{R_i}) < \ov{R_i}$ implies $O^p(\ov{R_i}) \le Z(\ov{R_i})$, so $\ov{R_i}$ is nilpotent and therefore $\ov{R_i} \cong O^p(\ov{R_i}) \times C_p = O_{p'}(\ov{R_i}) \times C_p$, which is abelian, a contradiction.  \\
	If $\ov{R_i} \ne \ov{M_i^\circ}$, then \eqref{4.14-1}(c:ii) shows that $F^*(\ov{M_i^\circ})$ is quasisimple of index $2$ in $\ov{M_i^\circ}$ and $p=2$ in all cases; thus $F^*(\ov{M_i^\circ}) = O^2(\ov{M_i^\circ}) =\ov{M_{i\circ}}$. Since $\ov{R_i} \le \ov{M_i^\circ}$ and $\ov{R_i}$ is quasisimple and normal in $\ov{M_i}$ we obtain $\ov{R_i} = F^*(\ov{M_i^\circ})$, proving \eqref{4.14-2}.\\
	
	Observe next that one of the following holds:
	\begin{itemize}
		\item[i.] $\displaystyle \left| \frac{W_1}{C_{W_1}(W_2)} \right| \le \left| \frac{W_2}{C_{W_2}(W_1)} \right|$,
		\vspace{0.1cm}
		\item[ii.] $\displaystyle \left| \frac{W_2}{C_{W_2}(W_1)} \right| \le \left| \frac{W_1}{C_{W_1}(W_2)} \right|$.
	\end{itemize}
	If (ii.) holds, then $|W_1/C_{W_1}(W_1^{u\inv})| \le |W_1^{u\inv} / C_{W_1^{u\inv}}(W_1) |$ and, up to replacing $W_2$ with $W_1^{u\inv}$ (i.e. $u$ with $u\inv$), we may suppose that $W_2$ is an offender on $W_1$. Lemma \ref{lemma4.2} shows that conjugation by $u\inv$ doesn't affect our setting. \\
	
	Set $Z:=[W_1,W_2]$ and, against the usual convention adopted until now, let the bar notation denote quotients of subgroups of $M_i$ over $C_{M_j}(Y_j)$, so
	\[
	\ov{W_i} := \frac{W_i C_{M_j}(Y_j)}{C_{M_j}(Y_j)}.
	\]
	Then \eqref{4.14-1}(b) gives $C_{M_j}(Y_j)=C_{M_j}(W_j)$, hence $\ov{W_i} \cong W_i/C_{W_i}(W_j)$ as an $M_1 \cap M_2$-module.
	
	\begin{subresult}\label{4.14-3}
		$W_2$ is a non-trivial, quadratic offender on $W_1$; thus $Z \ne 1$.
	\end{subresult}
	By \ref{lemma4.4} and \ref{notation4.3} $Y_2$ acts quadratically and non-trivially on $Y_1$, hence $W_2$ is a quadratic offender on $W_1$. By \eqref{4.14-1}(b) $C_{M_i}(Y_i) =C_{M_i}(W_i)$. Taking first $i=1$, then $[Y_1,Y_2] \ne 1$ implies $[W_1,Y_2] \ne 1$; taking $i=2$ we have that $[W_1,Y_2] \ne 1$ implies now $[W_1,W_2]\ne 1$.\\
	
	Set $\bK_i := \End_{R_i}(W_i)$ and recall that $T_j = Y_iR_j$.

	\begin{subresult}\label{4.14-7}
		$T_j$ acts $\bK_j$-linearly on $W_j$; hence $Z$ is a $\bK_j$-subspace of $W_j$.
	\end{subresult}
	Suppose that the action of $W_i$ on $W_j$ is not $\bK_j$-linear. Then $p < |\bK_j|=|\bK_i|$ and one can apply \cite[Lemma 1.22]{Meier-Stell-Stroth:2012} and obtain $\dim_{\bK_i}(W_i) =1$; this means that $R_i$ acts as scalar multiplications over $W_i$, thus $[R_i,R_i]$ acts trivially on $W_i$. However $\ov{R_i}$ is, by \eqref{4.14-1}(a), quasisimple, so $[\ov{R_i},\ov{R_i}]=\ov{R_i}$, yielding a contradiction since $W_i$ is a non-central $R_i$-module. We have shown that $W_i$ acts $\bK_j$-linearly on $W_j$; as $Y_i$ normalizes $R_j$, it induces field automorphisms of $\bK_j$, hence it acts semilinearly on $W_j$. Moreover, since $Y_i$ acts quadratically on $W_j$, it centralizes the non-trivial $\bK_j$-subspace $[W_j,W_i] \le W_j$, thus it must act linearly on the entire $W_j$.\\	
	
	We now turn our attention to the situation regarding $M_1$- and $uM_2$- conjugates of $Q$. 	
	\begin{subresult}\label{subres:4.14-conjugates of Q}
		The following hold.
		\begin{itemize} 
			\item[(i)] Let $x_i \in M_i$; if $Q_i^{x_i} \le M_j$, then there exists $x_j \in M_j$ such that $Q_i^{x_i} = Q_j^{x_j}$.
			\item[(ii)] If $[v,Q_1^{y_1}]=1=[v,Q_2^{y_2}]$ for elements $y_i \in M_i$ and for $1 \ne v \in Y_1 \cap Y_2$, then $Q_1^{y_1}=Q_2^{y_2}$.
		\end{itemize}
	\end{subresult}
	Suppose first that $i=1$ and $Q_1^{x_1} \le M_2$. Then there exists $t \in M_1$ such that we can write
	\begin{center}
		\begin{tikzcd}
			Q_1 \ar[r,"x_1"] &	Q_1^{x_1} \ar[r,"u\inv"] & (Q_1^{x_1})^{u\inv} \ar[r,"t"] & ((Q_1^{x_1})^{u\inv})^t \le S.
		\end{tikzcd}
	\end{center}
	We have $Q_1 \le N_\L(Y_1)$ with $Y_1 \in S_{x_1}$, $Q_1^{x_1} \le M_2 \le N_\L(Y_2)$ with $Y_2 \le S_{u\inv}$ and $(Q_1^{x_1})^{u\inv} \le N_\L(Y_1)$ with $Y_1 \le S_t$. Thus we may apply \ref{lemma:conj of p-subgr and fus systems}, and obtain that $((Q_1^{x_1})^{u\inv})^t$ is an $\L$-conjugate of $Q_1$; note also that $(u\inv,t)$ is in the domain of $\L$ via $(Y_2,Y_1,Y_1)$. Since $Q_1$ is weakly closed, $((Q_1^{x_1})^{u\inv})^t = Q_1$, hence $Q_1^{x_1} = (Q_1^{t\inv})^u = Q_1^{t\inv u}$. Setting $x_2 := (t\inv)^u \in M_2$ we obtain $Q_1^{x_1} = Q_1^{u x_2} = Q_2^{x_2}$.\\
	If instead $i=2$, then $Q_2^{x_2} \le M_1$ and again for a suitable $t \in M_1$ we reason in the same way over the sequence
	\begin{center}
		\begin{tikzcd}
			Q_1 \ar[r,"u"] &	Q_2 \ar[r,"x_2"] & Q_2^{x_2} \ar[r,"t"] & (Q_2^{x_2})^t \le S.
		\end{tikzcd}
	\end{center}
	This proves (i).\\
	For (ii) note that $v^{y_1 \inv} \le Z(Q_1) \le S$ and $Q$-repleteness gives $\<v^{y_1\inv}\> \in \Delta$. Then we have that $(y_1,y_2\inv,u\inv) \in D(\L)$ via $(\<v^{y_1\inv}\>, \< v\>, \< v^{y_2\inv} \>,  \< (v^{y_2\inv})^{u\inv} \>)$ since the first subgroup is in $\Delta$ and all the successive ones are contained either in $Y_1 \le S$ or in $Y_2 \le S$. In particular, also $(u,y_2,y_1\inv) \in D(\L)$. Moreover, $C_\L(v)$ is a subgroup of $\L$; as $Q_2^{y_2} \le C_\L(v)$ and $v^{y_1\inv} \in S$, conjugation by $y_1\inv$ extends to a well-defined homomorphism from $Q_2^{y_2}$. In particular, we get
	\[
	1 \ne v^{y_1\inv} \le Z(Q_1) \cap Z(Q_2^{y_2y_1\inv}) = Z(Q_1) \cap Z(Q_1^{uy_2y_1\inv})
	\]
	and \ref{lemma: L containing Q}(c) gives $uy_2y_1\inv \in N_\L(Q_1)$. Thus $Q_1^{y_1} = Q_2^{y_2}$, which is (ii). In particular, note that $Q_1^{y_1} = Q_2^{y_2} \le M_1 \cap M_2$.\\
	
	Define therefore
	\[
	\mathcal{Q} := \{Q_0 \in Q^{M_1} \cup Q^{uM_2} \mid  Q_0 \le M_1 \cap M_2 \};
	\]
	\eqref{subres:4.14-conjugates of Q}  shows that $\mathcal{Q} = Q_1^{M_1} \cap Q_2^{M_2}$ and if there exists $Q_0 \in \mathcal{Q}$ such that $[v,Q_0]=1$ for some $v \in Y_1\cap Y_2$, then such $Q_0$ is unique and we may therefore denote it by $Q_v$. Set then
	\begin{align*}
		X & := M_1 \cup uM_2 \\
		\V &  := \{v \in Z^\natural \mid \exists \, Q_v \in \mathcal{Q} \text{ with $[v,Q_v] =1$} \}
	\end{align*}
	and note that for every $v \in \V$, $Q_v \le M_1 \cap M_2$; thus $L := \< Q_v \mid v \in \V \>$ is a subgroup of $M_1 \cap M_2 \le \L$.

	\begin{subresult}\label{4.14-4}
		The following hold.
		\begin{itemize}
			\item[(a)] $W_1$, $W_2$ and $Z$ are normalized by $M_1 \cap M_2$; in particular $L\le N_{M_i}(Z)$.
			\item[(b)] $L = \wcl_{X}(Q,M_1 \cap M_2) \le \wcl_{X}(Q, N_{M_1}(Z)) \cap \wcl_{X}(Q,N_{M_2}(Z))$.
			\item[(c)] If $\V = Z^\natural$, then $L = \wcl_{X}(Q, N_{M_1}(Z)) = \wcl_{X}(Q,N_{M_2}(Z))$.
		\end{itemize}
	\end{subresult}
	
	\begin{itemize}
		\item[(a)] Clearly $M_1 \cap M_2$ normalizes $W_1$ and $W_2$, thus also $Z= [W_1,W_2]$, and (a) follows immediately from $L \le M_1 \cap M_2$.
		\item[(b)] Since $Z \le W_1 \cap W_2 \le Y_1 \cap Y_2$, \eqref{subres:4.14-conjugates of Q}\textit{(ii)} holds for the elements of $Z^\natural$. Note that $L$ is generated by $X$-conjugates of $Q$ that are subgroups of $M_1$ and $M_2$, therefore we have $L \le \wcl_{X}(Q,M_1 \cap M_2)$. Suppose that $x \in X$ and $Q^x \le M_1 \cap M_2$. Since $Q^x$ normalizes $Z$, there exists $1 \ne v \in C_Z(Q^x)$ and we get $Q_v=Q^x \le M_1 \cap M_2$, so $v \in \V$ and $Q_v \in L$, showing that $\wcl_{X}(Q,M_1 \cap M_2) \le L$. Thus they are equal. Since $M_1 \cap M_2 \le N_{M_i}(Z)$ one gets $L =  \wcl_{X}(Q,M_1 \cap M_2) \le \wcl_{X}(Q, N_{M_i}(Z))$, proving (b).
		\item[(c)] Suppose that $x \in X$; since $Q^x \le N_{M_i}(Z)$ for both $i \in \{1,2\}$, there exists an element $1 \ne z \in C_Z(Q^x)$.  As $z \in Z^\natural = \V$, we obtain  $Q_z=Q^x \in \mathcal{Q}$; hence $Q^x \le L$, showing that $\wcl_{X}(Q,N_{M_i}(Z)) \le L$ for both $i \in \{1,2\}$. This together with (b) implies (c).
	\end{itemize}
	
	\begin{subresult}\label{4.14-5}
		If $M_i$ is transitive on $W_i$, then $\V = Z^\natural$.
	\end{subresult}
	Since $M_i$ is transitive on $W_i$ and $1 \ne Z \le W_1 \cap W_2$, every $1 \ne v \in Z$ is centralized by some $M_1$-conjugate of $Q_1$ and by some $M_2$-conjugate of $Q_2$. By \eqref{subres:4.14-conjugates of Q} these two conjugates of $Q$ are equal, namely $Q_v \in Q_1^{M_1} \cap Q_2^{M_2}$, and contained in $M_1 \cap M_2$, thus $Q_v \in \mathcal{Q}$. This shows that $Z^\natural = \V$.
	
	\begin{subresult}\label{4.14-6}
		Let $1 \ne z \in C_Z(L)$ and $K_i \le M_i$ be a subgroup transitive on $W_i$. Then $L = Q_z$ and $Z^\natural \subseteq z^{N_{K_i}(L)}$.
	\end{subresult}
	First of all note that $\{z\} \cup K_i \cup L \subseteq M_i$, so all the conjugates in $z^{N_{K_i}(L)}$ are well-defined. The transitivity of $K_i$ implies, by \eqref{4.14-5}, $\V=Z^\natural$; hence for every $v \in Z^\natural$ one has $Q_v \le L$, so also $[z,Q_v]=1$. Then $Q_v = Q_z$ and one must have $L = Q_z$; let $k \in K_i$ be such that $z^k=v$ (such a $k$ exists by the transitivity of $K_i$), then $Q_z^k = Q_v = Q_z$, that is $k \in N_{K_i}(Q_z) = N_{K_i}(L)$.

	\vspace{0.60cm}
	
	The various cases of \cite[Theorem C.3]{Meier-Stell-Stroth:2012} can now be discussed. Here Case (12), that is the natural $\Sym(6)$- or $\Alt(6)$-module for $n=6$, can be treated uniformly with Case (2) since they are also natural $\Sp_4(2)$-, respectively $\Sp_4(2)'$-, modules.\\
	Now that the issues involving conjugates of $Q$ have been solved, the arguments used in each case reflect very closely those appearing in \cite[Lemma 4.14]{Meier-Stell-Stroth:2012} and, indeed, we will often directly refer to the proofs there even though the arguments for the locality $\L$ neew the adaptation worked out in points \eqref{4.14-1} to \eqref{4.14-6} now and then.

	\begin{caseresult}
		Case (1) holds with $n \ge 3$, so $\ov{J_1} \cong \SL_n(q)$ and $W_1$ is a corresponding natural module. Then Theorem~\ref{theorem DL}(1) holds.
	\end{caseresult}
	This is essentially \cite[Lemma 4.14, Case 1]{Meier-Stell-Stroth:2012}.
	
	\begin{caseresult}
		One of Case (2), with $n \ge 2$, and Case (12), with $n=6$, holds, hence $\ov{J_1} \cong \Sp_{2n}(q)$ or $\Sp_4(2)'$ and $W_1$ is a corresponding natural module. Then Theorem~\ref{theorem DL}(2) holds.
	\end{caseresult}
	This is essentially \cite[Lemma 4.14, Case 2]{Meier-Stell-Stroth:2012}, hoe

	Consider first the case when $p$ is odd. The argument uses the following statement, the proof of which is just as in \cite[Lemma 2.26]{Meier-Stell-Stroth:2012} up to replacing $N_G(Q)$ by $N_\L(Q)$, $C_G(Q)$ by $C_\L(Q)$ and recalling that we have chosen $Q$ such that $Q=Q^\bullet = O_p(N_\L(Q))$.
	\begin{lemma}[2.26]
		\label{lemma2.26}
		Let $X \le Y_M$ be an $M$-submodule such that $X$ is a natural $\Sp_{2m}(p^k)$-module for $M^\circ$ with $2m \ge 4$ and $p \ne 2$. Then $X \le Q$.
	\end{lemma}
	\begin{proof}
		Identical to the proof of \cite[Lemma 2.26]{Meier-Stell-Stroth:2012}.
	\end{proof}
	Using \ref{lemma2.26} one argues precisely as in the LST, obtaining the decomposition
		\[
	Y_1 = [Y_1,Z(\ov{J_1})] \times C_{Y_1}(Z(\ov{J_1})) = W_1 \times C_{Y_1}(J_1),
	\]
	which, together with \eqref{4.14-1}(b), gives $C_{Y_i}(J_i) =1$, hence $Y_1=W_1$ and Theorem~\ref{theorem DL}(2) holds.\\
	
	Let now $p=2$. The following lemma is then applied.
	\begin{lemma}[2.25]
		\label{lemma2.25}
		Let $p=2$ and $Y \le Y_M$ be an $M$-submodule such that $I:=[Y,M_\circ]$ is a natural $\Sp_{2m}(2^k)^\prime$-module for $M_\circ$, with $m\ge 1$.
		\begin{itemize}
			\item[(a)] If $L \le \L$ is a subgroup with $Y \le L$ and $A \norm L$ is a normal $p$-subgroup of $L$ such that $I \le A \le M$, then $Y \le O_2(L)$.
			\item[(b)] If $I \le Q$, then also $Y \le Q$.
		\end{itemize} 
	\end{lemma}
	\begin{proof}
		Up to few remarks, it is as in \cite[Lemma 2.25]{Meier-Stell-Stroth:2012}.\\
		We have $C_\L(M^\circ)=1$ by \ref{lemma:dichotomy for subgroups containing Q}, which implies $C_Y(M_\circ)=1$.\\
		That $[M^\circ,C_M(I)] \le C_M(Y)$ can still be obtained invoking \cite[Lemma 1.52(c)]{Meier-Stell-Stroth:2012}, since it is applied to $M$ and $Q$ is large in $M$.\\
		Note also that $I \le L \cap M$ by hypothesis and that $I \in \Delta$ since $I \le Y \le Y_M$ and it is normal in $M$ (thus normalized by $S$). This is sufficient to obtain (a).\\
		Now (b) follows from (a) with $L=N_\L(Q)$ and $A=Q=O_p(L)$.
	\end{proof}
	Now the reasoning is exactly as in \cite[Lemma 4.14, Case 2]{Meier-Stell-Stroth:2012}, with an application of \ref{lemma2.25}, proving that Theorem~\ref{theorem DL}(2) holds again.
	
	\setcounter{caseresult}{2}
	\begin{caseresult}
		Case (3), so $\ov{J_1} \cong \SU_n(q)$ with $n \ge 4$ and $W_1$ is a corresponding natural module, does not occur.
	\end{caseresult}
	The proof is as in \cite[Lemma 4.14, Case 3]{Meier-Stell-Stroth:2012}.
	
	\begin{caseresult}
		Case (4) holds, that is
		\[
		\ov{J_1} \cong \begin{cases*}
			\Omega_{2n}^+(q) \quad \text{for $2n \ge 6$,} \\
			\Omega_{2n}^-(q) \quad \text{for $p=2$ and $2n \ge 6$,} \\
			\Omega_{2n}^-(q) \quad \text{for $p$ odd and $2n \ge 8$,} \\
			\Omega_{2n+1}(q) \quad \text{for $p$ odd and $2n+1 \ge 7$,} \\
			\O_4^-(2), \\
			\O_{2n}^\epsilon(q) \quad \text{for $p=2$ and $2n \ge6$.}
		\end{cases*}
		\] 
		and $W_1$ is a corresponding natural module. Then Theorem~\ref{theorem DL}(5) or \ref{theorem DL}(4:1) holds.
	\end{caseresult}
	Here one needs to pay extra care that certain operations are possible in the locality $\L$; whence we reproduce the full argument.\\
	
	In all cases $\ov{R_i} = F^*(\ov{J_i}) \cong \Omega_m^\epsilon(q)$; now \eqref{4.14-1}(c) also gives $\ov{M_i^\circ} = \ov{R_i}$ and $W_i=[Y_i,M_i^\circ]$. Since $Y_i$ is a $p$-subgroup acting $\bK_j$-linearly on $W_j$ and it normalizes $\ov{R_j}$, \cite[Lemma B-35(d)]{Meier-Stell-Stroth:2012} shows that either $\ov{Y_iR_j}=\ov{T_j}=\ov{R_j} \cong \Omega_{2n}^\epsilon(q)$ or $p=2$ and $\ov{T_j} \cong \O_m^\epsilon(q)$, with $W_1$ a corresponding natural module.
	
	Suppose that $|Z| \le q$; \cite[Lemma B.9(c)]{Meier-Stell-Stroth:2012} gives $p=2$, $Z$ not singular in $W_1$ and $|\ov{W_2}| =2$. $W_2$ being an offender on $W_1$ implies then $q=2$ and so $\ov{T_1} \cong \O_{2n}^\epsilon(2)$. Hence \ref{lemma4.10} gives Theorem~\ref{theorem DL}(5).
	
	Suppose now that $W_1 < Y_1$. Then \cite[Theorem C.22]{Meier-Stell-Stroth:2012} gives $\ov{J_1} \cong \O_6^+(2) \cong \Sym(8)$ and \cite[Theorem C.4(h)]{Meier-Stell-Stroth:2012} shows that every offender in $\ov{T_i}$ on $Y_i$ is a best offender. Recall that $Y_2$ is an offender on $Y_1$, then always \cite[Theorem C.22]{Meier-Stell-Stroth:2012} shows that $\ov{Y_2}$ in $\ov{M_1}$ is generated by transpositions, so $[Y_1,Y_2]$ contains a non-singular vector of $Y_1$. We can now apply again \ref{lemma4.10}, hence Theorem~\ref{theorem DL}(5) holds in this case as well.
	
	Assume then that $W_1=Y_1$ and $|Z| > q$. If $\ov{J_1} \cong O_4^-(2) \cong \Sym(5)$, then $\ov{T_1} \in \{\Sym(5), \, \Alt(5)\}$ and \cite[Theorem C.4(g)]{Meier-Stell-Stroth:2012} shows that $\ov{W_2}$, an offender on $W_1$, is generated by transpositions of $\ov{T_1}$. Thus $Z=[W_1,W_2]$ contains non-singular vectors of $W_1$ and once again \ref{lemma4.10} yields the conclusion of Theorem~\ref{theorem DL}(5).
	
	Let then $\ov{J_1} \not \cong \O_4^-(2)$, so $2n \ge 6$. Looking for a contradiction, suppose that $Y \le Q$, so $Y_i \le Q_i$ for $i \in \{1,2\}$. By \cite[Lemma B.6(e)]{Meier-Stell-Stroth:2012} and by the quadratic action of $W_1$ on $W_2$, $Z$ is an isotropic subspace of $W_2$. Now \cite[Lemma B.5]{Meier-Stell-Stroth:2012} shows that the singular vectors of $W_2$ contained in $Z$ form a $\bK_2$-subspace of $Z$ with codimension at most $1$. Since $|Z| > q$, there exists then a singular vector $1 \ne v \in Z$. Then $v$ is $p$-central in $M_2$, so there exist $x \in M_2$ and $w \in W_1$ such that $w^{ux} =v$ and $Q_2^x = Q^{ux}$. Here note that $(u,x) \in \Delta$ via $(Y_1,Y_2,Y_2)$. Then $[w,Q]=1$, so $\<w\> \in \Delta$ and, therefore, also $\<v\> \in \Delta$ and $(Q!)$ implies $C_\L(w) \le N_\L(Q)$. Thus $Y \le O_p(C_\L(w))$ and we get $Y_2 \le O_p(C_\L(v))$ by conjugating back via $ux$ as a map $C_\L(w) \fto C_\L(v)$. In particular, $W_2 \le O_p(C_\L(v)) \cap C_{M_1}(v) \le O_p(C_{M_1}(v))$.\\
	Since $v \in Z$, it is also a vector of $W_1$; suppose it is singular, hence centralized by a Sylow $p$-subgroup of $M_1$. This contradicts the Point-Stabilizer Theorem \cite[Theorem C.8]{Meier-Stell-Stroth:2012}, $W_2$ being a non-trivial offender on $W_1$. Then $v$ is non-singular in $M_1$, so $|O_p(C_{\ov{M_1}}(v))| \le 2$ and then also $|W_2 / C_{W_2}(W_1)| \le 2$. Since $Z \ne 1$, this means that $q=2$ and $C_{W_2}(W_1)$ is a hyperplane in $W_2$, hence $|Z| = |C_{W_2}(W_1)^\perp| =2 = q$, yielding a contradiction to $|Z| > q=2$. hence to $Y \le Q$. Thus Theorem~\ref{theorem DL}(4:1) holds.
	
	\begin{caseresult}
		Case (5), that is $p=2$, $\ov{J_1}\cong G_2(q)$ and $W_1$ is a corresponding natural module, does not occur.
	\end{caseresult}
	This is \cite[Lemma 4.14, Case 5]{Meier-Stell-Stroth:2012}.
	
	\begin{caseresult}
		Case (6) holds, so $\ov{J_1} \cong \SL_n(q) / \<(-id)^{n-1}\>$ for $n \ge 5$ and $W_1$ is the corresponding exterior square of a natural $\SL_n(q)$-module. Then Theorem~\ref{theorem DL}(4:2) holds.
	\end{caseresult}
	The reasoning given in \cite[Lemma 4.14, Case 6]{Meier-Stell-Stroth:2012} works for our locality $\L$ as well.
	
	\begin{caseresult}
		Case (7), that is $\ov{J_1} \cong \Spin_{7}(q)$ and $W_1$ is the corresponding spin-module, does not occur.
	\end{caseresult}
	This is \cite[Lemma 4.14, Case 7]{Meier-Stell-Stroth:2012}.
	
	\begin{caseresult}
		Case (8) holds, that is $\ov{J_1} \cong \Spin_{10}^+(q)$ and $W_1$ is the corresponding halfspin-module. Then Theorem~\ref{theorem DL}(4:3) holds.
	\end{caseresult}
	The proof is as in \cite[Lemma 4.14, Case 8]{Meier-Stell-Stroth:2012}.
	
	\begin{caseresult}
		Case (9), that is $\ov{J_1} \cong 3 \cdot \Alt(6)$ and $|W_1|=2^6$, does not occur.
	\end{caseresult}
	The argument for the locality $\L$ requires a refinement with respect to that in \cite[Lemma 4.14, Case 9]{Meier-Stell-Stroth:2012}, since it needs the detection of a specific subgroup of $\L$ (namely $N_\L(Z)$ below), a step that is unnecessary when working within a group $G$ from the beginning.\\
	
	By \eqref{4.14-1}(a) and (c) $\ov{R_1} = \ov{J_1}$ and either $\ov{M_1^\circ} = \ov{J_1}$ or $\ov{M_1^\circ} \cong 3 \cdot \Sym(6)$; now $Z(\ov{J_1})$ has order $3$, so it acts coprimely on $Y_1$ and we get
	\[
	Y_1 = [Y_1,Z(\ov{J_1})] \times C_{Y_1}(Z(\ov{J_1})).
	\]
	As $[Y_1,Z(\ov{J_1})]$ is a non-trivial $J_1$-submodule of $W_1$, it equals $W_1$; moreover, $[J_1, C_{Y_1}(Z(\ov{J_1}))] \le W_1 \cap C_{Y_1}(Z(\ov{J_1})) =1$, so $Y_1 = W_1 \times C_{Y_1}(J_1)$. Since $C_{Y_1}(J_1) = 1$ by \eqref{4.14-1}(b), we get $Y_1 = W_1$, so also $Y_2 =W_2$. In particular, $Y_2$ is a non-trivial offender on $Y_1$ and, as $\bK_i=\End_{J_i}(Y_i)$, $\bK_i \cong \bF_4$. Applying \cite[Theorem C.4(e)]{Meier-Stell-Stroth:2012} we get $|Y_i|=|C_{Y_i}(Y_j)| = 4$, so also $Y_1$ is a non-trivial offender on $Y_2$, $Z=C_{Y_i}(Y_j)$ with $|Z|=2^4$ and a non-trivial offender is uniquely determined by its inclusion in a Sylow $p$-subgroup of $\ov{J_i}$; thus all the non-trivial offenders on $W_i$ in $\ov{J_i}$ are $\ov{J_i}$-conjugate and we also have $Y_i \le J_j$. Since $Y_2 \le S$, $Y_2^u$ is a well-defined subgroup of $M_2$ and a non-trivial offender on $Y_1^u = Y_2$. Thus in $\ov{M_2}$ we get $\ov{Y_1} = \ov{(Y_2^u)^h}$ for some $h \in M_2$; as $(u,h) \in D(\L)$ via $(Y_1, Y_2, Y_2^h=Y_2)$, we have $\ov{Y_1}=\ov{Y_2^{uh}}$. Set $g:=uh$ and note that $Y_1^g = Y_2$, $Y_2^g = Y_1$ and $Z^g=[Y_1^g,Y_2^g] =Z$. Since $Y_2 \le S$, it normalizes $Y_1 \cap Z(Q) \ne 1$, thus $1 \ne C_{Y_1 \cap Z(Q)}(Y_2) = Z \cap Z(Q)$ and $Q$-repleteness gives $Z \in \Delta$. In particular, $g \in N_\L(Z)$. Define
	\[
	\Gamma := \{ [y_1,y_2] \mid y_i \in Y_i \setminus Z \}
	\]
	and note that, since $Y_j \le J_i$, $[Y_i,y_j]$ is, for every $y_j \in Y_j \setminus Z$, a non-trivial $\bK_i$-subspace of $Y_i$. Consider the map $(y_j - \id): Y_i \fto Y_i$ given by $(y_j - \id)(v) = [v,y_j]$; then $ker(y_j - \id) = C_{Y_i}(y_j) \ge Z$. As $|Z|=2^4$ we get that $[Y_i,y_j]$ is $1$-dimensional over $\bK_i$. Since $|Y_i/Z|=2^2$, there are $3$ distinct spaces of the form $[Y_i,y_j]$ in $Y_i$.
	Therefore $|\Gamma|=9$ and we may build two (a priori not necessarily distinct) partitions of $\Gamma$:
	\[
	\Gamma_1:= \{ [Y_1,y_2]^\natural \mid y_2 \in Y_2 \setminus Z \} \qquad \text{and} \qquad 
	\Gamma_2:= \{ [Y_2,y_1]^\natural \mid y_1 \in Y_1 \setminus Z \}
	\]
	where $|[Y_i,y_j]^\natural | = 3$ and $|\Gamma_i|=3$.\\
	We obtain that the element $g \in N_\L(Z)$ acts in the following way:
	\[ \Gamma^g = \Gamma, \quad \Gamma_1^g = \Gamma_2 \quad \text{and} \quad \Gamma_2^g=\Gamma_1.
	\]
	As $\ov{Y_2}$ is a non-trivial offender on $\ov{Y_1}$, there exists a Sylow $2$-subgroup of $\ov{M_1}$ normalizing $\ov{Y_2}$, hence with preimage contained in $C_{N_\L(Z)}(\{ \Gamma_1, \Gamma_2 \})$. Since also $g \in N_\L(Z)$ and it is acting as a non-trivial transposition on $\{\Gamma_1,\Gamma_2\}$, we may suppose $o(g)=2$ and we obtain the following situation:
	\[
	g \in N_{N_\L(Z)}(\{\Gamma_1,\Gamma_2\}) \qquad \text{and} \qquad |C_{N_\L(Z)}(\{ \Gamma_1, \Gamma_2 \})|_2 = |N_\L(Z)|_2.
	\]
	We conclude that $N_{N_\L(Z)}(\{\Gamma_1,\Gamma_2\}) = C_{N_\L(Z)}(\{ \Gamma_1, \Gamma_2 \})$, so $\Gamma_2 = \Gamma_1^g = \Gamma_1$. In particular, for any fixed $y_1 \in Y_1 \setminus Z$ we have $[Y_2,y_1]^\natural \in \Gamma_1$. However note that, for every $y_2 \in Y_2 \setminus Z$, $[Y_2,y_1]^\natural \cap [Y_1,y_2]^\natural \supseteq \{ [y_2,y_1] \}$; in other words, every element of the partition $\Gamma_1$ has non-trivial intersection with every other element of $\Gamma_1$, a contradiction.
\end{proof}

\begin{proof}[Proof of Theorem~\ref{theorem DL}]
	By Lemma \ref{lemma4.9} the triple $(\ov{M_1},Y_1, \ov{Q_1})$ satisfies the hypothesis of the $Q!FF$-Module Theorem, \cite[C.24]{Meier-Stell-Stroth:2012} and we have then covered all the possible outcomes: Lemma \ref{lemma4.12} covers C.24(1), Lemma \ref{lemma4.13} covers C.24(2) when $W_1$ is not a simple $R_1$-module and \ref{lemma4.14} covers C.24(2) for $W_1$ a simple $R_1$-module.
\end{proof}

\bibliographystyle{alpha.bst}
\bibliography{rep-coh-local}

\end{document}